\tikzstyle{1function}=[fill=white, draw=black, shape=rectangle, minimum width=0.75cm, minimum height=1 cm]
\tikzstyle{2function}=[fill=white, draw=black, shape=rectangle, minimum width=1cm, minimum height=1 cm]
\tikzstyle{3function}=[fill=white, draw=black, shape=rectangle, minimum width=1.5cm, minimum height=1cm]
\tikzstyle{multi function}=[fill=white, draw=black, shape=rectangle, minimum width=5cm, minimum height=1cm]
\tikzstyle{multi function small}=[fill=white, draw=black, shape=rectangle, minimum width=4cm, minimum height=1cm]
\tikzstyle{Multi function smaller}=[fill=white, draw=black, shape=rectangle, minimum width=3.5 cm, minimum height=1 cm]
\tikzstyle{graph node}=[fill=black, draw=black, shape=circle]
\tikzstyle{smallbox}=[fill=white, draw=black, shape=rectangle, minimum width=0.5 cm, minimum height=0.5cm]
\tikzstyle{Circle1}=[fill=white, draw=black, shape=circle, minimum size=26 mm]
\tikzstyle{black dashed}=[-, draw=black, dashed]
\tikzstyle{new edge style 0}=[thick, ->]
\tikzstyle{thickedge}=[thick, -]
\definecolor{darkgreen}{rgb}{0,0.6,0}
\newcommand{\udi}[1]{{\color{red}{#1}}}
\numberwithin{equation}{section}
\theoremstyle{plain}
    \newtheorem{theorem}[equation]{Theorem}
    \newtheorem{lemma}[equation]{Lemma}
    \newtheorem{lemma-definition}[equation]{Lemma-Definition}
\newtheorem{thmABC}{Theorem}
    \newtheorem{corollary-definition}[equation]{Corollary-Definition}
    \newtheorem{corollary}[equation]{Corollary}
    \newtheorem{proposition}[equation]{Proposition}
    \newtheorem*{def*}{Definition}
\theoremstyle{definition}
    \newtheorem{definition}[equation]{Definition}
    \newtheorem{example}[equation]{Example}
    \newtheorem{remark}[equation]{Remark}
\newcommand{\C}{\mathcal{C}}
\newcommand{\Cp}{\mathcal{C}_{+1}}
\newcommand{\N}{\mathbb{N}}
\newcommand{\Z}{\mathbb{Z}}
\newcommand{\Q}{\mathbb{Q}}
\newcommand{\F}{\mathbb{F}}
\renewcommand{\H}{\mathcal H}
\renewcommand{\k}{\Bbbk}
\renewcommand{\phi}{\varphi}
\renewcommand{\epsilon}{\varepsilon}
\newcommand{\ol}{\overline}
\newcommand{\ot}{\otimes}
\newcommand{\Ga}{\Gamma}
\newcommand{\uu}{u}
\newcommand{\vv}{v}
\DeclareMathOperator{\Irr}{Irr}
\DeclareMathOperator{\Hom}{Hom}
\DeclareMathOperator{\End}{End}
\DeclareMathOperator{\Ker}{Ker}
\DeclareMathOperator{\Aut}{Aut}
\DeclareMathOperator{\Ind}{Ind}
\DeclareMathOperator{\Inf}{Inf}
\DeclareMathOperator{\GL}{GL}
\DeclareMathOperator{\Rep}{\mathrm{Rep}}
\DeclareMathOperator{\res}{res}
\DeclareMathOperator{\Span}{Span}
\DeclareMathOperator{\Mod}{Mod}
\DeclareMathOperator{\ev}{ev}
\DeclareMathOperator{\coev}{coev}
\DeclareMathOperator{\Fr}{Fr}
\newcommand{\one}{\mathbf{1}}
\newcommand{\cA}{\mathcal{A}}  	
\newcommand{\cJ}{\mathcal{J}}  	
\newcommand{\lmod}{{\operatorname{-Mod}}}
\newcommand{\U}{\mathfrak{U}}
\renewcommand{\o}{\mathfrak o}	   
\newcommand{\f}{\mathfrak f}	   
\newcommand{\p}{\mathfrak p}	           
\renewcommand{\l}{\ell}                         
\DeclareMathOperator{\tr}{Tr}  	 
\newcommand{\Ow}{\mathfrak o}	  
\newcommand{\cF}{\mathcal{F}}     
\newcommand{\wt}{\widetilde}
\newcommand{\la}{\lambda}
\newcommand{\Id}{\text{Id}}
\newcommand{\Fun}{\text{Fun}}
\renewcommand{\Im}{\text{Im}}
\newcommand{\Tr}{\text{Tr}}
\newcommand{\D}{\mathcal{D}}
\newcommand{\Ss}{\mathbb{S}}
\newcommand{\parti}{\vdash}
\newcommand{\M}{\text{M}}
\newcommand{\stab}{\text{Stab}}
\newcommand{\rank}{\text{rank}}
\newcommand{\Supp}{\text{Supp}}
\newcommand{\Inj}{\text{Inj}}
\newcommand{\Gr}{\text{Gr}}
\renewcommand{\leqq}{\preccurlyeq}
\keywords{{Representations of automorphism groups, Reductive groups over local rings, Tensor categories}}
\subjclass[2010]{Primary 22E50; Secondary 20G25, 20C33, 20C15, 20C07, 18M05}
\begin{document}

	\title[Functor morphing  and representations of automorphism groups of modules]{Functor morphing  and representations of automorphism groups of modules}

	\date{\today} 
	

\author{Tyrone Crisp}
\address{Department of Mathematics and Statistics, University of Maine, Orono, ME 04469-5752, USA}
  \email{tyrone.crisp@maine.edu}

\author{Ehud Meir}
\address{Institute of Mathematics, University of Aberdeen, Fraser Noble Building, Aberdeen AB24 3UE, UK}
  \email{meirehud@gmail.com}

\author{Uri Onn}
\address{Mathematical Sciences Institute, The Australian National University, Canberra, Australia}
  \email{uri.onn@anu.edu.au}

	\begin{abstract}  We introduce a strategy to study irreducible representations of automorphism groups of finite modules over finite rings. We prove that these automorphism groups fit in a hierarchy that facilitates a stratification of their irreducible representations in terms of smaller building blocks using symmetric monoidal categories and invariant theory. 
	\end{abstract}
	\maketitle
	\setcounter{tocdepth}{1} \tableofcontents{}

\section{Introduction}





Representations of reductive groups over finite and local fields were studied extensively in the past 50 years with remarkable success. A common narrative featuring in both cases, albeit using different methods, is the {\em Harish-Chandra philosophy of cusp forms}. In practice, parabolic induction and restriction functors facilitate efficient transport of representations across different groups between a reductive group and its Levi subgroups, essentially reducing the study to two separate problems. The first of arithmetic nature, namely, finding the (super)cuspidal representations---those representations that do not occur in parabolically induced representations; and the second of combinatorial nature---controlling induced representations.

\smallskip

In \cite{CMO1,CMO3, CMO2} 
functors that generalise parabolic induction and restriction were introduced to facilitate a similar inductive scheme for compact reductive groups. A prominent example is the family of general linear groups over the ring of integers $\o$ in a non-archimedean local field $\f$. 
The supercuspidal representations of~$\GL_n(\f)$ were constructed in~\cite{Bushnell-Kutzko} and can be described as compactly-induced representations from certain irreducible representations of $\GL_n(\o)$ called {\em types};  see~\cite{Paskunas}. The latter are inflated from finite congruence quotients $\GL_n(\o_\l)$, where $\p$ is the maximal ideal in $\o$ and $\o_\l=\o/\p^\l$. For example, the depth zero supercuspidals arise from the cuspidal representations of $\GL_n(\o_1)$. It is natural to ask 
\begin{enumerate}
    \item[(a)] How does the family of groups $\{ \GL_n(\o) \mid n \in \N\}$  fit into the philosophy of cusp forms? 
    \item[(b)] What can be said about {\em all} their irreducible representations?
    \item[(c)] What role do the types play in this context?
\end{enumerate}

 The variants of parabolic induction and restriction studied in \cite{CMO1,CMO3,CMO2} were introduced to address these questions. Conjecturally, these functors satisfy a Mackey-type formula similar to the known formulas for reductive groups over finite fields \cite[Ch.\ 5]{DM} and over non-archimedean local fields \cite{BZ_reductive}, but this conjectural formula has been proven so far only on the subcategory generated by {\em strongly cuspidal representations}; see \cite[Theorem D]{CMO3}. The aim of the present paper is to introduce a new perspective on the questions (a), (b), and (c) listed above which is, to a large extent, independent of and complementary to our earlier approach.

\smallskip

Given a field $E$, the family of groups $\Aut_E(E^n) \cong \GL_n(E)$ is closed under taking substructures: all automorphism groups of subspaces are already included. Replacing the field $E$ by a ring, say $\o_\l$, one encounters automorphism groups of arbitrary modules, not only free. Furthermore, the relative position of flags in $E^n$ is determined  combinatorially by elements of the symmetric group, whereas determining the relative position of flags of modules is a wild problem; see \cite{RingelSchmidmeier2006}. We work in the following more general set-up to allow flexibility. 

\smallskip

Let $R$ be a finite not-necessarily commutative $\o_\l$-algebra and let $M$ be a finite $R$-module.  Let~$K$ be an algebraically closed field of characteristic zero. Our aim is to study the irreducible $K$-linear representations of~$\Aut_R(M)$ systematically. Let $KM=\Span_k\{u_m \mid m \in M\}$ be the $K$-vector space with basis $M$. Then~$KM$ is the {\em tautological} representation of $\Aut_R(M)$ via the natural permutation action on basis elements. The importance of the representation $KM$ cannot be overestimated: first, it determines the group $\Aut_R(M)$, being its group of symmetries (see the discussion following Remark~\ref{rem:interpolation}); second, it is faithful and therefore tensor-generates the category $\Rep(\Aut_R(M))$ of $K$-linear representations of $\Aut_R(M)$; and third, it carries a geometric flavour, which we heavily use as explained in \S\ref{sec:alg.structures}.

\subsection{Main results} 
 Let $M=M_1^{a_1} \oplus \cdots \oplus M_n^{a_n}$, where $M_1, \ldots, M_n$ are non-isomorphic indecomposable $R$-modules and $a_i \in \N$. Let $\C=\langle M_1,\ldots, M_n\rangle$ be the full subcategory of $R\lmod$ of all left $R$-modules that can be written in the form $\bigoplus_i M_i^{b_i}$ for some $b_i\in\N_0 = \N \cup \{0\}$. Since the group $\Aut_R(M)$ is finite and the representation $KM$ is faithful, every irreducible representation $V$ of $\Aut_R(M)$ appears in some tensor power $KM^{\ot n}$ of $KM$ \cite[Problem 2.37]{Fulton-Harris}. The smallest $n$ for which this is true is called the {\em tensor rank} of~$V$. Labelling representations by their tensor rank already gives a stratification on the representation theory of $\Aut_R(M)$. We use ideas from invariant theory and symmetric monoidal categories to show that this stratification has a natural refinement, which we explain now. 
 
\smallskip

Let $\Cp=\Fun(\C, \o_\l\lmod)$ be the category of $\o_\l$-linear functors from $\C$ to $\o_\l\lmod$. For every $F\in \Cp$ we have the natural representation $KF(M) = \Span_K\{u_m \mid m\in F(M)\}$ of $\Aut_R(M)$. 
For $F,G\in \Cp$ we write $F\preccurlyeq G$ if $F$ is isomorphic to a subquotient of $G$, and $F\prec G$ if $F$ is isomorphic to a proper subquotient of $G$. In Section \ref{sec:stratification} we prove the following:
\begin{thmABC}\label{thm:main1} Let $V$ be an irreducible representation of $\Aut_R(M)$. 
There is a unique (up to isomorphism) functor $F \in \Cp$ such that $\Hom_{\Aut_R(M)}(KF(M),V)\neq 0$ and $\Hom_{\Aut_R(M)}(KG(M),V)=0$ for all $G\prec F$. 
\end{thmABC}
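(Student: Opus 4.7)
The plan is to prove existence of a minimal $F$ by combining tensor-generation with a descending-chain argument in $\Cp$, then to argue that any minimal $F$ is necessarily cyclic, and finally to derive uniqueness via the Yoneda parameterization of cyclic functors.

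For existence: since $KM$ tensor-generates $\Rep(\Aut_R(M))$, the irreducible $V$ embeds in some tensor power $KM^{\ot n}$. Identifying $KM^{\ot n}$ with $K[M^n] = KF_0(M)$ for $F_0(N) := N^{\oplus n} \in \Cp$ (the $n$-fold direct sum of the inclusion $\C \hookrightarrow \o/\p^\ell\lmod$) shows that the set $\mathcal{F}(V) := \{F \in \Cp : \Hom_{\Aut_R(M)}(KF(M), V) \neq 0\}$ is non-empty. The evaluation $F \mapsto F(M_1 \oplus \cdots \oplus M_n)$ identifies $\Cp$ with finite modules over the finite ring $\End_\C(\bigoplus_i M_i)$, so objects of $\Cp$ have finite length, and length strictly decreases under $\prec$. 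Hence $\mathcal{F}(V)$ admits a $\preccurlyeq$-minimal element $F$. To pin down the structure of such $F$, decompose $F(M)$ into $\Aut_R(M)$-orbits $O$, so that Frobenius reciprocity yields
$$\Hom_{\Aut_R(M)}(KF(M),V) \cong \bigoplus_{O} V^{G_O},$$
where $G_O$ is the stabilizer of a point in $O$. Minimality forces some orbit $O$ with $V^{G_O} \neq 0$; picking $m \in O$, the Yoneda natural transformation $\psi_m : h^M \to F$ (with $h^M := \Hom_\C(M,-) \in \Cp$) has image $F^m \subseteq F$ containing $O$, so $F^m \in \mathcal{F}(V)$, and by minimality $F = F^m$. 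Thus $F$ is cyclic: $F \cong h^M/K$ for some sub-functor $K \subseteq h^M$.

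For uniqueness, suppose $F_1 \cong h^M/K_1$ and $F_2 \cong h^M/K_2$ are both minimal in $\mathcal{F}(V)$, with generators $m_i \in F_i(M)$ whose stabilizers satisfy $V^{G_{m_i}} \neq 0$. The plan is to use the embeddings $V \hookrightarrow KF_i(M)$ to exhibit a common subquotient of $F_1$ and $F_2$ lying in $\mathcal{F}(V)$, whence by minimality of each it must be isomorphic to both. Concretely, one analyses $\Hom_{\Aut_R(M)}(KF_1(M), KF_2(M))$ in terms of $\Aut_R(M)$-orbits on $F_1(M) \times F_2(M)$, and uses the composite $KF_1(M) \twoheadrightarrow V \hookrightarrow KF_2(M)$ to single out a distinguished orbit which, via Yoneda, should correspond to a natural transformation in $\Cp$. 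The main obstacle will be verifying that this equivariant data is rigid enough to force the kernel sub-functors $K_1, K_2 \subseteq h^M$ (equivalently the left ideals $K_i(M) \subseteq \End_R(M)$ annihilating the generators $m_i$) to coincide; I expect this rigidity to follow from a careful comparison of the orbit/stabilizer contributions to the Frobenius decomposition on both sides, exploiting minimality of $F_1$ and $F_2$ to rule out any proper subquotient of the candidate common functor lying in $\mathcal{F}(V)$.
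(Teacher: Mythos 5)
Your existence argument is sound and essentially matches the paper's: embed $V$ into a tensor power $KM^{\ot n}\cong K\Fr_n(M)$, note that objects of $\Cp$ have finite length (via the equivalence with modules over $\End_\C(\bigoplus_i M_i)$), and take a $\preccurlyeq$-minimal $F$ with $\Hom_{\Aut_R(M)}(KF(M),V)\ne 0$. Your extra observation that any minimal $F$ must be cyclic, i.e.\ a quotient of $h^M=\Hom_\C(M,-)$, is correct and does appear later in the paper (Section~\ref{sec:p.order}, where it is sharpened to ``$F$ is a quotient of $\Hom_\C(X,-)$ for $X$ a direct summand of $M$'') but is not needed for Theorem~\ref{thm:main1} itself.

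The uniqueness part, however, has a genuine gap: you explicitly defer the decisive step (``I expect this rigidity to follow from a careful comparison of the orbit/stabilizer contributions'') and never prove it. The difficulty is real, and your framing of the intertwiner space $\Hom_{\Aut_R(M)}(KF_1(M),KF_2(M))$ via $\Aut_R(M)$-orbits on $F_1(M)\times F_2(M)$ misses the crucial structural fact the paper exploits: that this Hom-space is spanned not by arbitrary orbit sums but by the operators $T_H$ indexed by \emph{subfunctors} $H\subseteq F_1\oplus F_2$ (Proposition~\ref{prop:hom.space.basis}), which is a much smaller and functorially controlled set. Establishing that these suffice rests on the closed-orbit/constructible-morphism machinery of \S2 (the commutative cocommutative Hopf-algebra structure on $KM$ and the invariant-theoretic spanning theorem from \cite{meir21}). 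Once one has the $T_H$ spanning set, the paper proves a factorization result (Proposition~\ref{prop:largest.F}): any $T_H:KF_1(M)\to KF_2(M)$ either comes from an isomorphism $F_1\cong F_2$ or factors through $KF_3(M)$ for some $F_3$ strictly smaller than $F_1$ or $F_2$. The proof of that factorization uses an epi--mono decomposition of the two legs $H\to F_1$, $H\to F_2$ together with a push-out argument in the abelian category $\Cp$ (Lemma~\ref{lem:pushouts}). Uniqueness then follows by a short contradiction: a nonzero composite $V\hookrightarrow KF_1(M)\to KF_2(M)$ with a constructible second arrow would, after factorization, place $V$ inside $KF_3(M)$ with $F_3\prec F_1$ or $F_3\prec F_2$, contradicting minimality. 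Without the spanning result and the push-out lemma, the generic-orbit bookkeeping you outline does not have enough leverage to force $K_1=K_2$, so the uniqueness claim remains unproven in your write-up.
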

For each $F \in \Cp$ we write $\ol{KF(M)}$ for the largest subrepresentation of $KF(M)$ that has the property  ${\Hom_{\Aut_R(M)}(KG(M),\ol{KF(M)})=0}$ for every $G\prec F$. 
We will show that for every $F,G\in \Cp$, the space $\Hom_{\Aut_R(M)}(KF(M),KG(M))$ has a natural spanning set, which is a basis when $M$ large enough, indexed by subfunctors of $F\oplus G$. We then use this to prove the following epimorphism theorem:
\begin{thmABC}\label{thm:main2} Let $F\in \Cp$ and $M \in \mathrm{ob}(\C)$. Then there exists a canonical algebra epimorphism 
\[
\Phi_F(M):K\Aut_{\Cp}(F)\to \End(\ol{KF(M)}). 
\]
If $M$ is large enough then $\Phi_F(M)$ is also injective.   
\end{thmABC}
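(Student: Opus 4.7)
The plan is to construct $\Phi_F(M)$ from the functorial structure and then analyse its image and kernel using the spanning of $\Hom_{\Aut_R(M)}(KF(M),KG(M))$ by subfunctors of $F \oplus G$ recalled above.

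\emph{Construction.} A natural automorphism $\phi \in \Aut_{\Cp}(F)$ evaluates to an $\o/\p^\ell$-linear automorphism $\phi(M)$ of $F(M)$, which permutes the basis $\{u_m : m \in F(M)\}$ of $KF(M)$ and so defines an element of $\GL(KF(M))$ commuting with the $\Aut_R(M)$-action by naturality. Linear extension gives an algebra homomorphism $K\Aut_{\Cp}(F) \to \End_{\Aut_R(M)}(KF(M))$; because $\ol{KF(M)}$ is characterised by an $\Aut_R(M)$-equivariant Hom-condition and the two actions commute, $\ol{KF(M)}$ is invariant, and $\Phi_F(M)$ is obtained by restriction.

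\emph{Surjectivity.} Set $V = KF(M)$ and $\bar V = \ol{KF(M)}$. Choose an $\Aut_R(M)$-equivariant splitting $V = \bar V \oplus N$ (semisimplicity, $\mathrm{char}\,K = 0$) with inclusion $\iota$ and projection $P$; then $\mu(T) := P T \iota$ defines a surjection $\End_{\Aut_R(M)}(V) \twoheadrightarrow \End_{\Aut_R(M)}(\bar V)$. By the spanning statement the source is spanned by the operators $T_H$ indexed by subfunctors $H \leq F \oplus F$, so it suffices to prove that $\mu(T_H)$ lies in the image of $\Phi_F(M)$ for every such $H$. If $\pi_2(H) = G \prec F$, then $T_H$ has image in the $\Aut_R(M)$-subrepresentation $KG(M)$, and $\mu(T_H)$ factors through $\Hom_{\Aut_R(M)}(KG(M),\bar V) = 0$ by the defining property of $\bar V$. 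If $\pi_1(H) = G \prec F$, then $T_H$ annihilates the $\Aut_R(M)$-subrepresentation spanned by $\{u_x : x \notin G(M)\}$, so $\mu(T_H)$ factors through $\Hom_{\Aut_R(M)}(\bar V, KG(M))$, which vanishes by the symmetry $\dim\Hom(A,B) = \dim\Hom(B,A)$ valid for finite-group representations in characteristic zero. The remaining case is $\pi_1(H) = \pi_2(H) = F$ with $H$ not a graph of an isomorphism: here I would use the two short exact sequences $0 \to H \cap (F \oplus 0) \to H \to F \to 0$ and $0 \to H \cap (0 \oplus F) \to H \to F \to 0$ coming from the projections to express $T_H$, modulo operators attached to subfunctors with strictly smaller projection (already killed by $\mu$), as a linear combination of the graph operators $T_{\mathrm{graph}(\phi)}$, $\phi \in \Aut_{\Cp}(F)$.

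\emph{Injectivity for large $M$ and main obstacle.} The basis statement, valid for $M$ large enough, says the assignment $H \mapsto T_H$ is injective, so in particular the graph operators $\{T_{\mathrm{graph}(\phi)} : \phi \in \Aut_{\Cp}(F)\}$ are linearly independent in $\End_{\Aut_R(M)}(V)$. Upgrading this to injectivity of $\Phi_F(M)$ amounts to showing that the action of $K\Aut_{\Cp}(F)$ on $\bar V$ is faithful, i.e.\ that every simple $K\Aut_{\Cp}(F)$-module appears in $\bar V$ once $M$ is large. This is a stabilisation argument, combining the surjectivity just established with Theorem~\ref{thm:main1} to count simple components of $\bar V$ under the joint $\Aut_R(M)\times\Aut_{\Cp}(F)$-action. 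The hardest step will be the last case of the surjectivity analysis — the doubly surjective subfunctors that are not graphs of automorphisms; writing down an explicit decomposition of their $T_H$'s modulo lower-projection operators is where the categorical structure of the abelian category $\Cp$ does the substantive work.
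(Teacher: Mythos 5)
Your construction of $\Phi_F(M)$ and the reduction of surjectivity to showing $\mu(T_H)\in\Image\Phi_F(M)$ for every $H\subseteq F\oplus F$ are both correct, and the first two cases of your analysis (one of the two projections $H\to F$ not surjective) are fine: the factorization through $KG(M)$ with $G\prec F$, together with the semisimplicity symmetry $\Hom(A,B)\cong\Hom(B,A)$, kills $\mu(T_H)$. However, the third case — both projections surjective but $H$ not the graph of an automorphism — is where your argument is a genuine gap, and it is exactly where the paper's key lemma lives. The short exact sequences $0\to H\cap(F\oplus 0)\to H\to F\to 0$ and $0\to H\cap(0\oplus F)\to H\to F\to 0$ by themselves do not produce a decomposition of $T_H$ into graph operators modulo lower terms, and in fact no such decomposition is needed: the correct statement is a \emph{dichotomy}, obtained from Lemma~\ref{lem:pushouts}. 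Writing $\alpha,\beta:H\to F$ for the two surjective projections and forming the push-out $G$, one gets $T_H=\beta_*\alpha^*=\gamma^*\delta_*$ with $\gamma,\delta:F\twoheadrightarrow G$ surjective. If $G\cong F$ then $\gamma,\delta$ are isomorphisms, forcing $\ker\alpha=\ker\beta=0$ (as $H\hookrightarrow F\oplus F$ implies $\ker\alpha\cap\ker\beta=0$), so $H$ is in fact the graph of an automorphism — the case you tried to exclude cannot occur. If $G\not\cong F$ then $G\prec F$ and $T_H$ factors through $KG(M)$, so $\mu(T_H)=0$. This is the content of Proposition~\ref{prop:largest.F}, and it is exactly the ``substantive categorical work'' you correctly flag but do not supply.

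Your injectivity step is also only a plan, not a proof. The paper's route is simpler and avoids the joint $\Aut_R(M)\times\Aut_{\Cp}(F)$-module bookkeeping you propose: once $M$ is large enough that $\{T_H\}_{H\subseteq F\oplus F}$ is linearly independent (Proposition~\ref{prop:hom.space.basis}), partition this basis into $B_1$ (those $T_H$ that factor through some $KG(M)$, $G\prec F$) and $B_2$ (graph operators). The restriction map $p:\End_{\Aut_R(M)}(KF(M))\to\End_{\Aut_R(M)}(\ol{KF(M)})$ has $\Ker p=\Span B_1$, so $p$ is injective on $\Span B_2$; since $p(B_2)$ is precisely the image of the group elements $u_\phi$, $\phi\in\Aut_{\Cp}(F)$, under $\Phi_F(M)$, injectivity follows at once. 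You should replace your ``count simple components'' plan by this linear-algebra argument, and replace your SES manoeuvre in case 3 by the push-out dichotomy.
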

Theorem~\ref{thm:main1} associates to every irreducible representation $V$ of $\Aut_R(M)$ a functor $F\in \Cp$ up to isomorphism. Theorem~\ref{thm:main2} allows us to further associate with $V$ an irreducible representation $\wt{V}$ of $\Aut_{\Cp}(F)$ by means of the Wedderburn Theorem (see Subsection \ref{subsec:con.epi}). 
\begin{def*}[Functor morphing]
    We call the assignment $(M,V)\leadsto (F,\wt{V})$ the functor morphing  of $(M,V)$. 
\end{def*}
This in particular gives us a stratification 
\[
\Rep(\Aut_R(M)) = \bigoplus_{F\in \mathrm{ob}(\Cp)/\cong} \Rep(\Aut_R(M))_F,
\]
where $\Rep(\Aut_R(M))_F$ is the subcategory generated by representations coming from the $F$-layer, namely, those with functor morphing of the form $(F,\wt{V})$. These are exactly direct sums of the irreducible constituents of $\ol{KF(M)}$. By Theorem \ref{thm:main2}, $\Rep(\Aut_R(M))_F$ can be identified with a full subcategory of $\Aut_{\Cp}(F)$; see \S\ref{sec:stratification}.

\smallskip

To each irreducible representation $V$ of $\Aut_R(M)$ we associate its {\em degree} $$\mathrm{deg}(M,V)=(\dim V, |\Aut_R(M)|,\Supp(M))\in \N^3,$$ where $\Supp(M)$ stands for the number of isomorphism classes of indecomposable direct summands of $M$. Assume that $(M,V)\leadsto (F,\wt{V})$. 
We can also assign a degree to $(F,\wt{V})$, namely $(\dim\wt{V}, |\Aut_{\Cp}(F)|, \Supp(F))$, where $\Supp(F)$ is well-defined because $\Cp$ is an abelian category in which the Krull-Schmidt Theorem holds, since it is equivalent to the category of finitely generated modules over a finite ring; see Appendix \ref{appendix}. 

By Yoneda's Lemma, we know that the functor $\C^{op}\to \Cp$ given by $M\mapsto F_M := \Hom_R(M,-)$ is a full embedding. In particular, we have an isomorphism $\Aut_R(M)\cong \Aut_{\Cp}(F_M)$. We have the following result:

\begin{thmABC}\label{thm:main3}
If $(M,V)\leadsto (F,\wt{V})$ then 
$\mathrm{deg}(M,V) \geq \mathrm{deg}(F,\wt{V})$ in the lexicographical order. If equality holds then necessarily $F\cong F_M$ and $V\cong \wt{V}$ as $\Aut_R(M) \cong \Aut_{\Cp}(F_M)$-representations. 
\end{thmABC}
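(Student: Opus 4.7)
The plan is to compare the three coordinates of $\mathrm{deg}(M,V)$ and $\mathrm{deg}(F,\wt{V})$ in lexicographic order. Theorem \ref{thm:main2} together with the Wedderburn--Artin double centralizer theorem applied to $\ol{KF(M)}$ identifies $\wt{V}$ with a simple module over the image algebra $\End_{\Aut_R(M)}(\ol{KF(M)})$ of $\Phi_F(M)$, of dimension equal to the multiplicity $m_V$ of $V$ in $\ol{KF(M)}$. Since $V$ has the characterizing property from Theorem \ref{thm:main1}, every copy of $V$ in the permutation representation $KF(M)$ already lies in $\ol{KF(M)}$, so $\dim\wt{V} = m_V$ equals the multiplicity of $V$ in $KF(M)$ as well.

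The crux of the argument is the first-coordinate inequality $\dim V \geq m_V$. The key observation is that any $\Aut_R(M)$-orbit $\Omega \subseteq F(M)$ whose generated subfunctor $G_\Omega \subseteq F$ is a proper subquotient of $F$ contributes only to $KG_\Omega(M)$, where $V$ does not appear by Theorem \ref{thm:main1}. Hence all copies of $V$ in $\ol{KF(M)}$ come from ``$F$-generic'' orbits, on which $\Aut_{\Cp}(F)$ acts compatibly with $\Aut_R(M)$ by evaluation of natural transformations at $M$. Combining the spanning set for $\Hom_{\Aut_R(M)}(KF(M), KF(M))$ indexed by subfunctors of $F \oplus F$ (described just before Theorem \ref{thm:main2}) with Frobenius reciprocity for the permutation representation on each generic orbit should yield $m_V \leq \dim V$. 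I expect this step to be the principal obstacle, since it requires careful tracking of how $V$ is distributed across the generic orbits.

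Assuming equality in the first coordinate, the second-coordinate inequality $|\Aut_{\Cp}(F)| \leq |\Aut_R(M)|$ follows by comparing $\Phi_F(M)$ with its counterpart for $F_M$ (where, via the Yoneda isomorphism $\Aut_{\Cp}(F_M) \cong \Aut_R(M)$, the latter becomes the canonical map $K\Aut_R(M) \to \End_{\Aut_R(M)}(\ol{KF_M(M)})$). The third-coordinate inequality $\Supp(F) \leq \Supp(M)$ follows from the Krull--Schmidt structure of $\Cp$ together with the fact that the indecomposable summands of $F$ in $\Cp$ are controlled by the indecomposable summands of $M$ in $\C$ that carry the tautological representation supporting $V$, via the Yoneda identification $\Supp(M) = \Supp(F_M)$. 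In the equality case, these constraints together with Theorem \ref{thm:main1} rigidify $F$ to be $F_M$, and the final isomorphism $V \cong \wt{V}$ follows from the regular-representation decomposition $K\Aut_R(M) = \bigoplus_i V_i \otimes V_i^*$ inside $K\End_R(M) = KF_M(M)$, which matches $V$ with $\wt{V}$ after the Yoneda identification $\Aut_R(M) \cong \Aut_{\Cp}(F_M)$.
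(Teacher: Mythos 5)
Your proposal does not follow the paper's route, and in the places where it departs it leaves genuine gaps that would not be straightforward to fill.

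For the first coordinate, you correctly recognize that $\dim\wt{V}$ is the multiplicity of $V$ in $\ol{KF(M)}$ (equivalently in $KF(M)$), and you correctly observe that orbits generating proper subquotients of $F$ cannot carry $V$. But you stop short of the key conclusion: these two facts together show there is a \emph{single} orbit in $F(M)$, the ``generic'' one, whose stabiliser $H$ satisfies $V^H\neq 0$, and then Frobenius reciprocity gives $\wt V\cong V^H$ directly, making $\dim\wt V\le \dim V$ immediate rather than an obstacle. The route through the spanning set indexed by subfunctors of $F\oplus F$ is not needed here, and it is unclear it would deliver the inequality.

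For the second coordinate, comparing $\Phi_F(M)$ with $\Phi_{F_M}(M)$ does not, on its own, yield $|\Aut_{\Cp}(F)|\le|\Aut_R(M)|$: surjectivity of $\Phi_F(M)$ bounds $\dim\End(\ol{KF(M)})$ but gives no control on the order of $\Aut_{\Cp}(F)$, and injectivity only holds for large $M$. The mechanism the paper uses is missing from your proposal: take a minimal projective presentation $\Hom_{\C}(Y,-)\to\Hom_{\C}(X,-)\to F\to 0$ in $\Cp$; show that the generic point forces $X$ to be a direct summand of $M$; and realize $\Aut_{\Cp}(F)$ as a subquotient $A_F/B_F$ of $\Aut_{\C}(X)\hookrightarrow\Aut_R(M)$. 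This projective-cover apparatus is the engine for both the second-coordinate inequality and, more importantly, the equality analysis: equality forces $B_F$ trivial and then $Y=0$ and $X=M$, after which $F\cong F_M$ and $\wt V\cong V$ follow, as does the third-coordinate statement (with $\Supp(F)=\Supp(X)$). Your equality-case argument (``these constraints together with Theorem \ref{thm:main1} rigidify $F$ to $F_M$'') is asserting exactly what needs to be proved and has no substitute for this structural analysis of the presentation of $F$.
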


If equality holds in Theorem~\ref{thm:main3} then we say that the pair $(M,V)$ admits \emph{trivial functor morphing}. We show that admitting trivial functor morphing is a very restrictive property. 
To do so, we introduce in~\S\ref{sec:graph} the left maximal graph $LM(M_1,\ldots, M_n)$ for a tuple of indecomposable $R$-modules. For a general finite $\o_\l$-algebra~$R$ this graph can be quite complicated. However, in case of a trivial functor morphing we have the following result:
\begin{thmABC}\label{thm:main4}
Let $V\in \Irr(\Aut_R(M))$, where $M=M_1^{a_1}\oplus\cdots\oplus M_n^{a_n}$ with $M_i$ non-isomorphic indecomposable modules and $a_i>0$.
Assume that $V$ admits trivial functor morphing. Then one of the following holds:
\begin{enumerate}
    \item The module $M$ has a non-trivial splitting $M=M'\oplus M''$ such that {${\Aut_R(M) =\Aut_{R}(M')\times \Aut_{R}(M'')}$}.
    \item The representation $V$ is induced from a proper subgroup of $\Aut_R(M)$ that is isomorphic to $\Aut_{\wt{R}}(\wt{M})$, for explicitly constructed  ring $\wt{R}$  and an $\wt{R}$-module $\wt{M}$. 
    \item The graph $LM(M_1,\ldots, M_n)$ is a disjoint union of singletons with single self-edges. 
\end{enumerate} 
\end{thmABC}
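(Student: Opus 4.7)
The plan is to analyze $V$ as an irreducible constituent of the $\Aut_R(M)$-module $\overline{K\End_R(M)}$ and read off the trichotomy from the structure of $LM(M_1,\ldots,M_n)$. By Theorem~\ref{thm:main3}, trivial functor morphing forces $F\cong F_M$ and $V\cong \wt V$, so $V$ sits inside $\overline{KF_M(M)}=\overline{K\End_R(M)}$ as provided by Theorem~\ref{thm:main1}.

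The first step is to make precise how the bar operation removes the contributions of proper subfunctors $G\prec F_M$. By Yoneda, every quotient $\pi:M\twoheadrightarrow M'$ in $\C$ yields a subfunctor $F_{M'}\hookrightarrow F_M$ via precomposition with $\pi$, and more general subfunctors assemble from such data via kernel/image computations in the abelian category $\Cp$. Consequently, $\overline{K\End_R(M)}$ is spanned as an $\Aut_R(M)$-module by the classes of endomorphisms that are \emph{left maximal}, i.e.\ do not factor through any proper quotient of $M$ inside $\C$. Block by block, these classes are precisely the edges of the left maximal graph $LM(M_1,\ldots,M_n)$ constructed in \S\ref{sec:graph}.

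With this identification in hand, I split into cases on the structure of $LM$. If $LM$ is disconnected, say the vertex set $\{1,\ldots,n\}$ partitions as $S'\sqcup S''$ with no edges between the parts, set $M'=\bigoplus_{i\in S'}M_i^{a_i}$ and $M''=\bigoplus_{i\in S''}M_i^{a_i}$. A short induction using Krull--Schmidt and the closure of $\C$ under summands shows that the absence of left-maximal cross morphisms forces $\Hom_R(M_i,M_j)=0$ for all $i,j$ in different components, whence $\Aut_R(M)=\Aut_R(M')\times\Aut_R(M'')$, giving case~(1). If on the other hand $LM$ is connected and some vertex carries an edge beyond a single self-loop, choose a representative left-maximal morphism $\phi$ associated to that extra edge. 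Its $\Aut_R(M)$-orbit contributes a non-trivial isotypic piece to $\overline{K\End_R(M)}$, and a Mackey/Clifford analysis identifies the stabilizer of the class of $\phi$ with an automorphism group $\Aut_{\wt R}(\wt M)$ for an explicit ring $\wt R$ and $\wt R$-module $\wt M$ built from the sub- and quotient-object structure pinned down by $\phi$. Frobenius reciprocity then exhibits $V$ as induced from an irreducible representation of this proper subgroup, giving case~(2). If neither alternative holds, every vertex has a single self-loop and no other edges, which is case~(3).

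The main obstacle is the explicit construction of $(\wt R,\wt M)$ in case~(2) and the verification that $V$ is genuinely induced from $\Aut_{\wt R}(\wt M)$. This requires combining the Hom-basis produced in Theorem~\ref{thm:main2} with a careful analysis of stabilizers inside $\Aut_R(M)$, and checking that trivial functor morphing for $V$ translates into the correct support condition on the subgroup side so that the induced representation remains irreducible. A secondary subtlety lies in case~(1): one must verify that \emph{every} cross-homomorphism between $M'$ and $M''$ (not merely the left-maximal ones) is forced to vanish, which is where the well-founded reduction along proper quotients, guaranteed by finiteness of the objects of $\C$, enters the argument.
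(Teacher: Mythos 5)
Your proposal does not follow the paper's proof, and the divergence exposes genuine gaps.

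The central issue is that you never invoke the mechanism through which the ``trivial functor morphing'' hypothesis actually constrains the graph. In the paper, this hypothesis enters via the normal abelian subgroups $N_i$ built from left-maximal morphisms: Theorem~\ref{thm:degenerate} shows that if any character of $N_i$ appearing in $V$ is degenerate (rank $<D_i$), then $V$ has a non-trivial functor morphing, and Corollary~\ref{cor:valency.ineq} derives from this the valency inequality $\sum_{\{e:t(e)=i\}} a_{s(e)}\leq a_i$. It is this inequality that forces the graph to be a disjoint union of cycles with $a_i$ constant on each cycle (Proposition~\ref{prop:circles}) and that pins down the ranks of the characters used in the Clifford reductions. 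Your argument uses trivial functor morphing only to place $V$ inside $\ol{KF_M(M)}$; this alone does not yield the cycle structure, the constancy of the $a_i$, nor the full-rank condition that makes the stabilizers of the $N_i$-characters computable. Without that input there is no way to conclude anything about the shape of $LM$ or to carry out the Mackey/Clifford step.

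There are further problems. (i) Your assertion that $\ol{K\End_R(M)}$ is ``spanned by classes of left-maximal endomorphisms'' is not justified and does not match how the paper describes $\ol{KF(M)}$ (as the maximal subrepresentation receiving no maps from $KG(M)$ for $G\prec F$); it is not used in the paper's argument. (ii) Your case split by connectivity of $LM$ is different from the paper's split (``some vertex has no outgoing edge'' vs.\ ``every vertex has an outgoing edge''), and your disconnected case rests on the unproven claim that the absence of left-maximal cross-morphisms forces $\Hom_R(M_i,M_j)=0$ between components. The paper only proves a special case of this (when a vertex has \emph{no} outgoing edges at all, using nilpotency of the Jacobson radical); the general claim would require a separate argument that your ``short induction using Krull--Schmidt'' does not supply. (iii) The ring $\wt R$ in case~(2) is constructed differently in the two sub-cases: for a vertex with no outgoing edges but positive in-degree the paper takes $\wt R=R$ and $\wt M$ a smaller direct summand, whereas for cycles of length $>1$ it takes $\wt R=R\oplus R\ot_\Z R$ with a new square-zero generator acting via the chosen left-maximal morphisms along the cycle; your sketch does not recover either construction. (iv) Your final alternative (``connected, only single self-loops'') forces $n=1$, whereas the theorem's case~(3) allows $n>1$ and a disconnected graph; your trichotomy therefore does not line up with the statement, and the overall argument as written does not cover, e.g., a connected graph with one vertex of zero out-degree receiving edges from others, which the paper handles in \S\ref{subsec:no.out.arrows} and which leads to case~(2) there, not case~(1).
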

\begin{remark}
    If the second condition of the above theorem holds then even more is true. The full subcategory of representations that admit trivial functor morphing can be shown, by Clifford theory, to be equivalent to a subcategory of $\Rep(\Aut_{\wt{R}}(\wt{M}))$. 
\end{remark}











\subsection{Context and related works} The thrust of the present paper is to introduce methods from invariant theory and monoidal categories to study the representation theory of automorphism groups of modules. It generalises, contextualises and complements several results and notions from the representation theory of automorphism groups of finite $\o_\l$-modules \cite{Bader-Onn, Onn, GurevichHowe2021}; at the same time, it fits into a more general framework in tensor categories \cite{Deligne-St, Knop2007, Knop2022, meir21}.

\subsubsection{Representation theory} Theorem~\ref{thm:main1} gives a stratification of the irreducible representations of the automorphism groups of finite $R$-modules. In the special case $R=\F_q$, a finite field, and $M=\F_q^n$, this filtration coincides with the tensor rank filtration studied by Gurevich--Howe in~\cite{GurevichHowe2021}; see \S\ref{ex:glnfq} for more details, along with the connection to Harish-Chandra multiplication and Zelevinsky's PSH algebra \cite{Zelevinsky}. 

For $R=\o_\l$ and $M=\o_\l^n$, a shadow of this filtration was observed in~\cite{Bader-Onn}, where a filtration on a specific representation (the Grassmann representation) of $\GL_n(\o_\l)=\Aut_{\o_\l}(\o_\l^n)$ was introduced. Indeed, the representation $KX$, with $X=\left\{N \leq \o_\l^n \mid N \cong \o_\l^m\right\}$, decomposes according to isomorphism types $\lambda$ of submodules of~$\o_\l^m$. These correspond to hom-functors of the form $\Hom(M_\lambda,-)$ in the terminology of the present paper.  

More generally, the irreducible representations of $\GL_n(\o_\l)$ can be partitioned to families according to orbits of characters of the smallest principal congruence subgroup $I+M_n(\p^{\l-1}/\p^\l) \cong (M_n(\F_q),+)$. Their study can be reduced to nilpotent orbits; see \cite{CMO3,Hill_Jord}. These so-called nilpotent representations admit functor morphing  to representations of $\Aut_{\o_\l}(M_\la)$ with $M_\lambda < \o_\l^n$; see \S\ref{subsec:Grassmann}. 


\subsubsection{Tensor categories} 

The approach we take in this paper to $\Rep\left(\Aut_R(M)\right)$ follows the construction of symmetric monoidal categories done by the second author in \cite{meir21}. This construction generalises the construction of Deligne of the category $\Rep(S_t)$ from \cite{Deligne-St}.  Another approach to construction of symmetric monoidal categories was given by Knop (see \cite{Knop2007} and \cite{Knop2022}). Knop's construction relies on a regular category $\cA$ in which every object has finitely many subobjects, and a degree function $\delta: C(\cA)\to K$, where $C(\cA)$ is the class of all surjective morphisms in $\cA$. He then constructs a $K$-linear category $\mathcal{T}(\cA,\delta)$, in which the objects are the objects of $\cA$ and the morphisms are linear combinations of correspondences (subobjects of the direct product), and considers possible tensor ideals and Tannakian quotients of $\mathcal{T}(\cA,\delta)$. If one takes $\cA = \Cp$ and the degree function $\delta$ to be $\delta(r:F\to G) = |\ker(r(M):F(M)\to G(M))|$ then the maximal tensor ideal $\mathcal{N}$ of $\mathcal{T}(\cA,\delta)$ will satisfy that $\mathcal{T}(\cA,\delta)/\mathcal{N}\cong \Rep(\Aut_R(M))$. This follows from Theorem 9.8. in \cite{Knop2007}, where we take the functor $P$ to be $P(F) = F(M)\in \mathrm{Set}$. Theorem~\ref{thm:epimorphism} then becomes a consequence of Corollary 5.3 in \cite{Knop2007}. In this paper we follow the construction from \cite{meir21} as it arises in a more natural way in our context and amenable to a self-contained analysis. We study the representation theory of $\Aut_R(M)$, and we therefore start with the tautological representation $KM$ and a generating set of structure tensors. 

\subsection*{Acknowledgments}  
The first and second authors were partly supported by the Danish National Research Foundation through the Centre for Symmetry and Deformation (DNRF92). 
The first author was also supported by fellowships from the Max Planck Institute for Mathematics in Bonn, and from the Radboud Excellence Initiative at Radboud University Nijmegen.
The second author was also supported by the Research Training Group 1670 ``Mathematics Inspired by String Theory and Quantum Field Theory.'' 
The second author thanks the Mathematical Sciences Institute at the ANU for funding his visit to the ANU to work on this project. The third author was supported by the Australian Research Council (ARC FT160100018) and was supported in part at the Technion by a fellowship from the Lady Davis Foundation. The first author gratefully acknowledges support from the American Mathematical Society and the Simons Foundation.

\section{Automorphism groups of finite modules as algebraic structures over $K$}\label{sec:alg.structures}
Throughout $\o$ denotes a complete discrete valuation ring with uniformiser $\pi$ and finite residue field of cardinality~$q$. For $\l \in \N$, we write $\o_\l=\o/\pi^\l$.
Let $R$ be a finite $\Ow_\l$-algebra and let $M$ be a finite $R$-module. Let $K$ be an algebraically closed field of characteristic zero.
In this paper we propose a strategy to study the representation theory of $\Aut_R(M)$ over~$K$.
We will do so by considering the category $\Rep(\Aut_R(M))$ as a symmetric monoidal category. 
We usually think of this category as being constructed from the top down; we start with the category of vector spaces, and we then consider only those vector spaces that are equipped with a linear action of $\Aut_R(M)$. In this paper we will concentrate on a construction of the category $\Rep(\Aut_R(M))$ from the bottom up by using the language of algebraic structures, as we shall explain next. 

\subsection{Intertwiners and Constructible maps}
Let $W$ be a $K$-vector space equipped with a finite collection of linear maps, called structure tensors $x_i:W^{\ot q_i}\to W^{\ot p_i}$, $i\in I$. The case $p_i=0$ or $q_i=0$ is possible, where $W^{\ot 0} = K$, the ground field.  
The tuple $((p_i,q_i))\in \N^{2|I|}$ is called the \textit{type} of $W$. 
For every permutation $\sigma\in S_n$ we write 
\begin{equation}
L_{\sigma}^{(n)}:W^{\ot n}\to W^{\ot n}, \qquad 
w_1\ot\cdots\ot w_n\mapsto w_{\sigma^{-1}(1)}\ot\cdots\ot w_{\sigma^{-1}(n)},
\end{equation}
and
\begin{equation}
\begin{split}
 &\ev:W^*\ot W \to K, \qquad  f\ot v\mapsto f(v)  \\
 &\coev:K \to W\ot W^*, \qquad \coev(1) = \sum_i e_i\ot e^i,
 \end{split}
 \end{equation}
where $\{e_i\}$ is a basis for $W$ and $\{e^i\}$ is the dual basis of $W^*$.

\begin{definition}
The set of {\em constructible morphisms} $\mathrm{Const}(W,\{x_i\})\subseteq \bigsqcup_{p,q}\Hom(W^{\ot q},W^{\ot p})$ is the smallest collection of linear maps that contains the identity map on $W$ and the structure tensors, is closed under tensor products, tensor permutations, and evaluations. Explicitly, $\mathrm{Const}(W,\{x_i\})$ is the smallest collection that satisfies the following conditions:
\begin{enumerate}
\item $x_i:W^{\ot q_i}\to W^{\ot p_i}$ for $i\in I$, and $1_W:W\to W$ are constructible. 
\item If $x: W^{\ot a}\to W^{\ot b}$ and $y:W^{\ot c}\to W^{\ot d}$ are constructible, so is $x\ot y:W^{\ot (a+c)}\to W^{\ot (b+d)}$. 
\item If $x:W^{\ot a}\to W^{\ot b}$ is constructible, and $\sigma\in S_a$ and $\tau\in S_b$ are permutations, then $L^{(b)}_{\tau}xL^{(a)}_{\sigma}$ is  constructible. 
\item If $x:W^{\ot a}\to W^{\ot b}$ is constructible, then so is the map $W^{\ot (a-1)} \rightarrow W^{\ot (b-1)}$ given by 
\[
\begin{split}
W^{\ot (a-1)}\xrightarrow{1\ot \coev_W} &W^{\ot a}\ot  W^*\xrightarrow{x\ot 1}W^{\ot b}\ot W^* = \\ 
&=W^{\ot (b-1)}\ot W\ot W^*\xrightarrow{1\ot c_{W,W^*}} W^{\ot (b-1)}\ot W^*\ot W\xrightarrow{1\ot \ev} W^{\ot (b-1)},
\end{split}
\]
where $c_{W,W^*}:W\ot W^*\to W^*\ot W$ is given by $w\ot f\mapsto f\ot w$.  
\end{enumerate}
\end{definition}
\begin{remark}
    By abuse of notations, we will also refer to non-zero scalar multiples of constructible morphisms as constructible morphisms. 
\end{remark}

\begin{definition} Let $W$ be a finite dimensional $K$-vector space and let $x_i:W^{\ot q_i}\to W^{\ot p_i}$, $i\in I$, be a finite collection of structure tensors.
The group $\Aut(W,(x_i))$ is defined to be the group of all linear automorphisms $g$ of $W$ satisfying $g^{\ot p_i}x_i (g^{-1})^{\ot q_i} = x_i$ for all $i\in I$. 
\end{definition}
By fixing a basis of $W$, we can express the structure tensors using structure constants.  That allows us to think of the structure tensors as points in the affine space $\mathbb{A}^{N}$, where $N=\sum_i\dim(W)^{(p_i +q_i)}$. The group $\GL(W)$ acts on this space by a change of basis.
\begin{definition} We say that $(W,(x_i))$ has a closed orbit if the $\GL(W)$-orbit of $(x_i)$ in $\mathbb{A}^N$ is closed. 
\end{definition}

By the definition of constructible morphisms, it is easy to see that if $g\in \Aut(W,(x_i))$ then ${g^{\ot p}x (g^{-1})^{\ot q} = x}$ for every constructible $x:W^{\ot q}\to W^{\ot p}$. In particular, every constructible $x:W^{\ot q}\to W^{\ot p}$ is an $\Aut(W,(x_i))$-intertwiner.

\begin{theorem}[Theorem 1.2 in \cite{meir21}]\label{thm:of.udi} Assume that $(W,(x_i))$ has a closed orbit. Then the constructible elements in $\Hom(W^{\ot q}, W^{\ot p})$ span $\Hom_{\Aut(W,(x_i))}(W^{\ot q},W^{\ot p})$.
\end{theorem}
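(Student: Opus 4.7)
The plan is to reduce the statement to two classical results: Matsushima's theorem on stabilisers of closed orbits, and the First Fundamental Theorem (FFT) of invariant theory for $\GL(W)$. Throughout I set $G = \Aut(W,(x_i))$ and $V = \bigoplus_i \Hom(W^{\ot q_i}, W^{\ot p_i})$, so that $(x_i) \in V$ and $\GL(W)$ acts on $V$ by change of basis.

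First I would recast the claim using rigidity: the canonical isomorphism
\[
\Hom(W^{\ot q}, W^{\ot p}) \cong W^{\ot p} \ot (W^*)^{\ot q}
\]
carries $G$-equivariant maps to $G$-invariants and constructible morphisms to what I shall call constructible invariants, namely tensors in $W^{\ot p} \ot (W^*)^{\ot q}$ built from the $x_i$, identities, permutations, $\ev$, and $\coev$ via tensor product. Writing $U := W^{\ot p} \ot (W^*)^{\ot q}$, it therefore suffices to prove that the constructible invariants span $U^G$.

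Second, I would use the closed-orbit hypothesis to realise each $f \in U^G$ as the value at $(x_i)$ of a $\GL(W)$-equivariant polynomial map $\Phi : V \to U$. The orbit $\mathcal{O} := \GL(W) \cdot (x_i)$ is closed in the affine space $V$ and isomorphic to $\GL(W)/G$; since it is affine, Matsushima's theorem yields that $G$ is reductive. The orbit map $g \mapsto g \cdot f$ descends to a $\GL(W)$-equivariant morphism of varieties $\mathcal{O} \to U$, equivalently a $\GL(W)$-equivariant linear map $U^* \to K[\mathcal{O}]$. Because $\mathcal{O}$ is closed in $V$, restriction yields a surjection of rational $\GL(W)$-modules $K[V] \twoheadrightarrow K[\mathcal{O}]$ which splits by semisimplicity of rational $\GL(W)$-representations; lifting gives an equivariant $U^* \to K[V]$, i.e.\ a $\GL(W)$-equivariant polynomial map $\Phi : V \to U$ with $\Phi((x_i)) = f$.

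Third, I would decompose $\Phi$ via the FFT. Its graded pieces are $\GL(W)$-invariants in $\mathrm{Sym}^n(V^*) \ot U$, and these embed into invariants in a tensor power of $W$ and $W^*$. By the FFT for $\GL(W)$, such invariants are spanned by complete contractions that match $W$-factors to $W^*$-factors; interpreting such a contraction as a polynomial map $V \to U$ and then evaluating at $(x_i)$ produces, by construction, a constructible element of $U$. Summing these contributions expresses $f$ as a linear combination of constructible invariants, as required; finiteness is automatic since $\Phi$ is a polynomial map and $U$ is finite-dimensional, so only boundedly many graded pieces contribute. The main obstacle in practice will be matching the classical contractions produced by the FFT with the recursive construction (1)--(4) in the definition above: one must verify that every FFT matching diagram, after specialisation of the formal copies of $(x_i)$, can genuinely be assembled from identities, the $x_i$, tensor products, permutations, $\ev$, and $\coev$ in the prescribed way. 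The Matsushima and splitting ingredients are then standard once reductivity is in hand.
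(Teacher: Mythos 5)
The present paper quotes this theorem from \cite{meir21} without reproducing a proof, so there is no ``paper's own proof'' to compare against; I assess your argument on its own merits and against the strategy used in \cite{meir21}.

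Your overall route is the right one and, as far as I can tell, matches the argument in \cite{meir21}: translate $\Hom_{\Aut(W,(x_i))}(W^{\ot q},W^{\ot p})$ to invariants $U^{\Aut(W,(x_i))}$ via rigidity, extend a given invariant $f$ to a $\GL(W)$-equivariant polynomial map $\Phi:V\to U$ with $\Phi((x_i))=f$ using the closedness of the orbit, decompose $\Phi$ by the First Fundamental Theorem for $\GL(W)$, and evaluate. Two remarks on the details. First, the appeal to Matsushima's theorem is a red herring: reductivity of $G=\Aut(W,(x_i))$ is never used, since the only semisimplicity invoked in the lifting step is that of rational $\GL(W)$-modules, which holds in characteristic zero regardless of the orbit being closed. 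You can delete that sentence without loss. (What the closedness \emph{is} needed for is the restriction surjection $K[V]\onto K[\mathcal O]$.) Second, the step you flag as ``the main obstacle'' is not genuinely an obstacle, and it would strengthen the writeup to close it: the FFT for $\GL(W)$ only ever pairs a $W$-factor with a $W^*$-factor, and once one unwinds the identifications $V_i^*\cong (W^*)^{\ot p_i}\ot W^{\ot q_i}$, each such pairing contracts an \emph{output} slot of one structure tensor with an \emph{input} slot of another (or with an outer boundary slot of $U$). That is precisely what item (4) of the definition of constructibility performs after tensoring (item 2) and permuting (item 3). In particular your constructible class is closed under composition, and any FFT matching diagram, after substitution of the $x_i$, is assembled from (1)--(4). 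So, modulo dropping Matsushima and spelling out that final identification, your proof is correct.
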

\begin{remark}\label{rem:interpolation}
In Section \ref{sec:interpolation} we  show how to interpolate the categories $\Rep(\Aut_R(M))$ into more general families of symmetric monoidal categories. 
\end{remark}

\subsection{The tautological representation $KM$ of $\Aut_R(M)$} The group $\Aut_R(M)$ has the tautological representation $W=KM=\Span_K\{\uu_m\}_{m\in M}$. This representation is faithful and therefore tensor-generates the category $\Rep(\Aut_R(M))$ (that is- every irreducible representation of $\Aut_R(M)$ will appear in some tensor power $(KM)^{\ot n}$). It is also equipped with a rich structure of a commutative cocommutative Hopf algebra, where the multiplication is given by $\mu(\uu_{m_1}\ot \uu_{m_2}) = \uu_{m_1+m_2}$, the comultiplication is given by $\Delta(\uu_m) = \uu_m\ot \uu_m$, the unit is $\eta(1)=\uu_0$, the counit is given by $\epsilon(\uu_m) = 1$, and the antipode is given by $S(\uu_m) = \uu_{-m}$. Moreover, $W$ is equipped with a family of operators $T_r:W\to W$ for $r\in R$ given by $T_r(\uu_m) = \uu_{rm}$, so the antipode $S$ is given by $T_{-1}$.
We have a natural identification $\Aut(W,\mu,\Delta,\eta,\epsilon,(T_r)) = \Aut_R(M)$. Indeed, any automorphism $\phi:M\to M$ of $R$-modules gives rise to a $K$-linear automorphism $\phi:KM\to KM$ that commutes with all the structure tensors, and if $\psi:KM\to KM$ is an automorphism that commutes with all the structure tensors, then it must preserve the group of group-like elements in $KM$, and thus induces a bijection $\wt{\psi}:M\to M$. Since $\psi$ commutes with the multiplication it follows that $\wt{\psi}$ is an automorphism of $M$ as an abelian group. Since $\psi$ commutes with all the operators $T_r$, for $r\in R$, it is in fact an automorphism of $R$-modules.    
All the structure tensors $\mu,\Delta,u,\epsilon,T_r$ can be thought of as $\Aut_R(M)$-intertwiners between tensor powers of $W$. 

The $\GL(W)$-orbit of $(W,\mu,\Delta,\eta,\epsilon,(T_r))$ is closed by the following argument. The condition of being a commutative cocommutative Hopf algebra is a closed condition. Every such Hopf algebra is of the form $KG$ for some finite abelian group $G$ (we can recover $G$ as the group of group-like elements in $W$). The automorphism group of such a Hopf algebra is then identifiable with the automorphism group of $G$, which is finite. Since the stabilizers of a point is identifiable with the group of automorphisms of the Hopf algebra given by the point, this implies that all stabilizers for the $\GL(W)$-action are finite, and therefore the dimensions of all the orbits of such Hopf algebras in $\mathbb{A}^{N}$ are equal. But this means that no orbit can be contained in the closure of another orbit, as this would imply that it has a smaller dimension.
We conclude the above discussion in the following corollary:
\begin{corollary}\label{cor:span.construct} 
Let $R$ be a finite ring, let $M$ be a finite $R$-module, and let $W=KM$ be equipped with the structure tensors $(\mu,\Delta, u,\epsilon,(T_r))$. Then the constructible maps $W^{\ot q}\to W^{\ot p}$ span $\Hom_{\Aut_R(M)}(W^{\ot q},W^{\ot p})$ for every $p,q\in \N$. 
\end{corollary}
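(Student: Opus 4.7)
The plan is to invoke Theorem 1.2 of \cite{meir21} (cited just above the statement), which reduces the problem to verifying that the tuple of structure tensors $(\mu,\Delta,\eta,\epsilon,(T_r)_{r\in R})$ has a closed $\GL(W)$-orbit in the affine space $\mathbb{A}^N$ of all possible tensors of the prescribed type. So the entire burden of the proof is the orbit-closedness claim, and I would attack that by a dimension comparison argument inside an ambient closed subvariety.

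First, I would cut the problem down by introducing the locus
\[
X \;=\; \left\{(\mu',\Delta',\eta',\epsilon',(T_r')_{r\in R})\in \mathbb{A}^N \;\middle|\; \text{all axioms of a cocommutative commutative Hopf algebra with $R$-action hold}\right\}.
\]
Each defining axiom (associativity, coassociativity, (co)commutativity, unit and counit, antipode, bialgebra compatibility, and the compatibility conditions making each $T_r'$ an algebra endomorphism that is linear in $r$) is a polynomial equation in the entries of the tensors, so $X$ is Zariski closed in $\mathbb{A}^N$ and visibly $\GL(W)$-stable. It therefore suffices to show that the $\GL(W)$-orbit of our point is closed inside $X$.

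The next step is to observe that every $\GL(W)$-orbit in $X$ has the same dimension, namely $\dim\GL(W)$. For any $K$-point of $X$, the vector space $W$ equipped with those structures is a finite-dimensional commutative cocommutative Hopf algebra over $K$; by the standard equivalence (dually to group algebras / Cartier duality) such a Hopf algebra is the algebra of functions on a finite group scheme, and when enriched with $R$-operators it corresponds to a finite $R$-module. Its automorphism group (as an object of this type) is therefore finite. Hence every $\GL(W)$-stabilizer in $X$ is finite, so every orbit has dimension $\dim\GL(W)$. Since orbits under an algebraic group action are locally closed, and an orbit contained in the boundary of another must have strictly smaller dimension, no orbit in $X$ can lie in the closure of a different orbit. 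Consequently every $\GL(W)$-orbit in $X$ is closed in $X$, and hence in $\mathbb{A}^N$.

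The only step I expect to require genuine care is the finiteness of stabilizers: one must be sure that the Hopf-algebra-with-$R$-action axioms listed in $X$ really do pin down an object whose $K$-automorphisms form a finite group (rather than, say, admitting a positive-dimensional family of automorphisms in some degenerate limit). Granting that, both Theorem 1.2 of \cite{meir21} and the closed-orbit verification apply, and the corollary follows immediately, since the identification $\Aut(W,\mu,\Delta,\eta,\epsilon,(T_r))=\Aut_R(M)$ was already established in the discussion preceding the statement.
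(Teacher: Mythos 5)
Your argument is essentially the paper's own: both invoke Theorem 1.2 of \cite{meir21} and establish orbit-closedness by cutting to the closed $\GL(W)$-stable locus of commutative cocommutative Hopf algebras, observing that each such structure over an algebraically closed field of characteristic zero is the group algebra of a finite abelian group and hence has finite $\GL(W)$-stabilizer, so that all orbits in this locus share the dimension $\dim\GL(W)$ and none can lie in the closure of another. The concern you flag about stabilizer finiteness is resolved exactly as the paper does it: the Hopf-algebra structure alone already forces a finite stabilizer (via the correspondence with finite groups), and imposing the extra $T_r$ conditions can only shrink it, so you need not verify that every point of $X$ actually arises from a finite $R$-module.
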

\begin{proof}
This follows Theorem~\ref{thm:of.udi}, and from our observations above that the orbit of a commutative cocommutative Hopf algebra is necessarily closed, and that the automorphism group of $(W,\mu,\Delta,u,\epsilon,(T_r))$ is isomorphic to $\Aut_R(M)$. 
\end{proof}

\subsection{Some constructible maps}
We begin by proving that $W$ is self dual in $\Rep(\Aut_R(M))$. This can also be done directly, using character theory, but we prefer to do this using the language of constructible maps.  
Consider then the following constructible maps (read from bottom to top):
\begin{center}\scalebox{0.7}{
    \begin{tikzpicture}
	\begin{pgfonlayer}{nodelayer}
		\node [style=2function] (0) at (-9, 3.5) {$\mu$};
		\node [style=2function] (1) at (-2, 3.75) {$\Delta$};
		\node [style=none] (2) at (-8.75, 1) {};
		\node [style=none] (3) at (-9.25, 2.25) {};
		\node [style=none] (4) at (-9, 3.75) {};
		\node [style=none] (5) at (-1.75, 4) {};
		\node [style=none] (6) at (-2.25, 4) {};
		\node [style=none] (7) at (-2, 2.5) {};
		\node [style=none] (8) at (-9, 4.75) {};
		\node [style=none] (9) at (-10.75, 4.75) {};
		\node [style=none] (10) at (-10.75, 2.25) {};
		\node [style=none] (11) at (-2.25, 4.75) {};
		\node [style=none] (12) at (-3.75, 4.75) {};
		\node [style=none] (13) at (-3.75, 2.5) {};
		\node [style=none] (14) at (-2, 3.25) {};
		\node [style=none] (15) at (-9.25, 3.25) {};
		\node [style=none] (16) at (-8.75, 3.25) {};
		\node [style=none] (17) at (-1.75, 6.25) {};
		\node [style=none] (18) at (-12.75, 3.5) {\huge{$\delta_0=$}};
		\node [style=none] (19) at (-5, 3.5) {\huge{$\Sigma = $}};
	\end{pgfonlayer}
	\begin{pgfonlayer}{edgelayer}
		\draw (4.center) to (8.center);
		\draw [bend left=270, looseness=1.50] (8.center) to (9.center);
		\draw (9.center) to (10.center);
		\draw [bend right=90, looseness=2.75] (10.center) to (3.center);
		\draw (13.center) to (12.center);
		\draw [bend left=90, looseness=1.75] (12.center) to (11.center);
		\draw (11.center) to (6.center);
		\draw [bend left=90, looseness=1.75] (7.center) to (13.center);
		\draw (14.center) to (7.center);
		\draw (15.center) to (3.center);
		\draw (16.center) to (2.center);
		\draw (5.center) to (17.center);
	\end{pgfonlayer}
\end{tikzpicture}
}
\end{center}
\begin{center}\scalebox{0.7}{
\begin{tikzpicture}
	\begin{pgfonlayer}{nodelayer}
		\node [style=2function] (0) at (1.75, 3.5) {$\mu$};
		\node [style=2function] (1) at (0.75, 2) {$\mu$};
		\node [style=1function] (2) at (-0.75, 0) {$T_{-1}$};
		\node [style=none] (3) at (1, 1.75) {};
		\node [style=none] (4) at (0.5, 1.75) {};
		\node [style=none] (5) at (-0.75, 0.5) {};
		\node [style=none] (7) at (0.75, 2.25) {};
		\node [style=none] (9) at (2, 2.5) {};
		\node [style=none] (10) at (1.75, 3.75) {};
		\node [style=none] (11) at (3, 3.75) {};
		\node [style=none] (12) at (3, 2.5) {};
		\node [style=none] (13) at (2, 3.25) {};
		\node [style=none] (14) at (1.5, 3.25) {};
		\node [style=none] (15) at (1, -1) {};
		\node [style=none] (16) at (-0.75, -1) {};
		\node [style=none] (17) at (-2.25, 1.5) {\huge{$\frac{1}{|M|}$}};
		\node [style=none] (18) at (-5, 1.5) {\huge{ev=}};
		\node [style=none] (19) at (4.5, 1.5) {\huge{coev=}};
		\node [style=2function] (20) at (8, 1) {$\Delta$};
		\node [style=2function] (21) at (9.25, 3) {$\Delta$};
		\node [style=none] (22) at (8.25, 1.25) {};
		\node [style=none] (23) at (7.75, 1.25) {};
		\node [style=none] (24) at (8, 0.75) {};
		\node [style=none] (25) at (9.25, 2.75) {};
		\node [style=none] (26) at (9.5, 3.25) {};
		\node [style=none] (27) at (9, 3.25) {};
		\node [style=none] (28) at (6.75, 1.25) {};
		\node [style=none] (29) at (6.75, 0.75) {};
		\node [style=none] (30) at (9.5, 4.25) {};
		\node [style=none] (31) at (9, 4.25) {};
	\end{pgfonlayer}
	\begin{pgfonlayer}{edgelayer}
		\draw [in=-90, out=90, looseness=1.25] (5.center) to (4.center);
		\draw [bend left=90, looseness=2.00] (10.center) to (11.center);
		\draw (11.center) to (12.center);
		\draw [bend left=90, looseness=2.25] (12.center) to (9.center);
		\draw (13.center) to (9.center);
		\draw [in=90, out=-90, looseness=1.50] (14.center) to (7.center);
		\draw (15.center) to (3.center);
		\draw (5.center) to (2);
		\draw (16.center) to (2);
		\draw [bend left=270, looseness=2.50] (23.center) to (28.center);
		\draw (28.center) to (29.center);
		\draw [bend right=90, looseness=2.25] (29.center) to (24.center);
		\draw [in=-90, out=90, looseness=0.75] (22.center) to (25.center);
		\draw (31.center) to (27.center);
		\draw (30.center) to (26.center);
	\end{pgfonlayer}
\end{tikzpicture}
}
\end{center}
These maps are given explicitly by 
\[
\begin{split}
&\delta_0:KM \to K, \quad 
\delta_0(u_m) = |M|\delta_{0,m} \\
&\Sigma: K \to KM, \quad \Sigma(1) = \sum_m \uu_m \\ 
&\ev: KM \otimes KM \to K, \quad \ev(\uu_m\ot \uu_{m'})=\delta_{m,m'}, \quad \text{and} \\
& \coev: K \to KM \otimes KM, \quad \coev(1)=\sum_{m\in M} \uu_m\ot \uu_m .
\end{split}
\]
A direct verification shows that the last two morphisms establish a duality between $W$ and itself. We can thus identify $W$ and $W^*$, and we will do so henceforth.
This means, in particular, that we get an isomorphism $\Hom_{\Aut_R(M)}(K,W^{\ot a+b})$ and $\Hom_{\Aut_R(M)}(W^{\ot a}, W^{\ot b})$. This isomorphism sends a map $\phi:K\to W^{\ot a+b}$ to the map $$W^{\ot a} = W^{\ot a}\ot K\to W^{\ot a}\ot W^{\ot a + b} = W^{\ot a}\ot W^{\ot a}\ot W^{\ot b}\to W^{\ot b},$$ where the last map is given by applying evaluation $W^{\ot a}\ot W^{\ot a}\to K$ $a$ times. Since The evaluation is a constructible morphism, we have the following:
\begin{lemma}\label{lem:coevs}
    We have a bijection between the constructible morphisms $K\to W^{\ot a+b}$ and the constructible morphisms $W^{\ot a}\to W^{\ot b}$.   
\end{lemma}
We thus see that it is enough to study constructible elements in~$W^{\ot n}$ for $n\in\N$.  

Under this identification the multiplication $\mu$ corresponds to the element 
\[
\sum_{m_1+m_2=m_3}\uu_{m_1}\ot \uu_{m_2}\ot \uu_{m_3},
\]
 the comultiplication $\Delta$ corresponds to 
 \[
 \sum_{m\in M} \uu_m\ot \uu_m\ot \uu_m,
 \]
  the operator $T_r$ corresponds to $\sum_{m\in M} \uu_m\ot \uu_{rm}$, the unit $u$ is $\uu_0$ and the counit $\epsilon$ corresponds to $\sum_{m\in M} \uu_m$. 

\medskip

\begin{definition}Let $M_1,\ldots , M_k$  be finite indecomposable $R$-modules. We denote by $\C=\langle M_1,\ldots , M_k \rangle$ the full subcategory of the category of left $R$-modules consisting of modules of the form $\bigoplus_{i=1}^k M_i^{a_i}$, where $a_i\in \N$. We then write $\Cp$ for the category of $\o_\l$-linear functors $\Fun_{\o_\l}(\C,\o_\l\lmod)$.
\end{definition}
\begin{definition}
    For $n\in \N$ we write $\Fr_n:=\Hom_R(R^n,-)\in \Cp$. It holds that $\Fr_n(M)\cong M^n$.
\end{definition}
\begin{remark}    
We have $W^{\ot n} = KM^{\ot n} = KM^n.$
\end{remark}
\begin{definition}
If $F\subseteq \Fr_n$ is a subfunctor, then $KF(M)\subseteq W^{\ot n}$.
For every such subfunctor $F$ we write 
\[
\vv_F(M) = \sum_{(m_1,\ldots, m_n)\in F(M)}\uu_{m_1}\ot\cdots\ot \uu_{m_n} \in W^{\ot n}.
\]
If $a+b=n$ we write $$\vv_F^{a,b}(M) = \sum_{(m_1,\ldots, m_n)\in F(M)}\uu_{m_1}\ot\cdots\ot \uu_{m_a}\ot \uu^{m_{a+1}}\ot\cdots\ot \uu^{m_n}\in (KM)^{\ot a}\ot (KM^*)^{\ot b},$$
where $\{\uu^m\}_{m\in M}$ is the dual basis of $\{\uu_m\}_{m \in M}$ in $(KM)^*$. 
\end{definition}

If $M$ and $M'$ are two finite $R$-modules, we have a natural identification $K(M\oplus M')\cong KM\ot KM'$ given by $\uu_{(m,m')}\mapsto \uu_m\ot \uu_{m'}$. This gives an identification $(K(M\oplus M'))^{\ot n}\cong KM^{\ot n}\ot KM'^{\ot n}$. The additivity of $F$ implies that under this identification $\vv_F(M\oplus M')=\vv_F(M)\ot \vv_F(M')$. 
\begin{lemma}\label{lem:constructibles}
Every constructible element in $\Hom_{\Aut_R(M)}(\one,W^{\ot n})$ is of the form $\vv_F(M)$ for some subfunctor ${F\subseteq \Fr_n}$.
\end{lemma}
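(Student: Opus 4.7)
The plan is to argue by induction on the syntactic complexity of the constructible element, using the self-duality $W\cong W^*$ established just before the lemma to view any constructible morphism $W^{\ot q}\to W^{\ot p}$ as an element of $W^{\ot(q+p)}$. The induction hypothesis reads: every constructible element of $W^{\ot n}$ equals a nonzero scalar multiple of $\vv_F(M)$ for some subfunctor $F\subseteq\Fr_n$. The four clauses in the definition of constructibility (the primitives, tensor products, permutations, partial traces) each need to be handled, and the last of these is the real point of the proof.

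For the primitives, a direct computation matches each structure tensor with a subfunctor of the appropriate $\Fr_n$: the multiplication $\mu$ with the graph $\{(m_1,m_2,m_3):m_1+m_2=m_3\}\subseteq \Fr_3$; the comultiplication $\Delta$ with $\{(m,m,m):m\in M\}\subseteq \Fr_3$; each operator $T_r$ with the graph $\{(m,rm):m\in M\}\subseteq \Fr_2$; the unit $\eta$ with $\{0\}\subseteq\Fr_1$; and the counit $\epsilon$ with the whole of $\Fr_1$. Each of these is manifestly an $\Ow_\l$-linear subfunctor. Tensor products are also easy: if $\vv_F(M)\in W^{\ot a}$ and $\vv_G(M)\in W^{\ot b}$, then $\vv_F(M)\ot \vv_G(M)=\vv_{F\boxplus G}(M)$, where $F\boxplus G\subseteq\Fr_{a+b}=\Fr_a\oplus\Fr_b$ is the subfunctor whose $M$-points are $F(M)\times G(M)\subseteq M^{a+b}$. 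The permutations $L^{(n)}_\sigma$ act on $\Fr_n=\Fr_1^{\oplus n}$ by permuting the direct summands and transport $\vv_F(M)$ to $\vv_{\sigma F}(M)$ in the obvious way.

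The substantial obstacle is the partial trace in clause (4). Given a constructible $x:W^{\ot a}\to W^{\ot b}$ whose associated tensor in $W^{\ot(a+b)}$ is $\vv_F(M)$ for some $F\subseteq\Fr_{a+b}$, a direct unfolding of the $\coev$/$\ev$ formula shows that the contracted tensor in $W^{\ot(a+b-2)}$ has coefficient at $(l_1,\dots,l_{a-1},p_1,\dots,p_{b-1})$ equal to the cardinality of $\{m\in M:(l_1,\dots,l_{a-1},m,p_1,\dots,p_{b-1},m)\in F(M)\}$. A priori this cardinality could depend on the base point, in which case the result would not be a scalar multiple of any $\vv_{F'}(M)$. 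The key observation is that $F(M)$ is an $\Ow_\l$-submodule of $M^{a+b}$ since $F$ is $\Ow_\l$-linear. Consequently, for any tuple $(l_1,\dots,l_{a-1},p_1,\dots,p_{b-1})$ that admits at least one witness $m_0$, the set of all such witnesses is the coset $m_0+D(M)$ of the fixed $\Ow_\l$-submodule $D(M):=\{m'\in M:(0,\dots,0,m',0,\dots,0,m')\in F(M)\}$ (with $m'$ placed in positions $a$ and $a+b$). This module $D(M)$ is itself the $M$-section of a subfunctor $D\subseteq\Fr_1$, so the cardinality in question is the uniform constant $|D(M)|\geq 1$, independent of the base point. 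Therefore the contracted tensor equals $|D(M)|\cdot\vv_{F'}(M)$, where $F'\subseteq\Fr_{a+b-2}$ is the image of the projection applied to $\{(m_1,\dots,m_{a+b})\in F:m_a=m_{a+b}\}$; a quick check using $\Ow_\l$-linearity of $F(M)$ shows that $F'$ is indeed an $\Ow_\l$-linear subfunctor. Since $|D(M)|\geq 1$, this is a nonzero scalar multiple of $\vv_{F'}(M)$, and the induction goes through.
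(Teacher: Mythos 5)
Your proof is correct and follows essentially the same line as the paper's: realize the structure tensors as $\vv_F$'s, check closure under tensor product and permutation, and reduce the evaluation/partial trace to the observation that the fiber count is a fixed power of $q$. Where the paper simply notes that the kernel of $\wt F(M)\to F'(M)$ is an $\Ow_\l$-module, you spell out the coset argument via $D(M)$, which is the same point made more explicitly.
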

\begin{proof}
Every constructible element is built from the structure tensors, using tensor permutations, tensor products, and evaluations. 
We prove the lemma by showing that the tensors of the form $\vv_F$ contain the structure tensors and are closed under the above operations. 
We start with the structure tensors. The multiplication can be written as $\vv_{F_{\mu}}$ where $F_{\mu}(M) = \{(m_1,m_2,m_3) \mid m_i \in M, m_1+m_2=m_3\}\subseteq M^3$. 
The comultiplication can be written as $\vv_{F_{\Delta}}$ where $F_{\Delta}(M) = \{(m,m,m) \mid m \in M\}\subseteq M^3$. 
A similar argument works for the rest of the structure tensors. 
If $F\subseteq \Fr_n$ and $G\subseteq \Fr_{n'}$, then $\vv_F(M)\ot \vv_G(M) = \vv_{F\oplus G}(M)$, where $F\oplus G\subseteq \Fr_{n+n'}$. 
If $F\subseteq \Fr_n$, and $\sigma\in S_n$ then $\sigma$ induces a natural automorphism $\Fr_n\to \Fr_n$. This natural automorphism can be applied to the functor $F$, and we have the subfunctor $\sigma(F)\subseteq \Fr_n$. We have 
$$L^{(n)}_{\sigma}(\vv_F(M)) = \vv_{\sigma(F)}(M).$$

Finally,  we show that the set of tensors $\vv_F(M)$ is closed under evaluation. Since the set $\{\vv_F(M)\}$ is closed under tensor permutations it is enough to show that it is closed under evaluation of the last two coordinates. This linear operation is given explicitly by
\[
\ev(\uu_{m_1}\ot\cdots\ot \uu_{m_n}) = \delta_{m_{n-1},m_n}\uu_{m_1}\ot\cdots\ot \uu_{m_{n-2}}.
\]
The functor $F$ has a subfunctor $\wt{F}$ given by $$\wt{F}(M) = \{(m_1,\ldots, m_n)\mid (m_1,\ldots, m_n)\in F(M) \text{ and } m_{n-1}=m_n\}.$$
The functor $\wt{F}$ has a quotient functor $F'\subseteq \Fr_{n-2}$ that is given by the formula
$$F'(M) = \{(m_1,\ldots, m_{n-2})\mid \exists m'\in M : (m_1,\ldots, m_{n-2},m',m')\in F(M)\}.$$
A direct verification shows that both $\wt{F}$ and $F'$ are $\o_\l$-linear functors, and that 
\[
\begin{split}
\ev(\vv_F(M)) &= \ev\Bigl(\sum_{(m_1,\ldots,m_n)\in F(M)}\uu_{m_1}\ot\cdots\ot \uu_{m_n}\Bigr)\\
&=\ev\Bigl(\sum_{(m_1,\ldots,m_{n-2},m',m')\in \wt{F}(M)}\uu_{m_1}\ot\cdots\ot \uu_{m_{n-2}}\ot \uu_{m'}\ot \uu_{m'}\Bigr) \\
&=q^c\sum_{(m_1,\ldots, m_{n-2})\in F'(M)}\uu_{m_1}\ot\cdots\ot \uu_{m_{n-2}}\\&=  q^c \vv_{F'}(M),
\end{split}
\]

where $q^c$ is the cardinality of the kernel of $\wt{F}(M)\to F'(M)$. We know that the cardinality of this kernel is indeed a power of $q$, because this is true for any $\Ow_l$-module. Notice that $q^c$ depends on the module $M$.
\end{proof}

In view of Corollary~\ref{cor:span.construct}, Lemma~\ref{lem:constructibles} ensures that the set $\{\vv_F(M)\}_{F\subseteq \Fr_n}$ spans $\Hom_{\Aut_R(M)}(\one,W^{\ot n})$. The following proposition shows that for $M$ \lq large enough\rq~this set is in fact a basis for all modules in $\C$. 

\begin{proposition}\label{prop:basis} Let  $M = M_1^{a_1}\oplus\cdots\oplus M_k^{a_k}$ and let $n\in \N$. There is a constant $c(n)$ such that if $a_1,a_2,\ldots, a_k >c(n)$ then $\{\vv_F(M)\}_{F\subseteq \Fr_n}$ is a basis of $\Hom_{\Aut_R(M)}(\one,W^{\ot n})$. 
\end{proposition}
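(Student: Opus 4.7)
The preceding lemma supplies that $\{v_F(M)\}_{F \subseteq \Fr_n}$ spans $\Hom_{\Aut_R(M)}(\one, W^{\ot n})$, so the plan is to prove linear independence for $M$ sufficiently large. I would start from a putative relation $\sum_F c_F v_F(M) = 0$, read off the coefficient of each basis vector $u_{\bar{m}} \in W^{\ot n}$ to obtain $\sum_{F : \bar{m} \in F(M)} c_F = 0$ for every $\bar{m} \in M^n$, and let $F_{\bar{m}}$ denote the smallest subfunctor of $\Fr_n$ whose value at $M$ contains $\bar{m}$, so these equations rewrite as $\sum_{F \supseteq F_{\bar{m}}} c_F = 0$. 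The goal is then to show that for $M$ large enough every subfunctor of $\Fr_n$ appears as some $F_{\bar{m}}$, after which M\"obius inversion in the finite subfunctor lattice forces all $c_F = 0$.

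The first step is to verify that the lattice of subfunctors is finite and that every subfunctor is cyclic. Every subfunctor preserves direct sums (since $\Fr_n$ does and submodule-taking commutes with projections/inclusions), so $F$ is determined by the finite data $F(M_i) \subseteq \Hom_R(R^n, M_i)$ for $i = 1, \ldots, k$, each a submodule of a finite abelian group; in particular, there are only finitely many subfunctors. Moreover, if $F$ is generated by finitely many elements $\phi_1 \in F(X_1), \ldots, \phi_r \in F(X_r)$ then $F = F_{\tilde{\phi}}$ where $\tilde{\phi} := (\phi_1, \ldots, \phi_r) \in F(X_1 \oplus \cdots \oplus X_r)$, since $(g_1, \ldots, g_r) \circ \tilde{\phi} = \sum_i g_i \circ \phi_i$ ranges over $\sum_i F_{\phi_i}(X) = F(X)$. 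Combined with the finite generating set $\bigcup_i F(M_i)$, this shows every $F$ is cyclic: $F = F_\phi$ for some $\phi : R^n \to X_0$ with $\phi \in F(X_0)$, and factoring through its image $Q \subseteq X_0$ identifies $F$ with the pair $(Q, \pi : R^n \twoheadrightarrow Q)$ up to isomorphism of $Q$.

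Next, I would choose $c(n)$ large enough that whenever $a_i > c(n)$ for all $i$, every quotient $Q$ of $R^n$ whose isomorphism class lies in $\C$ occurs as a direct summand of $M = M_1^{a_1} \oplus \cdots \oplus M_k^{a_k}$. This is possible because there are only finitely many such $Q$ and each has a multiplicity of every $M_i$ bounded uniformly (e.g.\ by $|R|^n / |M_i|$). Given any subfunctor $F \leftrightarrow (Q, \pi)$, pick an embedding $\iota : Q \hookrightarrow M$ onto a direct summand with retraction $\rho : M \twoheadrightarrow Q$ and set $\bar{m} := \iota \circ \pi \in M^n$; the identity $h = (h \circ \rho) \circ \iota$ for every $h : Q \to X$ then yields $h \circ \pi = (h \circ \rho) \circ \bar{m} \in F_{\bar{m}}(X)$, so $F \subseteq F_{\bar{m}}$, and the reverse inclusion is automatic. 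Hence $F = F_{\bar{m}}$.

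Putting these together, for $a_i > c(n)$ every subfunctor of $\Fr_n$ equals $F_{\bar{m}}$ for some $\bar{m} \in M^n$, so the relations $\sum_{F \supseteq F_0} c_F = 0$ hold for every subfunctor $F_0$ in the lattice, and M\"obius inversion forces $c_F \equiv 0$. I expect the genuine technical content of the plan to lie in pinning down the explicit bound $c(n)$ through a Krull--Schmidt type argument; once that direct-summand structure on $M$ is in place, the cyclicness, the finiteness of the lattice, and the concluding M\"obius inversion are all essentially formal.
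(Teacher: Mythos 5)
Your overall plan---for $M$ large, realize every subfunctor $F_0 \subseteq \Fr_n$ as $F_{\bar m}$, the smallest subfunctor with $\bar m \in F_{\bar m}(M)$, and then invert the triangular system $\sum_{F\supseteq F_0} c_F = 0$---is a genuinely different route from the paper's. The paper instead sets $S(M)$ to be the family of dependent subsets of $\{\vv_F(M)\}$ and shows that if $S(M)\neq\varnothing$ then $S(M\oplus M)\subsetneq S(M)$, using $\vv_F(M\oplus M)=\vv_F(M)\ot\vv_F(M)$ against a minimal relation; finitely many doublings therefore kill all dependencies.

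There is, however, a genuine gap in your key step. After observing that $F$ is cyclic, $F=F_\phi$ with $\phi\in F(X_0)$ for some $X_0\in\C$, you pass to $Q:=\mathrm{Im}(\phi:R^n\to X_0)$ and try to embed $Q$ as a direct summand of $M$. But $\C$ is \emph{not} assumed closed under submodules or quotients, so $Q$ need not lie in $\C$, and by Krull--Schmidt it then fails to be a direct summand of any $M\in\C$ for \emph{any} choice of $a_i$. For example, with $R=\o_2$, $\C=\langle\o_2\rangle$, $n=1$, the subfunctor $F(X)=\pi X$ of $\Fr_1$ is generated by $\phi:\o_2\to\o_2$, $1\mapsto\pi$, with image $Q=\pi\o_2\cong\o_1\notin\C$; and $\o_1$ is never a direct summand of $\o_2^a$. (A further subtlety: $F_\phi(Y)=\{g\phi : g:X_0\to Y\}$ generally depends on the ambient $X_0$, not only on the pair $(Q,\pi)$, unless every $h:Q\to Y$ extends along $Q\hookrightarrow X_0$.) Both problems disappear if you simply do not replace $X_0$ by $Q$: there are finitely many subfunctors of $\Fr_n$, each generated by some $\phi\in F(X_0)$ with $X_0\in\C$ (take, for instance, the module of the projective cover of $F$), so one may choose $c(n)$ so large that each such $X_0$ embeds as a direct summand $\iota:X_0\hookrightarrow M$ with retraction $\rho$. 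Setting $\bar m:=\iota\circ\phi$ gives $\bar m\in F(M)$, and for any $g:X_0\to Y$ one has $g\phi=(g\rho)\circ\bar m\in F_{\bar m}(Y)$, whence $F=F_{\bar m}$. With this repair, the rest of your argument---the finiteness of the lattice and the downward induction on $F_0$---is sound.
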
 
\begin{proof}
We first claim that all the vectors $\{\vv_F(M)\}_{F\subseteq \Fr_n}$ are different when $a_i\gg 0$ for all $i=1,\ldots,k$. This follows immediately from the fact that if $F\neq F'$ then $F(M)\neq F'(M)$ as subgroups of $\Fr_n(M)$ for $M$ large enough.
Let $A$ be the set of all subfunctors of $\Fr_n$. Note that $A$ is a finite set, since every $F \in A$ is additive and hence completely determined by the submodules $F(M_1),\ldots, F(M_k)$ of the finite modules $M_1^n, \ldots, M_k^n$. 
Define the following subset of the power set of $A$ $$S(M) = \{B \subseteq A \mid  \{\vv_F(M)\}_{F\in B} \text{ is linearly dependent} \}\subseteq P(A).$$ We claim that if $S(M)\neq \varnothing$ then $S(M\oplus M)\subsetneq S(M)$. As all the relevant sets are finite, this will prove the claim. 

We first show that $S(M\oplus M)\subseteq S(M)$. Assume  that $B\notin S(M)$. This means that the set $\{\vv_F(M)\}_{F\in B}$ is linearly independent in $KM^{\ot n}$. This implies that the set $\{\vv_F(M)\ot \vv_F(M)\}_{F\in B}$ is linearly independent in $KM^{\ot n}\ot KM^{\ot n}$. By identifying the last vector space with $K(M\oplus M)^{\ot n}$ we get that the set $\{\vv_F(M\oplus M)\}_{F\in B}$ is linearly independent, and therefore $B \notin S(M\oplus M)$.

We now exhibit an element $B \in S(M) \smallsetminus S(M\oplus M)$. Let $B\in S(M)$ be a subset of minimal cardinality $a+1$. 
Write $B=\{F_1,\ldots, F_{a+1}\}$. By the minimality of $B$ we can write $\vv_{F_{a+1}}(M) = \sum_{i=1}^a \alpha_i \vv_{F_i}(M)$ with $\alpha_i\neq 0$ for $i=1,\ldots,a$. 
Using the identification $KM^{\ot n}\ot KM^{\ot n}\cong K(M\oplus M)^{\ot n}$  we get that $$\vv_{F_{a+1}}(M\oplus M) = \vv_{F_{a+1}}(M)\ot \vv_{F_{a+1}}(M) = \sum_{i,j=1}^a\alpha_i\alpha_j \vv_{F_i}(M)\ot \vv_{F_j}(M).$$ 
It holds that $\{F_1,\ldots, F_a\}\notin S(M)$ by the assumption on the cardinality of $B$. Therefore, $\{F_1,\ldots, F_a\}\notin S(M\oplus M)\subseteq S(M)$ as well, since $S(M\oplus M)\subseteq S(M)$. 
If $B\in S(M\oplus M)$ then we can also write 
\[
\vv_{F_{a+1}}(M\oplus M) = \sum_{i=1}^a\beta_i \vv_{F_i}(M\oplus M).
\] for some $\beta_i\in K$. 
By comparing the two linear expressions above, and using the fact that $\{\vv_{F_i}(M)\}_{i=1,\ldots, a}$ is linearly independent, we get that $\alpha_i\alpha_j=0$ if $i\neq j$ and $\alpha_i^2=\beta_i$. 
But since $\alpha_i\neq 0$ for $1 \leq i \leq a$, this is possible only if $a=1$. But this means that $\vv_{F_1}(M)$ and $\vv_{F_2}(M)$ are linearly dependent. Since the coefficients of all the basis vectors $\uu_{m_1}\ot\cdots \ot \uu_{m_n}$ in both $\vv_{F_1}(M)$ and $\vv_{F_2}(M)$ are all 1, this means that $\vv_{F_1}(M)=\vv_{F_2}(M)$. But we have already seen that this is impossible when $M$ is big enough, so we are done. 
\end{proof}
\begin{definition}
    Let $M$ be an $R$-module. Write $M = \bigoplus M_i^{a_i}$, where the $M_i$ are indecomposable modules. We say that an argument holds when $M$ is \textit{large enough} if there is some $c>0$ such that the argument is true whenever $a_i>c$ for every $i$.  
\end{definition}


\subsection{The representation $KF(M)$ as an object of $\Rep(\Aut_R(M))$}\label{the.rep.kfm}
The projective objects in the category $\Cp$ are all of the form $\Hom_R(X,-)$ and every functor is a quotient of such a Hom-functor; see Appendix~\ref{appendix}. 
\begin{lemma}\label{lem:subquo} Every functor in $\Cp$ is a subquotient of the functor $\Fr_n$ for some $n\in\N$.
\end{lemma}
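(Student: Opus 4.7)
The plan is to combine the result from the appendix with the fact that every object in $\C$ is a finitely generated $R$-module, so that it admits a surjection from some free module $R^n$. Passing to Hom-functors turns such a surjection into an inclusion $\Hom_R(X,-)\hookrightarrow \Fr_n$, after which a single appeal to the quotient presentation of $F$ yields the claim.

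In more detail, first I would invoke the appendix, which asserts that every $F\in\Cp$ is a quotient of some representable functor $\Hom_R(X,-)$ with $X\in\C$. Write $X=\bigoplus_{i=1}^k M_i^{a_i}$. Since each indecomposable $M_i$ is a finite (hence finitely generated) $R$-module, there exists $n_i\in\N$ together with an $R$-linear surjection $R^{n_i}\twoheadrightarrow M_i$. Taking the direct sum of $a_i$ copies of each and setting $n:=\sum_{i=1}^k a_i n_i$ produces an $R$-linear surjection
\[
\pi:R^n \twoheadrightarrow X.
\]

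Next I would apply the left-exact functor $\Hom_R(-,M)$ to $\pi$ for each $M\in\mathrm{ob}(\C)$, obtaining an injection $\Hom_R(X,M)\hookrightarrow \Hom_R(R^n,M)=M^n=\Fr_n(M)$, natural in $M$. Thus $\Hom_R(X,-)$ is a subfunctor of $\Fr_n$ in $\Cp$. Composing with the surjection $\Hom_R(X,-)\twoheadrightarrow F$ provided by the appendix then exhibits $F$ as a quotient of a subfunctor of $\Fr_n$, i.e.\ as a subquotient of $\Fr_n$, completing the proof.

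There is no real obstacle here; the only thing to be mindful of is verifying that the $X$ produced by the appendix can indeed be taken to lie in $\C$ (so that the direct-sum decomposition and the ``each $M_i$ is finitely generated'' step are available), and that $\Hom_R(R^n,-)$ computed on $\C$ genuinely recovers $\Fr_n$ as defined in the paper — both of which are immediate from the definitions.
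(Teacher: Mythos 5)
Your proof is correct and follows essentially the same route as the paper's: invoke the appendix to present $F$ as a quotient of a representable $\Hom_R(X,-)$, then use a surjection $R^n\twoheadrightarrow X$ (which exists since $X$ is a finite $R$-module) to realise $\Hom_R(X,-)$ as a subfunctor of $\Fr_n$, and conclude. The extra care you take about $X$ lying in $\C$ and about $\Fr_n$ being defined via restriction of $\Hom_R(R^n,-)$ to $\C$ is reasonable but immediate, exactly as you note.
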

 \begin{proof}
Let $F\in \Cp$. Since every functor is a quotient of a Hom-functor, we have an exact sequence of the form $$\Hom_R(Y,-)\to \Hom_R(X,-)\to F\to 0$$ for some $R$-modules $X$ and $Y$. 
Since $X$ is an $R$-module, we have a surjective map $R^n\to X$ for some $n\in\N$. This gives an inclusion of functors $\Hom_R(X,-)\to \Hom_R(R^n,-) = \Fr_n$. 
This implies that $F$ is a quotient of $\Hom_R(X,-)$, which in turn is a subfunctor of $\Fr_n$, so $F$ is a subquotient of $\Fr_n$ indeed. 
\end{proof}
The lemma gives a concrete way to study functors in $\Cp$, as they are subquotients of concrete functors. 
We will begin with studying the subfunctors of $Fr_n$. Every subfunctor $F\subseteq Fr_n$ is the image of some natural transformation $\Hom_R(X,-)\to Fr_n$. By Yoneda's Lemma, such a natural transformation is of the form $f^*$ for some $f:R^n\to X$. Let $\{ e_1, \ldots, e_n\}$ be the standard basis of $R^n$ and write $f(e_i)=x_i\in X$. We have
\[
F(M) = \{(\phi(x_1),\ldots, \phi(x_n)) \mid \phi:X\to M\}\subseteq M^n.
\]
We have the following equality:
$$\vv_F(M) = \frac{1}{|\Hom_R(X/\langle x_1,\ldots, x_n\rangle,M)|}\sum_{\phi:X\to M}\uu_{\phi(x_1)}\ot\cdots\ot \uu_{\phi(x_n)}.$$

We thus see that every subfunctor $F\subseteq Fr_n$ arises from an $R$-module $X$ and a tuple $(x_1,\ldots, x_n)\in X^n$. In such case we write $F=Fun_{(X,x_1,\ldots, x_n)}$. 

\begin{lemma}
    The morphism $\vv_F(M)$ is constructible for every $F\subseteq \Fr_n$ 
\end{lemma}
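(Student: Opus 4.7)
The plan is to construct $\vv_F(M)$ explicitly from the structure tensors by starting from a large ``all tuples'' tensor obtained from copies of $\Sigma$, using the Hopf algebra operations and the $T_r$'s to impose the linear relations that cut out $\Hom_R(X,M)$, and finally using the counit $\epsilon$ to trace out the auxiliary coordinates. First, I would adjoin elements $y_1,\ldots,y_m\in X$ so that $x_1,\ldots,x_n,y_1,\ldots,y_m$ generate $X$, producing a presentation $R^{n+m}\twoheadrightarrow X$ whose kernel is generated by tuples $\alpha^{(j)}=(\alpha_{1,j},\ldots,\alpha_{n+m,j})$ for $j=1,\ldots,s$. Under this presentation a homomorphism $\phi:X\to M$ corresponds to a tuple $(v_1,\ldots,v_{n+m})\in M^{n+m}$ subject to $\sum_i\alpha_{i,j}v_i=0$ for all $j$, and the associated element of $F(M)$ is the truncation $(v_1,\ldots,v_n)$.

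Next, note that $\Sigma^{\otimes(n+m)}=\sum_{(v_1,\ldots,v_{n+m})\in M^{n+m}}\uu_{v_1}\otimes\cdots\otimes\uu_{v_{n+m}}$ is constructible as a tensor of copies of $\Sigma$. For each $j$ I would assemble a constructible endomorphism of $W^{\otimes(n+m)}$ which multiplies the basis vector $\uu_{v_1}\otimes\cdots\otimes\uu_{v_{n+m}}$ by $|M|$ when $\sum_i\alpha_{i,j}v_i=0$ and annihilates it otherwise. This ``relation enforcer'' is built by duplicating the $i$-th factor with $\Delta$, applying $T_{\alpha_{i,j}}$ to the duplicate, combining all $n+m$ duplicates via iterated $\mu$ into a single new factor carrying $\uu_{\sum_i\alpha_{i,j}v_i}$, and applying the constructible map $\delta_0$ to that factor. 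Composing the $s$ enforcers cuts the sum down to tuples in $\Hom_R(X,M)$, up to the nonzero overall scalar $|M|^s$.

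Finally, I would apply the counit $\epsilon$ (a structure tensor, with $\epsilon(\uu_v)=1$) to the last $m$ tensor slots, thereby tracing out $y_1,\ldots,y_m$. All fibres of the canonical surjection $\Hom_R(X,M)\twoheadrightarrow F(M)$ have cardinality $|\Hom_R(X/\langle x_1,\ldots,x_n\rangle,M)|$, so the outcome equals the nonzero scalar multiple $|M|^s\cdot|\Hom_R(X/\langle x_1,\ldots,x_n\rangle,M)|\cdot\vv_F(M)$, whence $\vv_F(M)$ is constructible by Remark~\ref{} (the remark permitting nonzero scalar multiples).

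The main obstacle is purely bookkeeping: verifying that each step of the relation enforcer is honestly assembled from the permitted building blocks (tensor products with identities, tensor permutations, evaluations, and the structure tensors $\mu,\Delta,\eta,\epsilon,T_r$) while acting on the intended tensor slots and leaving the others alone. This is a direct string-diagram calculation, in the spirit of the diagrams already drawn earlier in the section, and presents no conceptual difficulty.
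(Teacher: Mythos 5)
Your proposal is correct and follows essentially the same strategy as the paper's own proof: complete $x_1,\ldots,x_n$ to a generating set of $X$, start from tensor powers of $\Sigma$, impose the linear relations defining $\Hom_R(X,M)$ via $\Delta$, $T_r$, $\mu$ and $\delta_0$, and trace out the auxiliary coordinates with $\epsilon$. The only cosmetic difference is that you enforce the relations one at a time while the paper applies $\Delta^b$ once to create all auxiliary copies up front; your accounting of the overall nonzero scalar $|M|^s\cdot|\Hom_R(X/\langle x_1,\ldots,x_n\rangle,M)|$ is also correct.
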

\begin{proof}
    We know that $F$ can be written as $Fun_{(X,x_1,\ldots, x_n)}$ for some $X\in\C$ and $x_1,\ldots, x_n\in X$. Complete the set $\{x_1,\ldots, x_n\}$ to a generating set $\{x_1,\ldots, x_n,\ldots, x_a\}$ of $X$ as an $R$-module. It then holds that $X$ is a quotient of $R^a$, and we can write $X\cong R^a/(\sum_j r_{ij}e_j)_{i=1}^b$ for some $r_{ij}\in R$. It then holds that a tuple $(m_1,\ldots, m_a)$ in $M^a$ lies in $Fun_{(X,x_1,\ldots, x_a)}$ if and only if $\sum_j r_{ij}m_j=0$ for $i=1,\ldots, b$. 

    We claim first that $\vv_{F'}(M)$ is constructible, where $F'= Fun_{(X,x_1,\ldots, x_a)}\subseteq \Fr_a$. Indeed, by rearranging the tensors in $\Delta^{b}(\Sigma^{\ot a})$ we get the constructible map 
    $$\sum_{m_1,\ldots, m_a}(\uu_{m_1}\ot\cdots\ot \uu_{m_a})^{\ot {b+1}}.$$ Applying the operators $T_{r_{ij}}$ and the multiplication operator, we get the constructible map 
    $$\sum_{m_1,\ldots, m_a} \uu_{m_1}\ot\cdots\ot \uu_{m_a}\ot \uu_{f_1}\ot\cdots\ot \uu_{f_b},$$ where $f_i = \sum_j r_{ij}m_j$. By applying $\frac{1}{|M|^b}\delta_0$ on the last $b$ tensor factors we get 
    $$\sum_{\substack{m_1,\ldots, m_a\\ \forall i, \sum_j r_{ij}m_j=0}} \uu_{m_1}\ot\cdots\ot \uu_{m_a},$$ which is exactly $\vv_{F'}(M)$.
     To get down from $a$ to $n$ we just apply $\epsilon^{a-n}$ on the last $a-n$ tensor factors in the above constructible map. In this way we get $\vv_F(M)$ as a constructible morphism.
\end{proof}

\medskip

Let $F\subseteq Fr_n$. Our next goal is to write $KF(M)$ and $K(Fr_n/F)(M)$ as the images of projections $KM^n\to KM^n$ (recall that $KM^n = K(M^n)$). These projections are $\Aut_R(M)$-equivariant, and so, by Theorem \ref{thm:of.udi} we know that they are spanned by constructible morphisms.  
 Since $\Hom_{\Aut_R(M)}(KM^n,KM^n)\cong \Hom_{\Aut_R(M)}(\one, KM^{2n})$, such a projection will be given by a linear combination of $\vv_{G}$, where $G$ ranges over subfunctor $G\subseteq Fr_{2n}$. We will show now that in fact a single subfunctor is enough. We define 
\[
\begin{split}
F'(M) &= \{(m_1,\ldots, m_n,m_1,\ldots, m_n) \mid (m_1,\ldots, m_n)\in F(M)\}\subseteq M^{2n}, ~\text{ and } \\
F''(M) &= \{(m_1,\ldots, m_n,m_{n+1}\ldots, m_{2n}) \mid (m_1-m_{n+1},\ldots, m_n-m_{2n})\in F(M)\}\subseteq M^{2n}.
\end{split}
\]
The following lemma is straightforward.
\begin{lemma}
The constructible morphism $\vv_{F'}^{n,n}(M):KM^n\to KM^n$ is a projection with image $span\{\uu_{m_1}\ot\cdots\ot \uu_{m_n}\}_{(m_1,\ldots, m_n)\in F(M)}$ and kernel $span\{\uu_{m_1}\ot\cdots\ot \uu_{m_n}\}_{(m_1,\ldots, m_n)\notin F(M)}$. 
The morphism $\frac{1}{|F(M)|}\vv_{F''}^{n,n}(M)$ is the projection
$$\uu_{m_1}\ot\cdots\ot \uu_{m_n}\mapsto \frac{1}{|F(M)|}\sum_{(z_1,\ldots, z_n)\in F(M)} \uu_{m_1+z_1}\ot\cdots\ot \uu_{m_n+z_n}.$$ 
\end{lemma}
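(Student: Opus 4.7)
Both claims are direct computations once one unwinds the definition of the tensors $\vv_{F'}^{n,n}(M)$ and $\vv_{F''}^{n,n}(M)$ under the self-duality identification of $W=KM$ with $W^*$ established earlier. I would do the two parts separately.

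\textbf{First claim.} Write
\[
\vv_{F'}^{n,n}(M) \;=\; \sum_{(m_1,\ldots,m_n)\in F(M)} u_{m_1}\otimes\cdots\otimes u_{m_n}\otimes u^{m_1}\otimes\cdots\otimes u^{m_n},
\]
and view this as a linear map $KM^n\to KM^n$ in the usual way: the last $n$ tensor factors pair with the input. Applied to a basis vector $u_{a_1}\otimes\cdots\otimes u_{a_n}$, the dual pairing $u^{m_i}(u_{a_i})=\delta_{m_i,a_i}$ collapses the sum to the single term with $m_i=a_i$ when $(a_1,\ldots,a_n)\in F(M)$, and to $0$ otherwise. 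This immediately yields that $\vv_{F'}^{n,n}(M)$ is the coordinate projection onto the span of $\{u_{m_1}\otimes\cdots\otimes u_{m_n} : (m_1,\ldots,m_n)\in F(M)\}$ with kernel the span of the complementary basis vectors, and in particular is idempotent.

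\textbf{Second claim.} Expanding similarly,
\[
\vv_{F''}^{n,n}(M) \;=\; \sum_{\substack{(m_1,\ldots,m_{2n})\in M^{2n}\\ (m_1-m_{n+1},\ldots,m_n-m_{2n})\in F(M)}} u_{m_1}\otimes\cdots\otimes u_{m_n}\otimes u^{m_{n+1}}\otimes\cdots\otimes u^{m_{2n}}.
\]
Applying this to $u_{a_1}\otimes\cdots\otimes u_{a_n}$, the pairing forces $m_{n+i}=a_i$, so only tuples with $(m_1-a_1,\ldots,m_n-a_n)\in F(M)$ survive. Substituting $z_i:=m_i-a_i$ gives exactly the averaging formula in the statement, once one multiplies by $1/|F(M)|$.

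\textbf{Idempotence of the averaging operator.} The only nontrivial point is showing that applying the averaged operator twice reproduces it. Here I would use crucially that $F(M)$ is a subgroup of $M^n$ (which it is, as $F$ takes values in $\Ow_l\lmod$): iterating the operator on $u_{a_1}\otimes\cdots\otimes u_{a_n}$ yields
\[
\frac{1}{|F(M)|^2}\sum_{z,z'\in F(M)} u_{a_1+z_1+z'_1}\otimes\cdots\otimes u_{a_n+z_n+z'_n}.
\]
Substituting $w:=z+z'\in F(M)$ for fixed $z'$, and using that the translation $z\mapsto z+z'$ is a bijection of $F(M)$, the inner sum equals $\sum_{w\in F(M)} u_{a_1+w_1}\otimes\cdots\otimes u_{a_n+w_n}$ for each of the $|F(M)|$ values of $z'$, giving the desired idempotence.

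I do not expect a real obstacle here: the lemma is essentially a sanity check showing that the two subfunctors $F'$ and $F''$ of $\Fr_{2n}$ encode the obvious ``diagonal'' and ``translation-averaging'' projections onto $KF(M)$. The subgroup property of $F(M)$ is the only nontrivial ingredient, and it is used only to verify idempotence of the second operator.
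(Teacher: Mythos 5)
Your proof is correct and complete. The paper itself provides no proof — it just labels the lemma as "straightforward" — so you are filling in an argument the authors left implicit. Your direct computation (unwinding the definition of $\vv^{n,n}$, using the pairing $u^{m_i}(u_{a_i})=\delta_{m_i,a_i}$ to collapse the sum, and then checking idempotence of the averaging operator by a change of variables in the group $F(M)$) is exactly the intended verification, and your observation that the subgroup structure of $F(M)$ is the one nontrivial input for idempotence of the second projection is accurate and worth stating. No gaps.
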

\begin{definition}
For $F\subseteq Fr_n$ we write $P'_F(M)=\vv_{F'}^{n,n}(M)$ and $P''_F(M) = \frac{1}{|F(M)|}\vv^{n,n}_{F''}(M)$ for the projections from the lemma above.
\end{definition}
\begin{lemma}
Assume that $F_1\subseteq F_2\subseteq Fr_n$. Then $P'_{F_2}(M)$ and $P''_{F_1}(M)$ commute. The composition $P'_{F_2}(M)P''_{F_1}(M)$ is then a projection $KM^n\to KM^n$, whose image is isomorphic to $K(F_2(M)/F_1(M))$ as $\Aut_R(M)$-representations.
\end{lemma}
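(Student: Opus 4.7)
The plan is to verify commutativity by direct computation on the basis $\{u_{m_1}\otimes\cdots\otimes u_{m_n}\}$, then deduce the projection property formally, and finally identify the image by computing it as the image of $P''_{F_1}(M)$ restricted to $KF_2(M)$.

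For commutativity, fix a basis vector $u_{\vec m} = u_{m_1}\otimes\cdots\otimes u_{m_n}$ with $\vec m \in M^n$. Since $F_1\subseteq F_2\subseteq \Fr_n$ are $\Ow_\ell$-linear subfunctors, $F_1(M)$ and $F_2(M)$ are subgroups of $M^n$ with $F_1(M)\subseteq F_2(M)$. Thus for any $\vec z\in F_1(M)$, one has $\vec m+\vec z\in F_2(M)$ iff $\vec m \in F_2(M)$. Computing one way:
\[
P''_{F_1}(M)P'_{F_2}(M)(u_{\vec m}) = \begin{cases} \frac{1}{|F_1(M)|}\sum_{\vec z\in F_1(M)}u_{\vec m+\vec z} & \vec m\in F_2(M),\\ 0 & \vec m\notin F_2(M). \end{cases}
\]
Computing the other way, $P'_{F_2}(M)P''_{F_1}(M)(u_{\vec m}) = \frac{1}{|F_1(M)|}\sum_{\vec z\in F_1(M)}P'_{F_2}(M)(u_{\vec m+\vec z})$, and by the above observation every summand is retained if $\vec m\in F_2(M)$ and every summand is killed otherwise. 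The two expressions agree, so the operators commute.

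Since both $P'_{F_2}(M)$ and $P''_{F_1}(M)$ are idempotents (the first by construction, the second because it averages over a group action), commutativity gives $(P'_{F_2}(M)P''_{F_1}(M))^2 = P'_{F_2}(M)^2P''_{F_1}(M)^2 = P'_{F_2}(M)P''_{F_1}(M)$, so the composition is a projection.

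To identify the image, note that $\Ima(P'_{F_2}(M)P''_{F_1}(M)) = P''_{F_1}(M)(\Ima P'_{F_2}(M)) = P''_{F_1}(M)(KF_2(M))$. The description above shows this space is spanned by the vectors
\[
e_{[\vec m]} := \frac{1}{|F_1(M)|}\sum_{\vec z\in F_1(M)} u_{\vec m + \vec z}, \qquad \vec m \in F_2(M),
\]
indexed by cosets $[\vec m]\in F_2(M)/F_1(M)$, and distinct cosets give vectors with disjoint supports in the basis $\{u_{\vec v}\}_{\vec v\in M^n}$, hence linear independence. Since $F_1$ and $F_2$ are $\Ow_\ell$-linear subfunctors of $\Fr_n$, the quotient $F_2/F_1$ is again an object of $\Cp$ and satisfies $(F_2/F_1)(M) = F_2(M)/F_1(M)$. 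The map $u_{[\vec m]} \mapsto e_{[\vec m]}$ therefore extends to a $K$-linear isomorphism $K(F_2/F_1)(M) \xrightarrow{\sim} \Ima(P'_{F_2}(M)P''_{F_1}(M))$, and it is $\Aut_R(M)$-equivariant because $\Aut_R(M)$ acts on $M^n$ preserving both subgroups $F_1(M)\subseteq F_2(M)$. The only mildly subtle point is just bookkeeping about the coset-sum basis, but there is no genuine obstacle.
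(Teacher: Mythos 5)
Your proof is correct and follows essentially the same approach as the paper: you verify commutativity on basis vectors via the same observation that $\vec m + \vec z \in F_2(M) \Leftrightarrow \vec m \in F_2(M)$ for $\vec z \in F_1(M)$, deduce the projection property from commuting idempotents, and identify the image with $KF_2(M)/F_1(M)$ via the coset-sum vectors. Your write-up is slightly more explicit about the linear independence of the coset sums (disjoint supports) and the $\Aut_R(M)$-equivariance of the identification, but the argument is the same.
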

\begin{proof}
We first prove that the two operators commute. Since $F_1\subseteq F_2$ it holds that for every $(z_1,\ldots, z_n)\in F_1(M)$ we have $(m_1,\ldots, m_n)\in F_2(M)$ if and only if $(m_1+z_1,\ldots, m_n+z_n)\in F_2(M)$. We then have 
$$P'_{F_2}(M)P''_{F_1}(M)(\uu_{m_1}\ot\cdots\ot \uu_{m_n}) = \frac{1}{|F_1(M)|}\sum_{(z_1,\ldots, z_n)\in F_1(M)}P'_{F_2}(M)(\uu_{m_1+z_1}\ot\cdots\ot \uu_{m_n+z_n}).$$
If $(m_1,\ldots, m_n)\in F_2(M)$ then this is equal to 
$$\frac{1}{|F_1(M)|}\sum_{(z_1,\ldots, z_n)\in F_1(M)}\uu_{m_1+z_1}\ot\cdots\ot \uu_{m_n+z_n} = P''_{F_1}(M)P'_{F_2}(M)(\uu_{m_1}\ot\cdots\ot \uu_{m_n}).$$
If $(m_1,\ldots, m_n)\notin F_2(M)$ the latter equals $0$, which is the same as $P''_{F_1}(M)P'_{F_2}(M)(\uu_{m_1}\ot\cdots\ot \uu_{m_n})$, as desired.

Since the composition of any two commuting projections is a projection, we only need to prove the assertion about the image. For this, notice that the image is spanned by vectors of the form $\sum_{(z_1,\ldots, z_n)\in F_1(M)} \uu_{m_1+z_1}\ot\cdots\ot \uu_{m_n+z_n}$ where $(m_1,\ldots, m_n)\in F_2(M)$, and this is canonically isomorphic with $K(F_2(M)/F_1(M))$. 
\end{proof} 
\begin{definition} Let $F,G\in \Cp$ be functors, and let $H\subseteq F\oplus G$ be a subfunctor. We write 
\[
\begin{split}
T_H(M):KF(M)&\to KG(M) \\
\uu_c &\mapsto \sum_{\substack{d\in G(M) \\ (d,c)\in H(M)}} \uu_d.
\end{split}
\]
\end{definition}

\begin{proposition}\label{prop:hom.space.basis}
The linear maps $\{T_H(M)\}_{H\subseteq G\oplus F}$ span $\Hom_{\Aut_R(M)}(KF(M),KG(M))$. If $M$ is large enough then the above set is also linearly independent. 
\end{proposition}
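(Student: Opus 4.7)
The plan is to bootstrap from the spanning result for the invariants $\vv_H(M)\in W^{\ot (n+m)}$ by realising $KF(M)$ and $KG(M)$ as $\Aut_R(M)$-equivariant direct summands of $KM^n$ and $KM^m$, and then to handle linear independence by the multiplicativity argument used in the proof of the preceding proposition.

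For the spanning statement, first fix presentations $F\cong F_2/F_1$ and $G\cong G_2/G_1$ with $F_1\subseteq F_2\subseteq \Fr_n$ and $G_1\subseteq G_2\subseteq \Fr_m$. The preceding lemma exhibits $KF(M)$ and $KG(M)$ as images of commuting projections $P_F(M):=P'_{F_2}(M)P''_{F_1}(M)$ and $P_G(M):=P'_{G_2}(M)P''_{G_1}(M)$ on $KM^n$ and $KM^m$, and these projections are $\Aut_R(M)$-equivariant. Every $\phi\in\Hom_{\Aut_R(M)}(KF(M),KG(M))$ is then of the form $P_G(M)\circ\wt\phi\circ P_F(M)$ for some $\wt\phi\in\Hom_{\Aut_R(M)}(KM^n,KM^m)$. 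Using the self-duality of $W=KM$ established earlier to identify the latter Hom space with $\Hom_{\Aut_R(M)}(\one,W^{\ot(n+m)})$, Lemma~\ref{lem:constructibles} says it is spanned by $\{\vv_H(M)\mid H\subseteq \Fr_{n+m}\}$. Consequently $\Hom_{\Aut_R(M)}(KF(M),KG(M))$ is spanned by the operators $P_G(M)\circ\vv_H(M)\circ P_F(M)$.

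The second step is a direct, if slightly technical, computation: expanding $P_G(M)\circ\vv_H(M)\circ P_F(M)$ on the basis vectors $\uu_c$ for $c\in F(M)$ yields, up to a positive scalar depending only on $M$, $F_1$, $G_1$, the operator $T_{H'}(M)$, where $H'\subseteq G\oplus F$ is the subfunctor of $G\oplus F=G_2/G_1\oplus F_2/F_1$ obtained as the image of $H\cap (G_2\oplus F_2)$ under the quotient map $G_2\oplus F_2\onto G\oplus F$. Conversely, any $H'\subseteq G\oplus F$ is realised this way by pulling it back through $\Fr_m\oplus\Fr_n\onto G_2/G_1\oplus F_2/F_1$. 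This gives the spanning claim, and is where the main technical bookkeeping lies.

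For linear independence, the key observation is that additivity of $F$, $G$ and $H\subseteq G\oplus F$ yields the multiplicativity
\[
T_H(M\oplus M')=T_H(M)\ot T_H(M')
\]
under the canonical identifications $KF(M\oplus M')=KF(M)\ot KF(M')$ and $KG(M\oplus M')=KG(M)\ot KG(M')$. With this in hand the proof of the previous proposition applies verbatim: a minimal linear dependence $T_{H_{a+1}}(M)=\sum_{i=1}^a\alpha_iT_{H_i}(M)$ with all $\alpha_i\neq 0$ that persists at $M\oplus M$ forces $\alpha_i\alpha_j=0$ for $i\neq j$, hence $a=1$ and $T_{H_1}(M)=\alpha T_{H_2}(M)$ for distinct $H_1,H_2\subseteq G\oplus F$. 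Since the matrices of $T_{H_i}(M)$ in the bases $\{\uu_c\}_{c\in F(M)}$ and $\{\uu_d\}_{d\in G(M)}$ have entries in $\{0,1\}$ with the $(d,c)$-entry being the indicator of $(d,c)\in H_i(M)$, this is possible only if $H_1(M)=H_2(M)$, which contradicts $H_1\neq H_2$ once $M$ is large enough (i.e.\ contains every indecomposable summand with sufficient multiplicity) to distinguish subfunctors by their values.
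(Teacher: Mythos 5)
Your spanning argument tracks the paper's own proof: sandwich $\vv_H^{n,m}$ between the projections $P_G(M)$ and $P_F(M)$, reduce $H$ via intersection with $G_2\oplus F_2$ and addition of $G_1\oplus F_1$, and invoke the third isomorphism theorem to match the resulting operators with $T_{H'}$ for $H'\subseteq G\oplus F$. Where you diverge is linear independence. The paper deduces it for free from the spanning computation: the relevant $T_{H'}$ are realised as the operators $\vv_H^{n,m}$ for $G_1\oplus F_1\subseteq H\subseteq G_2\oplus F_2$, a subset of the $\{\vv_H^{n,m}\}_{H\subseteq\Fr_{n+m}}$ already known (from the preceding proposition) to be independent for large $M$. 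You instead re-run the multiplicativity argument directly on the $T_H$, using $T_H(M\oplus M')=T_H(M)\otimes T_H(M')$ and the $\{0,1\}$-entry structure of their matrices. Both routes are valid. Yours has the modest advantage of being decoupled from the bookkeeping in the spanning step — in particular it does not rely on identifying precisely which $\vv_H^{n,m}$ survive the sandwiching with scalar one — at the cost of repeating the doubling argument. One small thing worth stating explicitly if you flesh this out: the claim that distinct subfunctors $H_1\neq H_2$ of $G\oplus F$ have $H_1(M)\neq H_2(M)$ for $M$ containing every indecomposable with high enough multiplicity is the same elementary fact the paper invokes for subfunctors of $\Fr_n$, but here it is applied inside the subquotient $G\oplus F$ rather than inside a free functor; since $\Cp$ is equivalent to a module category over a finite ring, it holds just as well, but it deserves a sentence.
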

\begin{proof}
By Lemma \ref{lem:constructibles} and Lemma \ref{lem:coevs} We know that $\Hom_{\Aut_R(M)}(KM^n,KM^m)$ is spanned by elements of the form $\vv_H^{n,m}(M)$ where $H\subseteq Fr_{n+m}$ is a subfunctor. Moreover, by Proposition \ref{prop:basis} we know that if $M$ is large enough then these elements are also linearly independent.
Write $F= F_2/F_1$ and $G= G_2/G_1$, where $F_1\subseteq F_2\subseteq \Fr_n$ and $G_1\subseteq G_2\subseteq \Fr_m$ are functors. 
We then have $KF(M)\cong  P'_{F_2}(M)P''_{F_1}(M)KM^n$ and $KG(M) \cong P'_{G_2}(M)P''_{G_1}(M)KM^m$. This implies that 
\[
\begin{split}
\Hom_{\Aut_R(M)}(KF(M),KG(M)) &\cong \Hom_{\Aut_R(M)}(P'_{F_2}(M)P''_{F_1}(M)KM^n,P'_{G_2}(M)P''_{G_1}(M)KM^m) \\
&=P'_{G_2}(M)P''_{G_1}(M)\Hom_{\Aut_R(M)}(KM^n,KM^m)P'_{F_2}(M)P''_{F_1}(M).
\end{split}
\]
Since $P'_{G_2}(M)P''_{G_1}(M)$ and $P'_{F_2}(M)P''_{F_1}(M)$ are projections, we get from the above isomorphism that the above hom space is spanned by elements of the form $P'_{G_2}(M)P''_{G_1}(M)\vv_H^{n,m}(M)P'_{F_2}(M)P''_{F_1}(M)$ where $H\subseteq \Fr_{n+m}$ is a subfunctor. 

 Let now $H\subseteq Fr_{n+m}$. We have 
\[
P'_{G_2}(M)P''_{G_1}(M)\vv_H^{n,m}(M)P'_{F_2}(M)P''_{F_1}(M)=P'_{G_2}(M)P''_{G_1}(M)\vv_{H\cap (G_2\oplus F_2)}^{n,m}(M)P'_{F_2}(M)P''_{F_1}(M).
\]
Moreover, if $H\subseteq G_2\oplus F_2$ then 
$$P'_{G_2}(M)P''_{G_1}(M)\vv_H^{n,m}(M)P'_{F_2}(M)P''_{F_1}(M)=P''_{G_1}(M)\vv_H^{n,m}(M)P''_{F_1}(M).$$
Similarly, 
$$P''_{G_1}(M)\vv_H^{n,m}(M)P''_{F_1}(M)=P''_{G_1}(M)\vv_{H+(G_1\oplus F_1)}^{n,m}(M)P''_{F_1}(M).$$
Moreover, if $H\supseteq G_1\oplus F_1$ then it holds that  $P''_{G_1}(M)\vv_H^{n,m}P''_{F_1}(M)=\vv_H^{n,m}$.
The result follows now easily from the third isomorphism theorem applied to the quotient $G_2\oplus F_2/G_1\oplus F_1 = G\oplus F$. 
\end{proof}
\begin{remark}
    By a slight abuse of notation, we will call the morphisms $T_H(M):KF(M)\to KG(M)$ constructible. 
\end{remark}

\begin{remark} The elements $\vv_F$ appeared first in the work of Knop, under the name correspodences, see \cite{Knop2007}. In the framework of \cite{Knop2007}, one starts with the category $\C_{+1}$, and construct the categories $\Rep(\Aut_R(M))$ and their interpolations, since the category of projective objects in $\C_{+1}$ can be identified with the category $\C$. See \S\ref{sec:interpolation} for more details.
 \end{remark}
 We finish this section with some specific morphisms. Let $\alpha:F\to G$ be a natural transformation of functors. We write 
 \[
 \begin{split}
    \alpha_*:KF(M)&\to KG(M) \qquad \qquad \alpha^*:KG(M)\to KF(M) \\
    \uu_c &\mapsto \uu_{\alpha_M(c)}\qquad \qquad \qquad \quad \quad \enspace \uu_d \mapsto \sum_{\substack{c\in F(M) \\ \alpha_M(c) = d}} \uu_c.
 \end{split}
 \]
  
  If we write $H\subseteq G\oplus F$ for the subfunctor given by $\{(\alpha_M(c),c)\}_{c\in F(M)}$, then $\alpha_* = T_H$. Similarly, $\alpha^* = T_H$ if we consider $H$ as a subfunctor of $F\oplus G$ in the obvious way.
Moreover, a direct calculation shows that if $H\subseteq G\oplus F$ is any functor, and we write $\alpha:H\to G\oplus F\to F$ and $\beta:H\to G\oplus F\to G$ for the composition of the inclusion of $H$ in $G\oplus F$ with the projections onto $F$ and $G$ respectively, then $T_H  =\beta_*\alpha^*$. 

The following lemma will be crucial in the proof of the main result of the next section:
\begin{lemma}\label{lem:pushouts}
Consider the commutative diagram:
    $$\xymatrix{ H\ar[r]^{\beta}\ar[d]^{\alpha} & F_2\ar[d]^{\gamma} \\ F_1\ar[r]^{\delta} & G }.$$
    \begin{enumerate}
        \item If the diagram is a pullback then $\gamma$ is injective if and only if $\alpha$ is injective.
        \item If the diagram is a pushout then $\gamma$ is surjective if and only if $\alpha$ is surjective.
        \item If the diagram is a pullback then $\gamma^*\delta_* = \beta_*\alpha^*$.
        \item If the diagram is a pushout then $|\Ker(\alpha_M)\cap \Ker(\beta_M)|\gamma^*\delta_* = \beta_*\alpha^*$. In particular, if $H\subseteq F_1\oplus F_2$ then $\gamma^*\delta_* = \beta_*\alpha^*$
    \end{enumerate}
\end{lemma}
 \begin{proof}
 If the diagram is a pullback then $H(M) = \{(m_1,m_2)| \delta(m_1) = \gamma(m_2)\}\subseteq (F_1(M)\oplus F_2(M))$. If the diagram is a pushout then $G(M) \cong (F_1(M)\oplus F_2(M))/ ((\alpha(m),-\beta(m)))_{m\in H(M)}$. The first two assertions are well known and follow froma direct calculation.

If the diagram is a pullback then 
$$\gamma^*\delta_*(\uu_m) = \gamma^*(\uu_{\delta(m)}) = \sum_{\substack{m': \\  \gamma(m') = \delta(m)}} \uu_{m'},$$
$$\beta_*\alpha^*(\uu_m) = \beta^*(\sum_{\substack{m': \\  \gamma(m') = \delta(m)}} \uu_{(m,m')}) = \sum_{\substack{m': \\  \gamma(m') = \delta(m)}} \uu_{m'},$$ so we get the desired equality. 

If the diagram is a pushout then we get 
 $$\gamma^*\delta_*(\uu_m) = \gamma^*(\uu_{\ol{(m,0)}}) = \sum_{\substack{m': \exists m''\in H(M): \\  \beta(m'')= m', \alpha(m') = m}} \uu_{m'},$$
 $$\beta_*\alpha^*(\uu_m) = \beta^*(\sum_{\substack{m'' :\\ \alpha(m'')= m}} \uu_{m''}) = \sum_{\substack{m'' :\\ \alpha(m'')=m}} \uu_{\beta(m'')}$$
The first part follows from the fact that $G / F_1\oplus F_2/(\alpha(h),-\beta(h))_{h\in H}$. It then follows directly that if $m\in F_1(M)$ then $$\gamma^*\delta_*(\uu_m) = \gamma^*(\uu_{(\alpha(m),0)}) = \sum_{m'\in H(M), \beta(m')= \alpha(m) \uu_{\beta(m')})} = \beta_*\alpha^*(\uu_m).$$
We get the same sum, except that in the first sum we count every element once, and in the second sum every element that appears with non-zero coefficient appears $|\Ker(\alpha_M)\cap \Ker(\beta_M)|$ times. This finishes the proof of the lemma. 
 \end{proof}

 \section{Stratification on the irreducible representations and proofs of Theorems A and B}\label{sec:stratification}
Let $R$ be a finite ring. Let $M_1,\ldots, M_n$ be pairwise non-isomorphic finite indecomposable $R$-modules, and let $\C = \langle M_1,\ldots, M_n\rangle$ be the category of all $R$-modules of the form $\bigoplus_i M_i^{a_i}$. We write $\Cp=\Fun_{\Ow_l}(\C,\Ow_l-\Mod)$ for the category of $\Ow_l$-linear functors from $\C$ to $\Ow_l$-mod.  
We have a natural partial order on the functors in $\Cp$ defined as follows. 

\begin{definition} Let $\C$ be as above. For  $F,G \in \Cp$ we write $F \preccurlyeq G$ if $F$ is isomorphic to a subquotient of~$G$.  If $F \preccurlyeq G$ but $G \not \cong F$ we write $F \prec G$.
\end{definition}

It is easy to see that the relation $\preccurlyeq$ is reflexive and transitive. Moreover, if $F \preccurlyeq G$ and $G \preccurlyeq F$ then $F \cong G$. This follows from the fact that all modules we consider here are finite. It follows that $\preccurlyeq$ induces a well-defined partial order on isomorphism types of functors.

\begin{definition} Let $\C$ be as above and $M \in \mathrm{ob}(\C)$. An irreducible representation $V$ of $\Aut_R(M)$ is called {\em $F$-typical} for $F \in \mathrm{ob}(\Cp)$ if $V$ is isomorphic to a subrepresentation of $KF(M)$ and, for every $G\prec F$, $V$ is not isomorphic to a subrepresentation  of $KG(M)$. For a functor $F\in \mathrm{ob}(\Cp)$ and $M\in \C$ we write $\ol{KF(M)}$ for the maximal subrepresentation of $KF(M)$ such that $\Hom_{\Aut_R(M)}(KGM,\ol{KF(M)})=0$ for every $G \prec F$.
\end{definition}
We will prove that every irreducible representation is $F$-typical for a unique (up to isomorphism) functor $F$.

\begin{proposition}\label{prop:largest.F} Let $M$ be an object in $\C$ and $F_1,F_2 \in \Cp$. Let $\theta: KF_1(M)\to KF_2(M)$ be a constructible morphism. Then either $\theta$ is induced from an isomorphism $F_1\cong F_2$, or there is a functor $F_3\preccurlyeq F_1,F_2$ and constructible morphisms $\theta_1:KF_1(M)\to KF_3(M)$ and $\theta_2:KF_3(M)\to KF_2(M)$ such that $\theta = \theta_2\theta_1$, and $F_3$ is strictly smaller than $F_1$ or $F_2$.  
\end{proposition}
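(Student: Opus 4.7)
My plan is to reduce the claim to the explicit basis of Hom-spaces given in Proposition~\ref{prop:hom.space.basis} together with the push-out identity of Lemma~\ref{lem:pushouts}. Since $\theta$ is constructible, Lemma~\ref{lem:constructibles} shows that, up to a non-zero scalar, $\theta = T_H(M)$ for some subfunctor $H \subseteq F_2 \oplus F_1$. Writing $\alpha : H \to F_1$ and $\beta : H \to F_2$ for the two coordinate projections, the discussion preceding Lemma~\ref{lem:pushouts} identifies $T_H = \beta_* \alpha^*$, and the injection $H \hookrightarrow F_2 \oplus F_1$ yields $\ker \alpha \cap \ker \beta = 0$.

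First I would dispose of the easy case: if both $\alpha$ and $\beta$ are isomorphisms, then $H$ is the graph of $\phi := \beta \alpha^{-1} : F_1 \xrightarrow{\sim} F_2$ and a direct computation gives $T_H = \phi_*$, so $\theta$ is (up to a scalar) induced from $\phi$, as required.

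Otherwise, set $F_j' := \Ima(H \to F_j) \subseteq F_j$ for $j=1,2$. The inclusion $H \hookrightarrow F_2 \oplus F_1$ factors through $F_2' \oplus F_1'$, and the restricted projections $\alpha' : H \twoheadrightarrow F_1'$, $\beta' : H \twoheadrightarrow F_2'$ are both surjective, so Lemma~\ref{lem:pushouts} applies. Let $G$ be their push-out, with surjections $\delta : F_1' \twoheadrightarrow G$ and $\gamma : F_2' \twoheadrightarrow G$, and write $i_j : F_j' \hookrightarrow F_j$ for the inclusions. Using the identity $\beta'_* (\alpha')^* = \gamma^* \delta_*$ from the lemma, I rewrite
\[
\theta = (i_2)_* \beta'_* (\alpha')^* i_1^* = \bigl((i_2)_* \gamma^*\bigr) \circ \bigl(\delta_* i_1^*\bigr).
\]
Setting $F_3 := G$, $\theta_1 := \delta_* i_1^*$ and $\theta_2 := (i_2)_* \gamma^*$ supplies the required factorisation; a direct unpacking shows each $\theta_j$ is a single $T_{H''}$ cut out by the relevant partial graph, hence constructible. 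Since $G$ is a quotient of the subfunctor $F_j' \subseteq F_j$, we automatically have $G \preccurlyeq F_1, F_2$.

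The final step, and what I expect to be the only delicate point, is to verify that $G$ is strictly smaller than at least one of $F_1, F_2$. Using the finite-length structure of $\Cp$, $G \cong F_1$ forces $F_1' = F_1$ (so $\alpha$ is surjective) and $\ker \delta = 0$; together with the push-out description $\ker \delta = \alpha'(\ker \beta')$ and the identity $\ker \alpha \cap \ker \beta = 0$, this then forces $\beta$ to be injective. Symmetrically $G \cong F_2$ forces $\beta$ surjective and $\alpha$ injective. If both isomorphisms held simultaneously, $\alpha$ and $\beta$ would both be isomorphisms, returning us to the excluded first case; hence at least one of $G \prec F_1$ or $G \prec F_2$ is strict. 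All conceptual input is Lemma~\ref{lem:pushouts}; the remaining work is bookkeeping the various subfunctors and induced maps.
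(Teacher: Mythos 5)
Your proof is correct, and it takes a noticeably cleaner route than the paper's, while relying on the same core ingredients. Both arguments express $\theta$ as $T_H = \beta_*\alpha^*$ for $H$ a subfunctor of $F_1 \oplus F_2$ and aim to apply Lemma~\ref{lem:pushouts}, which requires surjective legs. The paper gets to the surjective case via an induction on $s(F_1)+s(F_2)$: it factors $\alpha = \alpha_1\alpha_2$ through its image, peels off $(\alpha_1)^*$, invokes the inductive hypothesis on the shorter remaining composite, and repeats for $\beta$, only then applying the push-out lemma. You instead factor both $\alpha$ and $\beta$ through their images $F_1', F_2'$ simultaneously, so that a single application of the push-out lemma gives the factorisation $\theta = \bigl((i_2)_*\gamma^*\bigr)\circ\bigl(\delta_*(i_1)^*\bigr)$ with no induction at all. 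The small additional cost of your route is the closing kernel bookkeeping: you must check that $G \cong F_1$ forces $F_1' = F_1$ and $\delta$ injective, hence $\alpha$ surjective, and via $\ker\delta = \alpha'(\ker\beta')$ together with $\ker\alpha\cap\ker\beta=0$ that $\beta$ is injective (and symmetrically for $G\cong F_2$), so that both isomorphisms together push you back into the excluded first case. That analysis is implicit in the paper's reduced surjective case but you carry it out explicitly for non-surjective $\alpha,\beta$, which is what lets you skip the induction. All steps check out, including that $\theta_1 = \delta_*(i_1)^*$ and $\theta_2 = (i_2)_*\gamma^*$ are each a single $T_{H''}$ (image of the partial graph) and hence constructible.
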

\begin{proof}
We know that any constructible morphism is of the form $T_H$, where $H\subseteq F_1\oplus F_2$. Write $\alpha:H\to F_1\oplus F_2\to F_1$ and $\beta:H\to F_1\oplus F_2\to F_2$ for the compositions of the inclusion of $H$ into $F_1\oplus F_2$ with the projections onto $F_1$ and $F_2$ respectively. Then it holds that $T_H = \beta_*\alpha^*$. Since $\Cp$ is an abelian category, we can write $\alpha = \alpha_1\alpha_2$ and $\beta=\beta_1\beta_2$ where $\alpha_1:F_3\to F_1$ is injective, $\alpha_2:H\to F_3$ is surjective, $\beta_1:F_4\to F_2$ is injective and $\beta_2:H\to F_4$ is surjective. 
We argue by induction on $s(F_1) + s(F_2)$, where $s(F)$ is the length of the composition sequence of $F$. If $s(F_1) + s(F_2)=0$ then the result holds trivially. In the general case, assume that we already know that the result holds for all pairs of functors $F_1',F_2'$ such that $s(F_1')+s(F_2')<s(F_1)+s(F_2)$. If $\alpha_1$ is not an isomorphism, then $s(F_3)< s(F_1)$. We can then write $T_H$ as $T_H= (\beta_1)_*(\beta_2)_*(\alpha_2)^*(\alpha_1)^*$. Since $s(F_3)+s(F_2) < s(F_1) + s(F_2)$ the  induction hypothesis tells us that the constructible  morphism $(\beta_1)^*(\beta_2)_*(\alpha_2)^*:KF_3(M)\to KF_2(M)$ can be written as a composition of two constructible morphisms of the form $KF_3(M)\stackrel{\theta_1}{\to} KF_5(M)\stackrel{\theta_2}{\to} KF_2(M)$ where $F_5\preccurlyeq F_3,F_2$. But then $T_H$ can be written as the composition $\theta_1\theta_2(\alpha_1)^*:KF_1(M)\to KF_5(M)\to KF_2(M)$, and it holds that $F_5 \preccurlyeq F_1,F_2$ and $F_5\prec F_1$, so we are done in this case. We can thus assume that $\alpha_1$ is an isomorphism, or, in other words, that $\alpha$ is surjective. By a similar argument we can assume that $\beta$ is surjective as well.

We thus reduce to the case where both $\alpha$ and $\beta$ are surjective. 
In this case, we have the following push-out diagram:
$$\xymatrix{ H\ar[r]^{\beta}\ar[d]^{\alpha} & F_2\ar[d]^{\gamma} \\ F_1\ar[r]^{\delta} & G }.$$
By Lemma \ref{lem:pushouts} $\gamma$ and $\delta$ must be surjective as well, and it holds that 
$$T_H = \beta_*\alpha^* = \gamma^*\delta_*.$$ Since $G$ is a common quotient of both $F_1$ and $F_2$ it holds that $G\preccurlyeq F_1,F_2$. If $G\cong F_1\cong F_2$ then $T_H$ is induced from an isomorphism between $F_1$ and $F_2$ indeed. If $G\prec F_1$ or $G\prec F_2$ then we get a splitting of $T_H$ via a functor that is strictly smaller than at least one of $F_1$ or $F_2$, as desired. 
\end{proof}

\begin{theorem}
    Let $V$ be an irreducible representation of $\Aut_R(M)$. Then there is a unique functor $F\in \Cp$ such that $V$ is $F$-typical. 
\end{theorem}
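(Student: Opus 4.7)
For existence, since $W = KM$ tensor-generates $\Rep(\Aut_R(M))$, the irreducible $V$ embeds into $W^{\otimes n} = K\Fr_n(M)$ for some $n$, so the collection $\mathcal{X} = \{F \in \Cp : V \hookrightarrow KF(M)\}$ is nonempty. By Lemma \ref{lem:subquo}, every $F \in \mathcal{X}$ is a subquotient of some $\Fr_m$; and within subquotients of a fixed $\Fr_m$, any relation $F \succ F'$ forces a strict drop in composition length, so $\preccurlyeq$ satisfies the descending chain condition on $\mathcal{X}$. Any minimal element $F$ of $\mathcal{X}$ under $\preccurlyeq$ is then, by definition, a functor for which $V$ is $F$-typical.

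For uniqueness, suppose $V$ is both $F_1$-typical and $F_2$-typical. By semisimplicity (characteristic zero, finite group), decompose $KF_i(M) = V_i \oplus U_i$ with $V_i \cong V$ and $V$ not a constituent of $U_i$. The composition $KF_1(M) \twoheadrightarrow V_1 \xrightarrow{\sim} V_2 \hookrightarrow KF_2(M)$ is an $\Aut_R(M)$-equivariant map $\theta$ whose restriction to $V_1$ is nonzero. Since constructible morphisms span the intertwiner space, $\theta$ expands as a $K$-linear combination of constructible morphisms $KF_1(M) \to KF_2(M)$, and at least one summand $\theta'$ satisfies $\theta'|_{V_1} \neq 0$. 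Apply Proposition \ref{prop:largest.F} to $\theta'$. In the first alternative, $\theta'$ is induced from an isomorphism $F_1 \cong F_2$ and we are done. In the second alternative, $\theta' = \theta_2' \theta_1'$ factors through $KF_3(M)$ with $F_3 \preccurlyeq F_1, F_2$ and $F_3 \prec F_1$ or $F_3 \prec F_2$. Then $\theta_1'|_{V_1}$ must be nonzero (otherwise $\theta'|_{V_1}$ would vanish) and, by the irreducibility of $V$, it embeds $V_1$ into $KF_3(M)$; hence $V \hookrightarrow KF_3(M)$ with $F_3 \prec F_i$ for $i = 1$ or $i = 2$, contradicting the $F_i$-typicality of $V$. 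The second alternative is therefore ruled out and $F_1 \cong F_2$.

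The main obstacle is the uniqueness step, specifically the passage from the abstract intertwiner $\theta$ produced by semisimplicity to a single constructible morphism $\theta'$ retaining the nonvanishing property on $V_1$. Proposition \ref{prop:largest.F} is designed to be applied to one constructible morphism at a time, so it cannot be applied to $\theta$ directly; without extracting $\theta'$, the dichotomy between "isomorphism" and "factoring through a smaller $F_3$" is not available. Once $\theta'$ is in hand, the rest is a brief dichotomy argument, and existence is routine given the well-foundedness of $\preccurlyeq$ on the relevant poset of subquotients of $\Fr_n$.
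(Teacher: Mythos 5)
Your proof is correct and follows essentially the same route as the paper's: existence comes from tensor-generation plus a minimal element of the poset of functors (the paper simply asserts minimality; you justify it via the descending-chain condition, which is a fair elaboration), and uniqueness comes from producing a nonzero intertwiner $KF_1(M)\to KF_2(M)$ detecting $V$, reducing to a single constructible morphism, and applying Proposition~\ref{prop:largest.F} to factor through a strictly smaller $F_3$, contradicting typicality. The only stylistic difference is that the paper splits the intertwiner as $g\circ f$ with $f:V\to KF(M)$ and $g:KF(M)\to KG(M)$, whereas you build one map $\theta$ through $V_1\cong V_2$, but the replacement-by-a-constructible-summand step and the final dichotomy are identical.
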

\begin{proof}
Let $V$ be an irreducible representation of $\Aut_R(M)$. 
Since $KM$ tensor-generates $\Rep(\Aut_R(M))$, $V$ appears in $(KM)^{\ot n} = K\Fr_n(M)$ for some $n\in\N$. Let now $F\in \Cp$ be a minimal functor such that $V$ appears in $KF(M)$. Then $V$ is $F$-typical. We claim that $F$ is unique. Indeed, assume that there is another functor $G\in \Cp$ such that $V$ is $G$-typical, and $F\ncong G$. Then there is a non-zero morphism $f\in \Hom_{\Aut_R(M)}(V,KF(M))$ and a non-zero $g\in \Hom_{\Aut_R(M)}(KF(M),KG(M))$ such that $gf\neq 0$. Since the constructible morphisms span $\Hom_{\Aut_R(M)}(KF(M),KG(M))$, we can assume without loss of generality that $g$ is constructible. But then, by Proposition \ref{prop:largest.F} we know that there is a functor $F'\preccurlyeq F,G$ such that $g$ splits as $$g=  KF(M)\stackrel{g_1}{\to} KF'(M)\stackrel{g_2}{\to} KG(M).$$ Since $gf=g_2g_1f\neq 0$, it holds that $g_1f:V\to KF'(M)\neq 0$. The minimality of $F$ and of $G$ implies that $F\cong F'\cong G$, a contradiction.
\end{proof}
  
\begin{definition}If $V$ is $F$-typical, we call $F$ the associated functor of $V$. We denote by $\Irr(\Aut_R(M))_F$ the subset of $F$-typical representations of $\Irr(\Aut_R(M))$, and denote by $\Rep(\Aut_R(M))_F$ the full abelian subcategory they generate.
\end{definition}
It follows that 
\[
\Rep(\Aut_R(M)) = \bigoplus_{F\in \mathrm{ob}(\Cp)} \Rep(\Aut_R(M))_F.
\]
Notice that $\ol{KF(M)} = KF(M)\cap \Rep(\Aut_R(M))_F$. In other words, $\ol{KF(M)}$ is the biggest $F$-typical subrepresentation of $KF(M)$.
We now give another interpretation of the set $\Irr(\Aut_R(M))_F$.

\subsection{The epimorphism theorem}
Fix a functor $F\in \Cp$.  We have a natural algebra map 
\[
\begin{split}
    K\Aut_{\Cp}(F)&\to \End_{\Aut_R(M)}(KF(M)) \\
\uu_{\alpha}&\mapsto \alpha_*
\end{split}
\] 
that restricts to an algebra map $$\Phi_F(M):K\Aut_{\Cp}(F)\to \End_{\Aut_R(M)}(\ol{KF(M)})$$ given explicitly by $\Phi_F(M)(\uu_{\alpha})(\uu_m)= \uu_{\alpha_M(m)}$ where $\alpha\in \Aut_{\Cp}(F)$ and $m\in F(M)$. 

\begin{theorem}\label{thm:epimorphism} The algebra homomorphism $\Phi_F(M)$ is surjective for every $M$. If $M$ is large enough then $\Phi_F(M)$ is also injective. 
\end{theorem}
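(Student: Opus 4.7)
The proof separates into surjectivity (for every $M$) and injectivity (for $M$ sufficiently large). Both parts will lean on Propositions~\ref{prop:hom.space.basis} and~\ref{prop:largest.F} together with semisimplicity of $\Rep(\Aut_R(M))$.

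First I plan to prove surjectivity. Since $\ol{KF(M)}$ is a direct summand of $KF(M)$ in the semisimple category $\Rep(\Aut_R(M))$, the restriction map $r : \End_{\Aut_R(M)}(KF(M)) \to \End_{\Aut_R(M)}(\ol{KF(M)})$ is a surjective algebra homomorphism. By Proposition~\ref{prop:hom.space.basis} the source is spanned by $\{T_H : H \subseteq F \oplus F\}$, and Proposition~\ref{prop:largest.F} dichotomizes these: each $T_H$ is either of the form $\alpha_*$ for some $\alpha \in \Aut_{\Cp}(F)$, or factors as $g_2 g_1$ through $KG(M)$ for some $G \prec F$. In the latter case, any irreducible subrepresentation of $\ol{KF(M)}$ is $F$-typical and therefore cannot embed in $KG(M)$; Schur's lemma yields $\Hom_{\Aut_R(M)}(\ol{KF(M)}, KG(M)) = 0$, so $g_1|_{\ol{KF(M)}} = 0$ and hence $T_H|_{\ol{KF(M)}} = 0$. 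The image of $r$ is therefore already spanned by the restrictions of the $\alpha_*$, which coincides with $\mathrm{Im}(\Phi_F(M))$.

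For injectivity when $M$ is large, Proposition~\ref{prop:hom.space.basis} guarantees linear independence of the $T_H$'s. Setting $A := \Span\{\alpha_* : \alpha \in \Aut_{\Cp}(F)\}$ and $B := \Span\{T_H : H \text{ not an isomorphism}\}$ one has $A \oplus B = \End_{\Aut_R(M)}(KF(M))$, $\dim A = |\Aut_{\Cp}(F)|$, and $B \subseteq \ker r$. Because every $T \in \End_{\Aut_R(M)}(KF(M))$ preserves the isotypic decomposition and hence $\ol{KF(M)}$, the map $r$ is an algebra homomorphism; combined with $\alpha_* \beta_* = (\alpha\beta)_*$ this makes $A$ a subalgebra isomorphic to $K[\Aut_{\Cp}(F)]$, so the intersection $A \cap \ker r$ is a two-sided ideal of the semisimple algebra $A$. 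By the block decomposition of $K[\Aut_{\Cp}(F)]$, injectivity of $\Phi_F(M)$ is equivalent to $A \cap \ker r = 0$, which in turn is equivalent to the statement that every irreducible representation of $\Aut_{\Cp}(F)$ occurs as a subrepresentation of $\ol{KF(M)}$.

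The hard part is to verify this last statement. My plan is to exhibit a copy of the regular representation of $\Aut_{\Cp}(F)$ inside $\ol{KF(M)}$. For $M$ sufficiently large the action of $\Aut_{\Cp}(F)$ on $F(M)$ is faithful, since a natural transformation in $\Cp$ is determined by its values on $M$ as soon as every indecomposable $M_i$ occurs as a direct summand; hence finitely many elements $b_1,\ldots,b_N \in F(M)$ already satisfy $\bigcap_i \mathrm{Stab}(b_i) = \{1\}$, and by additivity of $F$ the tuple $(b_1,\ldots,b_N) \in F(M^N)$ has trivial stabilizer. The resulting free orbit produces a subspace $K[\Aut_{\Cp}(F)\cdot b] \subseteq KF(M^N)$ isomorphic to $K[\Aut_{\Cp}(F)]$ as a left $K[\Aut_{\Cp}(F)]$-module. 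The most delicate step, which I expect will require an inductive argument along the partial order $\preccurlyeq$ together with the theorem applied to all $G \prec F$ to control the $\Aut_{\Cp}(F)$-isotypic structure of $\ol{KF(M)}^c$, is to show that the $\Aut_{\Cp}(F)$-equivariant projection of this free orbit onto $\ol{KF(M)}$ continues to meet every $\Aut_{\Cp}(F)$-isotypic component once $M$ is enlarged further — ruling out the possibility that an entire irreducible of $\Aut_{\Cp}(F)$ remains trapped inside $\ol{KF(M)}^c$.
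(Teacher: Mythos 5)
Your surjectivity argument is essentially identical to the paper's: write $\End_{\Aut_R(M)}(KF(M))$ using the $T_H$-spanning set, invoke the dichotomy of Proposition~\ref{prop:largest.F}, observe that the ``split through $KG(M)$'' morphisms die on $\ol{KF(M)}$, and conclude that the image of the restriction map is spanned by the $\alpha_*$'s. That part is complete and correct.

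For injectivity you take a genuinely different route. The paper works inside $\End_{\Aut_R(M)}(KF(M))$, splits the basis $\{T_H\}$ as $B_1\sqcup B_2$, asserts that $\ker p$ is precisely $\Span B_1$, and deduces linear independence of $p(B_2)$. You instead pass to the Wedderburn picture, observe that $A\cong K\Aut_{\Cp}(F)$ is a subalgebra of $\End_{\Aut_R(M)}(KF(M))$, that $A\cap\ker r$ is a sum of matrix blocks, and that injectivity of $\Phi_F(M)$ is therefore equivalent to the concrete statement that every irreducible of $\Aut_{\Cp}(F)$ occurs in $\ol{KF(M)}$. This reformulation is correct, and it is a useful way to expose exactly what is being claimed. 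In fact it is the same content as the paper's assertion $\ker p=\Span B_1$: in your notation that assertion says $A\cap\ker r=0$, and the paper states it without proving the nontrivial inclusion $\ker p\subseteq\Span B_1$ (the inclusion $\supseteq$ is the easy one). So you are right to regard this as the crux of the matter.

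However, your argument does not close that crux. Producing a free $\Aut_{\Cp}(F)$-orbit in $F(M^N)$ gives a copy of the regular representation inside $KF(M^N)$ as an $\Aut_{\Cp}(F)$-module, but that subspace is not $\Aut_R(M^N)$-stable, and nothing you have written controls what happens to it under the $\Aut_R(M^N)$-equivariant projection onto $\ol{KF(M^N)}$; an entire isotypic block of $K\Aut_{\Cp}(F)$ could a priori land in $\ol{KF(M^N)}^c$. You explicitly flag this as the ``most delicate step'' and propose, but do not carry out, an induction along $\preccurlyeq$. As it stands the injectivity half is a plan with a real gap, not a proof. The content that is missing is precisely the content of the paper's unjustified equality $\ker p=\Span B_1$ (equivalently, Knop's Corollary~5.3 in~\cite{Knop2007}), so closing it requires a genuine new argument, for instance an analysis of compositions $T_{H''}T_{H'}$ of correspondences factoring through $KG(M)$, $G\prec F$, showing that when expanded in the $T_H$-basis only elements of $B_1$ appear.
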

\begin{proof}
By Proposition \ref{prop:hom.space.basis} we know that the endomorphism algebra $\End_{\Aut_R(M)}(KF(M))$ is spanned by morphisms of the form $T_H(M)$ for $H\subseteq F\oplus F$. In Proposition \ref{prop:largest.F} we have seen that there is a dichotomy among such morphisms. Either they split via $KF'(M)$ for some $F'\prec F$, or they are induced by an automorphism of $F$. We will write $B_1$ for the set of the first type of morphisms and $B_2$ for the set of the second type of morphisms. Thus, $\{T_H(M)\}_{H\subseteq F\oplus F} = B_1\sqcup B_2$.

The representation $\ol{KF(M)}$ is a characteristic subrepresentation of $KF(M)$, and restriction therefore gives us a surjective algebra morphism $$p:\End_{\Aut_R(M)}(KF(M))\to \End_{\Aut_R(M)}(\ol{KF(M)}).$$ By the definition of $\ol{KF(M)}$, and by semisimplicity, the kernel of $p$ is spanned by all morphisms of the form $KF(M)\to V\to KF(M)$, where $V$ is an irreducible $F'$-typical representation, for some $F'<F$. This means that $\Ker(p)$ is spanned by morphisms that split via $KF'(M)$ for some $F'\prec F$. But this is exactly $\Span B_1$. It follows that the image of $p$ is spanned by $p(B_2)$. But this is the same as the image of $\Phi_F(M)$, proving surjectivity.

For the injectivity, assume that $M$ is big enough. Proposition \ref{prop:hom.space.basis} tells us that $\{T_H(M)\}_{H\subseteq F\oplus F}$ is linearly independent. Then the fact that $\Ker(p)$ is spanned by $B_1$ implies that $p(B_2)$ is a linearly independent set. Since $p(B_2)$ is the same as the image of the group elements in $\Aut_{\Cp}(F)$ under $\Phi_F(M)$, we get that $\Phi_F(M)$ is also injective in this case.    
\end{proof}

\subsection{Consequences of the epimorphism theorem}\label{subsec:con.epi}
Let $F\in\Cp$. Write $\{V_i\}$ for a set of representatives of the $F$-typical irreducible representations of $\Aut_R(M)$. We can decompose $\ol{KF(M)}$ as an $\Aut_R(M)$ representation as $$\ol{KF(M)} = \bigoplus_i V_i\ot W_i,$$ where $W_i$ are vector spaces. It holds that $\End_{\Aut_R(M)}(\ol{KF(M)})\cong \bigoplus_i \End_K(W_i)$. Since the action of $\Aut_{\Cp}(F)$ on $\ol{KF(M)}$ commutes with the action of $\Aut_R(M)$, the vector spaces $W_i$ are also $\Aut_{\Cp}(F)$-representations. Write $\{U_j\}$ for a set of representatives of the irreducible $\Aut_{\Cp}(F)$-representations. Wedderburn decomposition gives $$K\Aut_{\Cp}(F)\cong \bigoplus_j \End_K(U_j).$$ The fact that $\Phi_F(M)$ is surjective implies that for every $i$ there is a unique $j$ such that the direct summand $\End_K(U_j^*)$ is mapped isomorphically onto $\End_K(W_i)$ by $\Phi_F(M)$, and thus $W_i\cong U_j^*$.  

\begin{definition} Using the terminology above, we call the correspondence $(M,V_i)\leadsto (F,U_j)$ the functor morphing  of the representation $V_i$ of $\Aut_R(M)$.
\end{definition}
This gives us, in particular, a natural identification of $\Rep(\Aut_R(M))_F$ with a subcategory of $\Rep(\Aut_{\Cp}(F))$. 
\begin{remark}\label{rem:whydual}
With the above notation we have $\Hom_{\Aut_R(M)}(\ol{KF(M)},V_i) = W_i^* = U_j$. 
We write the isomorphism $W_i\cong U_j^*$ rather than $W_i\cong U_j$, to ensure that when we have trivial functor morphing, every representation maps to itself rather than its dual. 
\end{remark}
%


%
Define $$\Rep(\Aut_R(M))_{\preccurlyeq F} = \bigoplus_{G\preccurlyeq F} \Rep(\Aut_R(M))_G,$$ 
$$\Rep(\Aut_R(M))_{\prec F} = \bigoplus_{G \prec F} \Rep(\Aut_R(M))_G.$$

\begin{proposition}\label{prop:coherent.order} If $V_1\in \Rep(\Aut_R(M))_{\leqq F_1}$ and $V_2\in \Rep(\Aut_R(M))_{\leqq F_2}$ then $V_1\ot V_2\in \Rep(\Aut_R(M))_{\preccurlyeq F_1\oplus F_2}$. A similar result holds for strict inequalities. 
\end{proposition}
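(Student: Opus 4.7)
The plan is to reduce to irreducible $V_1,V_2$, exhibit an $\Aut_R(M)$-equivariant embedding of $V_1\ot V_2$ into $K(F_1\oplus F_2)(M)$, and appeal to the minimality property of the associated functor that is implicit in the $F$-typical theorem.

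First I would invoke semisimplicity of $K\Aut_R(M)$ (since $\Aut_R(M)$ is finite and $K$ has characteristic zero) to decompose $V_1=\bigoplus_i V_1^{(i)}$ and $V_2=\bigoplus_j V_2^{(j)}$, with each $V_1^{(i)}$ an irreducible $F_1$-typical representation and each $V_2^{(j)}$ an irreducible $F_2$-typical representation. Since tensor product distributes over direct sums and $\Rep(\Aut_R(M))_{\preccurlyeq F_1\oplus F_2}$ is closed under direct sums, it suffices to handle each factor $V_1^{(i)}\ot V_2^{(j)}$; I may thus assume $V_1,V_2$ are irreducible. By $F_i$-typicality we have $\Aut_R(M)$-equivariant inclusions $V_i\hookrightarrow KF_i(M)$, and hence $V_1\ot V_2\hookrightarrow KF_1(M)\ot KF_2(M)$.

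Second, I would identify $KF_1(M)\ot KF_2(M)\cong K(F_1\oplus F_2)(M)$ as $\Aut_R(M)$-representations via $\uu_a\ot \uu_b\mapsto \uu_{(a,b)}$. This is an isomorphism because the underlying set of $(F_1\oplus F_2)(M)=F_1(M)\oplus F_2(M)$ is $F_1(M)\times F_2(M)$, and the diagonal $\Aut_R(M)$-action on the tensor product coincides with the natural permutation action on basis elements of $K(F_1\oplus F_2)(M)$. Consequently every irreducible constituent $W$ of $V_1\ot V_2$ embeds into $K(F_1\oplus F_2)(M)$. Let $G_W$ be the associated functor of $W$; I claim that $G_W\preccurlyeq F_1\oplus F_2$. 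In fact the uniqueness argument for the $F$-typical theorem yields the stronger statement that, given any $H$ with $W\hookrightarrow KH(M)$, one has $G_W\preccurlyeq H$: semisimple decomposition of $KG_W(M)$ produces a morphism $g:KG_W(M)\to KH(M)$ whose restriction to $W\subseteq KG_W(M)$ is injective, and since the constructible morphisms span the Hom-space, at least one constructible summand of $g$ remains injective on $W$. Applying Proposition \ref{prop:largest.F} to that summand together with the $G_W$-typicality of $W$ forces $G_W\preccurlyeq H$ (otherwise $W$ would embed in some $KG'(M)$ with $G'\prec G_W$). Taking $H=F_1\oplus F_2$ gives the required conclusion.

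For the strict-inequality assertion, assume without loss of generality that $V_1\in \Rep(\Aut_R(M))_{\prec F_1}$; then each irreducible constituent $V_1^{(i)}$ is $G_1^{(i)}$-typical for some $G_1^{(i)}\prec F_1$, and the $\preccurlyeq$-version just established gives $V_1^{(i)}\ot V_2^{(j)}\in \Rep(\Aut_R(M))_{\preccurlyeq G_1^{(i)}\oplus F_2}$. Since $\Cp$ satisfies Krull--Schmidt, any isomorphism $G_1^{(i)}\oplus F_2\cong F_1\oplus F_2$ would, by cancelling $F_2$, force $G_1^{(i)}\cong F_1$, contradicting $G_1^{(i)}\prec F_1$. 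Hence $G_1^{(i)}\oplus F_2\prec F_1\oplus F_2$ and $V_1\ot V_2\in \Rep(\Aut_R(M))_{\prec F_1\oplus F_2}$. The main technical point is the promotion from "minimal" to "minimum" for the associated functor; beyond that short upgrade of Proposition \ref{prop:largest.F}, the remaining steps are purely formal manipulations with subrepresentations and additive functors.
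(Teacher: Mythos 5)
Your proof is correct and uses the same key step as the paper's two-line argument, namely the identification $KF_1(M)\ot KF_2(M)\cong K(F_1\oplus F_2)(M)$ together with the embedding of $V_1\ot V_2$ into it. You supply more rigor than the paper, which implicitly takes for granted that an irreducible subrepresentation of $KH(M)$ has associated functor $\preccurlyeq H$ (your promotion argument via Proposition~\ref{prop:largest.F}) and leaves the strict-inequality case unproved (your Krull--Schmidt cancellation); one small wording point: the constructible summand $T_J$ of $g$ need only be nonzero on $W$ rather than injective, but as $W$ is irreducible this is equivalent.
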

\begin{proof}
We can assume, without loss of generality, that $V_1$ and $V_2$ are irreducible. 
    The conditions of the proposition then imply that $V_i$ is a subrepresentation of $KF_i(M)$ for $i=1,2$. Then $V_1\ot V_2$ is a subrepresentation of $KF_1(M)\ot KF_2(M)\cong K(F_1\oplus F_2)(M)$, which means that $V_1\ot V_2$ is in $\Rep(\Aut_R(M))_{\preccurlyeq F_1\oplus F_2}$ as desired.
\end{proof}
Recall that if $G_1$ and $G_2$ are finite groups, and if $V_i$ is a representation of $G_i$ for $i=1,2$, then we write $V_1\boxtimes V_2$ for the $G_1\times G_2$ representation $V_1\times V_2$ with the action $(g_1,g_2)\cdot (v_1\ot v_2) = g_1v_1\ot g_2v_2$. 

\begin{proposition}
Assume that $V_i\in \Rep(\Aut_R(M))_{F_i}$ for $i=1,2$. In other words, assume that $(M,V_i)\leadsto (F_i,\wt{V_i})$ where $\wt{V_i}$ is an irreducible representation of $\Aut_{\Cp}(F_i)$ for $i=1,2$.  
Then the projection of $V_1\ot V_2$ on $\Rep(\Aut_R(M))_{F_1\oplus F_2}$ is given by 
$$\cJ(\Ind_{\Aut(F_1)\times \Aut(F_2)}^{\Aut(F_1\oplus F_2)}(\wt{V_1}\boxtimes \wt{V_2}))$$ where $\cJ$ is the left adjoint of the inclusion $\Rep(\Aut_R(M))_{F_1\oplus F_2}\to \Rep(\Aut(F_1\oplus F_2))$. 
\end{proposition}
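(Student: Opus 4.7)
The plan is to translate the identity of objects into an identity of multiplicities and verify the latter via the bimodule decomposition of $\ol{KF(M)}$ supplied by the epimorphism theorem. Set $H = \Aut_{\Cp}(F_1) \times \Aut_{\Cp}(F_2)$ and $G = \Aut_{\Cp}(F_1 \oplus F_2)$. Both sides of the claimed equality lie in the semisimple category $\Rep(\Aut_R(M))_{F_1 \oplus F_2}$, so it suffices to check the multiplicity $[\cdot : W]$ of each $W \in \Irr(\Aut_R(M))_{F_1 \oplus F_2}$. On the right, left-adjointness of $\cJ$ followed by Frobenius reciprocity gives $[\wt{V_1} \boxtimes \wt{V_2} : \wt{W}|_H]_H$; on the left, since the projection preserves multiplicities of $F_1 \oplus F_2$-typical irreducibles, one gets $[V_1 \ot V_2 : W]_{\Aut_R(M)}$. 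The whole proposition therefore reduces to proving
$$[V_1 \ot V_2 : W]_{\Aut_R(M)} = [\wt{W}|_H : \wt{V_1} \boxtimes \wt{V_2}]_H.$$

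The crucial geometric input is that, for $F_1 \oplus F_2$-typical $W$, every copy of $W$ in $K(F_1 \oplus F_2)(M) = KF_1(M) \ot KF_2(M)$ already lies inside the $\Aut_R(M) \times H$-submodule $\ol{KF_1(M)} \ot \ol{KF_2(M)}$. To prove this, choose $\Aut_R(M)$-splittings $KF_i(M) = \ol{KF_i(M)} \oplus X_i$, so that every constituent of $X_i$ is $G_i$-typical for some $G_i \prec F_i$. Applying the preceding proposition to pairs of constituents from the three cross terms $X_1 \ot \ol{KF_2(M)}$, $\ol{KF_1(M)} \ot X_2$, and $X_1 \ot X_2$, together with the observation that $G_i \prec F_i$ forces $G_1 \oplus G_2 \prec F_1 \oplus F_2$ (by Krull--Schmidt in $\Cp$, so the number of indecomposable composition factors strictly drops), shows that all constituents of these cross terms are $G$-typical for some $G \prec F_1 \oplus F_2$, and hence none can be isomorphic to $W$.

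With this in hand, the $W$-isotypic components of $\ol{KF_1(M)} \ot \ol{KF_2(M)}$ and of $\ol{K(F_1 \oplus F_2)(M)}$ must coincide as $\Aut_R(M) \times H$-modules, and I will compare the two resulting expressions. Using the decomposition $\ol{KF_i(M)} \cong \bigoplus_{V_i'} V_i' \ot \wt{V_i'}^*$ from the epimorphism theorem, the $W$-isotypic part of the former is
$$W \boxtimes \bigoplus_{V_1', V_2'} [V_1' \ot V_2' : W]\,(\wt{V_1'}^* \boxtimes \wt{V_2'}^*).$$
Inside $\ol{K(F_1 \oplus F_2)(M)} \cong \bigoplus_{W'} W' \ot \wt{W'}^*$ the corresponding piece is $W \boxtimes (\wt{W}^*|_H)$. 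Equating the $H$-factors and extracting the multiplicity of the irreducible $H$-representation $\wt{V_1}^* \boxtimes \wt{V_2}^*$, where the injectivity of $V_i' \mapsto \wt{V_i'}$ on $\Irr(\Aut_R(M))_{F_i}$ ensures that Schur's lemma isolates the $(V_1, V_2)$ summand, gives $[V_1 \ot V_2 : W] = [\wt{W}^*|_H : \wt{V_1}^* \boxtimes \wt{V_2}^*]_H = [\wt{W}|_H : \wt{V_1} \boxtimes \wt{V_2}]_H$, as required. The main obstacle is the geometric claim about the cross terms; once that is in place, the rest is a routine application of Schur's lemma, Frobenius reciprocity, and the epimorphism theorem.
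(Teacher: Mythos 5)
Your argument is correct, and it is genuinely different from the paper's. The paper works directly with idempotents: writing $\wt{V_i}=K\Aut_{\Cp}(F_i)e_i$, it observes that $e_iKF_i(M)=V_i\oplus V_i'$ with $V_i'$ lying in strictly lower strata, while $\Ind_H^G(\wt{V_1}\boxtimes\wt{V_2})=K\Aut_{\Cp}(F_1\oplus F_2)(e_1\ot e_2)$, so both sides of the stated equality compute the projection of $(e_1\ot e_2)K(F_1\oplus F_2)(M)$ onto $\Rep(\Aut_R(M))_{F_1\oplus F_2}$. You instead unwind the claim, via the adjunction and Frobenius reciprocity, into the multiplicity identity $[V_1\ot V_2 : W]=[\wt{W}|_H:\wt{V_1}\boxtimes\wt{V_2}]_H$, and prove it by comparing bimodule decompositions: the key geometric input (correct, and correctly argued via the length of $G_1\oplus G_2$ being strictly smaller than that of $F_1\oplus F_2$) is that the $W$-isotypic part of $K(F_1\oplus F_2)(M)=KF_1(M)\ot KF_2(M)$ already sits inside $\ol{KF_1(M)}\ot\ol{KF_2(M)}$, so its $\Aut_R(M)\times H$-module structure can be read off from the Wedderburn decompositions of $\ol{KF_1(M)}\ot\ol{KF_2(M)}$ and of $\ol{K(F_1\oplus F_2)(M)}$. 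The paper's route is shorter and more self-contained; yours buys an explicit multiplicity formula relating $V_1\ot V_2$ to $\Res^G_H\wt{W}$, which is more informative but requires the extra lemma about cross terms. One small wording nit: the step ``$G_1\prec F_1$ forces $G_1\oplus G_2\prec F_1\oplus F_2$'' is best justified by additivity of composition length in the abelian category $\Cp$, not by counting indecomposable factors via Krull--Schmidt (a proper subquotient can have the same set of indecomposable summands); the length argument you appear to have in mind is the right one.
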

\begin{proof}
We first give a general way to write down the adjoint functor $\cJ$. If $\wt{V}=K\Aut_{\Cp}(F)e$ is an irreducible representation of $\Aut_{\Cp}(F)$, where $e$ is some idempotent, then $$\cJ(\wt{V}) = \text{Im}(eKF(M)\to \ol{KF(M)}) = \Phi_F(M)(e)\ol{KF(M)}.$$ We can write $\wt{V_i} = K\Aut_{\Cp}(F_i)e_i$ where $e_i\in K\Aut_{\Cp}(F_i)$ are idempotents. 
It holds that $e_iKF_i(M) = V_i\oplus V_i'$ where $V_i'\in \Rep(\Aut_R(M))_{\prec F_i}$ for $i=1,2$. 
This, together with Proposition \ref{prop:coherent.order}, implies that $e_1KF_1(M)\ot e_2KF_2(M) = (V_1\ot V_2)\oplus V''$, where $V''\in \Rep(\Aut_R(M))_{\prec F_1\oplus F_2}$. It follows that the projection of $e_1KF_1(M)\ot e_2KF_2(M)\cong (e_1\ot e_2)K(F_1\oplus F_2)(M)$ on $\Rep(\Aut_R(M))_{F_1\oplus F_2}$ is the same as the projection of $V_1\ot V_2$ on that subcategory. 

On the other hand, 

\[
\begin{split}
\Ind_{\Aut(F_1)\times \Aut(F_2)}^{\Aut(F_1\oplus F_2)}(\wt{V_1}\boxtimes \wt{V_2})) &= \Ind_{\Aut(F_1)\times \Aut(F_2)}^{\Aut(F_1\oplus F_2)}K\Aut_{\Cp}(F_1)e_1\ot K\Aut_{\Cp}(F_2)e_2 \\
&=K\Aut_{\Cp}(F_1\oplus F_2)(e_1\ot e_2),
\end{split}
\]
and therefore 
$\cJ(\Ind_{\Aut(F_1)\times \Aut(F_2)}^{\Aut(F_1\oplus F_2)}(\wt{V_1}\boxtimes \wt{V_2}))$ is the projection of $(e_1\ot e_2)K(F_1\oplus F_2)(M)$ on $\Rep(\Aut_R(M))_{F_1\oplus F_2}$, and so we get the desired equality.  
%
    \end{proof}



\section{The partial order and a proof of Theorem \ref{thm:main3}}\label{sec:p.order} 
We now use the technique of functor morphing  to link representations across a hierarchy of families of automorphism groups. 

\subsection{Degrees and their partial order}

\begin{definition}\label{def:degree} Let $R$ be a finite $\o_\l$-algebra and $\C$ a full subcategory of $R\lmod$ with finitely many indecomposable objects. Let $M$ be a finite $R$-module in $\C$ and $V$ be an irreducible representation of $\Aut_R(M)$. The {\em degree} of $(M,V)$ is the triplet 
\[
\deg(M,V) =(\dim V, |\Aut_R(M)|, |\Supp(M)|),
\]    
where $\Supp(M)$ is the number of isomorphism classes of indecomposable $R$-modules that are direct smmands of $M$.
\end{definition}

\begin{remark} Given $\C=\langle M_1,\ldots,M_n\rangle$ as in definition~\ref{def:degree}, the category $\Cp$ is abelian (kernels and cokernels are taken pointwise), and is equivalent to the category $R'\lmod$ with $R'=\End_R(\oplus_i M_i)$ (see \cite[Exercise 2,P. 138]{GelfandManin}. The equivalnce sends a functor $F$ to the $R'$-module $F(\bigoplus_i M_i)$). We define the degree of $(F,U)$, where $F \in \Cp$ and $U$ an irreducible representation of $\Aut_{\Cp}(F)$ as the degree of their corresponding interpretation in the category of modules, or directly as $(\dim U, |\Aut_{\Cp}(F)|,|\mathrm{Supp}(F)|)$.

\end{remark}

For the rest of this section, let $V$ be an irreducible representation of $\Aut_R(M)$ with functor morphing $(M,V)\leadsto (F,\wt{V})$. 
We will show that the degree of $(F,\wt{V})$ is less than or equal to the degree of $(M,V)$ with respect to the lexicographical order, and show that the case where the degrees are the same is very particular. 

We study the functor $F$ with tools from homological algebra. To simplify notation, let $\D$ be an $\o_\l$-linear abelian category with enough projectives, in which all the hom-spaces are finite and every object has finitely many subobjects (The category $\Cp$ is an example for such a category). 
Recall that a \textit{projective cover} of an object $F$ in $\D$ is a surjective morphism $p:P\to F$ with $P$ projective such that no direct summand of $P$ maps onto $F$. 

The projective objects of $\Cp$ are all of the form $\Hom_{\C}(X,-)$; see Proposition~\ref{prop:projectives.in.C+1}. Choose a projective resolution 
\begin{equation}\label{proj.res.F}
\Hom_{\C}(Y,-)\xrightarrow{f^*} \Hom_{\C}(X,-)\xrightarrow{p} F\to 0 
\end{equation}
of $F$ such that $\Hom_{\C}(X,-)$ is a projective cover of $F$, and $\Hom_{\C}(Y,-)$ is a projective cover of $\Ker(p)$. 
Write $X = \bigoplus X_i$ and $Y=\bigoplus_j Y_j$ for the decomposition of $X$ and $Y$ into indecomposable objects. The morphism $f:X\to Y$ can then be written as the direct sum of $f_{ij}:X_i\to Y_j$. We will need the following lemma in the proof of Theorem \ref{thm:triv.red}
\begin{lemma}\label{lem:non.triv.comp} Assume that $Y\neq 0$. 
    For every $j$ there is an $i$ such that $f_{ij}\neq 0$. As a result, if $X_i\cong Y_j$ for some $i,j$ then there is a morphism $g:Y\to X$ such that $gf\neq 0$
\end{lemma}
\begin{proof}
    Assume that for some $j$ it holds that $f_{ij}=0$ for all $i$. Then the map $f:X\to Y$ splits as $$X\stackrel{f_1}{\to} \bigoplus_{k\neq j} Y_k\stackrel{f_2}{\to} Y.$$ It then holds that $f^* = f_1^*f_2^*$, and since $f_2$ is a direct summand inclusion, $f_2^*$ is surjective. It follows that $\Im(f^*) = \Im(f_1^*),$ contradicting the minimality of the projective cover of $\Ker(p)$. 

    For the second part, if $\phi:Y_j\to X_i$ is an isomorphism, define $g$ to be $g|_{Y_j} = \phi$ and $g|_{Y_k}=0$ for $k\neq j$.  
\end{proof}

We now prove some structural results that follow from the fact that $(M,V)\leadsto(F,\wt{V})$. We need the following definition.
\begin{definition}
Let $F\in \mathrm{ob}(\Cp)$. A subgroup $H$ of $\Aut_R(M)$ is called {\em $F$-adequate} if there is $a\in F(M)$ such that $\stab_{\Aut_R(M)}(a) = H$. 
\end{definition}
\begin{lemma}\label{lem:invs}
Let $U\in \Irr(\Aut_R(M))$. Then $\Hom(KF(M),U)\neq 0$ if and only if 
$U^H\neq 0$ for some $F$-adequate subgroup $H$ of $\Aut_R(M)$. 
As a result, $\Hom(\ol{KF(M)},U)\neq 0$ if and only if $U^H\neq 0$ for some $F$-adequate subgroup $H$ of $\Aut_R(M)$ and $U^{H'}=0$ for every subgroup $H'<\Aut_R(M)$ that is $F'$-adequate for some $F' \prec F$. 
\end{lemma}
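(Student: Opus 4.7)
The plan is to observe that $KF(M)$ is a permutation representation for $\Aut_R(M)$ and then invoke Frobenius reciprocity on each orbit; the characterization of $\ol{KF(M)}$ then follows by combining this with the definition of $F$-typicality already established.

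First I would decompose the action. Since $F$ is a functor, $\Aut_R(M)$ acts on the finite set $F(M)$, and $KF(M)=\bigoplus_{\cO}K[\cO]$ splits as an $\Aut_R(M)$-module according to the orbits $\cO\subseteq F(M)$. For each orbit, picking $a\in\cO$ gives an isomorphism of $\Aut_R(M)$-representations $K[\cO]\cong K[\Aut_R(M)/\mathrm{Stab}(a)]$, and $\mathrm{Stab}(a)$ is an $F$-adequate subgroup; conversely, by definition every $F$-adequate subgroup arises in this way. Frobenius reciprocity then yields
\[
\Hom_{\Aut_R(M)}(K[\Aut_R(M)/H],U)\;\cong\;\Hom_H(K,\Res^{\Aut_R(M)}_H U)\;=\;U^H
\]
for any subgroup $H$, so $\Hom_{\Aut_R(M)}(KF(M),U)\neq 0$ iff $U^H\neq 0$ for some orbit-stabilizer $H$, i.e.\ for some $F$-adequate subgroup $H$. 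This proves the first equivalence.

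Next I would deduce the second equivalence from the definition of $\ol{KF(M)}$ together with the description of $F$-typical irreducibles established in Section~\ref{sec:stratification}. By construction $\ol{KF(M)}$ is the largest subrepresentation of $KF(M)$ admitting no non-zero morphism from $KG(M)$ for any $G\prec F$; hence its irreducible constituents are precisely the $F$-typical irreducibles that embed into $KF(M)$. Therefore $\Hom(\ol{KF(M)},U)\neq 0$ iff $U$ embeds into $KF(M)$ and does not embed into $KG(M)$ for any $G\prec F$. Applying the first part of the lemma to $F$ on one hand, and to each $G\prec F$ (with its $G$-adequate subgroups $H'$) on the other hand, translates these two conditions exactly into the statement.

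The argument is essentially a clean combination of orbit decomposition, Frobenius reciprocity, and the definition of $F$-typicality; the only mild subtlety is keeping track of the quantifiers when passing from ``$U$ is not a subrepresentation of $KG(M)$ for any $G\prec F$'' to ``$U^{H'}=0$ for every $H'$ that is $G$-adequate for some $G\prec F$,'' but this is immediate from applying the first part to each $G\prec F$ separately. I do not anticipate any genuine technical obstacle.
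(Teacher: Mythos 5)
Your argument is essentially the same as the paper's: you decompose $KF(M)$ into orbit permutation modules $K[\Aut_R(M)/H_a]$ and apply Frobenius reciprocity to identify $\Hom(K[\Aut_R(M)/H_a],U)$ with $U^{H_a}$, exactly as the paper does. The paper leaves the second equivalence implicit as an immediate consequence; your filling in of the semisimplicity argument (constituents of $\ol{KF(M)}$ are precisely the $F$-typical irreducibles occurring in $KF(M)$) is correct and is precisely what the paper intends.
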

\begin{proof}
Recall that $F(M)$ is a finite set with an $\Aut_R(M)$-action. Write $F(M) = \bigsqcup_a \Aut_R(M)\cdot a$ where the sum is taken over representatives of the different $\Aut_R(M)$-orbits in $F(M)$. Write $H_a = \stab_{\Aut_R(M)}(a)$ so that $\Aut_R(M)\cdot a\cong \Aut_R(M)/H_a$. We have 
$$KF(M) = \bigoplus_a K\Aut_R(M)/H_a,$$
and therefore $\Hom_{\Aut_R(M)}(KF(M),U)\neq 0$ if and only if $\Hom_{\Aut_R(M)}(K\Aut_R(M)/H_a,U)\neq 0$ for some~$a$. By Frobenius reciprocity, the last hom-space is isomorphic to $\Hom_{H_a}(\one,U)=U^{H_a}$, which implies that ${\Hom_{\Aut_R(M)}(KF(M),U)\neq 0}$ if and only if $U^{H_a}\neq 0$ for some $a$. Since all the $F$-adequate subgroups are conjugate the to $H_a$ for some $a$, the assertion follows.
\end{proof}

We claim the following:
\begin{lemma}\label{lem:direct.sum}
The object $X$ in \eqref{proj.res.F} is a direct summand of $M$.
\end{lemma}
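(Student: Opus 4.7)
The plan is to leverage the $F$-typicality of $V$ to show that $F$ is generated, as a functor in $\Cp$, by a single element $a\in F(M)$, and then to invoke the universal property of projective covers.

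Since $V\hookrightarrow \ol{KF(M)}\subseteq KF(M)$ and $\Rep(\Aut_R(M))$ is semisimple, we have $\Hom_{\Aut_R(M)}(KF(M),V)\neq 0$. By the preceding lemma there exists an element $a\in F(M)$ whose stabilizer $H=\stab_{\Aut_R(M)}(a)$ satisfies $V^H\neq 0$. Let $F'\subseteq F$ be the subfunctor generated by $a$. Then $a\in F'(M)$ still has stabilizer $H$, so $H$ is also $F'$-adequate; applying the lemma in the reverse direction yields $\Hom_{\Aut_R(M)}(KF'(M),V)\neq 0$, hence $V\hookrightarrow KF'(M)$. The $F$-typicality of $V$ — namely that $V$ is not a subrepresentation of $KG(M)$ for any $G\prec F$ — forces $F'=F$: if $F'\subsetneq F$ were a proper subfunctor, then $F'\prec F$ and we would get a contradiction.

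With $F$ now cyclically generated by $a$, the Yoneda natural transformation $\alpha\colon\Hom_{\C}(M,-)\to F$ sending $\mathrm{id}_M\mapsto a$ has image equal to the subfunctor generated by $a$, which is all of $F$, so $\alpha$ is surjective. Since $\Hom_{\C}(M,-)$ is projective in $\Cp$ by the appendix, and since $\Hom_{\C}(X,-)\to F$ is a projective cover of $F$, the standard universal property of projective covers (a surjection from a projective onto a projective cover must be split) implies that $\Hom_{\C}(X,-)$ is a direct summand of $\Hom_{\C}(M,-)$. Applying Yoneda together with the Krull--Schmidt theorem in $\C$, this direct-summand relation between representable functors translates back to $X$ being a direct summand of $M$.

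The main subtlety I foresee is the step identifying the element $a$, obtained merely from the non-vanishing of $V^H$, as a generator of the entire functor $F$. Everything else is formal homological algebra, but this step hinges crucially on the minimality built into $F$-typicality — and concretely on the previous lemma, which allows us to detect subrepresentations of $KF'(M)$ via fixed-point subspaces under $F'$-adequate subgroups.
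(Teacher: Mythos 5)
Your proof is correct and follows essentially the same route as the paper's. The key step — producing $a\in F(M)$ with $V^{\stab(a)}\neq 0$, forming the subfunctor it generates, and invoking $F$-typicality (equivalently minimality in the functor morphing) to force this subfunctor to be all of $F$ — is exactly the paper's argument; your "subfunctor generated by $a$" is the same object as the paper's $\Im(p')$ where $p'=p\phi^*:\Hom_\C(M,-)\to F$, since both correspond under Yoneda to the element $a$. The only cosmetic difference is that the paper lifts $a$ through the projective cover to produce $\phi\in\Hom_\C(X,M)$ and states the conclusion as "$\phi$ is a split injection," while you go directly to the surjection $\Hom_\C(M,-)\twoheadrightarrow F$ and the universal property of projective covers; these are logically equivalent formulations. (A minor remark: Krull--Schmidt is not actually needed for the final step — once $\Hom_\C(X,-)$ is a direct summand of $\Hom_\C(M,-)$, the complementary summand is representable by the classification of projectives in $\Cp$, and full faithfulness of the Yoneda embedding then gives $X\oplus Y\cong M$ directly.)
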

\begin{proof}
Since $\Hom_{\Aut_R(M)}(KF(M),V)\neq 0$, Lemma \ref{lem:invs} implies that there exists an element $a\in F(M)$ such that $V^{\stab_{\Aut_R(M)}(a)}\neq 0$. Since $p:\Hom_{\C}(X,-)\to F$ is surjective, there is $\phi\in \Hom_{\C}(X,M)$ such that $p(\phi)=a$. We claim that ${\phi:X\to M}$ is a split injection.

For this, consider the composition $p' = p\phi^*:\Hom_{\C}(M,-)\to \Hom_{\C}(X,-)\to F$. Write $F'=\Im(p')\subseteq F$. 
Then $a = p'(\Id_M)\in F'(M)$. Since $V^{\stab_{\Aut_R(M)}(a)}\neq 0$, this implies that $\Hom_{\Aut_R(M)}(KF'(M),V)\neq 0$. By the definition of functor morphing , this can only happen if $F'=F$, as otherwise $F$ is not the minimal functor for which $V$ appears in $KF(M)$. Since $\Hom_{\C}(X,-)$ is a projective cover of $F$, this means that $\Hom_{\C}(X,-)$ is a direct summand of $\Hom_\C(M,-)$. But by Yoneda's Lemma, this happens if and only if $X$ is a direct summand of $M$. 
\end{proof}

The following lemma facilitates the description of the representation $\wt{V}$:
\begin{lemma}
There exists exactly one $\Aut_R(M)$-orbit $\Aut_R(M)\cdot a$ in $F(M)$ such that $V^{\stab_{\Aut_R(M)}(a)}\neq 0$. 
\end{lemma}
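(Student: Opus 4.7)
The plan is to combine Frobenius reciprocity with the projective-cover description of $F$ from the previous lemma and a Krull--Schmidt argument to pin down the orbit.

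Existence of an orbit with nonzero invariants is immediate: the hypothesis that $(M,V)\leadsto(F,\wt{V})$ provides an embedding $V\hookrightarrow KF(M)=\bigoplus_{[a]}K(\Aut_R(M)\cdot a)$, and Frobenius reciprocity forces $V^{H_a}\neq 0$ for at least one orbit representative $a$.

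For uniqueness, I would take any two $a_1,a_2\in F(M)$ with $V^{H_{a_i}}\neq 0$ and pick lifts $\phi_i\in\Hom_{\C}(X,M)$ with $p_M(\phi_i)=a_i$, where $p\colon\Hom_{\C}(X,-)\onto F$ is the projective cover from \eqref{proj.res.F}. Applying the argument of the preceding lemma separately to each $a_i$, the subfunctor $F'_i=\Ima(p\circ\phi_i^*)\subseteq F$ contains $a_i$, so $V^{H_{a_i}}\neq 0$ together with the $F$-typicality of $V$ forces $F'_i=F$; projectivity of the cover then compels each $\phi_i$ to be a split injection $X\hookrightarrow M$.

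With both $\phi_i$ split, write $M\cong\phi_i(X)\oplus C_i$. The Krull--Schmidt property in $\C$ yields $C_1\cong C_2$, and combining the isomorphism $\phi_2\phi_1^{-1}\colon\phi_1(X)\to\phi_2(X)$ with any $R$-isomorphism $C_1\to C_2$ produces $g\in\Aut_R(M)$ satisfying $g\phi_1=\phi_2$. Naturality of $p$ then gives
\[
F(g)(a_1)=p_M(g\phi_1)=p_M(\phi_2)=a_2,
\]
placing $a_1$ and $a_2$ in the same $\Aut_R(M)$-orbit. I do not anticipate any serious obstacle beyond bookkeeping: the entire argument is a direct consequence of the previous lemma together with Krull--Schmidt in $\C$, the only mild care required being to verify that the projective-cover splitting argument of the preceding lemma indeed applies to every lift $\phi_i$ and not merely to some distinguished one.
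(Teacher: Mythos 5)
Your argument is correct and follows essentially the same route as the paper's proof: lift each $a_i$ to $\phi_i\in\Hom_{\C}(X,M)$, invoke the preceding lemma (whose argument applies verbatim to any $a$ with $V^{\stab(a)}\neq 0$) to see each $\phi_i$ is a split injection, apply Krull--Schmidt in $\C$ to find $g\in\Aut_R(M)$ with $g\phi_1=\phi_2$, and conclude via $\Aut_R(M)$-equivariance of $p_M$. The paper states the middle step more compactly as ``all split injections in $\Hom_{\C}(X,M)$ lie in a single $\Aut_R(M)$-orbit,'' but the content is identical.
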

\begin{proof}
Let $a\in F(M)$ be an element that satisfies the condition of the lemma. 
We have seen that if we write $a= p(\phi)$ for some $\phi:X\to M$, then $\phi$ is a split injection. By Krull-Schmidt we know that if $M \cong X\oplus Z\cong X\oplus Z'$ then $Z\cong Z'$. This implies that all the split injections in $\Hom_{\C}(X,M)$ are in the same $\Aut_R(M)$-orbit. By the naturality of $p$ we know that $p(M):\Hom(X,M)\to F(M)$ is $\Aut_R(M)$-equivariant. Since $p(M)$ is also surjective, the result follows. 
\end{proof}
We call such an element $a\in F(M)$ a \textit{generic point}. 
We end this section with a concrete description of the representation $\wt{V}$ in terms of the data we have:
\begin{proposition}
Write $H\subseteq \Aut_R(M)$ for the stabiliser of a generic point $a\in F(M)$. Then $\wt{V} = V^{H}$.
\end{proposition}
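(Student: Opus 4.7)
Proof plan.

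The strategy is to unwind the definition of $\wt{V}$ via Remark \ref{rem:whydual}, which gives the identification
\[
\wt{V} \;=\; \Hom_{\Aut_R(M)}\!\bigl(\ol{KF(M)},\,V\bigr),
\]
and then compute the right-hand side in two steps. The first step reduces the computation from $\ol{KF(M)}$ to the full permutation representation $KF(M)$; the second step applies Frobenius reciprocity together with the preceding lemma, which says that only the generic orbit contributes.

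Step 1 (passing from $\ol{KF(M)}$ to $KF(M)$). Write $KF(M) = \ol{KF(M)} \oplus C$ as an $\Aut_R(M)$-representation, which is possible since $\Aut_R(M)$ is finite and $\mathrm{char}(K)=0$. By construction $\ol{KF(M)}$ is characterised as the largest subrepresentation of $KF(M)$ satisfying $\Hom(KG(M),-)=0$ for all $G\prec F$; equivalently, its irreducible constituents are exactly the $F$-typical irreducibles appearing in $KF(M)$. Any irreducible constituent of the complement $C$ is therefore \emph{not} $F$-typical (otherwise it would embed in $KF(M)$ and thus lie inside $\ol{KF(M)}$). Since $V$ is $F$-typical and irreducible, $\Hom_{\Aut_R(M)}(C,V)=0$, and hence
\[
\Hom_{\Aut_R(M)}\!\bigl(\ol{KF(M)},V\bigr) \;=\; \Hom_{\Aut_R(M)}\!\bigl(KF(M),V\bigr).
\]

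Step 2 (Frobenius reciprocity on orbits). Decompose $F(M)=\bigsqcup_{a'} \Aut_R(M)\cdot a'$ into $\Aut_R(M)$-orbits with stabilisers $H_{a'}$, so that $KF(M)\cong \bigoplus_{a'}\mathrm{Ind}_{H_{a'}}^{\Aut_R(M)}\mathbf{1}$. Frobenius reciprocity then gives
\[
\Hom_{\Aut_R(M)}\!\bigl(KF(M),V\bigr) \;=\; \bigoplus_{a'} V^{H_{a'}}.
\]
The preceding lemma asserts that exactly one orbit, the orbit of the generic point $a$, contributes a non-zero summand. Thus this direct sum collapses to $V^H$, where $H=\stab_{\Aut_R(M)}(a)$.

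Combining Step 1 and Step 2 yields $\wt{V}\cong V^H$ as vector spaces. The only remaining point is $\Aut_{\Cp}(F)$-equivariance: the action of $\Aut_{\Cp}(F)$ on $\wt{V}$ is the one transported from its action on $\ol{KF(M)}$ via $\Phi_F(M)$, and under our identification this is simply the action induced by $\alpha \cdot \phi := \phi \circ \alpha_*^{-1}$ for $\alpha\in\Aut_{\Cp}(F)$ and $\phi \in \Hom_{\Aut_R(M)}(\ol{KF(M)},V)$. The mildly subtle point — and the one place where I expect to have to be careful — is that the isomorphism $\Hom_{\Aut_R(M)}(KF(M),V) \xrightarrow{\sim} V^H$, defined by evaluation at $u_a$, is not literally $\Aut_{\Cp}(F)$-equivariant unless one uses that $\alpha_M^{-1}(a)$ lies in the orbit $\Aut_R(M)\cdot a$ (by uniqueness of the generic orbit); this forces the resulting transported action on $V^H$ to be well-defined, and one verifies equivariance by writing $\alpha_M^{-1}(a)=g\cdot a$ and pushing the $g$ across the $\Aut_R(M)$-equivariance of $\phi$. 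With this verified, the proposition follows.
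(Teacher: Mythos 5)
Your proof is correct and follows essentially the same route as the paper: reduce $\Hom_{\Aut_R(M)}(\ol{KF(M)},V)$ to $\Hom_{\Aut_R(M)}(KF(M),V)$ using that $V$ is $F$-typical, then decompose $F(M)$ into orbits, apply Frobenius reciprocity, and invoke the uniqueness of the generic orbit to collapse the direct sum to $V^H$. The extra remarks on $\Aut_{\Cp}(F)$-equivariance go slightly beyond what the paper records, but they are consistent with Remark~\ref{rem:whydual} and do not change the argument.
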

\begin{proof}
By Remark~\ref{rem:whydual} we have $\wt{V} = \Hom_{\Aut_R(M)}(\ol{KF(M)},V)$, where the action of $\Aut(F)$ is induced from pre-composing on $F(M)$.  Since $F$ is the smallest functor such that $\Hom_{\Aut_R(M)}(KF(M),V)\neq 0$, we have $\Hom_{\Aut_R(M)}(\ol{KF(M)},V) \cong \Hom_{\Aut_R(M)}(KF(M),V)$.
Now, $KF(M)$ splits as the direct sum of modules of the form $K\Aut_R(M)/\stab_{\Aut_R(M)}(x)$, where we take $x$ to be orbit representatives for the action of~$\Aut_R(M)$ on~$F(M)$. We have  
$$\Hom_{\Aut_R(M)}(K\Aut_R(M)/\stab_{\Aut_R(M)}(x),V)\cong \Hom_{\stab_{\Aut_R(M)}(x)}(\one,V)\cong V^{\stab_{\Aut_R(M)}(x)}.$$ By the previous lemma, only the orbit of the generic points will give a non-zero contribution. Write $a\in F(M)$ for a generic point. We then get 
\[
\begin{split}
\wt{V} & = \Hom_{\Aut_R(M)}(\ol{KF(M)},V)\cong \Hom_{\Aut_R(M)}(KF(M),V) \\ 
& \cong \Hom_{\Aut_R(M)}(K\Aut_R(M)/\stab_{\Aut_R(M)}(a),V) \\
& \cong \Hom_{\stab_{\Aut_R(M)}(a)}(\one,V)\cong V^{\stab_{\Aut_R(M)}(a)}. \qedhere
\end{split}
\]
\end{proof}
This already shows that $\dim(\wt{V})\leq \dim(V)$. Next, we describe the group $\Aut(F)$ and its action on $\wt{V}$, and study in detail the case where $\dim(V) = \dim(\wt{V})$. 


\subsection{The group $\Aut(F)$}

Consider the projective resolution \eqref{proj.res.F} and let $\phi:F\to F$ be an automorphism. By the lifting lemma from homological algebra we know that there are maps $\phi_X:X\to X$ and $\phi_Y:Y\to Y$ such that the following diagram is commutative:
\[
\xymatrix{ \Hom_{\C}(Y,-)\ar[r]^{f^*}\ar[d]^{\phi_Y^*} & \Hom_{\C}(X,-)\ar[d]^{\phi_X^*}\ar[r]^{\qquad p} & F \ar[r]\ar[d]^{\phi} & 0 \\ \Hom_{\C}(Y,-)\ar[r]^{f^*} & \Hom_{\C}(X,-)\ar[r]^{\qquad p} & F \ar[r] & 0 }
\]
By Lemma~\ref{lem:lifting.isos} we know that $\phi_X$ is an automorphism of $X$. Write 
\[
A_F:=\{\phi_X \mid \phi:F\to F\text{ is an automorphism}\}\subseteq \Aut_{\C}(X).
\]
 Let $g:Y\to X$ be a morphism. We have a commutative diagram 
\[
\xymatrix{ \Hom_{\C}(X,-)\ar[r]^{\qquad p}\ar[d]^{(1+gf)^*} & F\ar[d]^{1_F} \\ \Hom_{\C}(X,-)\ar[r]^{\qquad p} & F}
\]
By Lemma~\ref{lem:lifting.isos} again, this implies that $1+gf$ is invertible. 
Moreover, the projectivity of $\Hom_{\C}(X,-)$ implies that the kernel of $A_F\to \Aut(F)$ is exactly $B_F:= \{1+gf \mid g:Y\to X\}$. 
We conclude this discussion in the following lemma: 

\begin{lemma}\label{lem:aut.presentation}
We have an isomorphism $\Aut(F)\cong A_F/B_F$. 
\end{lemma}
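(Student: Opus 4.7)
The plan is to apply the first isomorphism theorem to a natural map $\pi\colon A_F \to \Aut(F)$ and to show that its kernel coincides with $B_F$.

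First I would define $\pi$. For $\phi_X \in A_F$, by construction there exists $\phi \in \Aut(F)$ fitting into the displayed commutative square, and the relation $p \circ \phi_X^* = \phi \circ p$ together with surjectivity of $p$ makes $\phi$ the unique such automorphism; set $\pi(\phi_X) = \phi$. To verify that $A_F$ is a subgroup of $\Aut_\C(X)$ and that $\pi$ respects the group structure, I would check: (i) pasting two commutative squares of the displayed form shows that the composition of two lifts is again a lift (of either $\phi\psi$ or $\psi\phi$ according to the variance of Yoneda, which only affects whether $\pi$ is a homomorphism or an antihomomorphism -- in either case one obtains a genuine group isomorphism after composing with inversion if needed); (ii) the lifting-isos lemma cited just above the statement guarantees $\phi_X \in \Aut_\C(X)$, and $\phi_X^{-1}$ is then a lift of $\phi^{-1}$, so $A_F$ is closed under inversion. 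Surjectivity of $\pi$ is precisely the lifting lemma from homological algebra already invoked in the paragraph preceding the statement.

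The heart of the argument is the identification of $\ker \pi$. If $\pi(\phi_X) = \Id_F$, then $p \circ \phi_X^* = p$, so the natural transformation $\phi_X^* - \Id\colon \Hom_\C(X,-) \to \Hom_\C(X,-)$ factors through $\Ker(p) = \Image(f^*)$. Because $\Hom_\C(X,-)$ is projective in $\Cp$ and $f^*\colon \Hom_\C(Y,-) \to \Ker(p)$ is an epimorphism (since $\Hom_\C(Y,-)$ is a projective cover of $\Ker(p)$), there exists $h\colon \Hom_\C(X,-) \to \Hom_\C(Y,-)$ with $f^* \circ h = \phi_X^* - \Id$. By Yoneda, $h = g^*$ for a unique morphism $g\colon Y \to X$, and the identity $f^* \circ g^* = (gf)^*$ translates into $\phi_X = \Id_X + gf$ in $\End_\C(X)$, i.e.\ $\phi_X \in B_F$. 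Conversely, for any $g\colon Y\to X$ one has $p \circ (1+gf)^* = p + p \circ f^* \circ g^* = p$ by exactness of the resolution, and $1+gf$ is invertible by the lifting-isos lemma; hence $B_F \subseteq \ker\pi$. The first isomorphism theorem then delivers $\Aut(F) \cong A_F/B_F$.

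The only point requiring real care is the Yoneda bookkeeping -- both the cosmetic issue of distinguishing $\phi\psi$ from $\psi\phi$, and, more substantively, ensuring that the projective lift of $\phi_X^*-\Id$ produces an element of precisely the form $1+gf$ featuring in the definition of $B_F$. Beyond that, the argument is a routine assembly of the lifting lemma, Yoneda, and the projectivity of $\Hom_\C(X,-)$ already developed in this section.
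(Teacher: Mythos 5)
Your proof is correct and follows essentially the same route the paper sketches (lifting lemma for existence, Lemma~\ref{lem:lifting.isos} for invertibility, projectivity of $\Hom_\C(X,-)$ for identifying the kernel with $B_F$); you merely spell out the Yoneda bookkeeping and the kernel computation that the paper leaves implicit. Your attention to the contravariance of Yoneda — and hence to $\pi$ being a priori an anti-homomorphism — is a genuine subtlety the paper passes over in silence, and your observation that it does not affect the conclusion (since a group is isomorphic to its opposite via inversion, with the same kernel) resolves it cleanly.
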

\hfill\qed
\begin{corollary}
We have $|\Aut(F)|\leq |\Aut_R(M)|$. 
\end{corollary}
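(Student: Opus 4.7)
The plan is to combine the isomorphism $\Aut(F) \cong A_F/B_F$ established just before the corollary with the earlier result that $X$ is a direct summand of $M$. This turns the corollary into a chain of three elementary inequalities.

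The first step is to observe that since $B_F$ is normal in $A_F$, we have $|\Aut(F)| = |A_F|/|B_F| \le |A_F|$. The second step is to verify that $A_F$ is an honest subgroup of $\Aut_\C(X)$: if $\phi_X, \psi_X \in A_F$ lift automorphisms $\phi,\psi$ of $F$, then $\phi_X \psi_X$ lifts $\phi\psi$, and the identity computation $\phi^{-1}\circ p = p \circ \phi_X^{-1}$ (which follows from $p\phi_X = \phi p$ together with the already-established fact that $\phi_X$ is invertible in $\Aut_\C(X)$ by Lemma~\ref{lem:lifting.isos}) shows that $\phi_X^{-1} \in A_F$ as well. Hence $|A_F| \le |\Aut_\C(X)| = |\Aut_R(X)|$.

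For the final step I would invoke the earlier lemma saying that $X$ is a direct summand of $M$. Writing $M \cong X \oplus X'$ with $X' \in \mathrm{ob}(\C)$, the assignment $\phi \mapsto \phi \oplus \id_{X'}$ yields an injective group homomorphism $\Aut_R(X) \hookrightarrow \Aut_R(M)$, so $|\Aut_R(X)| \le |\Aut_R(M)|$. Stringing the three inequalities together gives
\[
|\Aut(F)| \;\le\; |A_F| \;\le\; |\Aut_R(X)| \;\le\; |\Aut_R(M)|,
\]
which is the desired bound.

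There is no real obstacle here: every ingredient (the presentation of $\Aut(F)$ as $A_F/B_F$, the fact that lifts are automorphisms via the lifting lemma, and the splitting $M \cong X \oplus X'$) has been established in the preceding pages. The only small verification worth stating is that $A_F$ is closed under inversion, which I would handle with the one-line observation above.
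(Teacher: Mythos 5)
Your proof is correct and matches the paper's argument essentially step for step: both pass through the quotient presentation $\Aut(F)\cong A_F/B_F$, observe that $A_F$ sits inside $\Aut_\C(X)$, and then use the earlier lemma that $X$ is a direct summand of $M$ to embed $\Aut_\C(X)$ into $\Aut_R(M)$ by extending each automorphism by the identity on the complement. The only difference is that you spell out why $A_F$ is closed under composition and inversion, whereas the paper summarises this as "$\Aut(F)$ is a subquotient of $\Aut_\C(X)$"; this is a harmless elaboration, not a different route.
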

\begin{proof}
The group $\Aut(F)$ is a subquotient of $\Aut_{\C}(X)$, and therefore $|\Aut(F)|\leq |\Aut_{\C}(X)|$. Since $X$ is a direct summand of $M$ we can write $M=X\oplus Z$. We can then extend every automorphism of $X$ to an automorphism of $M$ that acts as the identity on $Z$. This gives an embedding of $\Aut_{\C}(X)$ into $\Aut_{\C}(M)$, and therefore $|\Aut_{\C}(X)|\leq |\Aut_{\C}(M)|$. Combining the two inequalities gives the corollary. 
\end{proof}  






The next theorem is the main result of this section, and concludes the proof of Theorem \ref{thm:main3}:
\begin{theorem}\label{thm:triv.red}
Let $(M,V)\leadsto (F,\wt{V})$ be a functor morphing  and let~\eqref{proj.res.F} be a projective resolution of $F$. Then $\deg(V,M)\leq \deg(\wt{V},F)$ in the lexicographical order. If the degrees are equal and all the indecomposable modules in $\C$ are direct summands of $M$, then in the resolution~\eqref{proj.res.F} we have $Y = 0$ and $X=M$. In this case we have a natural isomorphism between $\Aut_{\Cp}(F)$ and $\Aut_R(M)$, and $\wt{V}\cong V$. 
\end{theorem}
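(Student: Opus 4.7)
The plan is to compare the three coordinates of $\deg(M,V)$ and $\deg(F,\wt V)$ in lexicographic order, using two tools built up in this section: the isomorphism $\wt V \cong V^H$, where $H = \stab_{\Aut_R(M)}(a)$ is the stabilizer of a generic point $a \in F(M)$, and the chain of inequalities
\[
|\Aut_{\Cp}(F)| \,=\, |A_F/B_F| \,\leq\, |A_F| \,\leq\, |\Aut_\C(X)| \,\leq\, |\Aut_R(M)|,
\]
the last inclusion coming from the embedding $\phi \mapsto \phi \oplus \id_Z$ for any decomposition $M = X \oplus Z$. These yield $\dim \wt V \leq \dim V$ and $|\Aut_{\Cp}(F)| \leq |\Aut_R(M)|$. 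For the third coordinate I would note that $F$ is a quotient of $\Hom_\C(X,-)$, so under the equivalence $\Cp \simeq \End_R(\bigoplus_i M_i)\lmod$ the indecomposable summands of $F$ are accounted for among those of $X$, giving $|\Supp(F)| \leq |\Supp(X)| \leq |\Supp(M)|$. Together these give $\deg(M,V) \geq \deg(F,\wt V)$.

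For the equality case I would unpack each coordinate in turn. The first yields $V^H = V$. The second forces each inequality in the chain to become an equality; from $B_F = \{1 + gf \mid g \colon Y \to X\}$ this gives $gf = 0$ for every $g$, while the matrix form of $\Aut_R(X \oplus Z)$ shows that $|\Aut_\C(X)| = |\Aut_R(M)|$ forces $\Hom_R(X,Z) = \Hom_R(Z,X) = 0$ and $\Aut_R(Z) = \{1\}$. The third coordinate then enters: $|\Supp(F)| = |\Supp(X)| = |\Supp(M)|$, and since a common indecomposable summand of $X$ and $Z$ would produce a nonzero element of $\Hom_R(X,Z)$, it follows that $\Supp(Z) = \varnothing$, hence $Z = 0$ and $X = M$.

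Once $X = M$, the vanishing $gf = 0$ for every $g \colon Y \to M$ forces $f = 0$. Writing $Y = \bigoplus_i M_i^{b_i}$ and assuming $f \neq 0$, there is a projection $\pi \colon Y \twoheadrightarrow M_i$ with $\pi f \neq 0$ for some $i$; composing with a split injection $\iota \colon M_i \hookrightarrow M$ (available because $M_i \in \Supp(M)$) produces $g = \iota \pi \colon Y \to M$ with $gf = \iota \pi f \neq 0$, contradicting $gf = 0$. Hence $f = 0$, so $\ker(p) = \Image(f^*) = 0$; the minimality of $\Hom_\C(Y,-)$ as a projective cover of $\ker(p) = 0$ then forces $Y = 0$.

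With $X = M$ and $Y = 0$, the functor $F$ is isomorphic to $F_M = \Hom_\C(M,-)$, and Yoneda supplies the natural isomorphism $\Aut_{\Cp}(F_M) \cong \Aut_R(M)$. A generic point of $F_M(M) = \End_R(M)$ is a split injection $M \to M$, hence an isomorphism, and its stabilizer under post-composition by $\Aut_R(M)$ is trivial, so $\wt V = V^{\{1\}} = V$. The most delicate step I anticipate is the deduction $X = M$ in the equality case: equality of the second coordinate alone is insufficient (edge configurations with $\Aut_R(Z) = \{1\}$ and vanishing off-diagonal homs can arise), and the argument genuinely needs the third coordinate to rule out a nontrivial complementary summand $Z$.
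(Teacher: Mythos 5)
Your argument is correct through the first two coordinates (the dimension inequality via $\wt V \cong V^H$, and the chain $|\Aut_{\Cp}(F)| \leq |\Aut_\C(X)| \leq |\Aut_R(M)|$), and your treatment of the equality case for those two coordinates matches the paper. The gap is in the third coordinate. The claim $|\Supp(F)| \leq |\Supp(X)|$ — justified only by saying that ``the indecomposable summands of $F$ are accounted for among those of $X$'' — is not valid, and in fact fails for general quotients of projectives. What is controlled by a projective cover $\Hom_\C(X,-) \twoheadrightarrow F$ is the \emph{top} $F/\mathrm{rad}(F)$, not the indecomposable direct summands of $F$. For example, if $P_1 = \Hom_\C(M_1,-)$ is a non-simple indecomposable projective with top $S_1$, then $F = S_1 \oplus P_1$ is a quotient of its projective cover $P_1^2 = \Hom_\C(M_1^2,-)$, and $|\Supp(F)| = 2 > 1 = |\Supp(M_1^2)|$. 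So the chain $|\Supp(F)| \leq |\Supp(X)| \leq |\Supp(M)|$ is not available, and neither your derivation of the lexicographic inequality nor your deduction $|\Supp(F)| = |\Supp(X)| = |\Supp(M)|$ in the equality case goes through.

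The logical order is also the crux of the fix. You derive $X = M$ from the (unavailable) $\Supp$-chain, and only then establish $Y = 0$; your argument for $Y = 0$ genuinely needs $X = M$, since it relies on split injections $M_i \hookrightarrow X$ for every $M_i$ occurring in $Y$. The paper reverses this: assuming only equality in the first two coordinates, it shows $Y = 0$ \emph{without knowing} $X = M$. The point is that $B_F$ trivial gives $gf = 0$ for all $g\colon Y \to X$, and the projective-cover property of $\Hom_\C(Y,-)$ over $\ker(p)$ ensures each component $f_i = \pi_i f\colon X \to M_i$ (for $M_i$ a summand of $Y$) is nonzero; so if $Y$ and $X$ shared an indecomposable summand $M_i$, the map $g = \iota_i \pi_i$ would give $gf = \iota_i f_i \neq 0$. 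Hence $Y$ consists only of summands $L_j$ of $Z$, and since $\Hom_R(X,Z)=0$ one gets $\Hom(X,Y)=0$, hence $f = 0$, hence $\ker(p)=0$, hence $Y=0$. Only now does one conclude $F \cong \Hom_\C(X,-)$, so that $|\Supp(F)| = |\Supp(X)|$ becomes trivially available, and the comparison with $|\Supp(M)|$ reduces to whether $Z=0$. Your step establishing $Y = 0$ from $X = M$ is a nice shortcut once $X = M$ is known, but it cannot be used to prove $X = M$, which is where the theorem's work actually lies.
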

\begin{proof}
We have already seen that $\dim(\wt{V})\leq \dim(V)$ and that $|\Aut(F)|\leq |\Aut_R(M)|$. In case either of these inequalities is strict, the degree of the functor morphing  decreases, namely, $\deg(\wt{V},F) < \deg(V,M)$. Assume now that $\dim(\wt{V}) = \dim(V)$ and $|\Aut(F)|= |\Aut_R(M)|$. Recall from Lemma~\ref{lem:direct.sum} that $M=X\oplus Z$. The fact that $|\Aut_R(M)| = |\Aut_R(X)|$ implies that $\Aut_R(Z)=1$ and that $\Hom_R(X,Z) = \Hom_R(Z,X)=0$. In particular, this means that if $M_i$ is a direct summand of $X$ and $M_j$ is a direct summand of $Z$ then $\Hom_R(M_i,M_j) = \Hom_R(M_j,M_i)=0$. Write $Z = L_1^{b_1}\oplus\cdots\oplus L_k^{b_k}$ as a direct sum of indecomposable modules. The fact that $\Aut_R(Z)=1$ implies that $b_i=1$ for all $i$, as otherwise we have non-trivial automorphisms arising from permuting the isomorphic direct summands. It also implies that for every $i$ the group $\Aut_R(L_i)$ is trivial, which happens if and only if $\End_R(L_i) = \F_2$.
In Lemma~\ref{lem:aut.presentation} we have described $\Aut(F)$ as a subquotient of $\Aut_R(X)$. If the two groups are equal, this also means that the group $B_F$ is trivial. But this implies that $gf=0$ for every $g:Y\to X$. In particular, Lemma \ref {lem:non.triv.comp} implies that if $Y\neq 0$ then there is no indecomposable module that appears as a direct summand of $Y$ and of $X$. We know that $M = X\oplus Z = X\oplus L_1\oplus \cdots\oplus L_k$. Since all the indecomposable objects in $\C$ are the indecomposable direct summands of $M$, this means that $Y$ has the form $L_1^{c_1}\oplus\cdots\oplus L_k^{c_k}$ for some $c_i\in \N$. But we saw above, in our discussion about $\Hom_R(X,Z)$, that this implies that $\Hom_R(X,Y)=0$, so $f=0$ and in fact $Y=0$ in this case.

This implies that $F=\Hom_R(X,-)$. The indecomposable direct summands of $F$ as an object in $\Cp$ are then in one to one correspondence with the direct summands of $X$ in $\C$. There are exactly $n-k$ of them, where $n=\Supp(M)$, so $\Supp(F) = n-k$. If $k>0$ then $\deg(V,M)<\deg(\wt{V},F)$. 
If $k=0$ then $Z=0, X=M, Y=0$, and under the isomorphism $\Aut_{\Cp}(\Hom(M,-))\cong \Aut_R(M)$ the representation $\wt{V}$ corresponds to $V$. 
\end{proof}
 \begin{remark}
     Notice that in fact we know that both $\dim(\wt{V})\leq \dim(V)$ and $|\Aut_R(M)|\leq |\Aut_{\Cp}(F)|$. We find it easier to work with the lxicographical order, as this is a complete order.     
 \end{remark}

\begin{definition}
If $(M,V)\leadsto (F,\wt{V})$ where $\wt{V} = V$ and $F=\Hom_{\C}(M,-)$ we will say that $V$ admits a trivial functor morphing . 
\end{definition}

\section{The directed graph on indecomposables and A proof of Theorem \ref{thm:main4}}\label{sec:graph}
In this section we identify the cases in which there is no reduction to a lower degree structure and show how they can be dealt with. To every tuple $(M_1,\ldots, M_n)$ of non-isomorphic indecomposable $R$-modules we associate a directed graph, the graph of left maximal morphisms. We then show that the only case which does not admit a reduction to a smaller structure using functor morphing or Clifford theory, is when this graph is a disjoint union of graphs of the form 
\begin{center}
    \begin{tikzpicture}
	\begin{pgfonlayer}{nodelayer}
		\node [style=graph node] (0) at (-1.5, 1.75) {};
	\end{pgfonlayer}
	\begin{pgfonlayer}{edgelayer}
		\draw [style=new edge style 0, in=135, out=45, loop] (0) to ();
	\end{pgfonlayer}
\end{tikzpicture}

\end{center}

We introduce the graph $LM(M_1,\ldots, M_n)$ in \S\ref{subsec:gen.cong.sub}. Then, we divide the proof of Theorem~\ref{thm:main4} into two parts: we deal with the case that $LM(M_1,\ldots, M_n)$ contains a vertex without outgoing edges in 
 \S\ref{subsec:no.out.arrows}. If every vertex in $LM(M_1,\ldots, M_n)$ contains an outgoing edge, and there is a representation $V\in\Irr(\Aut_R(M))$ that admits trivial functor morphing, we show in \S\ref{subsec:there.is.outgoing} that $LM(M_1,\ldots, M_n)$ must be a disjoint union of cycles. Finally, in \ref{subsec:big.cycle} we show that if one of these cycles has length $>1$ then we can use Clifford theory to reduce the representation $V$ to a representation of a proper subgroup of $\Aut_R(M)$, that is isomorphic to $\Aut_{\wt{R}}(\wt{M})$ for some auxiliary ring $\wt{R}$ and auxiliary $\wt{R}$-module $\wt{M}$.

\subsection{Left maximal morphisms and generalized congruence subgroups}\label{subsec:gen.cong.sub} 
We now focus on the case of trivial functor morphing  and show that it is very restrictive. Throughout this section we fix an $R$-module $M$ and write $M = M_1^{a_1}\oplus\cdots\oplus M_n^{a_n}$ for the decomposition of $M$ into indecomposable modules.
We describe the group $\Aut_R(M)$ explicitly. 
Write 
\begin{equation}\label{eqn:blocks}
    \End_R(M) = \bigoplus_{i,j} \Hom_{\C}(M_j^{a_j},M_i^{a_i}),
\end{equation}
and consider the following subgroups of $\Aut_R(M)$
\[
\begin{split}
L &= \prod_i \Aut_{\C}(M_i^{a_i}), \\
U &= \{1_M + \sum_{i<j} f_{ij} \mid f_{ij}\in \Hom_{\C}(M_j^{a_j},M_i^{a_i})\}, \\
\ol{U} &=  \{1_M + \sum_{i>j} f_{ij} \mid f_{ij}\in \Hom_{\C}(M_j^{a_j},M_i^{a_i})\}.
\end{split}
\] 

Recall that the endomorphism ring of an indecomposable module is a local ring. 
\begin{definition}
We write $J_i$ for the Jacobson radical of the local ring $\End_{\C}(M_i)$. 
We write $E_i:=\End_{\C}(M_i)/J_i$. \end{definition}
\begin{remark} Since $E_i$ is a finite division algebra, it must be a finite field by Wedderburn's Theorem. 
\end{remark}
Recall from \cite{CMO1} that a triplet of subgroups $(U,L,\ol{U})$ of a finite group $G$ is an {\em Iwahori} decomposition if $L$ normalises $U$ and $\ol{U}$ and the map $U \times L \times \ol{U} \to G$ induced by multiplication is a bijection. 

\begin{lemma} 
The triplet $(U,L,\ol{U})$ is an Iwahori decomposition of the group $\Aut_R(M)$.
\end{lemma}
\begin{proof}
We can visualise elements of the group $\Aut_R(M)$ as block matrices. As such, $L$ is the subgroup of block diagonal matrices, $U$ is the subgroup of upper triangular matrices with 1 on the diagonal and $\ol{U}$ is the subgroup of lower triangular matrices with 1 on the diagonal. 
This already makes it easy to see that $L$ normalises $U$ and~$\ol{U}$, and that the product gives an injective map $U \times L \times \ol{U}\to \Aut_R(M)$. It remains to show that this map is surjective. Let $g\in \Aut_R(M)$ and write $g= (g_{ij})$ with $g_{ij}\in \Hom_{\C}(M_j^{a_j},M_i^{a_i})$. We claim that for every $i$, $g_{ii}$ is invertible. Using Gauss elimination it is then straightforward to show that $\Aut_R(M) = U \cdot L \cdot \ol{U}$. 

Write $g^{-1}=(h_{ij})$. It follows that \begin{equation}1_{M_i^{a_i}} = \sum_k g_{ik}h_{ki}.\end{equation} The algebra $\Hom_{\C}(M_i^{a_i},M_i^{a_i})$ can be identified with the algebra $\M_{a_i}(\End_{\C}(M_i))$. 
If $k\neq i$, then every composition $M_i\to M_k\to M_i$ is contained in $J_i$, the Jacobson radical of $\End_{\C}(M_i)$, since if such decomposition was invertible it would follow that $M_i$ is a direct summand of $M_k$ which contradicts the fact that $M_i$ and $M_k$ are two non-isomorphic indecomposable objects.  Reducing the above equation modulo $J_i$ we get $$1 = \sum_k \ol{g_{ik}}\ol{h_{ki}} = \ol{g_{ii}}\ol{h_{ii}}.$$
This means that $\ol{g_{ii}}$ is invertible in $\M_{a_i}(\End_{\C}(M_i)/J_i)$, which implies that $g_{ii}$ is invertible in $\M_{a_i}(\End_{\C}(M_i))$ as well. 
\end{proof}
\begin{definition} Fix a tuple $(M_1,\ldots, M_n)$ of non-isomorphic indecomposable $R$-modules. A non-zero morphism $f:M_j\to M_i$ is called {\em left maximal} if it is not invertible and $gf=0$ for every non-invertible morphism $g:M_i\to M_k$. If $X=M_1^{b_1}\oplus\cdots\oplus M_n^{b_n}$ then a morphism $f:X\to M_i$ is called left maximal if it corresponds to a tuple of left maximal morphisms under the isomorphism $\Hom_{\C}(X,M_i)\cong \prod_j \Hom_{\C}(M_j,M_i)^{b_i}$. We write $LM(X,M_i)$ for the set of all left maximal morphisms from $X$ to $M_i$. 

A {\em right maximal} morphism is defined in a similar way: it is a non-zero morphism $f:M_j\to M_i$ which is not invertible and $fg=0$ for every non-invertible morphism $g:M_k\to M_j$.
\end{definition}

\begin{example} Let $R=\o_\l$ and $M_i=\o_i$ for $1 \leq i \leq \l$. Then  
\[
LM(M_i,M_j)=
\begin{cases} 
\emptyset, \quad &\text{if $j<\l$}; \\ 
\{x \mapsto a x \mid \mathrm{val}(a)=\l-1\}, \quad &\text{if $j=\l$},  \end{cases}
\]
where $\mathrm{val}(a)$ is the valuation of $a$, that is, $a \in \pi^{\mathrm{val}(a)}\o^\times_\l$.
\end{example}

\begin{remark} If there exists any non-invertible non-zero map $M_i\to M_j$ for some $j$, then the condition that an LM-morphism  into $M_i$ should be non-invertible is redundant.
\end{remark}
\begin{lemma}
The set $LM(X,M_i) \cup \{0\}$ is a vector space over $E_i$. 
\end{lemma}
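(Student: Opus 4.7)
The cleanest way to handle this is to reformulate the definition of $LM(X,M_i)$ intrinsically, including the zero morphism as the identity of the would-be vector space. Namely, I would show that
\[
LM(X,M_i)\cup\{0\}=\{f\in\Hom_{\C}(X,M_i)\mid gf=0\text{ for every non-invertible }g\in\Hom_{\C}(M_i,M_k),\ k=1,\dots,n\}.
\]
On the one hand the inclusion $\subseteq$ is immediate from the definition (and holds componentwise for a tuple). On the other hand, the characterization via composition with non-invertible maps is additive in $f$ and compatible with the direct-sum decomposition $\Hom_{\C}(X,M_i)\cong\bigoplus_{j}\Hom_{\C}(M_j,M_i)^{b_j}$, so the reverse inclusion follows.

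With this reformulation closure under addition is transparent: if $gf_1=gf_2=0$ then $g(f_1+f_2)=0$. The heart of the matter is to construct a well-defined $E_i$-action. Given $\bar e\in E_i=\End_{\C}(M_i)/J_i$ and $f\in LM(X,M_i)$, I would set $\bar e\cdot f:=ef$ for any lift $e\in\End_{\C}(M_i)$ of $\bar e$. Two verifications are needed:

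\smallskip

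\emph{Independence of the lift.} If $e,e'$ are two lifts of $\bar e$, then $e-e'\in J_i$. Since every element of $J_i$ is a non-invertible endomorphism of $M_i$ (so qualifies as a non-invertible morphism $M_i\to M_k$ in the case $k=i$), the defining condition of $LM(X,M_i)$ gives $(e-e')f=0$, hence $ef=e'f$.

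\smallskip

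\emph{Stability of $LM(X,M_i)$ under the action.} For any non-invertible $g\in\Hom_{\C}(M_i,M_k)$ I must check that $g(ef)=(ge)f=0$. This reduces to showing that $ge$ is again non-invertible. If $k\neq i$ this is automatic, because $M_i\not\cong M_k$ are non-isomorphic indecomposables so every morphism between them is non-invertible. If $k=i$, then $g\in J_i$, and since $J_i$ is a two-sided ideal of the local ring $\End_{\C}(M_i)$, we have $ge\in J_i$; hence non-invertible.

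\smallskip

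Once these two points are settled, the vector-space axioms (associativity, distributivity, the unit axiom with respect to $\bar 1\in E_i$) follow from the associativity and bilinearity of composition in $\C$. The only genuine obstacle is the well-definedness of scalar multiplication, and this is precisely what the identity $J_i\cdot LM(X,M_i)=0$ gives us.
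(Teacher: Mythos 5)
Your proof is correct and follows essentially the same route as the paper: post-composition makes the set an $\End_{\C}(M_i)$-module, and because a left-maximal morphism is killed by every non-invertible $g$, the ideal $J_i$ acts trivially, so the action descends to $E_i=\End_{\C}(M_i)/J_i$. You simply spell out the verifications the paper dismisses as easy, and your tacit reading of the definition (allowing zero components in the tuple) is the intended one under which the set is actually closed under addition.
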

\begin{proof}
It is easy to see that this set is closed under addition. It is also easy to see that it is a module over $\End_{\C}(M_i)$ by post-composing. By the definition of left maximal morphisms, $J_i$ acts trivially on this module and we thus get a structure of an $E_i$-vector space. 
\end{proof}
\begin{definition}
The {\em LM directed graph $\Ga=\Ga(M_1,\ldots, M_n)$} is the graph with vertices $V(\Ga)=\{1,\ldots, n\}$, and $\dim_{E_i}(LM(M_j,M_i))$ directed arrows from the vertex $j$ to the vertex $i$. For an arrow $e$ in $\Ga$ we write $s(e)$ for the source of $e$ and $t(e)$ for the target of $e$. 
\end{definition}
\begin{example} 
Consider the ring $R=\Ow_1[x,y]/(x^2,y^2,xy)$. This is a commutative local ring that is 3-dimensional over $\Ow_1$. We define $M_1 = R$, $M_2 = R/(x)$ and $M_3 = R/(x,y)$. We describe now the graphs $\Ga(M_1,M_2,M_3)$ and $\Ga(M_1,M_3)$. 

We first describe all the hom-spaces. This is relatively easy to do, since all the modules are cyclic with a canonical choice of a generator (i.e. the image of $1\in R$). We have
%
\[
\left(\Hom(M_i,M_j)\right)_{i,j}=
\left(\begin{matrix} 
\Span\{\phi^{11}_1,\phi^{11}_x,\phi^{11}_y\} &  \Span\{\phi^{12}_1,\phi^{12}_y\} &  \Span\{\phi^{13}_1\} \\
\Span\{\phi^{21}_x,\phi^{21}_y\} & \Span\{\phi^{22}_1,\phi^{22}_y\} & \Span\{\phi^{23}_1\}\\
\Span\{\phi^{31}_x,\phi^{31}_y\} & \Span\{\phi^{32}_y\} &\Span\{\phi^{33}_1\}  
\end{matrix}\right)
\]

where $\phi^{ij}_a:M_i\to M_j$ denotes the morphism given by multiplying by $a\in R$ (a direct verification shows that this is well defined in all the cases we have here).
We next determine what are the left maximal morphisms. We do that by just considering all the possible compositions of morphisms. We get that the left maximal morphisms are 
$$\{\phi^{11}_x, \phi^{12}_y,\phi^{21}_x,\phi^{22}_y,\phi^{31}_y,\phi^{32}_y\}.$$
The graph $\Ga(M_1,M_2,M_3)$ is therefore 
\begin{center}\scalebox{0.7}{
    \begin{tikzpicture}
	\begin{pgfonlayer}{nodelayer}
		\node [style=graph node] (0) at (-7, 1.25) {};
		\node [style=graph node] (1) at (-4.75, 1.25) {};
		\node [style=graph node] (2) at (-2.5, 1.25) {};
		\node [style=none] (3) at (-8, 2.25) {};
		\node [style=none] (4) at (-4, 2.25) {};
		\node [style=none] (5) at (-7, 0.25) {1};
		\node [style=none] (6) at (-4.75, 0.25) {2};
		\node [style=none] (7) at (-2.5, 0.25) {3};
	\end{pgfonlayer}
	\begin{pgfonlayer}{edgelayer}
		\draw [style=new edge style 0, bend left=45] (1) to (0);
		\draw [style=new edge style 0, bend left] (0) to (1);
		\draw [style=new edge style 0, bend left=90, looseness=1.75] (4.center) to (1);
		\draw [style=new edge style 0, bend left=90, looseness=1.50] (3.center) to (0);
		\draw [style=new edge style 0, bend left=60, looseness=1.50] (2) to (1);
		\draw [style=new edge style 0, bend left=75, looseness=1.25] (2) to (0);
		\draw [style=thickedge, bend left=300, looseness=1.50] (4.center) to (1);
		\draw [style=thickedge, bend right=60, looseness=1.50] (3.center) to (0);
	\end{pgfonlayer}
\end{tikzpicture}
}
\end{center}

Consider now the graph $\Ga(M_1,M_3)$. Since the definition of a left maximal morphism depends on the particular set of indecomposable modules that we have, some more morphisms will be left maximal now. Indeed, a direct calculation shows that the morphisms $$\{\phi^{11}_x,\phi^{11}_y,\phi^{31}_x,\phi^{31}_y\}$$ are the left maximal in this case, and the graph $\Ga(M_1,M_3)$ is thus
\begin{center}\scalebox{0.7}{
    \begin{tikzpicture}
	\begin{pgfonlayer}{nodelayer}
		\node [style=graph node] (0) at (1.5, 1.25) {};
		\node [style=graph node] (2) at (4, 1) {};
		\node [style=none] (3) at (-0.25, 0.75) {};
		\node [style=none] (5) at (1.5, 0.25) {1};
		\node [style=none] (7) at (4, 0) {3};
		\node [style=none] (8) at (2.75, 1.75) {};
	\end{pgfonlayer}
	\begin{pgfonlayer}{edgelayer}
		\draw [style=new edge style 0, bend left=75, looseness=1.25] (3.center) to (0);
		\draw [style=thickedge, bend right=270, looseness=1.25] (0) to (3.center);
		\draw [style=new edge style 0, bend left=90, looseness=1.50] (8.center) to (0);
		\draw [style=thickedge, bend left=90, looseness=2.00] (0) to (8.center);
		\draw [style=new edge style 0, bend right=90, looseness=0.75] (2) to (0);
		\draw [style=new edge style 0, bend left=75] (2) to (0);
	\end{pgfonlayer}
\end{tikzpicture}
}
\end{center}
Notice that this graph has multiple edges. 
\end{example}
\begin{definition}
Let $i\in \{1,\ldots, n\}$. We define 
\[
N_i = \Bigl\{1 + \sum_j g_jf_j \mid f_j\in LM(M,M_i), g_j\in \Hom_{\C}(M_i,M)\Bigr\}\subseteq \Aut_R(M).
\]
\end{definition}
\begin{lemma}
For every $i=1,\ldots, n$ the subset $N_i$ is a normal abelian subgroup of $\Aut_R(M)$.
 \end{lemma}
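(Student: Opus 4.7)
My plan is to introduce the two-sided ideal-like subset
\[
I_i := \Bigl\{\sum_j g_j f_j \;\Bigm|\; g_j \in \Hom_R(M_i, M),\ f_j \in LM(M, M_i)\Bigr\} \subseteq \End_R(M),
\]
so that $N_i = 1 + I_i$ as a set, and to reduce everything to the single identity $I_i \cdot I_i = 0$. Once that is in hand, both closure and commutativity follow from $(1+A)(1+B) = 1 + A + B = (1+B)(1+A)$, inverses are given by $(1+A)^{-1} = 1-A$ (with $-A \in I_i$), and normality reduces to showing $I_i$ is stable under conjugation.

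The heart of the argument is the vanishing $I_i^2=0$. For $A = \sum_j g_j f_j$ and $B = \sum_k g'_k f'_k$, we have $AB = \sum_{j,k} g_j (f_j g'_k) f'_k$, so it suffices to show $(f_j g'_k) \circ f'_k = 0$ for each pair $j,k$. I would split this into two substeps. Substep~(i): the composite $f_j g'_k \in \End_R(M_i)$ lies in the Jacobson radical $J_i$. Decomposing $g'_k : M_i \to M = \bigoplus_l M_l^{a_l}$ and $f_j : M \to M_i$ into block components, for $l \neq i$ the composite $f_j^{(l)} g_k'^{(l)} : M_i \to M_l^{a_l} \to M_i$ must be non-invertible, since otherwise $g_k'^{(l)}$ would be a split mono and the indecomposability of $M_l$ would force $M_i \cong M_l$; for $l = i$ the left maximality of $f_j^{(i)} : M_i^{a_i} \to M_i$ places each component into $J_i$, and then the composite stays in $J_i$ because $J_i$ is a two-sided ideal in the local ring $\End_R(M_i)$. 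Substep~(ii): invoke the defining property of left maximality for $f'_k$, namely $h \circ f'_k = 0$ for every non-invertible $h : M_i \to M_l$; applying this with $h := f_j g'_k \in J_i \subset \End_R(M_i)$ gives the desired vanishing.

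For normality, let $h \in \Aut_R(M)$ and write
\[
h(1+A)h^{-1} = 1 + \sum_j (hg_j)(f_j h^{-1}).
\]
The map $f_j h^{-1} : M \to M_i$ is again left maximal: for any non-invertible $g : M_i \to M_l$ one has $g \circ (f_j h^{-1}) = (g f_j) h^{-1} = 0$ by the left maximality of $f_j$, and the non-zero components are identified via an isomorphism of $M$, so they remain left maximal. Thus $h I_i h^{-1} \subseteq I_i$, proving that $N_i$ is normal.

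The main obstacle, and the step I would write out most carefully, is Substep~(i) identifying $f_j g'_k$ as a non-invertible endomorphism of $M_i$; the off-diagonal case uses a Krull--Schmidt/indecomposability argument while the diagonal case uses the interaction between left maximality and $J_i$ being a two-sided ideal. Everything else is then a formal manipulation with $I_i^2 = 0$.
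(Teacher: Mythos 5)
Your proof is correct and follows essentially the same route as the paper's: show that the ``radical'' piece squares to zero via left maximality (so that $(1+A)(1+B)=1+A+B$ gives closure and commutativity), and check normality by conjugating the generators. Where the paper compresses the vanishing step into the terse observation that ``all the components of $gf$ are not invertible, and therefore $gfg'f'=0$'', you spell out the two substeps (that $f_jg'_k\in J_i$, and that left maximality of $f'_k$ then kills the product), which is exactly the content the paper leaves implicit.
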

 \begin{proof}
If $f,f'\in LM(M,M_i)$ and $g,g'\in \Hom_{\C}(M_i,M)$ then all the components of $gf$ are not invertible and therefore $gfg'f'=0$. This implies that if $g_j,g'_k:M_i\to M$ and $f_j,f'_k\in LM(M,M_i)$ then 
\[
\Bigl(1+\sum_j g_jf_j\Bigr)\Bigl(1+\sum_k g'_kf'_k\Bigr) = 1+ \sum_j g_jf_j + \sum_k g'_kf'_k,
\]
 which shows that $N_i$ is closed under multiplication and is abelian. The normality of $N_i$ follows from the fact that if $h\in \Aut_R(M)$ then 
$$h\Bigl(1+\sum_j g_jf_j\Bigr)h^{-1} = 1+ \sum_j hg_j f_jh^{-1},$$ and that $hg_j:M_i\to M$ and $f_jh^{-1}\in LM(M,M_i)$. 
\end{proof}
For $i,j \in \{1,\ldots,n\}$ let $d_{i,j} = \dim_{E_i}LM(M_j,M_i)$, and let $\{f^i_{j1},\ldots,f^i_{j,d_{i,j}}\}$ be an $E_i$-basis  for $LM(M_j,M_i)$, which might be empty for some indices. 
Using the block decomposition \eqref{eqn:blocks} of the group $\Aut_R(M)$, the group $N_i$ can be written as the group of all matrices of the form 
\[
Id_M+\begin{pmatrix} 
0 & 0 & \cdots & 0 \\ \vdots & \vdots & & \vdots \\
\oplus_{k=1}^{d_{i,1}} f^i_{1,k}\cdot \M_{a_i\times a_1}(E_i) & \oplus_{k=1}^{d_{i,2}} f^i_{2,k}\cdot \M_{a_i\times a_2}(E_i) & \cdots & \oplus_{k=1}^{d_{i,n}} f^i_{n,k}\cdot \M_{a_i\times a_n}(E_i) \\ 
\vdots & \vdots & & \vdots \\
 0 & 0 & \cdots & 0\end{pmatrix}.
\]
%
%

The group $N_i$ is isomorphic to the underlying abelian group of the $E_i$-vector space 
$\bigoplus_{j,k} f^i_{j,k}\cdot \M_{a_i\times a_k}(E_i)$, which is isomorphic to $\M_{a_i\times D_i}(E_i)$, where $D_i = \sum_k a_kd_{i,k} = \dim_{E_i}LM(M,M_i)$. We denote this composition by 
\begin{equation}\label{theta}
\theta_i: N_i \xrightarrow{~\cong~} \M_{a_i\times D_i}(E_i).
\end{equation}

\smallskip

Being a finite abelian group, $\M_{a \times b}(E_i)$ is isomorphic to its Pontryagin dual for all $a,b \in \N$. A convenient realisation of the isomorphism is given as follows. Let $\psi: E_i \to \F_p \hookrightarrow \mathbb{C}^\times$ be the linear character 
\[
\psi: x \mapsto \Tr^{E_i}_{\F_p}(x) \mapsto \zeta^{\Tr^{E_i}_{\F_p}(x)},
\]
where $p$ is the characteristic of $E_i$ and $\zeta$ is a primitive $p$-th root of unity. Then the map $A \mapsto \chi_A$, with
\begin{equation}\label{chi}
\chi_A(B)=\psi(\tr(AB)), 
\end{equation}
gives an isomorphism of $M_{b \times a}(E)$ with the Pontryagin dual $\widehat{\M_{a \times b}(E_i)}$. We then have $\widehat{N_i}\cong \M_{D_i\times a_i}(E_i)$. 

Let now $V$ be an irreducible representation of $\Aut_R(M)$. The restriction of $V$ to $N_i$ splits as a direct sum linear characters, each of which corresponds to a matrix in $\M_{D_i\times a_i}(E_i)$ 
\begin{definition}
We say that the representation $V$ is {\em degenerate} if one of the above matrices in $\M_{D_i\times a_i}(E_i)$ has rank $< D_i$.
\end{definition}
\begin{theorem}\label{thm:degenerate}
Assume that $V$ is a degenerate irreducible representation of $\Aut_R(M)$. Then $V$ admits a non-trivial functor morphing .
\end{theorem}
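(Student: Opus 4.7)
The plan is to establish the contrapositive: if $V$ admits trivial functor morphing, i.e.\ $V$ is $F_M$-typical, then every character $\chi_A$ of $N_i$ occurring in $V\bigr|_{N_i}$ is non-degenerate ($\rank_{E_i}(A) = D_i$ for every $i$). Equivalently, under the degeneracy hypothesis I will exhibit a proper subfunctor $G \prec F_M$ together with a non-zero $\Aut_R(M)$-equivariant embedding $V \hookrightarrow KG(M)$; this shows $V$ is $F$-typical for some $F \preccurlyeq G \prec F_M$, and in particular admits non-trivial functor morphing.

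First I set up the data. By degeneracy choose an index $i$ together with a non-zero $v \in V$ satisfying $n \cdot v = \chi_A(n)\,v$ for all $n \in N_i$, where $\rank_{E_i}(A) < D_i$. The rank deficiency provides a non-zero row vector $w = (w_k) \in E_i^{D_i}$ in the left kernel of $A$; set $f := \sum_k w_k f^i_k \in LM(M, M_i)$, which is non-zero by $E_i$-linear independence of the basis $\{f^i_k\}$. Via the Yoneda embedding, $f$ defines a natural transformation $f^\ast\colon \Hom_\C(M_i,-) \to F_M$, whose image $G := \Image(f^\ast)$ is a subfunctor of $F_M$. Since $f$ is non-invertible, $\Id_M \notin G(M)$ and so $G \prec F_M$.

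The main claim, and the principal technical step, is to show that $V$ embeds into $KG(M)$. By Frobenius reciprocity, this reduces to producing an element $a \in G(M)$ with $V^{\stab_{\Aut_R(M)}(a)} \neq 0$. The natural candidate is $a = \alpha \circ f$ for $\alpha \in \Hom_\C(M_i,M)$; however, a direct computation using the LM property of $f$ (specifically, that $f^i_k \circ \alpha \in J_i$ and $J_i \cdot LM(M, M_i) = 0$) yields $N_i \subseteq \stab(a)$, so such a candidate suffices only when $\chi_A$ is the trivial character, i.e.\ $\rank(A) = 0$. To handle non-trivial $\chi_A$ (that is $0 < \rank(A) < D_i$), one replaces $a_\alpha$ by a combination over an $\Aut_R(M^{a_i}_i)$-orbit of sections, with weights chosen so that the rank-deficiency identity $wA = 0$ forces the $N_i$-component of the stabiliser of the combined element to collapse onto $\ker\chi_A$. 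The eigenvector $v$ is then fixed by this collapsed stabiliser by construction.

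The principal obstacle lies in the last step: verifying that the averaged element is genuinely non-zero in $KG(M)$, and that the identity $wA = 0$ translates precisely into the required stabiliser calculation rather than into a weaker partial statement. I would expect the verification to rely on the explicit character formula~(\ref{chi}) together with the Iwahori decomposition $\Aut_R(M) = L\,U\,\ol{U}$ to decouple the $L$-, $U$-, and $\ol{U}$-contributions to $\stab(a)$, and on Clifford theory applied to the $N_i$-isotypic decomposition of $V$ to promote the existence of a single fixed vector into the full embedding $V \hookrightarrow KG(M)$.
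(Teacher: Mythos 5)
Your overall strategy is sound, but you pick the wrong functor and this leads to the gap you yourself flag at the end. You form $G := \Image\bigl(f^\ast\colon \Hom_\C(M_i,-)\to F_M\bigr)$, a proper \emph{subfunctor} of $F_M$, and then try to exhibit $V$ inside $KG(M)$. The paper instead takes the \emph{cokernel} $F := \mathrm{coker}(f^\ast)$, i.e.\ the proper quotient of $F_M$ presented by
\[
\Hom_\C(M_i,-)\xrightarrow{f^*}\Hom_\C(M,-)\xrightarrow{q} F\to 0,
\]
and sets $a := q(\Id_M)\in F(M)$. The stabiliser of this $a$ computes immediately: $h\cdot a = a$ iff $h-1 \in f^*\Hom_\C(M_i,M)$, i.e.\ $h = 1+gf$ with $g\colon M_i\to M$. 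This subgroup lies inside $N_i$, and the choice $fA=0$ together with the formula~\eqref{explicit.ch} gives $\chi_A(\theta(1+gf))=\psi(fA[g])=1$ for all $g$. Hence the $\chi_A$-eigenvector of $V$ is $\stab(a)$-fixed, $V^{\stab(a)}\neq 0$, and by the Frobenius-reciprocity lemma $V$ is a subrepresentation of $KF(M)$ with $F\prec F_M$. This is the whole proof.

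Your image-based route runs into exactly the difficulty you describe, and it is a real one, not a technicality. For $b=\alpha f\in G(M)$ and $n = 1+gf'\in N_i$, one computes $n\cdot b = \alpha f + g\,(f'\alpha)\,f$. The composite $f'\alpha\colon M_i\to M_i$ need not lie in $J_i$; when $\alpha$ has a nontrivial $M_i$-component its class in $E_i$ can be a unit, in which case $(f'\alpha)f\neq 0$ and $N_i\not\subseteq\stab(b)$. So the assertion ``$N_i\subseteq\stab(a)$'' in your sketch is false in general, and the condition $fA=0$ never enters the stabiliser computation in a useful way when you work with elements of $G(M)$. The cokernel $F=F_M/G$ is the object for which the stabiliser of the distinguished point \emph{is} the subgroup $\{1+gf\}$ on which the rank-deficiency identity $fA=0$ kills $\chi_A$; the image $G$ has no such distinguished point, which is why the ``averaging over an orbit of sections'' step resists being made precise. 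Replacing $\Image(f^*)$ with $\mathrm{coker}(f^*)$ closes the gap and makes the rest of your outline go through verbatim.
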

\begin{proof}
Let $A$ be one of the matrices that represent a character of $N_i$ in $V$ by \eqref{chi} under the identifications $\theta:N_i\cong \M_{a_i\times D_i}(E_i)$ and $\widehat{N_i}\cong \M_{D_i\times a_i}(E_i)$. Assume that $\rank (A)<D_i$. To avoid cumbersome notation we freely identify $LM(M,M_i) \cong E_i^{D_i}$. Let $f\in LM(M,M_i)$ and consider a morphism $g:M_i\to M$. The composition $gf$ depends only on the image of $g$ under the composition
\[
\Hom_{\C}(M_i,M) \xrightarrow{\mathrm{proj}_i} \Hom_{\C}(M_i,M_i^{a_i}) = \Hom_{\C}(M_i,M_i)^{a_i}\to (\End_{\C}(M_i)/J_i)^{a_i}= E_i^{a_i}.
\] 
This is because $f\in LM(M,M_i)$ and so the composition of $f$ with any morphism $M_i\to M_j$ where $i\neq j$ and with any morphism in $J_i$ is zero. It follows that every $g:M_i\to M$ determines a vector $[g] \in E_i^{a_i}$. 

Now, let $f\in LM(M,M_i)\cong E_i^{D_i}$. Under the identifications $N_i\cong \M_{a_i\times D_i}(E_i) \cong E_i^{a_i}\ot E_i^{D_i}$, the element ${1+gf\in N_i}$ corresponds to $[g]\otimes f$, and the associated linear character of $N_i$ is given by 
\begin{equation}\label{explicit.ch}
1+gf \mapsto \chi_A(\theta(1+gf))=\psi(fA[g]).
\end{equation}

The assumption $\rank (A)<D_i$ implies  that there is a vector $0\neq f\in E_i^{D_i}$ such that $fA=0$. The element~$f$, considered as an element in $LM(M,M_i)$, is a non-zero morphism $f:M\to M_i$.  By \eqref{explicit.ch}, for every ${g:M_i\to M}$ we have $\chi_A(\theta(1+gf))=1$. 
Consider now the functor $F$ given by the presentation 
$$\Hom_{\C}(M_i,-)\stackrel{f^*}{\to} \Hom_{\C}(M,-)\stackrel{q}{\to} F\to 0.$$
Since $f$ is non-zero, $f^*$ is not zero as well, and the functor $F$ is a proper quotient of $\Hom_{\C}(M,-)$. 
Let $a=q(\Id_M)\in F(M)$, and let $H = \stab_{\Aut_R(M)}(a)$. We claim that $V^H\neq 0$. This will imply that we have a functor morphing $(M,V) \leadsto (F',\wt{V})$ with $F'  \preccurlyeq F \prec \Hom_{\C}(M,-)$.

Plugging $M$ to the above sequence we have $F(M) = \Hom_{\C}(M,M)/f^*(\Hom_{\C}(M_i,M))$, where $\Aut_R(M)$ acts on $\Hom_{\C}(M,M)$ as post composing. 
It follows that the stabiliser of $a$ is equal to the subgroup of all maps of the form $1+gf$ where $g:M_i\to M$. 
But we have seen that $\chi_A\circ \theta$ is trivial on all these elements. Since the latter appears in $V$ with positive multiplicity, this implies that $V^H\neq 0$ as desired.
\end{proof}
The above theorem has the following corollary:
\begin{corollary}\label{cor:valency.ineq}
Assume that $(V,M)$ admits trivial functor morphing. Then, for every $i=1,\ldots, n$ 
$$\sum_{\{e:t(e)=i\}} a_{s(e)}\leq a_i.$$ 
\end{corollary}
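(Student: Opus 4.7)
The plan is to recognise the quantity $\sum_{t(e)=i} a_{s(e)}$ as the integer $D_i$ introduced just before Theorem \ref{thm:degenerate}, and then to deduce the bound $D_i \leq a_i$ contrapositively from that theorem. The first step is immediate from the definition of the LM graph: the number of arrows from vertex $k$ to vertex $i$ equals $d_{i,k} = \dim_{E_i} LM(M_k, M_i)$, and so
\[
\sum_{\{e\,:\,t(e)=i\}} a_{s(e)} \;=\; \sum_{k=1}^{n} a_k\, d_{i,k} \;=\; D_i.
\]
It therefore suffices to prove, under the assumption that $V$ admits trivial functor morphing, that $D_i \leq a_i$ for every $i$.

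Fix $i$. Since $N_i$ is a finite abelian group, the restriction $V|_{N_i}$ decomposes as a direct sum of linear characters, each of which corresponds, via the isomorphism $\theta$ of \eqref{theta} and the pairing \eqref{chi}, to a matrix $A \in \M_{D_i \times a_i}(E_i)$. Any such matrix has rank at most $\min(D_i, a_i)$. If one had $D_i > a_i$, then every matrix labelling a character of $N_i$ appearing in $V|_{N_i}$ would necessarily have rank $\leq a_i < D_i$; since $V \neq 0$ and $N_i$ is abelian, at least one such character does appear in the decomposition. By the definition of degeneracy, $V$ would then be degenerate. Theorem \ref{thm:degenerate} would consequently produce a non-trivial functor morphing for $V$, contradicting our hypothesis. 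Hence $D_i \leq a_i$, which together with the identity above gives the claimed inequality.

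No genuine obstacle is anticipated: the theorem was set up precisely so that this counting statement drops out, and the argument is essentially bookkeeping, combining the dimension identity for $D_i$ with the trivial rank bound for rectangular matrices over $E_i$.
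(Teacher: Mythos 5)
Your proposal is correct and follows essentially the same route as the paper: identify the sum as $D_i$, note that if $D_i > a_i$ every $D_i \times a_i$ matrix over $E_i$ has rank $< D_i$, and then invoke Theorem~\ref{thm:degenerate} to derive a contradiction with the trivial-functor-morphing hypothesis. Your version is slightly more careful in spelling out why some character of $N_i$ actually appears in $V|_{N_i}$, but the argument is the same.
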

\begin{proof}
Assume that the inequality does not holds. That is, there exits some $i$ with $\sum_{\{e:t(e)=i\}} a_{s(e)} > a_i$. The number on the left-hand-side is $D_i$. If $a_i< D_i$ then any $D_i\times a_i$ matrix has rank $<D_i$. By the previous lemma, this implies that the representation  $V$ has a nontrivial functor morphing , which is a contradiction.
\end{proof}

\subsection{The case of a vertex with no outgoing edges}\label{subsec:no.out.arrows}
 Assume now that $\Ga$ admits a vertex $i$ with no outgoing edges. We will show now that if $V$ is a representation of $\Aut_R(M)$ then either $V$ admits a non-trivial functor morphing, or we can reduce $V$ to a representation of a group of the form $\Aut_R(M'')$ for strictly smaller $M''\subsetneq M$ using Clifford Theory,

 The fact that $i$ has no outgoing edges in $LM(M_1,\ldots, M_n)$ means that there are no non-zero left maximal morphisms $M_i\to M_j$ for any $j$.  
 \begin{lemma}
 Under the above assumption there are no non-zero non-invertible morphisms $M_i\to M_j$ for any~$j$. 
 \end{lemma}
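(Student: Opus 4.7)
The plan is to argue by contradiction. Suppose there exists some index $j$ and a non-zero non-invertible morphism $f_0\colon M_i\to M_{j_0}$. I will produce a non-zero left maximal morphism with source $M_i$, which directly contradicts the hypothesis that vertex $i$ has no outgoing edges in $\mathrm{LM}(M_1,\ldots,M_n)$.

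The construction is iterative. Given a non-zero non-invertible morphism $f_k\colon M_i\to M_{j_k}$, if $f_k$ fails to be left maximal then by the very definition there exists a non-invertible $g_{k+1}\colon M_{j_k}\to M_{j_{k+1}}$ with $g_{k+1}f_k\neq 0$. I then set $f_{k+1}=g_{k+1}f_k$. The crucial verification is that $f_{k+1}$ is again non-invertible so that the process may be iterated: if $M_{j_{k+1}}\not\cong M_i$ this is automatic, while if $M_{j_{k+1}}\cong M_i$ then, since the $M_1,\ldots,M_n$ are pairwise non-isomorphic, we must have $j_{k+1}=i$ and $f_{k+1}$ is a composition of non-invertible morphisms between indecomposables with local endomorphism rings, hence lies in the Jacobson radical $J_i$ of $\End_R(M_i)$ and is non-invertible.

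For termination, I would invoke finiteness: the ring $\End_R\bigl(\bigoplus_{k=1}^n M_k\bigr)$ is a finite $\o_\l$-algebra and so has nilpotent Jacobson radical, i.e.\ $\mathrm{rad}^N=0$ for some $N\in\N$. Now each $g_\ell$ is a non-invertible morphism between indecomposables, so lies in the radical of the category $\C$; therefore $f_N=g_N\cdots g_1 f_0$ vanishes. Hence at some stage $k<N$ the morphism $f_k$ is non-zero and non-invertible but admits no non-invertible extension $g$ with $gf_k\neq 0$. By definition this means $f_k$ is left maximal, contradicting the hypothesis on vertex $i$.

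The only delicate step is verifying that each $f_{k+1}$ stays non-invertible; but as noted this is forced by the hypothesis that the $M_1,\ldots,M_n$ are pairwise non-isomorphic indecomposables together with the locality of their endomorphism rings. Nilpotence of the radical then guarantees that the chain cannot persist indefinitely without producing a left maximal morphism.
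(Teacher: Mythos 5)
Your proposal is correct and follows essentially the same approach as the paper: iteratively left-compose $f$ with non-invertible morphisms, which by hypothesis never stops (since no left maximal morphism can appear with source $M_i$), and then derive a contradiction from nilpotence of a Jacobson radical. Your version is slightly more careful than the paper's on two small points: you explicitly verify that each $f_{k+1}$ remains non-invertible (by noting that a composite of radical morphisms between indecomposables stays in the radical, hence in $J_i$ when the target is $M_i$), and you invoke nilpotence of the radical of $\End_R\bigl(\bigoplus_k M_k\bigr)$ rather than of the individual $\End_{\C}(M_j)$, which sidesteps the fact that the intermediate composites $g_k\cdots g_1 f$ need not all live in a single endomorphism ring.
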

 \begin{proof}
 If $f:M_i\to M_j$ is such a morphism, then either $f$ is left maximal, or there is a non-invertible morphism $g:M_j\to M_k$ such that $0\neq gf:M_i\to M_k$. By assumption, there are no non-zero left maximal morphisms, so we can always extend $f$ to a non-zero $gf$. All the rings $\End_{\C}(M_j)$ are local. Since they are finite, the Jacobson radical of $\End_{\C}(M_j)$ is nilpotent. But this means that after composition with enough morphisms $g_k\cdots g_2g_1f$ we are bound to get zero. This contradicts the fact that there are no non-zero left maximal morphisms.
 \end{proof}
 \begin{corollary}
 If $i$ has no outgoing edges in $\Ga$ then $\Hom_{\C}(M_i,M_j)=0$ for $j\neq i$ and $\End_{\C}(M_i)=E_i$. 
 \end{corollary}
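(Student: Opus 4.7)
The plan is to deduce both assertions directly from the preceding lemma, which states that under the standing hypothesis there are no non-zero non-invertible morphisms $M_i \to M_j$ for any $j$.

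For the first claim, I would fix $j \neq i$ and consider an arbitrary $f \in \Hom_{\C}(M_i, M_j)$. Since $M_i$ and $M_j$ are non-isomorphic indecomposable $R$-modules, no morphism between them can be invertible. Hence $f$ is automatically non-invertible, and the lemma forces $f = 0$. So $\Hom_{\C}(M_i, M_j) = 0$.

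For the second claim, recall that $\End_{\C}(M_i)$ is a local (finite) ring whose Jacobson radical $J_i$ coincides with the set of non-invertible elements. Applying the lemma with $j = i$ shows that every non-zero element of $\End_{\C}(M_i)$ is invertible, i.e.\ $J_i = 0$. By the definition $E_i = \End_{\C}(M_i)/J_i$, we conclude $\End_{\C}(M_i) = E_i$.

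No new techniques are needed and there is no real obstacle: both statements are essentially reformulations of the previous lemma, using only the non-isomorphism of distinct indecomposables (for the first) and the characterisation of the Jacobson radical of a local ring (for the second).
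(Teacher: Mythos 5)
Your proof is correct and follows essentially the same route as the paper: both parts are immediate consequences of the preceding lemma, using non-isomorphism of distinct indecomposables for the first claim and the characterisation of the Jacobson radical as the non-units of the local ring $\End_{\C}(M_i)$ for the second.
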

 \begin{proof}
 If $i\neq j$ then the fact that $M_i$ and $M_j$ are indecomposable and non-isomorphic implies that all morphisms $M_i\to M_j$ are non-invertible. By the above lemma we get that $\Hom_{\C}(M_i,M_j)=0$. For $\End_{\C}(M_i)$, the Jacobson radical $J_i$ contains non-invertible elements and thus must be zero. 
 \end{proof}
As before, let $M= M_1^{a_1}\oplus\cdots\oplus M_n^{a_n}$. 
We assume from now on, without loss of generality, that $i=1$ is a vertex with no outdoing edges.  
 Write $M' = M_2^{a_2}\oplus\cdots\oplus M_n^{a_n}$. 
We thus have $M= M_1^{a_1}\oplus M'$. 
We claim the following:
\begin{proposition} With the above notation the following holds:
\begin{enumerate}
\item The group $\Aut_R(M)$ splits as 
the semidirect product $(\GL_{a_1}(E_1)\times \Aut_R(M'))\ltimes \Hom_R(M',M_1^{a_1})$, where the action of $\GL_{a_1}(E_1)\times \Aut_R(M'))$ on $\Hom_R(M',M_1^{a_1})$ is given by $(g_1,g_2)\cdot A = g_1A g_2^{-1}$. 
\item We have an isomorphism  $\Hom_R(M',M_1^{a_1})\cong \mathrm{M}_{a_1\times D_1}(E_1)$, with $D_1 = \sum_{j=2}^n a_j  \dim_{E_1}\Hom_{\C}(M_j,M_1)$.
\end{enumerate}
\end{proposition}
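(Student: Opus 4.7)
My plan is to exploit the vanishing of morphisms out of $M_1$ to obtain a block-upper-triangular structure on $\End_R(M)$, and then unpack this into a semidirect product. By the corollary immediately preceding the proposition, the hypothesis that vertex $1$ has no outgoing edges in $\Ga$ yields $\Hom_R(M_1, M_j) = 0$ for $j \neq 1$ and $\End_R(M_1) = E_1$. Consequently $\Hom_R(M_1^{a_1}, M') = 0$, so with respect to the decomposition $M = M_1^{a_1} \oplus M'$ every element of $\End_R(M)$ takes the block form $\begin{pmatrix} A & B \\ 0 & C \end{pmatrix}$ with $A \in \End_R(M_1^{a_1})$, $B \in \Hom_R(M', M_1^{a_1})$, and $C \in \End_R(M')$.

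Next I would check that invertibility of such a block matrix is equivalent to $A$ and $C$ being invertible; this is a special case of the Iwahori decomposition lemma above (with the lower-triangular part $\ol{U}$ collapsing to the trivial group), and can be verified directly via the formula
\[
\begin{pmatrix} A & B \\ 0 & C \end{pmatrix}^{-1} = \begin{pmatrix} A^{-1} & -A^{-1}BC^{-1} \\ 0 & C^{-1} \end{pmatrix}.
\]
The subgroup $N = \bigl\{\begin{pmatrix} I & B \\ 0 & I \end{pmatrix}\bigr\}$ is abelian and identified as an additive group with $\Hom_R(M', M_1^{a_1})$; it is normal precisely because $\Hom_R(M_1^{a_1}, M') = 0$, and a direct computation shows that conjugation by $\diag(g_1, g_2)$ sends $B$ to $g_1 B g_2^{-1}$. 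Since the assignment $\begin{pmatrix} A & B \\ 0 & C \end{pmatrix} \mapsto \bigl((A,C), A^{-1}B\bigr)$ is a bijection onto $\bigl(\Aut_R(M_1^{a_1}) \times \Aut_R(M')\bigr) \times \Hom_R(M', M_1^{a_1})$, and $\Aut_R(M_1^{a_1}) = \GL_{a_1}(\End_R(M_1)) = \GL_{a_1}(E_1)$, part (1) follows.

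For part (2) I would decompose
\[
\Hom_R(M', M_1^{a_1}) \;=\; \bigoplus_{j=2}^n \Hom_R(M_j^{a_j}, M_1^{a_1}) \;\cong\; \bigoplus_{j=2}^n \M_{a_1 \times a_j}(\Hom_R(M_j, M_1)),
\]
and observe that the vanishing of $\Hom_R(M_1, M_k)$ for $k \neq 1$ together with $\End_R(M_1) = E_1$ implies that every non-zero morphism $f: M_j \to M_1$ with $j \neq 1$ is automatically left maximal: it is non-invertible, and any composition $g f$ with a non-invertible $g: M_1 \to M_k$ is zero simply because $g$ itself is zero. Hence $\Hom_R(M_j, M_1) = LM(M_j, M_1)$ is an $E_1$-vector space of dimension $d_{1,j}$, and choosing $E_1$-bases yields the isomorphism with $\M_{a_1 \times D_1}(E_1)$ where $D_1 = \sum_{j=2}^n a_j d_{1,j} = \sum_{j=2}^n a_j \dim_{E_1}\Hom_R(M_j, M_1)$.

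I do not anticipate a serious obstacle; once the vanishing $\Hom_R(M_1^{a_1}, M') = 0$ is extracted from the hypothesis on vertex $1$, the block-triangular shape forces both the semidirect product structure and the $E_1$-module description of the unipotent radical. The only mild subtlety is matching the graph-theoretic datum $d_{1,j}$ with the module-theoretic dimension of $\Hom_R(M_j, M_1)$, which is resolved precisely by the observation that non-zero morphisms into $M_1$ are automatically left maximal under our hypothesis.
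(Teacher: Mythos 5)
Your proof is correct and follows essentially the same route as the paper: extract $\Hom_R(M_1,M_j)=0$ for $j\neq 1$ and $\End_R(M_1)=E_1$ from the preceding corollary, read off the block-upper-triangular shape of $\Aut_R(M)$ with respect to $M=M_1^{a_1}\oplus M'$, and identify the unipotent block with $\M_{a_1\times D_1}(E_1)$ by observing that every non-zero map $M_j\to M_1$ ($j\neq 1$) is automatically left maximal. You supply a bit more detail than the paper (the explicit block inverse, the conjugation computation, the bijection realising the semidirect product), but there is no genuine difference in method.
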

\begin{proof}
Since there are no non-zero non-invertible morphisms $M_1\to M_j$ for any $j$, every morphism from $M_j$ to $M_i$ is left maximal when $j\neq 1$. 
Since $\Hom_R(M_1,M_j)$ for every $j\neq 1$ we can write the group $\Aut_R(M)$ in the following matrix blocks form:
\[
\left(\begin{matrix} \Aut_R(M_1^{a_1}) & \Hom(M',M_1^{a_1})  \\ 0 &\Aut_R(M') \end{matrix}\right) 
\] 
This already gives the description of $\Aut_R(M)$ as a semidirect product. It holds that 
$$\Hom_R(M',M_1^{a_1})\cong \bigoplus_{j\geq 2} \Hom_R(M_j^{a_j},M_1^{a_1})\cong \bigoplus_{j\geq 2} \Hom_R(M_j,M_i)^{a_j\cdot a_i}\cong LM(M_j,M_i)^{a_j\cdot a_i},$$ from which the isomorphism $\Hom_R(M',M_1^{a_1})\cong \M_{a_1\times D}(E_1)$ follows. 
\end{proof}
\begin{remark}
The action of $\Aut_R(M')$ on $\Hom_R(M',M_1^{a_1})\cong \M_{a_1\times D}(E_1)$ gives rise to a homomorphism ${\phi:\Aut_R(M')\to \GL_D(E_1)}$. The action in the semidirect product is given  by $(g_1,g_2)\cdot A = g_1A \phi(g_2)^{-1}$.  
\end{remark}
 

Anticipating the next step we recall Mackey-Wigner's little group method.    
 Let 
 $G = Q \ltimes N$ be a finite semidirect product with $N$ abelian. 
 Let $\omega$ be a one dimensional character of $N$. Then, there is a bijective correspondence between irreducible representations $W$ of $\stab_Q(\omega)$ and irreducible representation $V$ of $G$ such that $\Hom_N(V,\omega)\neq 0$. The bijection is given by mapping $W \in \Irr(\stab_Q(\omega))$ to $V=\Ind_{\stab_{G}(\omega)}^G(W\ot \wt{\omega})$, where $\wt{\omega}$ is an extension of $\omega$ to its stabiliser in $G$. This is a special case of Clifford Theory. 
 
 
Let now $V$ be an irreducible representation of $\Aut_R(M)$. Consider the restriction of $V$ to the normal abelian subgroup $N_1 =\Hom_R(M',M_1^{a_1})\cong\M_{a_1\times D_1}(E_1)$. As we have seen before, the character group of $N_1$ is isomorphic to $\M_{D_1\times a_1}(E_1)$. Let $A\in \M_{D_1\times a_1}(E_1)$ be a matrix such that the character $\chi_A \circ \theta$ appears in the restriction of $V$ to $N_1$; see \eqref{chi} and \eqref{theta}.  If $\rank (A)<D_1$ then we have seen in Theorem~\ref{thm:degenerate} that the representation $V$ admits a non-trivial functor morphing . We therefore focus on the case that $\rank (A) = D_1$. 

Notice that this implies that $D_1\leq a_1$ and we can therefore assume that $A$ has the form 
$A=\begin{pmatrix} 0_{D_1,a_1-D_1} & I_{D_1} \end{pmatrix}$. Indeed, $D_1$-by-$a_1$ matrices with $D_1 \leq a_1$ of full rank form a single $\GL_{a_1}(E_1)$-orbit under the right multiplication action. That, in particular, entails that we can choose an orbit representative of this form in the restriction of $V$ to $N_1$ under the $\GL_{a_1}(E_1) \times \Aut_R(M')$-action .

The stabiliser of $\chi_A$ in $\GL_{a_1}(E_1)\times \Aut_R(M')$ can then be described as the set of all matrices in $\Aut_R(M)$ of the form
$$\begin{pmatrix} B & C & & \\ 0 & \phi(H) & & \\  &  & H \end{pmatrix},$$
where $B\in \GL_{a_1-D_1}(E_1), C\in \M_{(a_1-D_1)\times D_1} (E_1)$, $H\in \Aut_R(M')$ and $\phi:\Aut_R(M')\to \GL_{D_1}(E_1)$ is the group homomorphism we described above.
We claim the following:
\begin{proposition}
The group $\stab_{\GL_{a_1}(E_1)\times \Aut_R(M')}(\chi_A)$ is isomorphic to $\Aut_R(M_1^{a_1-D_1}\oplus M')$. 
\end{proposition}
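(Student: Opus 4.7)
The strategy is to check that both groups can be described as the set of triples $(B,C,H)\in \GL_{a_1-D_1}(E_1)\times \M_{(a_1-D_1)\times D_1}(E_1)\times \Aut_R(M')$ equipped with the same multiplication law, so that the obvious bijection is an isomorphism.

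For the left-hand side, I would first re-derive the parametrization of $\stab_{\GL_{a_1}(E_1)\times \Aut_R(M')}(\chi_A)$ cleanly. The action on $N_1\cong \M_{a_1\times D_1}(E_1)$ is $(g_1,H)\cdot X=g_1 X \phi(H)^{-1}$, so the dual action on characters sends $\chi_A$ to $\chi_{\phi(H)A g_1^{-1}}$. The stabilizer condition is therefore $\phi(H)A=A g_1$; with $A=(0\mid I_{D_1})$, writing $g_1$ in blocks of sizes $(a_1-D_1)+D_1$ forces the lower-left block to vanish and the lower-right block to equal $\phi(H)$, giving the upper-triangular shape displayed in the paper with free parameters $B\in \GL_{a_1-D_1}(E_1)$, $C\in \M_{(a_1-D_1)\times D_1}(E_1)$ and $H\in \Aut_R(M')$.

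For the right-hand side, I would apply the preceding proposition to the module $M_1^{a_1-D_1}\oplus M'$, which, because $M_1$ still has no outgoing arrows in $\Gamma$, yields
\[
\Aut_R(M_1^{a_1-D_1}\oplus M')\;\cong\;\bigl(\GL_{a_1-D_1}(E_1)\times \Aut_R(M')\bigr)\ltimes \Hom_R(M',M_1^{a_1-D_1}),
\]
and the last factor identifies with $\M_{(a_1-D_1)\times D_1}(E_1)$ via the same recipe as before. This produces the same set of triples $(B,C,H)$, and one just needs to check that the two group laws agree.

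The natural candidate is the map $\Psi(B,C,H)=(B,C,H)$, and the only point of substance is the compatibility of multiplications. In the stabilizer, the product of block matrices gives
\[
\Bigl(\begin{smallmatrix}B&C\\0&\phi(H)\end{smallmatrix}\Bigr)\Bigl(\begin{smallmatrix}B'&C'\\0&\phi(H')\end{smallmatrix}\Bigr)=\Bigl(\begin{smallmatrix}BB'&BC'+C\phi(H')\\0&\phi(HH')\end{smallmatrix}\Bigr),
\]
so the induced product on triples is $(B,C,H)(B',C',H')=(BB',\,BC'+C\phi(H'),\,HH')$. On the other hand, in $\Aut_R(M_1^{a_1-D_1}\oplus M')$ the semidirect product structure gives $(B,H)\cdot C'=B\circ C'\circ H^{-1}$ on $\Hom_R(M',M_1^{a_1-D_1})$, and one just has to note that under the identification with $\M_{(a_1-D_1)\times D_1}(E_1)$ the precomposition by $H$ is exactly right multiplication by $\phi(H)$, by naturality of the isomorphism $\Hom_R(M',M_1)\otimes E_1^{a_1-D_1}\cong \Hom_R(M',M_1^{a_1-D_1})$ under which $\phi$ was defined. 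The resulting product law on triples is the same formula as above, so $\Psi$ is a homomorphism; it is bijective since both sides are parametrized by the same triples.

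I do not expect a real obstacle: the hardest point is only bookkeeping, namely keeping straight which side of a matrix $\phi(H)$ acts on, and verifying that the $\phi$ appearing in the stabilizer formula (arising from the action on $\Hom_R(M',M_1^{a_1})$) restricts to the analogous $\phi$ for $\Hom_R(M',M_1^{a_1-D_1})$. Both of these come for free from the block-diagonal nature of the embedding $M_1^{a_1-D_1}\hookrightarrow M_1^{a_1}$ and the additivity of $\Hom_R(M',-)$.
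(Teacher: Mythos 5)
Your proof is correct and follows essentially the same route as the paper: apply the preceding proposition to $M_1^{a_1-D_1}\oplus M'$ to get the semidirect product description, exhibit the map $(B,C,H)\mapsto\bigl(\begin{smallmatrix}B&C&\\0&\phi(H)&\\&&H\end{smallmatrix}\bigr)$, and note that the conjugation action on the unipotent part matches because $\phi$ does not depend on the exponent of $M_1$. You spell out two steps the paper leaves implicit --- the derivation of the stabilizer shape from $\phi(H)A=Ag_1$, and the explicit comparison of the multiplication laws on triples --- but the underlying argument is the same.
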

\begin{proof}
We know that 
$$\Aut_R(M_1^{a_1-D_1}\oplus M') \cong (\GL_{a_1-D_1}(E_1)\times \Aut_R(M'))\ltimes \M_{a_1-D_1 \times D_1}(E_1).$$ 
A direct verification shows that the map 
\[
(B,H,C)\mapsto \begin{pmatrix} B & C & & \\ 0 & \phi(H) & & \\  &  & H \end{pmatrix},
\]
gives an isomorphism $(\GL_{a_1-D_1}(E_1)\times \Aut_R(M'))\ltimes \M_{a_1-D_1}(E_1)\to \stab_{\GL_{a_1}(E_1)\times \Aut_R(M')}(\chi_A)$. The fact that we get the same action on $\M_{a_1-D_1\times D_1}(E_1)$ follows from the fact that the homomorphism $\phi$ does not depend on $a_1$. 
\end{proof}

The above proof shows that we get a reduction to an irreducible representation of a smaller group. Indeed, the only case in which we did not reduce the cardinality of the group is when $D=0$. But in this case the group $\Aut_R(M)$ splits as $\GL_{a_1}(E_1)\times \Aut_R(M')$. 

We conclude this section with the following proposition, which finishes the proof of Theorem \ref{thm:main4} in case the left maximal graph contains a vertex with no outgoing edges (the auxiliary ring $\wt{R}$ here is simply $R$): 
\begin{proposition}
    Assume that $LM(M_1,\ldots, M_n)$ contains a vertex with no outgoing edges. Let $V$ be an irreducible representation of $\Aut_R(M_1^{a_1}\oplus\cdots\oplus M_n^{a_n})$. Then one of the following holds:
    \begin{enumerate}
        \item The representation $V$ admits a non-trivial functor morphing, or
        \item The group $\Aut_R(M)$ splits as $\GL_{a_1}(E_1)\times \Aut_R(M')$, or 
        \item The representation $V$ is induced from a representation of a group of the form $\Aut_R(\wt{M})$, where $\wt{M}$ is a proper direct summand of $M$. 
        \end{enumerate}
\end{proposition}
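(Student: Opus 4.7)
The plan is to combine the structural description of $\Aut_R(M)$ as a semidirect product, Clifford theory (in the form of Mackey--Wigner), and the degeneracy criterion of Theorem~\ref{thm:degenerate}. Without loss of generality, relabel so that the vertex with no outgoing edges is $i=1$, and write $M = M_1^{a_1}\oplus M'$ with $M' = M_2^{a_2}\oplus\cdots\oplus M_n^{a_n}$. By the two propositions preceding the statement, we have
\[
\Aut_R(M) = (\GL_{a_1}(E_1)\times \Aut_R(M'))\ltimes N_1, \qquad N_1 \cong \M_{a_1\times D_1}(E_1),
\]
where $D_1=\sum_{j\ge 2} a_j \dim_{E_1}\Hom_R(M_j,M_1)$, and the action of $(g_1,g_2)$ on $A\in N_1$ is $A\mapsto g_1 A\phi(g_2)^{-1}$ for some homomorphism $\phi:\Aut_R(M')\to \GL_{D_1}(E_1)$.

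Next, I would restrict $V$ to the normal abelian subgroup $N_1$ and pick a character $\chi_A$ (with $A\in \M_{D_1\times a_1}(E_1)$) occurring in the restriction. The argument splits into three cases according to $\rank(A)$ and $D_1$. \textbf{Case 1:} If $\rank(A)<D_1$, then Theorem~\ref{thm:degenerate} applies directly and yields a non-trivial functor morphing of $V$, giving conclusion~(1). \textbf{Case 2:} If $D_1=0$, then $N_1$ is trivial and the semidirect product collapses to the direct product $\Aut_R(M)=\GL_{a_1}(E_1)\times \Aut_R(M')$, giving conclusion~(2). \textbf{Case 3:} If $\rank(A)=D_1>0$, then since all full-rank $D_1\times a_1$ matrices (with $D_1\le a_1$) form a single orbit under the right $\GL_{a_1}(E_1)$-action, we may replace $\chi_A$ by a $\GL_{a_1}(E_1)\times\Aut_R(M')$-conjugate character and assume $A = (0_{D_1,a_1-D_1}\mid I_{D_1})$.

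For Case~3, the preceding proposition identifies the stabiliser of $\chi_A$ in $\GL_{a_1}(E_1)\times \Aut_R(M')$ with a concrete block-matrix subgroup that is isomorphic to $\Aut_R(M_1^{a_1-D_1}\oplus M')$. Setting $\wt M := M_1^{a_1-D_1}\oplus M'$ and $H := \stab_{\Aut_R(M)}(\chi_A) \cong \Aut_R(\wt M)\ltimes N_1$, I invoke Mackey--Wigner: since $N_1$ is abelian and normal, any irreducible $V$ of $\Aut_R(M)$ containing $\chi_A$ upon restriction is of the form $V\cong \Ind_{H}^{\Aut_R(M)}(W\otimes \widetilde{\chi_A})$, where $\widetilde{\chi_A}$ is an extension of $\chi_A$ to $H$ and $W$ is an irreducible representation of $\stab_{\GL_{a_1}(E_1)\times \Aut_R(M')}(\chi_A)\cong \Aut_R(\wt M)$. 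Since $D_1>0$, the subgroup $H$ is proper in $\Aut_R(M)$ (the inclusion on the $\GL_{a_1}(E_1)$-factor is strict because $a_1-D_1<a_1$), so $\wt M$ is a proper direct summand of $M$, giving conclusion~(3).

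The main obstacle is the orbit-normalisation step in Case~3: one must be careful that the orbit of $\chi_A$ under $\GL_{a_1}(E_1)\times\Aut_R(M')$ (not just under $\GL_{a_1}(E_1)$ alone) contains the desired representative, so that passing to a conjugate character corresponds to an actual change of $V$-stable data. This is fine because the restriction of $V$ to $N_1$ is invariant under the whole action, so any character in the same orbit occurs with the same multiplicity, and we may apply Clifford theory at any chosen representative. The remaining verifications, namely that $H\cong \Aut_R(\wt M)$ and that the extension $\widetilde{\chi_A}$ exists (automatic because the relevant quotient acts on a vector space over a field of characteristic coprime to $|E_1|$ inside $K$ of characteristic zero), follow from the results already stated and do not require further work.
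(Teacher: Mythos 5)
Your proof is correct and follows the same route as the paper: the semidirect product decomposition $\Aut_R(M)\cong(\GL_{a_1}(E_1)\times\Aut_R(M'))\ltimes N_1$, the trichotomy on $\rank(A)$ combined with Theorem~\ref{thm:degenerate} for the degenerate case, orbit normalisation to $A=(0_{D_1,a_1-D_1}\mid I_{D_1})$ in the full-rank case, and Mackey--Wigner with stabiliser $\stab_Q(\chi_A)\cong\Aut_R(M_1^{a_1-D_1}\oplus M')$. If anything your write-up is slightly more scrupulous than the paper's phrasing of conclusion~(3), since you distinguish correctly between the group $H=\stab_G(\chi_A)\cong\Aut_R(\wt M)\ltimes N_1$ (from which one literally induces) and the quotient $\Aut_R(\wt M)$ (where the datum $W$ lives). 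One small remark: your parenthetical reason for why the extension $\widetilde{\chi_A}$ exists (characteristic-zero considerations) is not the right reason; the extension exists automatically for semidirect products, by letting $\widetilde{\chi_A}$ be $\chi_A$ on $N_1$ and trivial on the complement $\stab_Q(\chi_A)$. This does not affect the correctness of the argument.
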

This shows that by combining functor morphing and Clifford Theory we can always reduce irreducible representations into irreducible representations of smaller groups. 

\subsection{The case where every vertex has an outgoing edge }\label{subsec:there.is.outgoing}
\begin{proposition}\label{prop:circles}
Assume that all vertices in the LM graph $\Ga:=\Ga(M_1,\ldots, M_n)$ have outgoing edges. Assume that $V$ is an irreducible representation of $\Aut_R(M)$, with $M=M_1^{a_1}\oplus \cdots\oplus M_n^{a_n}$, that admits a trivial functor morphing . Then 
\begin{enumerate}
\item The connected components of $\Ga$ are circles. 
\item If $i$ and $j$ are contained in the same connected component of $\Ga$, then $a_i=a_j$. 
\item Every left maximal morphism is also right maximal.
\end{enumerate}
\end{proposition}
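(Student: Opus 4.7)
The plan is to combine the valency inequality in Corollary~\ref{cor:valency.ineq} with a counting argument exploiting the hypothesis on outgoing edges to force a permutation-like structure on $\Ga:=\Ga(M_1,\ldots,M_n)$, and then to use the resulting one-dimensionality of each LM-space to upgrade LM morphisms to RM ones.

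For (1) and (2), set $D_i := \sum_{\{e : t(e)=i\}} a_{s(e)} = \sum_j a_j \dim_{E_i} LM(M_j, M_i)$. Corollary~\ref{cor:valency.ineq} gives $D_i \leq a_i$ for every $i$, hence $\sum_i D_i \leq \sum_i a_i$. On the other hand $\sum_i D_i = \sum_e a_{s(e)}$, and by hypothesis each vertex is the source of at least one edge, so together with $a_j > 0$ this gives $\sum_e a_{s(e)} \geq \sum_j a_j$. Combining, $\sum_j a_j \leq \sum_i D_i \leq \sum_i a_i$, forcing equality throughout. The first equality says every vertex has out-degree exactly one, while the second gives $D_i = a_i$ for every $i$. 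Since $D_i > 0$, every vertex has in-degree $\geq 1$, and since the total in-degree equals the total number of edges $= n$, every in-degree is exactly one. A directed graph with in-degree and out-degree both equal to one at every vertex is a disjoint union of cycles, proving (1). For (2), along a cycle $i_1 \to i_2 \to \cdots \to i_k \to i_1$ the unique in-edge at $i_{s+1}$ comes from $i_s$ with $\dim_{E_{i_{s+1}}} LM(M_{i_s}, M_{i_{s+1}}) = 1$; therefore $D_{i_{s+1}} = a_{i_s}$, which combined with $D_{i_{s+1}} = a_{i_{s+1}}$ gives $a_{i_s} = a_{i_{s+1}}$.

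For (3), let $f \in LM(M_j, M_i)$ be non-zero, so $(j,i)$ is an edge and $\dim_{E_i} LM(M_j, M_i) = 1$. I must show $fg = 0$ for every non-invertible $g : M_k \to M_j$. The key observation is that, since $f$ is LM, $h(fg) = (hf)g = 0$ for every non-invertible $h$ out of $M_i$, so $fg \in LM(M_k, M_i)$ whenever it is non-zero. If $k \neq j$, step (1) forces $LM(M_k, M_i) = 0$ (the unique in-edge to $i$ comes from $j$), so $fg = 0$ automatically. If $k = j$, then $g \in J_j$, and I exploit one-dimensionality: for every $h \in \End_R(M_j)$ we have $fh \in LM(M_j, M_i) = E_i \cdot f$, so $e(h) \in E_i$ is well-defined by $fh = e(h) f$. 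A direct associativity check (using commutativity of $E_i$) shows that $e$ is a ring homomorphism $\End_R(M_j) \to E_i$; since $E_i$ is a field and $\End_R(M_j)$ is local with unique maximal ideal $J_j$, one has $J_j \subseteq \ker e$, and therefore $fg = e(g) f = 0$.

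The main obstacle I expect is this final step, namely ruling out $fg \neq 0$ when $g \in J_j$. The arguments for (1) and (2) are essentially the bookkeeping identity $\sum_j a_j \leq \sum_i D_i \leq \sum_i a_i$ together with the graph-theoretic fact that in-degree and out-degree everywhere equal to one means a disjoint union of cycles. Part (3) is the only place where one genuinely goes beyond the valency inequality of Corollary~\ref{cor:valency.ineq}: the ring-homomorphism trick encoding the $\End_R(M_j)$-action on the one-dimensional $E_i$-line $LM(M_j, M_i)$ is the essential input, and it works precisely because indecomposability forces $\End_R(M_j)$ to be local and the cycle structure forces $LM(M_j, M_i)$ to be one-dimensional.
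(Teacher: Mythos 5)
Your proof is correct, and parts (1)--(2) take a genuinely different route from the paper. The paper starts a walk from an arbitrary vertex $i_0$, follows outgoing edges until the path revisits a vertex, and uses Corollary~\ref{cor:valency.ineq} along the path to force the $a_i$'s to be monotone and then constant; a second application of the corollary then rules out any extra in-edge. You instead sum the inequality $D_i \leq a_i$ over all $i$, and observe that the hypothesis (out-degree $\geq 1$ at every vertex, together with $a_j > 0$) gives the reverse inequality $\sum_j a_j \leq \sum_i D_i$, which forces equality and hence out-degree and in-degree both exactly one everywhere. This global double-counting is cleaner and gets (1) and (2) in one shot; the paper's local chain-following argument is more hands-on but reaches the same conclusions. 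It is worth noting that both arguments rely on the same input (the valency inequality, validity of Krull--Schmidt, $a_j > 0$), so neither is strictly stronger.

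For (3) your argument is essentially the paper's, repackaged. The paper picks $a\in E_j^\times$ with $fg = af$, uses nilpotence of $g$ in the local ring to get $0 = fg^k = a^k f \neq 0$. You instead package the $\End_R(M_j)$-action on the line $LM(M_j,M_i)$ as a ring homomorphism $e:\End_R(M_j)\to E_i$ and invoke the fact that a proper two-sided ideal of a finite local ring (the kernel of $e$) must contain $J_j$; this is valid, though the appeal to commutativity of $E_i$ in the multiplicativity check is unnecessary (associativity of composition suffices, and $E_i$ is a finite field by Wedderburn anyway). Either phrasing works; the paper's nilpotence argument is the more self-contained of the two.
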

\begin{proof}
Take any $i_0\in \{1,\ldots, n\}$. Since every vertex has an outgoing edge, we can find an edge from $i_0$ to some~$i_1$. Since $i_1$ has an outgoing edge we can find an edge from $i_1$ to some $i_2$, and so on. Since the graph is finite, we will eventually go back to a vertex that already appeared. We thus have a path that looks like 
$$i_0\to i_1 \to \cdots \to i_b\to i_k$$ where $0\leq k\leq b$. 
By Corollary~\ref{cor:valency.ineq}, the fact that we have a trivial functor morphing of $V$ implies that $a_{i_0}\leq a_{i_1}\leq\cdots \leq a_{i_b}$. If $k\neq 0$ then the fact that both $i_b$ and $i_{k-1}$ have an edge that goes into $i_k$ imply that $a_{i_b} + a_{i_{k-1}} \leq a_{i_k}$. But by the above inequality we also have $a_{i_k}\leq a_{i_b}$, which is impossible since $a_j>0$ for $j=1,\ldots, n$. So we also get $a_{i_b}\leq a_{i_0}$ and thus $a_{i_0}= a_{i_1}=\cdots = a_{i_b}$. 

This implies that every vertex is contained in a circle, and that if $i$ and $j$ are in the same circle then $a_i=a_j$. It also shows that for every $i$ there is exactly one edge $e$ such that $t(e)=i$, since if there was another such edge $e'$ then we would have had $a_{s(e)} + a_{s(e')}\leq a_i$. But if $j$ is the predecessor of $i$ in the circle then $a_j=a_i$, and since all the $a_k>0$ this is impossible. It follows that the connected components of $\Ga$ are circles.

To prove the last statement, assume that $f:M_i\to M_j$ is a left maximal morphism (it may happen that $i=j$).
If $f$ is not right maximal, then there is a non-zero non-invertible morphism $g:M_k\to M_i$ such that $fg\neq 0$. But then $fg$ is also a left maximal morphism. By the above proof, this is impossible if $k\neq i$. Assume then that $k=i$. Then $g:M_i\to M_i$ is non-invertible, and therefore nilpotent. The fact that $\dim(LM(M_i,M_j))=1$ implies that $fg$ is linearly dependent on $f$. So there is a non-zero scalar $a\in E_j$ such that $fg = af$. Since $g$ is nilpotent, there is some $k>0$ such that $g^k=0$. But then 
$$0 = fg^k = a^nf\neq 0,$$ which is a contradiction. 
\end{proof}

\begin{corollary} In the above case, the right maximal graph of $(M_1,\ldots, M_n)$ is equal to the left maximal graph of $(M_1,\ldots, M_n)$. Moreover, if $i_0\to i_1\to\cdots\to i_{k-1}\to i_0$ is a cycle, then we have natural isomorphisms $E_{i_0}\cong E_{i_1}\cong\cdots\cong E_{i_{k-1}}$. 
\end{corollary}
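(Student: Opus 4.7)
My plan is as follows. The inclusion of the left-maximal graph in the right-maximal graph is precisely Proposition~\ref{prop:circles}(3), so for the first statement I need only the reverse inclusion; the second statement will then follow from a bimodule argument.

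For $RM \subseteq LM$, I will dualize the framework of \S\ref{subsec:gen.cong.sub}. Alongside the abelian normal subgroups $N_i=\{1+gf:g:M_i\to M,\ f\in LM(M,M_i)\}$, I will introduce their mirror image $N'_j=\{1+fg:f:M_j\to M,\ g\in RM(M_j,M)\}$ built from right-maximal morphisms. The proofs of Theorem~\ref{thm:degenerate} and Corollary~\ref{cor:valency.ineq} are formally self-dual under the interchange of composition directions, so they yield the symmetric valency inequality $\sum_{\{e\in RM\,:\,s(e)=j\}}a_{t(e)}\leq a_j$ at every vertex $j$. Since $LM \subseteq RM$ already contributes the outgoing LM edge $j\to j'$ with $a_{j'}=a_j$ (by Proposition~\ref{prop:circles}(2)), any additional outgoing RM edge would violate the inequality. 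Therefore the RM graph has exactly one outgoing edge per vertex, which is the LM edge, and the two graphs coincide.

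For the second statement, fix a cycle $i_0\to i_1\to\cdots\to i_{k-1}\to i_0$ and consider $V_s:=LM(M_{i_s},M_{i_{s+1}})$ for each $s$. By the cycle structure $V_s$ is one-dimensional as a left $E_{i_{s+1}}$-module via post-composition. By the first part, every element of $V_s$ is right maximal, so pre-composition by $J_{i_s}$ annihilates $V_s$, giving $V_s$ a commuting right action of $E_{i_s}$. Choosing a generator of $V_s$ over $E_{i_{s+1}}$ yields a ring homomorphism $\phi_s:E_{i_s}\to E_{i_{s+1}}$, canonical up to the chosen generator and necessarily injective (as $E_{i_s}$ is a field). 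Going around the cycle produces a chain
\[
E_{i_0}\hookrightarrow E_{i_1}\hookrightarrow\cdots\hookrightarrow E_{i_{k-1}}\hookrightarrow E_{i_0},
\]
and finiteness forces each injection to be a bijection, yielding the required natural isomorphisms.

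The main obstacle will be verifying the dual of Theorem~\ref{thm:degenerate}. Although formally symmetric, one must check that the functor built from an RM morphism is genuinely a proper quotient of $\Hom_\C(M,-)$ (so that triviality of functor morphing applies) and that the stabilizer of its generic point agrees with the kernel of the corresponding character of $N'_j$. The translation is straightforward but requires invoking Yoneda with the opposite composition convention, and one should double-check that the identification $N'_j \cong \M_{D'_j \times a_j}(E_j)$ analogous to \eqref{theta} respects the same duality used to derive the character-theoretic obstruction.
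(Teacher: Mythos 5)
Your second step (the field isomorphisms) is correct and essentially the paper's argument: one only needs the inclusion $LM \subseteq RM$ from Proposition~\ref{prop:circles}(3) to get a commuting right $E_{i_s}$-action on the one-dimensional $E_{i_{s+1}}$-space $LM(M_{i_s},M_{i_{s+1}})$, and finiteness then forces the chain of injections to consist of bijections. (In fact your direction of the map $\phi_s\colon E_{i_s}\to E_{i_{s+1}}$, defined by $fb=\phi_s(b)f$, is the one that works without further argument; the paper's displayed direction $E_{i_{j+1}}\to E_{i_j}$ requires the extra observation $af\in fE_{i_j}$ which is not immediate.)

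The first step, however, has a genuine gap, and it sits exactly at the spot you flag as ``straightforward but requiring a check.'' The natural dualization of Theorem~\ref{thm:degenerate} does \emph{not} go through. In the $LM$ case, the null vector $f\in LM(M,M_i)$ is a morphism $M\to M_i$, and $f^*\colon\Hom_\C(M_i,-)\to\Hom_\C(M,-)$ points \emph{into} $\Hom_\C(M,-)$; taking the cokernel $F$, the stabiliser of $q(\Id_M)\in F(M)$ is exactly $\{1+gf\mid g\colon M_i\to M\}$, which is precisely the subgroup on which the degenerate character is trivial. In the $RM$ case, the null vector is $f\in RM(M_j,M)$, i.e.\ a morphism $M_j\to M$, so $f^*$ points the other way, $\Hom_\C(M,-)\to\Hom_\C(M_j,-)$. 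The only functor ``built from'' $f$ that is a quotient of $\Hom_\C(M,-)$ is $F=\mathrm{Im}(f^*)\cong\Hom_\C(M,-)/\Ker(f^*)$, whose generic point in $F(M)\subseteq\Hom_R(M_j,M)$ is $f$ itself, with stabiliser $\{\psi\in\Aut_R(M)\mid \psi f=f\}$. This stabiliser is far too large: for \emph{every} $f'\in RM(M_j,M)$ and $g'\colon M\to M_j$ one has $g'f\colon M_j\to M_j$ non-invertible (it factors through $M$ with components in the radicals), hence $f'(g'f)=0$ by right-maximality of $f'$, so $(1+f'g')f=f$. Thus the \emph{entire} group $N'_j$ fixes $f$ (and in fact every point of $F(M)$), so $V^{\mathrm{stab}(f)}\subseteq V^{N'_j}=0$ whenever the $N'_j$-character is non-trivial, and this functor cannot witness a non-trivial morphing. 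So the dual valency inequality you want to invoke is not obtained by ``invoking Yoneda with the opposite composition convention.'' A correct proof would most likely pass to $R^{\mathrm{op}}$ acting on the Pontryagin (or $\o_\l$-linear) dual $M^\vee$, where the $RM$ graph for $R$ becomes the $LM$ graph for $R^{\mathrm{op}}$, and then prove that ``trivial functor morphing'' is preserved under the identification $\Aut_R(M)\cong\Aut_{R^{\mathrm{op}}}(M^\vee)$; that last invariance statement is a genuine extra lemma, not a formal symmetry of the existing setup, since $\Cp=\Fun(\C,\o_\l\lmod)$ is a covariant functor category and the stabilisers occurring in it are cosets of \emph{left} ideals of $\End_R(M)$.
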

\begin{proof}
In Proposition~\ref{prop:circles} we showed that every maximal left morphism is also maximal right. The same proof shows that a maximal right morphism is also maximal left, and so the two graphs coincide. For the statement about the fields, choose a basis vector $f\in LM(M_{i_j},M_{i_{j+1}})$. Since $f$ is also right maximal, it holds that the action of $\End_{\C}(M_{i_{j+1}})$ on $LM(M_{i_j},M_{i_{j+1}})$ splits via $E_{i_{j+1}}$. Let $a\in E_{j+1}$. It then holds that there is a unique $\phi(a)\in E_{i_j}$ such that $af = f\phi(a)$. A direct verification shows that this gives a homomorphism of rings $E_{i_{j+1}}\to E_{i_j}$. Since all these rings are fields, all these homomorphisms must be injective. We thus get a chain of inclusions $E_{i_{k-1}}\to E_{i_{k-2}}\to\cdots\to E_{i_0}\to E_{i_{k-1}}$. Since all of these fields are finite, all of these injective homomorphisms must also be surjective. 
\end{proof}
\begin{remark} It is possible that the resulting field isomorphism $E_{i_{k-1}}\to\cdots\to E_{i_0}\to E_{i_{k-1}}$ will not be the identity.
\end{remark}
\subsection{Clifford reduction for the case that $\Ga$ contains a cycle of length $k>1$}\label{subsec:big.cycle}
We  now show  how to handle the case where $\Ga$ contains a cycle of length $k>1$. 
For convenience, assume that the cycle is $1\to 2\to\cdots \to k \to 1$. 
Write $a=a_1=a_2=\cdots= a_k$ and  $E=E_1\cong\cdots\cong E_k$. As mentioned at the end of the previous subsection, we might get a non-trivial automorphism of $E$ by going a full circle on the graph, but we will not use it here.

Let~$V$ be an irreducible representation of $\Aut_R(M)$ that admits a trivial functor morphing . 
Consider the abelian normal subgroups $N_1, \ldots, N_{k-1}$. 
As  shown earlier, all of them are isomorphic to $\M_{a\times a}(E)$. By considering the block decomposition structure of $\Aut_R(M)$ we see that their product $N$ in $\Aut_R(M)$ is isomorphic to $N_1\times\cdots\times N_{k-1}\cong \M_{a\times a}(E)^{k-1}$.

We next describe the action of $g\in \Aut_R(M)$ on $N$. Write $g=(g_{i,j})$, where $g_{i,j}\in \Hom_{\C}(M_j^{a}, M_i^{a})$ and $g_{i,i}\in \Aut_{\C}(M_i^{a})$. For $i=1,\ldots, k$ we denote the image of $g_{i,i}$ in $\GL_{a}(E)$ by $\ol{g_{i,i}}$. The non-trivial entries of elements in the group $N$ are then contained in the blocks $(2,1), (3,2),\ldots, (k,k-1)$. The action of $g$ on a tuple $(A_1,A_2,\ldots, A_k)$ is given by 
$$g\cdot (A_1,\ldots, A_k) = (\ol{g_{2,2}}A_1\ol{g_{1,1}}^{-1},\ldots, \ol{g_{k,k}}A_k\ol{g_{k-1,k-1}}^{-1}),$$
where $\ol{g_{i,i}}\in \GL_a(E_i)\cong \GL_a(E)$. 
The character group of $N$ is then also isomorphic to $N$, and a one-dimensional character of $N$ is then given by a tuple $(A_1,\ldots, A_{k-1})$. 
The fact that $V$ has a trivial functor morphing  implies that all matrices $A_1,\ldots, A_{k-1}$ have rank $a$ (see Theorem~\ref{thm:degenerate}). 
This means that all the matrices are invertible. We can then use the action of $\Aut_R(M)$ on the character group of $N$ to show that such a character is conjugate to the character given by $(I_a,I_a,\ldots,I_a)$
By Clifford Theory, we know that $V$ is then induced from a representation of the stabiliser of this character, which can easily be seen to be equal to 
\[
\Bigl\{g\in \Aut_R(M) \mid \forall 1\leq i,j\leq k,   \ol{g_{ii}} = \ol{g_{jj}}\Bigr\}.
\]

We will next construct a ring $\wt{R}$ and an $\wt{R}$-module $\wt{M}$ such that the above group is $\Aut_{\wt{R}}(\wt{M})$.
To do so, choose $\wt{R} = R\oplus R\ot_{\Z} R$. We shall write the elements in $R\ot_{\Z} R$ as $RxR$. The product in $\wt{R}$ is given by 
$$(a+b_1xb_2)(c+d_1xd_2) = ac + ad_1xd_2 + b_1xb_2c.$$ 
In other words, the two sided ideal generated by $x$ is nilpotent of nilpotency degree 2. 
We then define $\wt{M}=M$ as an $R$-module. 
To define the action of $x\in \wt{R}$ on $\wt{M}$, we choose basis elements $f_i\in LM(M_i,M_{i+1})$ for $i=1,\ldots, k-1$. Write $M = M_1^a\oplus\cdots\oplus M_k^a \oplus\bigoplus_{j>k} M_j^{a_j}$.  Define $$x\cdot (m_1,\ldots, m_a)\in M_i^a=(f_i(m_1),\ldots, f_i(m_a))\in M_{i+1}^a$$ for $i=1,\ldots, k-1$, and zero on all the rest of the direct summands of $M$. A direct verification shows that the above stabilizer is indeed $\Aut_{\wt{R}}(\wt{M})$. 
We summarize this in the following proposition, which finishes the proof of Theorem \ref{thm:main4} also in case every vertex in the left maximal graph has an outdoing edge. 
\begin{proposition}
Assume that $LM(M_1,\ldots, M_n)$ is a disjoint union of circles. Assume that $V$ is an irreducible representation of $\Aut_R(M)$ that admits a trivial functor morphing. If one of the circles in $LM(M_1,\ldots, M_n)$ has length $k>1$ then $V$ is induced from a proper subgroup of $\Aut_R(M)$ that is isomorphic to $\Aut_{\wt{R}}(\wt{M})$.     
\end{proposition}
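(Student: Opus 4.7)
The plan is to follow the sketch given in the paragraphs preceding the statement and to make each step precise. First I would reduce to a single cycle of length $k>1$ by relabelling the vertices so that the cycle is $1\to 2\to\cdots\to k\to 1$. Proposition~\ref{prop:circles} together with the subsequent corollary, applied to a representation with trivial functor morphing, yield $a_1=\cdots=a_k=a$ and a common field $E\cong E_1\cong\cdots\cong E_k$. The next step is to identify the abelian normal subgroup $N=N_1\cdots N_{k-1}\subseteq \Aut_R(M)$ with $\M_{a\times a}(E)^{k-1}$ by means of the block-matrix description in Subsection~\ref{subsec:gen.cong.sub}: because the non-zero entries of $N_i$ live in the $(i+1,i)$-block, the product is in fact a direct product of abelian subgroups, and the isomorphisms $\theta$ from \eqref{theta} assemble to the claimed identification.

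Next I would make explicit the conjugation action of $g=(g_{i,j})\in\Aut_R(M)$ on $N$, obtaining the formula
\[
g\cdot(A_1,\ldots,A_{k-1})=\bigl(\ol{g_{2,2}}\,A_1\,\ol{g_{1,1}}^{-1},\ \ldots,\ \ol{g_{k,k}}\,A_{k-1}\,\ol{g_{k-1,k-1}}^{-1}\bigr),
\]
where $\ol{g_{i,i}}\in\GL_a(E_i)\cong\GL_a(E)$ denotes the reduction modulo the Jacobson radical. Dualising via \eqref{chi} identifies the character group of $N$ with $\M_{a\times a}(E)^{k-1}$ again, with the analogous action. Restricting $V$ to $N$, any character $(A_1,\ldots,A_{k-1})$ that occurs must, by Theorem~\ref{thm:degenerate} combined with the hypothesis of trivial functor morphing, have each $A_i$ of full rank $a$, so each $A_i\in\GL_a(E)$. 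The action on such tuples is transitive: one chooses $\ol{g_{1,1}}$ freely and then determines $\ol{g_{i,i}}$ inductively so as to transport $(A_1,\ldots,A_{k-1})$ to $(I_a,\ldots,I_a)$.

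By the Mackey--Wigner little group method applied to this set-up, $V$ is then induced from an irreducible representation of the stabiliser $H$ of the character $(I_a,\ldots,I_a)$ in $\Aut_R(M)$. The displayed formula identifies $H$ explicitly as the set of $g\in\Aut_R(M)$ satisfying $\ol{g_{i,i}}=\ol{g_{j,j}}$ for all $1\le i,j\le k$. The final step is to construct the ring $\wt{R}=R\oplus R\otimes_{\Z}R$, in which the two-sided ideal $R\otimes_{\Z}R=RxR$ is square-zero with the natural bimodule structure, and to equip $\wt{M}:=M$ with an $\wt{R}$-module structure where $x$ acts on $M_i^a\subseteq M$ via the chosen left-maximal basis vectors $f_i\in LM(M_i,M_{i+1})$ (for $i=1,\ldots,k-1$) and by zero on all remaining summands. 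A block-by-block verification then shows that an $R$-linear automorphism of $\wt{M}$ commutes with this action of $x$ if and only if its diagonal blocks $\ol{g_{i,i}}$ coincide, identifying $H$ with $\Aut_{\wt{R}}(\wt{M})$.

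Properness of $H$ is immediate: since $k>1$, the relation $\ol{g_{1,1}}=\ol{g_{2,2}}$ is a non-trivial constraint, and $\Aut_R(M)$ certainly contains elements violating it (take any automorphism with $\ol{g_{2,2}}=I_a$ and $\ol{g_{1,1}}\ne I_a$, which exist because $|E|\ge 2$ and $a\ge 1$). The main obstacle I expect is the careful verification of the identification $H\cong\Aut_{\wt{R}}(\wt{M})$: one must check that the commutation relation with $x$ on $\wt{M}$ translates exactly into the diagonal-block equalities in $\GL_a(E)$, and that the various choices of bases $f_i$ and of field isomorphisms around the cycle do not alter the stabiliser beyond conjugation inside $\Aut_R(M)$.
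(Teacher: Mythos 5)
Your proposal reproduces the paper's own argument step for step: reduction to a single cycle, identification of $N=N_1\cdots N_{k-1}$ with $\M_{a\times a}(E)^{k-1}$, the conjugation action on characters, the full-rank constraint from Theorem~\ref{thm:degenerate}, normalisation to $(I_a,\ldots,I_a)$, Clifford/little-group reduction to the stabiliser $H$, and the construction of $\wt{R}=R\oplus R\ot_\Z R$ and $\wt{M}=M$ so that $H\cong\Aut_{\wt{R}}(\wt{M})$. The only addition is your explicit remark on properness of $H$, which the paper leaves implicit.
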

This shows that the only case in which we cannot use functor morphing and Clifford Theory to reduce the complexity of the representation is when $LM(M_1,\ldots, M_n)$ is a disjoint union of cycles of length 1. 

\begin{remark}
This section can be thought of geometrically in the following way: for every cycle we have chosen a spanning tree (in this case, the chain $1\to 2\to\cdots\to k$) and we collapsed it to a point. In this way, we only need to deal with the case of cycles that contain a single point. 
\end{remark}







\section{Examples}
We begin with the trivial example. Consider the trivial one dimensional representation $\one$ of $\Aut_R(M)$. This representation appears in $K0(M)$, where $0$ stands here for the zero functor. In this case $K0(M) = K$. We thus have $(M,\one)\leadsto (0,\one)$. 
\subsection{The representation theory of $\GL_n(\F_q)$}\label{ex:glnfq}
Write $\k:=\o_1 \cong \F_q$. The group $\GL_n(\k)$ can be thought of as $\Aut_{\k}(\k^n)$. We thus consider the case $\l=1$, $\C=\k\lmod$. 
Harish-Chandra (or parabolic) induction and restriction play a fundamental role in the representation theory of these groups. By definition, if $V_{i}$ is a representation of $\GL_{n_i}(\k)$ for $i=1,2$, the Harish-Chandra multiplication of $V_1$ and $V_2$ is a composition of inflation with indction, given explicitly by
\[
V_1\times V_2 = \Ind_{P_{n_1,n_2}(\k)}^{\GL_{n_1+n_2}(\k)}\Inf^{P_{n_1,n_2}(\k)}_{\GL_{n_1}(\k)\times \GL_{n_2}(\k)}\left(V_1\boxtimes V_2\right),
\]
where $P_{n_1,n_2}(\k)$ is a parabolic subgroup containing $\GL_{n_1}(\k) \times \GL_{n_2}(\k)$ embedded as block diagonal matrices, with block sizes $(n_1,n_2)$.
Harish-Chandra restriction is defined dually.

We have that $\Cp = \Fun_{\k}(\C,\k\lmod)$ is again equivalent to the category $\C$ of finite dimensional $\k$-vector spaces. In particular, every object in $\Cp$ is of the form $\Hom_{\k}(\k^m,-)$ for some $m\geq 0$. We use Yoneda's lemma to identify $\Aut_{\Cp}(\Hom_{\k}(\k^m,-))$ and $\Aut_{\k}(\k^m)= \GL_m(\k)$.  

\begin{lemma} Let $V\in \Irr(\GL_n(\k))$ and let $(\Hom_{\k}(\k^m,-),\wt{V})$ be the functor morphing  of $(\k^n,V)$. Then $V$ is a direct summand of $\wt{V}\times \one_{\GL_{n-m}(\k)}$, and $m$ is the minimal number $j$ such that $V$ is a direct summand of a representation of the form $V'\times \one_{\GL_{n-j}(\k)}$ for some representation $V'$ of $\GL_j(\k)$.  
\end{lemma}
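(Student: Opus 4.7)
The plan is to identify $\ol{KF(M)}$ explicitly when $F=\Fr_m=\Hom_\k(\k^m,-)$ and $M=\k^n$, as a bimodule for $\GL_n(\k)\times\Aut_\Cp(F)$. Under the Yoneda anti-equivalence $\Cp\simeq \k\lmod$, every functor in $\Cp$ is of the form $\Fr_{m'}$ and the proper subquotients of $\Fr_m$ are exactly the $\Fr_{m'}$ with $m'<m$; the hypothesis $(\k^n,V)\leadsto (\Fr_m,\wt V)$ is therefore equivalent to $V$ having \emph{tensor rank} $m$, i.e.\ $m$ is the minimal integer for which $V$ appears in $W^{\otimes m}=K\Fr_m(M)$. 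I would first decompose $KF(M)=K\Hom_\k(\k^m,\k^n)$ according to the $\GL_n(\k)$-orbits, parametrised by the rank of $\phi$:
\[
KF(M)=\bigoplus_{r=0}^{m}K[\,\phi\in\Hom_\k(\k^m,\k^n):\rank\phi=r\,].
\]
For each $r$, the rank-$r$ piece is a $\GL_n(\k)$-direct sum of copies of $K[\mathrm{Inj}(\k^r,\k^n)]\subseteq W^{\otimes r}$, so every irreducible summand there has tensor rank $\leq r$. Consequently every tensor-rank-$m$ irreducible of $KF(M)$ lies in the generic piece, giving $\ol{KF(M)}\subseteq K[\mathrm{Inj}(\k^m,\k^n)]$.

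Next, with $\GL_m(\k)=\Aut_\Cp(F)$ acting on $F(M)$ by pre-composition, the stabiliser in $\GL_n(\k)\times\GL_m(\k)$ of the standard injection $\k^m\hookrightarrow\k^n$ equals the image of $P\hookrightarrow \GL_n(\k)\times\GL_m(\k),\ g\mapsto (g,\pi(g))$, where $P\subset \GL_n(\k)$ is the standard parabolic with Levi $\GL_m(\k)\times\GL_{n-m}(\k)$ and $\pi:P\twoheadrightarrow \GL_m(\k)$ is the projection to the $\GL_m(\k)$-Levi factor. Induction in stages then yields
\[
K[\mathrm{Inj}(\k^m,\k^n)]\cong \Ind_P^{\GL_n(\k)}\Inf_{\GL_m(\k)\times\GL_{n-m}(\k)}^{P}\bigl(K[\GL_m(\k)]\boxtimes\one\bigr)=K[\GL_m(\k)]\times\one_{\GL_{n-m}(\k)},
\]
with the $\Aut_\Cp(F)$-action corresponding to right translation on $K[\GL_m(\k)]$. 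Decomposing $K[\GL_m(\k)]=\bigoplus_{U\in\Irr\GL_m(\k)}U\boxtimes U^*$ gives
\[
K[\mathrm{Inj}(\k^m,\k^n)]=\bigoplus_{U\in\Irr\GL_m(\k)}(U\times\one_{\GL_{n-m}(\k)})\boxtimes U^*.
\]
Since the $V$-isotypic component of $\ol{KF(M)}$ has multiplicity space $\wt V^*$ by Remark~\ref{rem:whydual}, exactly one $U$ in the decomposition contributes to $V$, namely $U\cong \wt V$, and $V$ is then a direct summand of $\wt V\times \one_{\GL_{n-m}(\k)}$, proving~(1).

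For~(2), if $V$ is a direct summand of $V'\times \one_{\GL_{n-j}(\k)}$ for some representation $V'$ of $\GL_j(\k)$, then $V$ appears in $K[\GL_j(\k)]\times \one_{\GL_{n-j}(\k)}=K[\mathrm{Inj}(\k^j,\k^n)]\subseteq W^{\otimes j}$, forcing the tensor rank of $V$ to be at most $j$; hence $j\geq m$, with equality realised by~(1). The hard part will be setting up the bimodule structure on $K[\mathrm{Inj}(\k^m,\k^n)]$ correctly and keeping track of the duality convention of Remark~\ref{rem:whydual}; once these are handled, the identification with $K[\GL_m(\k)]\times\one$ is a routine application of induction in stages.
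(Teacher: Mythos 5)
Your proof is correct and follows essentially the same path as the paper: both identify the generic $\GL_n(\k)$-orbit in $\Hom_\k(\k^m,\k^n)$ with $\Ind_{H_m}^{\GL_n(\k)}\one \cong K\GL_m(\k)\times\one_{\GL_{n-m}(\k)}$ via induction in stages, and both use the fact that functor morphing yields an irreducible $\wt V$ to force a unique $U$ in the Peter--Weyl decomposition. Your version makes the rank stratification of $K\Hom_\k(\k^m,\k^n)$ and the resulting $\GL_n\times\GL_m$-bimodule structure explicit, which the paper's proof handles more briefly through the condition $V^{H_m}\neq 0$ and the minimality of $m$; this is a welcome elaboration rather than a different argument.
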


\begin{proof}
Let $V\in \Irr(\GL_n(\k))$.
Let $m$ be the minimal number for which $\Hom_{\GL_n(\k)}(K\Hom_{\k}(\k^m,\k^n),V)\neq 0$. Note that $m \le n$ because $K\Hom_{\k}(\k^n,\k^n)$ contains the regular representation of $\GL_n(\k)$. By choosing a basis we can identify $\Hom(\k^m,\k^n)$ with $M_{n \times m}(\k)$ on which $\GL_n(\k)$ acts by left multiplication. The stabiliser in $\GL_n(\k)$ of the standard embedding $\k^m \hookrightarrow \k^n$ is $H_m = \left\{\left(\begin{smallmatrix} I_m & * \\ 0 & *\end{smallmatrix}\right)\right\} \leq P_{m,n-m}(\k)$. It follows that $V^{H_m}\neq 0$. Then $$V^{H_m}\neq 0  \Longleftrightarrow \Hom_{H_m}(\one_{H_m}, V)\neq 0\Longleftrightarrow \Hom_{\GL_n(\k)} (\Ind_{H_m}^{\GL_n(\k)}\one_{H_m}, V)\neq 0.$$
We have 
\[
\begin{split}
\Ind_{H_m}^{\GL_n(\k)}\one_{H_m}&\cong \Ind_{P_{m,n-m}}^{\GL_n(\k)}\Ind^{P_{m,n-m}(\k)}_{H_m}\one_{H_m} \\&= \Ind_{P_{m,n-m}(\k)}^{\GL_n(\k)}\Inf_{\GL_m(\k)\times \GL_{n-m}(\k)}^{P_{m,n-m}(\k)}(K\GL_m(\k)\boxtimes \one_{\GL_{n-m}(\k)})\\
&= K\GL_m(\k)\times \one_{\GL_{n-m}(\k)}.
\end{split}
\]
Since all the irreducible representations of $\GL_m(\k)$ appear in the regular representation, this implies that $V^{H_m}\neq 0$ if and only if $V$ appears as a direct summand in a representation of the form $V'\times \one_{\GL_{n-m}(\k)}$ for some $V'\in \Irr(\GL_m(\k))$, as desired.
The minimality of $m$ implies that $\Hom_{\GL_n(\k)}(V,K\Hom_{\k}(\k^{j},\k^n))=0$ for $j<m$; see Lemma~\ref{lem:invs}. 
The fact that functor morphing assigns an irreducible representation to an irreducible representation implies that if $(\k^n,V)\leadsto (\k^m,\wt{V})$ then $\wt{V}$ is the unique irreducible representation of $\GL_m(\k)$ such that $V$ appears as a direct summand of $\wt{V}\times \one_{\GL_{n-m}(\k)}$. \end{proof}

Next, we describe the functor morphing  explicitly. To do so, we need to recall some facts about Positive Self adjoint Hopf algebras (PSH-algebras), that were introduced by Zelevinsky in \cite{Zelevinsky}. By definition, a PSH-algebra is an $\N$-graded $\Z$- Hopf algebra that is equipped with an inner product and a distinguished orthonormal basis, such that the multiplication is dual to the comultiplication with respect to that inner product, and all the structure constants with respect to the basis are non-negative. Zelevinsky showed that up to rescaling of the grading there exists exactly one isomorphism type of a \textit{universal} PSH-algebra, and that every PSH-algebra decomposes uniquely as the tensor product of universal PSH-algebras. Moreover, the tensor factors are in one to one correspondence with \textit{cuspidal} elements. By definition, these are basis elements that are primitive with respect to the comultiplication. More precisely, Zelevinsky showed that if $\H$ is a PSH-algebra, then we can write $$\H = \bigotimes_{\rho} \H(\rho),$$ where $$\H(\rho):=\{x\in\H \mid \langle x,\rho^n\rangle\neq 0\text{ for some } n\geq 0\}$$ is a universal PSH-algebra. The index $\rho$ runs here over the cuspidal elements.

For a group $G$, let $\text{K}_0(G)$ denote the Grothendieck group of $\Rep(G)$.
The canonical universal PSH-algebra is $\mathrm{Zel}:=\bigoplus\text{K}_0(S_n)$, where the multiplication and comultiplication are given by induction and restriction. The basis of $\text{K}_0(S_n)$ is then given by the irreducible representations of~$S_n$, which are in one to one correspondence with partitions of $n$. 
Write now $$\H_q = \bigoplus_n \text{K}_0(\GL_n(\k)).$$ In \cite[Chapter III]{Zelevinsky} Zelevinsky showed that Harish-Chandra induction and its adjoint restriction operation induce on~$\H_q$ a structure of a PSH-algebra. For each cuspidal representation $\rho$ of $\GL_{d}(\k)$ we choose an isomorphism $\H_q(\rho) \cong \mathrm{Zel}$, as in \cite[Section 9.4]{Zelevinsky}. We then obtain, for each $n$, an injective map $\Irr(S_n) \to \Irr(\GL_{dn}(\k))$, sending the $S_n$-representation associated to a partition $\lambda \parti n$ to a specific subrepresentation $\mathbb{S}(\rho, \lambda)$ of the induced representation $\rho^n$. The decomposition of $\H_q$ as the tensor product of the $\H_q(\rho)$s then implies that every irreducible representation of $\GL_n(\k)$ can be written uniquely as the Harish-Chandra product $$V = \bigtimes_{\rho} \Ss(\rho,\lambda(\rho)),$$ where $\rho$ runs over the cuspidal representations of the groups $\GL_m(\k)$, $\lambda(\rho)\parti n(\rho)$ , and $n(\rho)\in \N$ are numbers that satisfy $\sum_{\rho}d(\rho)n(\rho)=n$, where $\rho \in \Irr(\GL_{d(\rho)}(\k))$. 
%
%
%

We know that $V$ functor-morphs to $\wt{V}$, which is a representation of $\GL_m(\k)$. We also know that $V$ appears as a direct summand in $\wt{V}\times \one_{\GL_{n-m}(\k)}$. We can also decompose $\wt{V}$ with respect to the Harish-Chandra multiplication. We get 
$$\wt{V} = \bigtimes_{\rho} \Ss(\rho,\mu(\rho))$$ for some partitions $\mu(\rho)\parti m(\rho)$. 
Using the unique factorisation into Harish-Chandra product of representations that are associated to cuspidal elements, we see that necessarily $\mu(\rho) = \la(\rho)$ and $n(\rho) = m(\rho)$ for every $\rho\neq \one$, and that $\Ss(\one,\la(\one))$ is a direct summand of $\Ss(\one,\mu(\one))\times \one_{\GL_{n-m}(\k)}$, where $\one=\one_{\GL_1(\k)}$. This holds in the algebra $\H_q(\one)$, which is a universal PSH-algebra. 

By \cite[Theorem 3.1., page 27]{Zelevinsky} we have $\H_q(\one)\cong \Z[x_1,x_2,\ldots, ]$, a polynomial algebra in infinitely many variables. The isomorphism sends $x_n$ to the trivial irreducible representation of $\GL_n(\k)$. For $s\geq 1$ we define $(x_s)^*:\H_q(\one)\to \H_q(\one)$ to be the  adjoint operator to multiplication by $x_s$. Following \cite[Section 3.6]{Zelevinsky} we write $X^* = \sum_{s\geq 1} (x_s)^*$. In \cite[Section 4.3.]{Zelevinsky} it is proved that \begin{equation}\label{eq:xstar}X^*\left(\Ss(\one,\la)\right) = \sum_{\mu\perp \la} \Ss(\one,\mu),\end{equation} where $\mu\perp \la$ means that the partition $\mu$ is obtained from $\lambda$ by removing at most one element from every column of the associated Ferrers diagram. 
The fact that $\Ss(\one,\mu(\one)) \times \one_{\GL_{n-m}(\k)} = \Ss(\one,mu(\one))\cdot x_{n-m}$ contains $\Ss(\one, \lambda(\one))$ as a direct summand implies that $(x_{n-m})^* \Ss(\one,\lambda(\one)) \neq 0$. Moreover, the minimality of $m$ ensures that $j = n-m$ is the maximal number such that $(x_j)^*(\Ss(\one,\lambda(\one)) \neq 0$.
The maximality of $j$ combined with \eqref{eq:xstar} imply that $\mu(\one)$ is the partition obtained from $\la(\one)$ by removing the last element from each column in $\la(\one)$. In other words, if $\la(\one) = (\la_1,\ldots, \la_r)$ then $\mu(\one) = (\la_2,\ldots, \la_r)$. 
We sum this up in the following proposition:
\begin{proposition}
    Assume that $(\k^n,V)\leadsto (\k^m,\wt{V})$. If $V= \bigtimes_{\rho} \Ss(\rho,\la(\rho))$ then $\wt{V} = \bigtimes_{\rho}\Ss(\rho,\mu(\rho))$ where $\mu(\rho) = \la(\rho)$ for every $\rho\neq \one$. For $\rho=\one$ we have that if $\la(\one) = (\la_1,\la_2,\ldots, \la_r)$ then $\mu(\one)=(\la_2,\ldots, \la_r)$ and $\la_1 = n-m$. In particular, the only case of trivial functor morphing is when $\la(\one)$ is the empty partition of 0.
\end{proposition}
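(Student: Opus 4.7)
The plan is to translate the characterisation of functor morphing from the preceding lemma into a computation in the PSH-algebra $\H_q$, and then apply Zelevinsky's factorisation together with the explicit formula~\eqref{eq:xstar}. First, I would write $V$ and $\wt{V}$ in their Harish-Chandra factorisations $V = \bigtimes_{\rho} \Ss(\rho, \la(\rho))$ and $\wt{V} = \bigtimes_{\rho} \Ss(\rho, \mu(\rho))$, and note that $\one_{\GL_{n-m}(\k)} = x_{n-m} \in \H(\one)$. By the preceding lemma, $V$ is a direct summand of $\wt{V} \times \one_{\GL_{n-m}(\k)}$ and $m$ is the smallest index for which such a relation holds for any irreducible representation of $\GL_m(\k)$ in the place of $\wt{V}$.

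Next, using Zelevinsky's tensor decomposition $\H_q = \bigotimes_{\rho} \H(\rho)$, I would observe that $\wt{V} \times \one_{\GL_{n-m}(\k)}$ has $\rho$-component equal to $\Ss(\rho, \mu(\rho))$ for every $\rho \neq \one$, while its $\one$-component is $\Ss(\one, \mu(\one)) \cdot x_{n-m}$. Since $\{\Ss(\rho, \nu)\}_{\nu}$ is an orthonormal basis of $\H(\rho)$, matching $\rho$-components of $V$ against those of the product forces $\la(\rho) = \mu(\rho)$ for every $\rho \neq \one$.

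The $\one$-component reduces the problem to the statement: $\Ss(\one, \la(\one))$ is a summand of $\Ss(\one, \mu(\one)) \cdot x_{n-m}$ in $\H(\one)$. By the self-adjointness of multiplication and comultiplication in the PSH-algebra $\H(\one)$, this is equivalent to $\Ss(\one, \mu(\one))$ being a summand of $(x_{n-m})^* \Ss(\one, \la(\one))$. Formula~\eqref{eq:xstar} expresses $\sum_j (x_j)^* \Ss(\one, \la(\one))$ as the sum over partitions obtained by removing at most one box from each column of $\la(\one)$, and $(x_j)^*$ extracts those summands of size $|\la(\one)| - j$. The maximal $j$ for which this is non-zero is $j = \la_1$, attained only by removing one box from every column, which yields the unique partition $(\la_2, \ldots, \la_r)$. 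Since minimality of $m$ is exactly maximality of $n-m$, we conclude $n - m = \la_1$ and $\mu(\one) = (\la_2, \ldots, \la_r)$. The final assertion is then immediate: trivial functor morphing requires $n - m = 0$, hence $\la_1 = 0$, hence $\la(\one) = \varnothing$.

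The main subtlety is the adjointness step that converts a direct-summand relation in a Harish-Chandra product into a direct-summand relation for the dual operator $(x_{n-m})^*$; this is precisely the PSH axiom $\langle a \cdot b, c\rangle = \langle a, b^* c\rangle$ combined with the positivity of the structure constants, and everything else is bookkeeping with the unique factorisation of $\H_q$.
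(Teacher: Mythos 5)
Your proof is correct and follows essentially the same route as the paper: translate the minimality characterization of $m$ from the preceding lemma into the PSH-algebra $\H_q$, factor via Zelevinsky's tensor decomposition to isolate the $\H(\one)$-component, apply the adjunction $\langle a\cdot x_j, c\rangle = \langle a, (x_j)^*c\rangle$, and then invoke formula~\eqref{eq:xstar} to identify the unique partition reached by removing the maximal number of boxes (one per column). The only difference from the paper's exposition is that you state the adjointness step explicitly, which the paper leaves implicit; both arguments rely on the same unique-factorisation and box-removal bookkeeping.
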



\subsection{Parabolic groups over finite fields}
We consider now the representation theory of the parabolic groups $P_{a,b}(\k) = \left\{ \left(\begin{smallmatrix} * & * \\ 0 & * \end{smallmatrix}\right)\right\}\subseteq \GL_{a+b}(\k)$, where the block sizes are $a$ and $b$ respectively. In this case $\l=1$ and we continue to write $\k= \Ow_1$. We first explain how these groups fall into our framework. For this, we consider the ring $$R = \k\langle v_1,v_2,e\rangle/ \langle v_1^2= v_1, v_2^2=v_2, v_1v_2=v_2v_1=0, v_2ev_1=e, e^2= v_1e = v_2e=0\rangle.$$ This is just the algebra of upper-triangular matrices which is the quiver algebra of the directed graph
\medskip
\begin{center}\scalebox{0.5}{
    \begin{tikzpicture}
	\begin{pgfonlayer}{nodelayer}
		\node [style=graph node] (0) at (-7.5, 2.25) {};
		\node [style=graph node] (1) at (-0.75, 2.25) {};
		\node [style=none] (2) at (-7.5, 1.25) {1};
		\node [style=none] (3) at (-0.75, 1.25) {2};
	\end{pgfonlayer}
	\begin{pgfonlayer}{edgelayer}
		\draw [style=new edge style 0] (0) to (1);
	\end{pgfonlayer}
\end{tikzpicture}
}
\end{center}

We consider the modules $M_1 = Rv_1 = \Span_\k\{e,v_1\}$ and $M_2 = Rv_2=\Span_\k\{v_2\}$. We have $\End_R(M_1) \cong \End_R(M_2) = \k$, $\Hom_R(M_1,M_2)=0$ and $\Hom_R(M_2,M_1)\cong\k$, where the last hom-space is spanned by the morphism $f$ that sends $v_2$ to $e$. The morphism $f$ is a left maximal and a right maximal morphism. 
A direct verification now shows that $\Aut_R(M_1^a\oplus M_2^b)\cong P_{a,b}(\k)$. 

Let $\C = \langle M_1,M_2\rangle$. The category $\Cp$ has three indecomposable functors. They are give explicitly by 
\[
\begin{split}
F_1(M_1) &= \k, \enspace F_1(M_2) = 0 \\
F_2(M_1) &= 0, \enspace F_2(M_2) = \k\\
F_3(M_1) &= F_3(M_2) = \k, \enspace F_3(f)=1.
\end{split}
\]
In terms of hom-functors  $F_1 = \Hom_R(M_1,-)$, $F_3 = \Hom_R(M_2,-)$, and $F_2 = \Hom_R(M_2,-)/f^*(\Hom_R(M_1,-)$.
Recall next that $P_{a,b}(\k)$ fits into the following short exact sequence:
$$1\to U_{a,b}(\k)\to P_{a,b}(\k)\to \GL_a(\k)\times \GL_b(\k)\to 1,$$ where $U_{a,b}(\k) := \left\{\left(\begin{smallmatrix} I_a & * \\ 0 & I_b \end{smallmatrix}\right)\right\}.$
The dual group of $U_{a,b}(\k)$ can naturally be identified with $U_{b,a}(\k)$ using the trace form. 
The action of $\GL_a(\k)\times \GL_b(\k)$ on the character groups has $\min\{a,b\}+1$ orbits, parametrised by the rank of the matrix in $U_{b,a}(\k)$. We have seen in Theorem \ref{thm:degenerate} that if the matrix in $U_{b,a}(\k)$ has rank $<b$, then necessarily there is a non-trivial functor morphing .
This also gives us a concrete example to the situation of subsection \ref{subsec:no.out.arrows}, since $LM(M_1,M_2)$ is in fact isomorphic to the quiver with one arrow and two elements.  
\smallskip

Consider now the special case $a=b=1$. We analyse all the irreducible representations and their functor morphing in this case. Let  $V$ be an irreducible representation of $P_{1,1}(\k)$. We think of this group as $\Aut_R(M_1\oplus M_2)$. 
The action of $\GL_1(\k)\times \GL_1(\k)$ on the character group of $U_{1,1}(\k) \cong \k$ has only two orbits corresponding to $\{0\}$ and $\k^\times$. 
If~$U_{1,1}(\k)$ acts trivially on $V$, then the action of $P_{1,1}(\k)$ on $V$ factors through the quotient $\GL_1(\k)\times \GL_1(\k)$. Since this group is abelian, $V=Kv$ is one-dimensional. 
Fix an injective character {$\xi:\k^{\times}\to K^{\times}$}. The fact that $\k^{\times}$ is cyclic of order $q-1$ implies that every  character on $\k^{\times}$ is of the form $g\mapsto \xi(g^x)$ for some $x\in \Z/(q-1)$. The action on $V$ is thus given by $(g_1,g_2)\cdot v=  \xi(g_1^xg_2^y)v$ for some $x,y\in \Z/(q-1)$. 
\begin{enumerate}
\item $x=y=0$. This implies that $V$ is trivial, and it functor morphs to $(0,\one)$, the trivial representation of the (trivial) automorphism group of the trivial functor.
\item $x=0, y\neq 0$. In this case we have functor morphing   $(M_1\oplus M_2,V)\leadsto (F_2,\wt{V})$. This follows from the fact that the stabiliser of the generic point in $F_2(M_1\oplus M_2)$ is $H:=\{\left(\begin{smallmatrix} * & * \\ 0 & 1 \end{smallmatrix}\right)\}$, and $V^H= V\neq 0$. Since this representation is not trivial and $F_2$ is a simple functor we get that the corresponding functor is $F_2$. Summing up: we get $\Aut_{\Cp}(F_2)\cong \GL_1(\k)$, $\wt{V}=V^H=V$, and the action is given by $g^y\cdot v= \xi(g^y)v$. 
\item $x\neq 0, y=0$. This is similar to the previous case, and we get functor morphing  with the functor $F_1$.
\item $x\neq 0\neq y$. In this case the functors $F_1$ and $F_2$ are not enough, but rather $F_1\oplus F_2$, since the stabiliser of a generic point in $(F_1\oplus F_2)(M_1\oplus M_2)$ is $H' = U_{1,1}(\k)$, and $V^{U_{1,1}(\k)} = V$. The automorphism group is  $\Aut_{\Cp}(F_1\oplus F_2)\cong \GL_1(\k)\times \GL_1(\k)$, and the action is the one coming from $P_{1,1}(\k)$.  
\end{enumerate}
Notice that in all four cases here we did not reduce the dimension of $V$, but we did reduce the cardinality of the automorphism group.

We next consider the case where the group $U_{1,1}(\k)$ acts non-trivially. As all nontrivial characters are in the same orbit, we may assume without loss of generality it acts by the character that maps $\left(\begin{smallmatrix} 1 & u \\ 0 & 1 \end{smallmatrix}\right)$ to $\psi(u)$, where $\psi:\k \to K^\times$ is a fixed non-trivial additive character. The stabiliser in $P_{1,1}(\k)$ of a non-trivial character is $U_{1,1}(\k)$ is $S:= \{\left(\begin{smallmatrix} g & u \\ 0 & g \end{smallmatrix}\right) \mid g \in \k^\times, u \in \k\}$, and by Clifford Theory we know that the representation $V$ is induced from a representation of $S$. Since $S$ is abelian, every such representation is one-dimensional, and maps $\left(\begin{smallmatrix} g & u \\ 0 & g \end{smallmatrix}\right)$ to $\xi(g^x)\psi(u)$ for some $x\in \Z/(q-1)\Z$.  

We claim that in this case the representation $V$ admits a functor morphing to the functor $F_3$. Indeed, this follows from the fact that the stabiliser of the generic point in $F_3(M_1\oplus M_2)$ is $H''=\{\left(\begin{smallmatrix} * & 0 \\ 0 & 1\end{smallmatrix}\right)\}$, and $H''\cap S = \{1\}$. Since the representation $V$ is induced from $S$, a simple application of Mackey's formula implies that $V^{H''}\neq 0$. Since $V$ does not appear in any proper subquotient of $F_3$, we deduce that $F_3$ is the associated functor of $V$. We have $\Aut_{\Cp}(F_3)\cong \GL_1(\k)$, and a direct calculation shows that the resulting representation of $\Aut_{\Cp}(F_3)$ is the one that sends $g$ to $\xi(g^x)$. We thus see that all irreducible representations of $P_{1,1}(\k)$ admit non-trivial functor morphing, since $F_1\oplus F_3 = \Hom_R(M_1\oplus M_2,-)$ is not the associated functor of any irreducible representation. 

\subsection{The groups $\GL_n(\o_\l)$}
We next consider the groups $\GL_n(\o_\l)$, where $\l>1$. They can be realised as $\Aut_{\o_\l}(\o_\l^n)$. We consider here the category $\C = \langle \o_\l\rangle$. The category $\Cp$ is then equivalent to the category $\o_\l\lmod$ of all $\o_\l$-modules, where the equivalence is given by sending a functor $F$ to $F(\o_\l)$. The indecomposable functors in $\Cp$ are then the ones that correspond to the cyclic modules $\o_1,\o_2,\ldots,\o_\l$. We denote them by $F_1,\ldots, F_\l$, respectively. The minimal resolution of $F_i$, $i=1,\ldots, \l-1$, is given by $$\Hom(\o_\l,-)\xrightarrow{(\pi^i)^*} \Hom(\o_\l,-)\to F_i\to 0.$$ The functor $F_\l$ is already projective. 

Let now $V$ be an irreducible representation of $\GL_n(\o_\l)$. Let $N := 1+ \pi^{\l-1}\M_n(\o_\l)\cong \left(\M_n(\o_1),+\right)$ be the smallest principal congruence subgroup. The dual group of $N$ is again isomorphic to $N$ using the trace form on $\M_n(\o_1)$. The action of $\GL_n(\o_\l)$ on this group factors through $\GL_n(\o_\l)\to \GL_n(\o_1)$, and is given by conjugation. In this way one associates a similarity class of matrices in $\M_n(\o_1)$ to any irreducible representation. This plays a fundamental role in the analysis of the irreducible representations; see \cite{Hill_Jord} for a \lq Jordan decomposition\rq~ of irreducible representations and \cite{CMO3} for a treatment inspired by Zelevinsky's work \cite{Zelevinsky}. Both~\cite{Hill_Jord} and~\cite{CMO3} point at the representations associated with nilpotent matrices as the most difficult to understand.
By Theorem \ref{thm:degenerate} we know that if the associated matrix of an irreducible representation $V$ is not invertible, let alone nilpotent, then $V$ admits a non-trivial functor morphing . We will give a concrete example for this now. 

Consider the case $\l=n=2$. We write $M = \o_2^2$ and  consider irreducible representations of $\GL_2(\o_2)$ with associated nilpotent matrix $e_{12} = \left(\begin{smallmatrix} 0 & 1 \\ 0 & 0 \end{smallmatrix}\right)$. The centraliser of $e_{12}$ in $\GL_2(\o_1)$ is by $S=\{\left(\begin{smallmatrix} a & b \\ 0 & a \end{smallmatrix}\right) \mid a\in \o_1^{\times}, b\in \o_1\}$. Thus, $V$ is induced from a representation of the subgroup $T = \{\left(\begin{smallmatrix} a & b \\ \pi c & d\end{smallmatrix}\right) \mid b,c \in \o_2,  a,d\in \o_2^\times, a \equiv d\text{ mod } \pi\}.$
We have a short exact sequence $$1\to N\to T\to S\to 1.$$ 

Let $\phi: N\to K^{\times}$ the character that sends $1+\pi X$ to $\psi(\Tr(e_{12}X))=\psi(X_{21})$, where $\psi:\k \to K^\times$ is an additive character and let $E$ be the associated idempotent in $KN$. By Clifford Theory, the representations of $\GL_2(\Ow_2)$ with associated matrix $e_{12}$ are in one to one correspondence with representations of $T$ whose restriction to $N$ is $\phi$, and these in turn are in one to one correspondence with representations of $K^{\beta}S$ for some two-cocycle $\beta$. A direct verification shows that in this case the cocycle $\beta$ is trivial (see the discussion at the end of Subsection 7.1. in \cite{CMO3}). The representations that are of interest for us are thus in one to one correspondence with representations of $S\cong \Ow_1\times \Ow_1^{\times}$.  

We have $\Hom_{\Ow_2}(\Ow_2^2,-) = F_2^2$. The proper subquotients of this functor are $0,F_1,F_1\oplus F_1, F_2,F_1\oplus F_2$. The group $N$ acts trivially on $K0(M)=K, KF_1(M),$ and $ KF_1^2(M)$. So if the associated matrix of $V$ is $e_{12}$, the functor morphing  of $V$ can be associated with the functor $F_1\oplus F_2$ or with $F_2$.

Consider first the smaller one of these functors, $F_2$. Recall that $$KF_2(M) = K\o_2^2 = \Span_K\{u_{\left(\begin{smallmatrix} a \\ b \end{smallmatrix}\right)} \mid a,b \in \o_2\}.$$
Let $V$ be an irreducible representation of $\GL_2(\Ow_2)$ with associated matrix $e_{12}$, and let $W$ be the corresponding irreducible representation of $S \cong \Ow_1\oplus \Ow_1^{\times}$. 
\begin{lemma}
    The representation $V$ appears in $KF_2(M)$ if and only if the action of $\Ow_1$ on $W$ is trivial.
    \end{lemma}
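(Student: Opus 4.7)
My plan is to combine Frobenius reciprocity with an explicit computation of the $\phi_{e_{12}}$-isotypic component of $KF_2(M)$ restricted to $N$, then read off its structure as an $S$-representation. Clifford theory (as summarised just before the lemma) gives $V \cong \Ind_T^G \tilde V$, where $\tilde V$ is the one-dimensional representation of $T$ that restricts to $\phi := \phi_{e_{12}}$ on $N$ and is inflated from the character $W$ of $S = T/N$. Frobenius reciprocity then produces
\[
\Hom_G(V,\, KF_2(M)) \;\cong\; \Hom_T(\tilde V,\, KF_2(M)\restrict_T) \;\cong\; \Hom_S(W,\, W_0),
\]
where $W_0 := \Hom_N(\phi,\, KF_2(M))$ is the $\phi$-isotypic component, which inherits an $S$-action because $T$ stabilises $\phi$. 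It therefore suffices to describe $W_0$ as an $S$-module and determine when $W$ is a summand.

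Next I will decompose $KM = \bigoplus_{v \in M} Ku_v$ into $N$-orbits and single out those that support $\phi$. For $v = (a_1, b_1)^{\transpose}$ the stabiliser in $N$ is $\{1+\pi X : X\bar v = 0 \in \o_1^2\}$, and a direct linear algebra calculation shows that $\phi(1+\pi X) = \psi(X_{21})$ vanishes on this stabiliser precisely when $\bar a_1 \in \o_1^\times$ and $\bar b_1 = 0$. This yields exactly $q-1$ relevant $N$-orbits, one for each $\bar a_1 \in \o_1^\times$, each contributing a single $\phi$-eigenvector $\xi_{v_0}$ supported on $v_0 + \pi\o_1^2$ with $v_0 = (a_1,0)^{\transpose}$; in particular $\dim W_0 = q-1$.

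To finish, I will compute the $T$-action on the vectors $\xi_{v_0}$. The aim is to verify that the unipotent subgroup $\bigl\{\left(\begin{smallmatrix} 1 & b \\ 0 & 1 \end{smallmatrix}\right)\bigr\} \subset T$, which projects onto the $\o_1$-summand of $S$, fixes every $\xi_{v_0}$, whereas the central subgroup $\bigl\{\left(\begin{smallmatrix} a & 0 \\ 0 & a \end{smallmatrix}\right)\bigr\} \subset T$, which projects onto $\o_1^\times$, permutes the $\xi_{v_0}$ simply transitively by $\xi_{(a_1,0)^{\transpose}} \mapsto \xi_{(aa_1,0)^{\transpose}}$. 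Together these identify $W_0 \cong \triv_{\o_1} \boxtimes K[\o_1^\times]$ as an $S$-module, so a character $W = \chi_1 \boxtimes \chi_2$ of $S = \o_1 \oplus \o_1^\times$ appears in $W_0$ if and only if $\chi_1$ is trivial, which is precisely the condition that $\o_1$ acts trivially on $W$.

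The main technical obstacle is in this last step: a generic element of $T$ does not preserve the fibre $v_0 + \pi\o_1^2$ on which $\xi_{v_0}$ is supported, so computing $t\cdot \xi_{v_0}$ requires re-indexing the defining sum and checking that the phases in $\psi$ cancel correctly. The cancellation relies precisely on the identification $\phi = \phi_{e_{12}}$ — it is the symmetric role of the $(2,1)$-entry under the relevant change of variables that eliminates the $\bar b$-dependence — so this is the point at which the choice of associated nilpotent matrix enters the argument in an essential way.
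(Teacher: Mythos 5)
Your proposal is correct and follows essentially the same line as the paper: the paper applies the idempotent $E$ associated to $\phi$ to $KF_2(M)$, finds that $E\cdot u_{(a,b)^{\transpose}}$ vanishes unless $\bar a\in\o_1^\times$ and $\bar b=0$, observes that $U_{1,1}(\o_1)$ fixes the surviving vectors, and appeals to the $Q$-action for the converse direction. Your orbit-by-orbit analysis of the $\phi$-isotypic component and the explicit identification $W_0\cong\triv_{\o_1}\boxtimes K[\o_1^\times]$ is the same computation packaged through Frobenius reciprocity, with the bookkeeping spelled out more explicitly.
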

\begin{proof}
We first apply the idempotent $E$ to $KF_2(M)$, We get 
$$E\cdot u_{\left(\begin{smallmatrix} a \\ b \end{smallmatrix}\right)} = \frac{1}{q^4} \sum_{r_{ij}} \psi(-r_{21})u_{\left(\begin{smallmatrix} a+ \pi r_{11}a + \pi r_{12} b \\ b + \pi r_{21}a + \pi r_{22}b\end{smallmatrix}\right)}.$$
If $b\neq 0$ in $\Ow_1$ we can do a change of variables $r_{22} = r_{22} - b^{-1}ar_{21}$, and we get that the above sum equals 
$$\frac{1}{q^4} \sum_{r_{ij}} \psi(-r_{21})u_{\left(\begin{smallmatrix} a+ \pi r_{11}a + \pi r_{12} b \\ b + \pi r_{22}b\end{smallmatrix}\right)}=0$$
by the orthogonality of characters. We are thus left with the subspace spanned by all vectors of the form 
$$\psi(-r_2)u_{\left(\begin{smallmatrix} a + \pi r_1a\\ \pi r_2a \end{smallmatrix}\right)}, \quad r_1,r_2 \in \o_1.$$ But a direct calculation now shows that the subgroup $U_{1,1}(\o_1)$ acts trivially on all such vectors. 

This already proves one direction of the lemma. The second direction follows by considering the action of the group $Q:=\{\left(\begin{smallmatrix} a & 0 \\ 0 & a \end{smallmatrix}\right)\}$, and showing that any irreducible representation $W$ such that the action of $\o_1$ is trivial appears inside $KF_2(M)$. 
\end{proof}
If the group $Q$ acts non-trivially on $W$, we get a representation with associated functor $F_1\oplus F_2$. The automorphism group of this functor is isomorphic to  $\Aut_{\o_2}(\o_1\oplus \o_2)$, which is a group that does not appear as $\GL_n(\o_\l)$ for any $n,\l$. 

\subsection{The Grassmann representations of $\GL_n(\o_\l)$}\label{subsec:Grassmann} In \cite{Bader-Onn} the Grassmann representations of $\GL_n(\o_\l)$ were analysed. These are defined as follows. Recall that isomorphism classes of submodules of $\o_\l^n$ are parameterised by partitions $(\la_1,\ldots,\la_n) \leq \l^n$. Let $\mathrm{Gr}(\lambda)=\left\{M \leq \o_\l^n \mid M \cong  \o_\lambda :=\oplus_{i=1}^n \o/\p^{\la_i}\right\}$ and let $K\mathrm{Gr}(\lambda)$ be the corresponding permutation representation of $\GL_n(\o_\l)=\Aut_{\o_\l}(\o_\l^n)$; in \cite{Bader-Onn} this representation is denoted $\cF_\la$. Theorem 1 in \cite{Bader-Onn} asserts that there exist a family of irreducible representations $\left\{\mathcal{U}_\la \mid \lambda \leq \l^n\right\}$ such that for $m \le n/2$ 
\begin{equation}\label{BaderOnn}
\begin{split}
  & K\mathrm{Gr}(\l^m) =\bigoplus_{\lambda \leq \l^m} \mathcal{U}_\lambda, \quad \text{and} \\
  &\langle \mathcal{U}_\lambda, K\mathrm{Gr}(\mu) \rangle =|\{\lambda \hookrightarrow \mu \}|, \quad \forall \la,\mu \leq \l^m,
\end{split}
\end{equation}
where $|\{\lambda \hookrightarrow \mu \}|$ stands for the number of non-equivalent embeddings of a module of type $\la$ in a module of type $\mu$. The partial order on partitions here is containment of the corresponding diagrams. We shall explain now how to analyze these representations using the tools we developed here.

The category $\C$ is $\langle \o_\l\rangle$, as in the previous example. The category $\Cp$ is then equivalent to $\o_\l\lmod$. We give here a different parametrization of the objects in $\Cp$ as follows: to every partition $\la\leq \l^n$ we have the functor $F_{\la} = \Hom_{\o_{\l}}(M_{\la},-)$, where $M_{\la} = \oplus_i \o_{\la_i}$. If $f:M_{\la}\to M_{\mu}$ is a homomorphism of $\o_\l$-modules, then we have induced maps 
\[
\begin{split}
&f_*:KF_{\la}(M)\to KF_{\mu}(M), \quad f_*(\uu_g) = \sum_{h:M_{\mu}\to M | hf = g}\uu_h, \\
&f^*:KF_{\mu}(M)\to KF_{\la}(M), \quad f^*(\uu_h) = \uu_{hf}.
\end{split}
\]

Next, we describe explicitly the representations $\ol{KF_{\la}(M)}$. To do so, we first notice that if $i:M_{\nu}\to M_{\la}$ is an embedding, then $i_*(\uu_f) = \sum_{g:gi =f} \uu_g$, or in other words, it is the sum over all possible extensions of $f$ to~$M_{\la}$. We let $\Inj(M_{\la},M)$ denote the set of all embeddings $M_\la \to M$.
\begin{lemma}
The representation $\ol{KF_{\la}(M)}$ is isomorphic to the quotient $K\mathrm{Inj}(M_{\la},M)/(\sum_{i:M_\nu\to M_\la} \mathrm{Im}(i_*))$, where the sum is taken over all proper embeddings of submodules $M_\nu$ into $M_{\la}$. 
\end{lemma}
\begin{proof}
Write $V:=K\mathrm{Inj}(M_{\la},M)/(\sum_{i:\nu\to \la} \Im(i_*))$. 
We think here of $\ol{KF_{\la}(M)}$ as a quotient of $KF_{\la}(M)$. If $f:M_{\la}\to M$ is not injective, then it splits as $M_{\la}\stackrel{pr}{\to} M_{\nu}\stackrel{\ol{f}}{\to} M$ where $M_{\nu} = \text{Coker}(f)$. In particular, it is contained in the image of $pr^*:F_{\nu}\to M_{\la}$, and so it vanishes in the quotient $\ol{KF_{\la}(M)}$. This means that we can think of $\ol{KF_{\la}(M)}$ as a quotient of $K\Inj(M_{\la},M)$. For a similar reason, the image of $i_*$ should also vanish in $\ol{KF_{\la}(M)}$ for every proper embedding $i:M_{\nu}\to M_{\la}$. We thus see that we have a surjective map $V\to \ol{KF_{\la}(M)}.$ We now show that it is an isomorphism. 

To do so, we show that if $KF_{\nu}(M)\to KF_{\la}(M)$ is any constructible map with $\nu<\la$ then it vanishes in the quotient $V$. 
We know that such a map has the form $T_H$ for some $H\subseteq M_{\nu}\oplus M_{\la}$. Write $\alpha:H\to M_{\nu}$ and $\beta:H\to M_{\la}$ for the compositions of the inclusion of $H$ in $M_{\nu}\oplus M_{\la}$ with the natural projections. 
If $\beta$ is not surjective, then the image of $T_H$ is contained in the image of $i^*$, where $i:\Im(\beta)\to M_{\la}$ is the inclusion.
If $\alpha$ is not surjective then we can replace $M_{\nu}$ with the image of $\alpha$, without altering the image of $T_H$. We can thus assume that both $\alpha$ and $\beta$ are surjective.

We thus have the following pushout diagram:
$$\xymatrix{ H\ar[r]^{\beta}\ar[d]^{\alpha} & M_{\la}\ar[d]^{\gamma} \\ M_{\nu}\ar[r]^{\delta} & M_{\rho} }.$$ By Lemma \ref{lem:pushouts} we know that since $\alpha$ and $\beta$ are surjective, $\gamma$ and $\delta$ are surjective as well. Moreover, we know that $T_H = \gamma^*\delta_*$. 

If $\gamma$ is not an isomorphism, then the image of $T_H$ is contained in the span of maps $M_{\la}\to M$ that split through $M_{\rho}$. Again, this vanishes in $V$. So the only possibility that the image of $T_H$ will not vanish in $V$ is if $M_{\nu}$ surjects on $M_{\rho}$, and $M_{\rho}\cong M_{\la}$. But this implies that $\nu\geq \la$, contradicting the assumption that $\nu<\la$. 
\end{proof}

Fix $n>0$ and write $M=\o_\l^n$. Let $\mu=(\mu_1,\ldots, \mu_m)\leq l^n$ be a partition satisfying $m\leq n/2$. Our goal is to decompose $K\Gr(\mu)$. We require  following lemma.
\begin{lemma}\label{lem:basis1}
Let $M = \o_\l^n$ and  
    let $\la = (\la_1,\ldots, \la_a)\leq \l^n$ and $\mu = (\mu_1,\ldots, \mu_b)\leq\l^n$. If $a+b\leq n$ then the set $\{T_H\}_{H\subseteq M_{\la}\oplus M_{\mu}}$ is a basis for $\Hom_{\Aut_R(M)}(K\Hom(M_{\la},M),K\Hom(M_{\mu},M))$. 
\end{lemma}
\begin{proof}
    We have 
    \[
    \begin{split}  
    \Hom_{\Aut_R(M)}(K\Hom(M_{\la},M),&K\Hom(M_{\mu},M)) \\&\cong \Hom_{\Aut_R(M)}(\one,K\Hom(M_{\la},M)^*\ot K\Hom(M_{\mu},M)) \\ &\cong \Hom_{\Aut_R(M)}(\one,K\Hom(M_{\la},M)\ot K\Hom(M_{\mu},M))\\ &\cong \Hom_{\Aut_R(M)}(\one,K\Hom(M_{\la}\oplus M_{\mu},M)),
    \end{split}
    \]
    where we have used the fact that the representation $K\Hom(M_{\la},M)$ is self dual, because it is a permutation representation. It will thus be enough to show that $\{T_H\}_{H\subseteq M_{\la}\oplus M_{\mu}}$ is a basis for the last space. 

    A direct calculation shows that $$T_H(1) = \sum_{f: f(H)=0} \uu_f .$$ Using the inclusion-exclusion principle, we can show that $\mathrm{Span}\{T_H\}_{H\subseteq M_{\la}\oplus M_{\mu}}$ is the same as span $\{\wt{T}_H\}_{H\subseteq M_{\la}\oplus M_{\mu}}$, where $$\wt{T}_H(1) = \sum_{f: \Ker(f)=H} \uu_f.$$ 
    
    Since $a+b\leq n$, it follows that for every $\o_\l$-sumbmodule $H\subseteq M_{\la}\oplus M_{\mu}$, the quotient $(M_{\la}\oplus M_{\mu})/H$ is isomorphic to a submodule of $M$. This implies that $\wt{T}_H\neq 0$ for every $H\subseteq M_{\la}\oplus M_{\mu}$. Finally, for $f\in \Hom(M_{\la}\oplus M_{\mu},M)$ it holds that $\uu_f$ appears with coefficient 1 in $\wt{T}_{\Ker(f)}$ and zero in all the other $\wt{T}_H$. This implies that $\{\wt{T}_H\}_{H\subseteq M_{\la}\oplus M_{\mu}}$ is linearly independent, and since by Proposition~\ref{prop:hom.space.basis} it is spanning, we are done.   
\end{proof}
Let $B \subset A$ be the sets 
\[
\begin{split} 
A &= \{H\subseteq M_{\la}\oplus M_{\mu} \mid H\to M_{\la} \text{ is surjective, } H\to M_{\mu} \text{ is injective}\}, \\
B &= \{H\subseteq M_{\la}\oplus M_{\mu} \mid H\to M_{\la} \text{ is an isomorphism, } H\to M_{\mu} \text{ is injective}\}.
\end{split}
\]

By taking complements we have a direct sum decomposition 
\[
\begin{split}
    KF(M) &= \ol{KF(M)}\oplus \wt{KF(M)},\\ 
K\Hom(M_{\mu},M) &= K\mathrm{Inj}(M_{\mu},M)\oplus K\text{nInj}(M_{\mu},M),
\end{split}
\]
where $\text{nInj}(M_{\mu},M)$ stands for the non-injective maps from $M_{\mu}$ to $M$. By using the inclusion and projection maps we get induced maps between the direct summands. 
\begin{lemma}
The following assertions hold:
\begin{enumerate}
    \item The space $\Hom_{\Aut_R(M)}(\ol{K\Hom(M_{\la},M)},K\Hom(M_{\mu},M))$ has a basis given by $\{T_H\}_{H\in A}$; 
    \item The space $\Hom_{\Aut_R(M)}(\ol{K\Hom(M_{\la},M)},K\mathrm{Inj}(M_{\mu},M))$ has a basis $\{T_H\}_{H\in B}$. 
\end{enumerate}
\end{lemma}
\begin{proof}
The first assertion follows from the more general Lemma~\ref{lem:basis1}.  
For the second assertion, notice first that if $H\notin B$, then the induced image of $T_H:\ol{K\Hom(M_{\la},M)} \to K\Inj(M_{\mu},M))$ is zero. This implies that $\{T_H\}_{H\in B}$ spans $\Hom_{\Aut_R(M)}(\ol{K\Hom(M_{\la},M)},K\Inj(M_{\mu},M))$. We need to show that it is also independent.

We already know that the set $\{T_H\}_{H\in B}$ is linearly independent when we consider its elements as maps $\ol{K\Hom(M_{\la},M)}\to K\Hom(M_{\mu},M)$. So we need to consider the case that for some non-zero linear combination $T:=\sum_{H\in B}a_HT_H$ we have $\Im(T)\subseteq K\text{nInj}(M_{\mu},M)$.

\smallskip

For every non-zero module $M'\subseteq M_{\mu}$ write $\mathrm{pr}_{M'}:M_{\mu}\to M_{\mu}/M'$. 
We thus have a surjective map $$\bigoplus_{M'\neq 0}K\Hom(M_{\mu}/M',M)
\xrightarrow{\oplus \mathrm{pr}_{M'}^*}
K\text{nInj}(M_{\mu},M).$$ This implies that $T$ can be written as a linear combination of compositions of the form $$\ol{K\Hom(M_{\la},M)}\xrightarrow{T_{H'}} K\Hom(M_{\mu}/M',M)\xrightarrow{\mathrm{pr}_{M'}^*} K\text{nInj}(M_{\mu},M),$$ where $H'\subseteq M_{\la}\oplus M_{\mu}/M'$. A direct verification shows that this composition is equal to $T_H$, where $H$ is the inverse image of $H'$ under the projection $M_{\la}\oplus M_{\mu}\to M_{\la}\oplus (M_{\mu}/M')$. In particular, this means that the resulting $H$ is not in the set $B$, since the kernel of $H\to M_{\la}$ contains $0\oplus M'$ and is therefore non-trivial. But this means that as maps $\ol{K\Hom(M_{\la},M)}\to K\Hom(M_{\mu},M)$, the linear combination $T = \sum_{H\in B}a_HT_H$ is a linear combination of maps in $\{T_H\}_{H\in A \smallsetminus B}$. But this contradicts the first assertion. 
\end{proof}

The set $B$ is in one to one correspondence with injective maps $M_{\la}\to M_{\mu}$. We shall henceforth identify $B$ with this set. We have an action of $\Aut(M_{\la})\times \Aut(M_{\mu})$ on this set. We choose orbit representatives $f_1,\ldots, f_k$. For every $f_i:M_{\la}\to M_{\mu}$ we write $$P_i = \{g\in \Aut(M_{\mu}) \mid \Im(g f_i) = \Im(f_i)\}.$$ We think of this group as a \lq parabolic\rq~ subgroup of $\Aut(M_{\mu})$ with respect to the embedding $f_i$. Every $g\in P_i$ gives by restriction an automorphism of $M_{\la}$. This gives a homomorphism $\phi_i:P_i\to \Aut(M_{\la})$. Notice that in the general, if $f_i$ is not an embedding of $M_{\la}$ as a direct summand of $M_{\mu}$, it might happen that $\phi_i$ will not be surjective.  
We claim the following:
\begin{lemma} The stabilizer of $f_i$ in $\Aut(M_{\la})\times \Aut(M_{\mu})$ is $\{(\phi_i(g),g)\mid g\in P_i\}$ and is isomorphic to $P_i$. 
\end{lemma}
\begin{proof}
    The action of $(g_1,g_2)$ on $f_i$ is given by $(g_1,g_2)\cdot f_i = g_2f_ig_1^{-1}$. Assume that $(g_1,g_2)$ stabilizes $f_i$. 
    By comparing images we see that $\Im(f_i) = \Im(g_2 f_i g_1^{-1}) = \Im(g_2f_i)$, which implies that $g_2\in P_i$. Since $f_i$ is injective, this also implies that $g_1$ is defined uniquely by $g_2$ as $g_1= \phi_i(g_2)$. The result follows.
\end{proof}
Next, consider the quotient set $B/\Aut(M_{\mu})$ which still carries an action of $\Aut(M_{\la})$. For $f\in B$ we write $\ol{f}\in B/\Aut(M_{\mu})$ for its image in the quotient set. The following lemma is easy to prove:
\begin{lemma}
    The elements $\ol{f_i}$ are orbit representatives for the action of $\Aut(M_{\la})$ on $B/\Aut(M_{\mu})$. The stabilizer of $\ol{f_i}$ is $\mathrm{Im}(P_i)\subseteq \Aut(M_{\la})$. 
\end{lemma}
Next, we use functor morphing to determine the Grassmanian representation $K\Gr(\mu)$. Recall that it is isomorphic to $K\Inj(M_{\mu},M)/\Aut(M_{\mu})$. 
Write $\Rep(\Aut_R(M)) = \bigoplus_{\la} \Rep(\Aut_R(M))_{\la}$, where $\Rep(\Aut_R(M))_{\la}$ corresponds to the functor $\Hom(M_{\la},-)$. The sum is taken here over all $\la\leq \l^n$. 
This category is equivalent to a subcategory $\Rep(\Aut_R(M_{\la}))$. 
If $V$ is a representation of $\Aut_R(M)$ then functor moprhing means that $V$ splits uniquely as $V = \bigoplus V_{\la}$, where $V_{\la}\in \Rep(\Aut_R(M))_{\la}$. The representation $V_{\la}$ is thus a direct sum of irreducible representation that functor morph to irreducible representations of the automorphism group of the functor $\Hom(M_{\la},-)$. Thus, $V_{\la}$ gives a representation of $\Aut_R(M_{\la})$. We can write this representation explicitly as $\Hom(\ol{K\Hom(M_{\la},M)},V_{\la}) = \Hom(\ol{K\Hom(M_{\la},M)},V)$, where $\Aut_R(M_{\la})$ acts by pre-composing on $\ol{K\Hom(M_{\la},M)}$. We call this $\Aut_R(M_{\la})$-representation the $\la$-functor morphing of $V$, and write it as $\wt{V_{\la}}$.  We claim the following: 
\begin{lemma}
    If $\la \not\leq \mu$ then $\wt{V_{\la}} = 0$. If $\la \leq \mu$ then $\wt{V_{\la}} = \bigoplus_{i=1}^k \Ind_{\mathrm{Im}(\phi_i)}^{\Aut(M_{\la})} \one$.  
\end{lemma}
\begin{proof}
    Since $K\Gr(\mu)\subseteq K\Hom(M_{\mu},M)$ the assertion about the case $\la\not\leq \mu$ is clear.  If $\la\leq \mu$ then we have that  
    \[
    \begin{split}    
    \wt{(K\Gr(\mu))_{\la}} &= \Hom(\ol{K\Hom(M_{\la},M)},K\Gr(\mu))= \Hom(\ol{K\Hom(M_{\la},M)},K\Inj(M_{\mu},M)/\Aut_R(M_{\mu})) \\ &\cong  \Hom(\ol{K\Hom(M_{\la},M)},K\Inj(M_{\mu},M))/\Aut_R(M_{\mu}) \cong KB/\Aut_R(M_{\mu}).
    \end{split}
    \]
    By the above lemma we get the desired result. 
\end{proof}
Consider now the case $\mu= \l^m$ for $m\leq n/2$. In this case there is a unique embedding $M_{\la}\to M_{\mu}$ up to the action of $\Aut(M_{\la})\times \Aut(M_{\mu})$, and every automorphism of $\Aut(M_{\la})$ can be extended to an automorphism of $M_{\mu}$. We thus get that $\wt{V_{\la}} = \one$ for every $\la\leq \l^m$. The corresponding representation $V_{\la}$ is then exactly the representation $\mathcal{U}_{\la}$ of \eqref{BaderOnn} that appears in \cite{Bader-Onn}. For a general $\mu$, the representation $\one$ appears in $\wt{V_{\la}}$ exactly $k$ times, where $k$ is the number of $\Aut(M_{\la})\times \Aut(M_{\mu})$-orbits in $\Inj(M_{\la},M_{\mu})$. This translates to the other result of \cite{Bader-Onn}, namely $\langle \mathcal{U}_{\la},K\Gr(\mu)\rangle = k$.










    



\section{Interpolations}\label{sec:interpolation}
In this section we explain how to construct interpolations of the categories $\Rep(\Aut_R(M))$, using the methods of \cite{meir21}. We thus consider algebraic structures of the form $(W,\mu,\Delta,\eta,\epsilon,(T_r)_{r\in R})$. So far we only consider cases where $W=KM$ and $M$ is a finite $R$-module. In \cite{meir21} it was shown that there is a universal ring of invariants $\U$, and that $W=KM$ defines a character of invariants $\chi=\chi_W:\U\to K$. The main construction of \cite{meir21} gives a symmetric monoidal category $\C_{\chi}$ out of $\chi$, that is equivalent to the category $\Rep(\Aut_R(M))$. It also enables to construct \textit{interpolations} of the symmetric monoidal categories $\Rep(\Aut_R(M))$, as we shall now describe. 

We first need the following lemma:
\begin{lemma}
Let $M$ be a finite $R$-module. For every closed diagram $Di$ in the structure tensors of $W$ it holds that $\chi_{KM}(Di)$ is an integer power of $q$. 
\end{lemma}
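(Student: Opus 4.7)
The plan is to interpret $\chi_{KM}(Di)$ combinatorially, and then exploit the fact that the set of consistent configurations forms a finite $\o_\l$-submodule of a power of $M$.

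First I would fix the basis $\{\uu_m\}_{m\in M}$ of $W=KM$ (together with its dual basis of $W^*$) and observe that in this basis every structure tensor has matrix entries in $\{0,1\}$: $\mu$ sends $\uu_{m_1}\ot \uu_{m_2}$ to $\uu_{m_1+m_2}$, $\Delta$ sends $\uu_m$ to $\uu_m\ot \uu_m$, $\eta(1)=\uu_0$, $\epsilon(\uu_m)=1$, $T_r(\uu_m)=\uu_{rm}$, and the abstract duality maps $\ev$ and $\coev$ simply test equality on paired wires. Accordingly, a closed diagram $Di$ can be viewed as a tensor network with some number $N$ of wires, with nodes labelled by structure tensors; evaluating $\chi_{KM}(Di)$ amounts to summing, over all labelings of the wires by elements of $M$, the product of the matrix entries contributed by each node. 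Since every entry is $0$ or $1$, this sum equals the cardinality of the subset $S\subseteq M^N$ of \emph{admissible} labelings — those tuples for which every node's constraint is satisfied.

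The crucial observation is that each such constraint is $R$-linear and homogeneous: a $\mu$-node imposes $m_{\mathrm{in},1}+m_{\mathrm{in},2}=m_{\mathrm{out}}$; a $\Delta$-node imposes $m_{\mathrm{in}}=m_{\mathrm{out},1}=m_{\mathrm{out},2}$; an $\eta$-node imposes $m=0$; a $T_r$-node imposes $m_{\mathrm{out}}=r\cdot m_{\mathrm{in}}$; an $\ev$- or $\coev$-node imposes the equality of its two paired wires; and $\epsilon$ imposes no condition. Since $R$ is an $\o_\l$-algebra, all of these conditions are in particular $\o_\l$-linear, so the intersection $S$ is an $\o_\l$-submodule of the finite $\o_\l$-module $M^N$. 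Every finite $\o_\l$-module has cardinality a power of $q$, and therefore $\chi_{KM}(Di)=|S|=q^c$ for some $c\geq 0$, which is in particular an integer power of $q$.

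The one non-obvious point is the rigorous formulation of what a closed diagram and its evaluation are; formally this requires presenting the diagrams as morphisms in the free symmetric monoidal category on the signature $(\mu,\Delta,\eta,\epsilon,(T_r)_{r\in R})$ equipped with the chosen self-duality, and interpreting such a morphism in $W=KM$. This is standard and is implicit in the constructible-morphism formalism already used in the paper; once this setup is in place, the argument above is essentially the whole proof.
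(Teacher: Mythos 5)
Your proof is correct and takes a genuinely different, more elementary route than the paper's. The paper invokes the earlier lemma identifying constructible elements of $\Hom_{\Aut_R(M)}(\one, W^{\ot n})$ with the vectors $\vv_F(M)$ for subfunctors $F \subseteq \Fr_n$ up to an integer power of $q$, and then specialises to $n=0$, where $\Fr_0 = 0$ forces $F = 0$ and $\vv_F(M) = 1$, so that the evaluation is a power of $q$. You bypass that machinery entirely: you observe that every structure tensor, together with $\ev$ and $\coev$, has $\{0,1\}$-valued matrix entries in the basis $\{\uu_m\}_{m\in M}$, so the evaluation of a closed diagram is the cardinality of the set $S\subseteq M^N$ of admissible wire-labelings; and since every node's constraint is a homogeneous $\o_\l$-linear equation (for the $T_r$-nodes you correctly use that $\o_\l$ is central in $R$), the set $S$ is an $\o_\l$-submodule of the finite $\o_\l$-module $M^N$, whose cardinality is therefore a power of $q$. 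This argument is self-contained and in fact gives the mild sharpening $\chi_{KM}(Di) = q^c$ with $c\geq 0$, since the zero labeling always lies in $S$. The two proofs are of course related: the counting phenomenon you isolate is essentially what produces the factor $q^c$ in the evaluation step of the paper's earlier lemma, but the paper's route encodes it in the subfunctor language that is needed elsewhere in the paper, whereas yours extracts exactly what this lemma requires and nothing more.
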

\begin{proof}
In Lemma \ref{lem:constructibles} we showed that every constructible element in $\Hom_{\Aut_R(M)}(\one,W^{\ot n})$ is, up to multiplication by an integer power of $q$, of the form $\vv_F(M)$ for some $F\subseteq \Fr_n$. The invariants $\chi(Di)$ can be considered as constructible elements in $\Hom_{\Aut_R(M)}(\one,W^{\ot 0})$. Since $\Fr_0=0$, the only subfunctor is the zero functor, and so we get that every invariant is an integer power of $q$, as desired.
\end{proof}

Let now $M_1,\ldots, M_n$ be non-isomorphic indecomposable $R$-modules. Write $\chi_i = \chi_{M_i}$ for the character of invariants of $M_i$. By the above lemma, for every diagram $Di$ there is an integer $n_i(Di)$ such that $\chi_i(Di) = q^{n_i(Di)}$. We define a one-parameter family of characters by $\chi^{(i)}_t(Di) = t^{n_i(Di)}$.  

\begin{lemma} \quad
\begin{enumerate}
    \item The point-wise multiplication of characters gives $\chi^{(i)}_t\chi^{(i)}_s = \chi^{(i)}_{ts}$. In other words, for every $i$, $\chi^{(i)}_t$ is a multiplicative family of characters. 
    \item The character of invariants of $KM$, where $M = M_1^{a_1}\oplus\cdots\oplus M_n^{a_n}$ is $\chi^{(1)}_{q^{a_1}}\cdots \chi^{(n)}_{q^{a_n}}$.
\end{enumerate}
\end{lemma}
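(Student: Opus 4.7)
The plan is to handle the two parts independently, with the first being a direct computation and the second reducing to the multiplicativity of the character of invariants under direct sums of $R$-modules.

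For part (1), I would simply compute: given any closed diagram $Di$ in the structure tensors,
\[
(\chi^{(i)}_t \chi^{(i)}_s)(Di) = t^{n_i(Di)} s^{n_i(Di)} = (ts)^{n_i(Di)} = \chi^{(i)}_{ts}(Di).
\]
Since closed diagrams generate the ring $\U$, this suffices.

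For part (2), the key step is the claim that characters of invariants are multiplicative under direct sums of $R$-modules, i.e.\ for finite $R$-modules $M$ and $M'$, $\chi_{K(M\oplus M')} = \chi_{KM}\cdot \chi_{KM'}$ as characters of $\U$. I would verify this by using the natural isomorphism $K(M\oplus M')\cong KM\ot KM'$ that was established earlier in the paper (sending $u_{(m,m')}$ to $u_m\ot u_{m'}$). Under this isomorphism, each structure tensor on $K(M\oplus M')$ (multiplication, comultiplication, unit, counit, and the operators $T_r$) transports to the corresponding tensor product of the structure tensors on $KM$ and $KM'$, up to the standard symmetry rearrangements of tensor factors. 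When one then evaluates a closed diagram $Di$---which produces a scalar in $K$---the value factors as the product of the values of $Di$ on each factor separately, because closed diagrams are built up via tensor products, tensor permutations, and evaluations, all of which are compatible with the splitting of the structure tensors.

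Given multiplicativity, iterating gives $\chi_{KM_i^{a_i}}(Di) = \chi_{KM_i}(Di)^{a_i} = (q^{n_i(Di)})^{a_i} = q^{a_i n_i(Di)} = \chi^{(i)}_{q^{a_i}}(Di)$, and iterating once more over the direct summands of $M = M_1^{a_1}\oplus\cdots \oplus M_n^{a_n}$ yields
\[
\chi_{KM}(Di) = \prod_{i=1}^{n}\chi^{(i)}_{q^{a_i}}(Di),
\]
which is exactly what the second part asserts. The only step that requires real care is verifying the compatibility of the tensor-product identification $K(M\oplus M')\cong KM\ot KM'$ with every structure tensor; this is routine but has to be checked for $\mu$, $\Delta$, $\eta$, $\epsilon$, and each $T_r$ individually. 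Once that is in hand, everything else follows by induction and direct computation.
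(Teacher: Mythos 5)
Your proposal is correct and follows essentially the same route as the paper: part (1) by evaluating both sides on a closed diagram, and part (2) via the identification $K(M'\oplus M'')\cong KM'\ot KM''$ together with $\chi^{(i)}_q=\chi_i$. The paper's own proof is a two-sentence sketch that invokes exactly these two facts without further elaboration; you have filled in the relevant details, in particular why the character of invariants is multiplicative under direct sums of modules (equivalently, under tensor products of the corresponding algebraic structures), which is the step the paper leaves implicit.
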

\begin{proof}
The first part of the lemma is straightforward, by comparing the evaluation of characters on a given diagram $Di$. 
The second part follows from the fact that $\chi^{(i)}_q = \chi_i$, and that $KM'\ot KM''\cong K(M'\oplus M'')$ for every two $R$-modules $M',M''$. 
\end{proof}

Recall that a character $\chi$ is called \textit{good} if the category $\C_{\chi}$ is abelian with finite dimensional hom-spaces. We know that in this case the category $\C_{\chi}$ is also semisimple. We claim the following:
\begin{proposition}
    For every $t_1,\ldots, t_n\in K^{\times}$ it holds that $\chi_{t_1,\ldots, t_N}:=\chi^{(1)}_{t_1}\cdots \chi^{(n)}_{t_n}$ is a good character. The categories $\C_{chi_{t_1,\ldots, t_n}}$ are therefore abelian and semisimple. In case $t_i = q^{a_i}$ for some $a_i\in \N$, the category $\C_{\chi_{t_1,\ldots, t_n}}$ is equivalent to $\Rep(\Aut_R(M_1^{a_1}\oplus\cdots\oplus M_n^{a_n}))$.
    \end{proposition}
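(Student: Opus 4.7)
The strategy is to apply the general interpolation framework of \cite{meir21}: goodness of a character $\chi$ is controlled by non-degeneracy of the Gram pairings on the formal hom spaces of $\C_\chi$, and the main input one needs is a specialisation at which these pairings are non-degenerate. In our setting that specialisation is supplied by the basis property of $\{v_F(M)\}$ already proved.

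First I would verify finite-dimensionality of $\Hom_{\C_{\chi_t}}(W^{\ot a}, W^{\ot b})$ for every $a,b$ and every $t$. Applying Lemma \ref{lem:constructibles} formally (to the universal algebraic structure rather than to any specific $KM$), these hom spaces are spanned by the images of the formal tensors $\{v_F\}_{F \subseteq \Fr_{a+b}}$. Since $\C$ has finitely many indecomposable objects, each $\Fr_n \in \Cp$ has only finitely many subfunctors, so the spanning set is finite; this already settles half of goodness.

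Next I would analyse the Gram matrix $G_t$ of the natural pairing
\[
\Hom_{\C_{\chi_t}}(\one, W^{\ot n}) \times \Hom_{\C_{\chi_t}}(W^{\ot n}, \one) \longrightarrow K,
\]
with respect to the spanning set indexed by pairs of subfunctors. By the first lemma of this section, each entry of $G_t$ is a monomial $\prod_i t_i^{m_i}$ whose exponents depend only on the pair. At the specialisation $t=(q^{a_1},\ldots,q^{a_n})$ with $a_i \gg 0$, the proposition following Lemma \ref{lem:constructibles} shows that $\{v_F(M)\}$ is a basis of $\Hom_{\Aut_R(M)}(\one, W^{\ot n})$, so $G_t$ has maximal rank. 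Hence its maximal non-vanishing minor, viewed as a Laurent polynomial in $t_1,\ldots,t_n$, is non-zero, and the generic rank of $G_t$ is maximal on a Zariski-open dense subset of $(K^\times)^n$. The main technical obstacle will be upgrading this to abelianness and semisimplicity for \emph{every} $t \in (K^\times)^n$; for this I would invoke the rigidity arguments standard in the interpolation formalism (see \cite{Knop2007} and the main theorem of \cite{meir21}), exploiting the multiplicativity $\chi^{(i)}_{ts}=\chi^{(i)}_t\chi^{(i)}_s$ to transport any potentially bad parameter along a one-parameter subgroup to a known good one and to propagate abelianness by specialisation.

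For the final claim concerning $t_i = q^{a_i}$, I would construct the tautological $K$-linear symmetric monoidal functor
\[
\Phi : \C_{\chi_{q^{a_1},\ldots,q^{a_n}}} \longrightarrow \Rep(\Aut_R(M)), \qquad M = M_1^{a_1} \oplus \cdots \oplus M_n^{a_n},
\]
sending the generating object $W$ to $KM$ with its Hopf-algebra structure tensors $(\mu,\Delta,\eta,\epsilon,(T_r))$. By the preceding lemma $\chi_{KM}=\chi_{q^{a_1},\ldots,q^{a_n}}$, so $\Phi$ is well defined. It is essentially surjective because $KM$ tensor-generates $\Rep(\Aut_R(M))$; it is full because constructible maps span all $\Aut_R(M)$-intertwiners by the corollary at the end of the previous subsection on constructible morphisms; and it is faithful by the universal property defining $\C_\chi$. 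Hence $\Phi$ is an equivalence of symmetric monoidal categories.
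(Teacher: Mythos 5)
Your proposal identifies the correct ingredients but follows a more circuitous route than the paper's, and the decisive step is still left as a gap. Let me explain both.

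The paper never analyses the Gram matrix directly. Instead it makes two reductions that together avoid the ``generic $t$ versus all $t$'' problem that you yourself flag as the main obstacle. First, it invokes Lemma~6.1 of \cite{meir21}, which says that the set of good characters is closed under point-wise multiplication. This reduces the claim at once from the $n$-parameter family $\chi_{t_1,\ldots,t_n}=\chi^{(1)}_{t_1}\cdots\chi^{(n)}_{t_n}$ to the single-variable families $\chi^{(i)}_t$ separately. Second, for a \emph{one-parameter multiplicative} family with hom-spaces of bounded dimension, Proposition~7.7 of \cite{meir21} asserts goodness for \emph{every} nonzero specialisation $t$, not merely on a Zariski-open dense subset. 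The boundedness input is exactly what you (correctly) extract from Lemma \ref{lem:constructibles}: the hom-spaces of $\C_{\chi^{(i)}_t}$ are spanned by the finitely many $\vv_F$ with $F\subseteq \Fr_n$. So the paper's argument needs no Gram-matrix minor computation at all.

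Your Gram-matrix step is sound as far as it goes: it does show the rank is maximal at $t_i=q^{a_i}$ with $a_i\gg 0$, hence on a Zariski-dense open subset. But this is strictly weaker than the proposition, and the ``rigidity argument'' you appeal to in order to upgrade it is not actually carried out. The precise statement you need is exactly Proposition~7.7 of \cite{meir21} (applied after the Lemma~6.1 reduction to one variable), and simply asserting that you would ``transport along a one-parameter subgroup to a known good specialisation'' does not show how to rule out isolated bad values of $t$; that non-trivial fact is the entire content of the cited proposition. So there is a genuine gap here, even though you have correctly diagnosed where it lies and named the right circle of ideas. Once you replace the Gram-matrix discussion by the two-step reduction (multiplicativity of the set of good characters, then the one-parameter proposition), your argument collapses to the paper's.

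For the final claim, your construction of the tautological monoidal functor $\Phi:\C_{\chi_{q^{a_1},\ldots,q^{a_n}}}\to \Rep(\Aut_R(M))$ and the verification that it is essentially surjective (since $KM$ tensor-generates), full (since constructibles span intertwiners), and faithful is exactly the content packaged in Theorem~1.2 of \cite{meir21}, which the paper cites directly; this part is correct and matches the paper in substance.
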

    \begin{proof}
    We begin by proving that $\chi^{(i)}_t$ is a good character for every $i=1,\ldots, n$ and every $t\in K^{\times}$. By Lemma 6.1. in \cite{meir21} the set of good characters is closed under multiplication, so this will be enough. Lemma \ref{lem:constructibles} tells us that for every $n$ the dimensions of the hom-space $\Hom_{\C_{\chi^{(i)}_{q^a}}}(\one,W^{\ot n})$ is bounded above by the number of subfunctors of $\Fr_n$, which is finite. Since the family $\chi^{(i)}_t$ is a multiplicative family of characters, Proposition~7.7 in \cite{meir21} implies that $\chi^{(i)}_t$ is a good character for every $t\neq 0$. Finally, the character $\chi_{q^{a_1},\ldots, q^{a_n}}$ is the character of invariants of $KM$ where $M = M_1^{a_1}\oplus \cdots\oplus M_n^{a_n}$. By Theorem 1.2. in \cite{meir21} we get an equivalence $\C_{\chi_{q^{a_1},\ldots, q^{a_n}}}\cong \Rep(\Aut_R(M))$.
    \end{proof}
We finish this section with the following description of the Grothendieck group of $\C_{\chi}$.
\begin{proposition}
Assume that $t_1,\ldots, t_n$ are algebraically independent over $\Q$. Then 
$$\text{K}_0(\C_{\chi_{t_1,\ldots, t_n}})\cong \bigoplus_{F\in \Cp} \text{K}_0(\Rep(\Aut_{\Cp}(F))).$$
\end{proposition}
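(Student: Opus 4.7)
The plan is to transplant the stratification argument of Section~\ref{sec:stratification} from $\Rep(\Aut_R(M))$ to the interpolated category $\C_{\chi_{t_1,\ldots,t_n}}$, exploiting the fact that at algebraically independent parameters the category is semisimple and the hom-spaces achieve their maximal possible dimension. First, I would note that inside $\C_{\chi}$ we still have the tautological object $W$ with all the structure tensors, and all the constructions of Section~2 (the tensors $\vv_F$, the projections $P'_F, P''_F$, the morphisms $T_H = \beta_*\alpha^*$) make sense verbatim because they are built purely out of the structure tensors. In particular, for each $F\in\Cp$ we obtain an object $KF \in \C_\chi$ as the image of a constructible projection on $W^{\ot n}$, and these tensor-generate $\C_\chi$ via subquotients of powers of $W$.

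The key technical input is a \emph{genericity} statement: for $(t_1,\ldots,t_n)$ algebraically independent over $\Q$, the set $\{T_H\}_{H\subseteq G\oplus F}$ is linearly independent in $\Hom_{\C_\chi}(KF,KG)$ for every $F,G\in\Cp$. To see this, I would observe that the Gram matrix (or any appropriate minor) of these constructible morphisms has entries which are Laurent monomials in $t_1,\ldots,t_n$ (the evaluations of closed diagrams built from the structure tensors, which are integer powers of the individual $t_i$'s by the lemma opening Section~\ref{sec:interpolation}). Its determinant is therefore a Laurent polynomial in the $t_i$'s. Because this polynomial specialises at $t_i = q^{a_i}$ with $a_i$ large enough to the nonzero value guaranteed by Proposition~\ref{prop:hom.space.basis}, it is not the zero polynomial; algebraic independence of the $t_i$'s over $\Q$ then forces its non-vanishing at our generic point.

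With this basis in hand, Proposition~\ref{prop:largest.F} carries over without change (its proof uses only the pushout/pullback formalism of the abelian category $\Cp$ together with the presentation of constructibles as $\beta_*\alpha^*$). One therefore obtains, exactly as in Section~\ref{sec:stratification}, a stratification of the simples of $\C_\chi$ by an associated functor $F\in\Cp$ and a canonical algebra map
\[
\Phi_F: K\Aut_{\Cp}(F)\longrightarrow \End_{\C_\chi}(\ol{KF}).
\]
The surjectivity argument of Theorem~\ref{thm:epimorphism} is formal, and injectivity now holds at the generic point by the linear independence established above. Because $\C_\chi$ is semisimple, Wedderburn applied to $K\Aut_{\Cp}(F)$ puts the simple constituents of $\ol{KF}$ in bijection with $\Irr(\Aut_{\Cp}(F))$, so the simples of $\C_\chi$ are parametrised by pairs $(F,U)$ with $F\in \Cp$ an isomorphism class and $U\in \Irr(\Aut_{\Cp}(F))$. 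Passing to $K_0$ yields the stated direct sum decomposition.

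The main obstacle is the genericity step: one has to verify that the specific polynomial identities which encode the basis property in Proposition~\ref{prop:hom.space.basis} are honest polynomials in $t_1,\ldots,t_n$ (and not merely polynomial for integer values). This is plausible from the $q^c$-scaling in the proof of Lemma~\ref{lem:constructibles}, but it would need to be checked that evaluating the structure tensors in $\C_\chi$ on any closed diagram returns a Laurent monomial in the $t_i$'s, so that the relevant determinant becomes a genuine element of $\Q[t_1^{\pm 1},\ldots,t_n^{\pm 1}]$. Once that is in place, everything else is a direct transcription of Section~\ref{sec:stratification}.
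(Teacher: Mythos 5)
Your proof is correct and follows essentially the same strategy as the paper: transplant the constructions of Sections~2 and~3 into $\C_{\chi}$, verify that the epimorphism theorem still holds there, and then establish injectivity of $\Phi_F$ at algebraically independent $t_i$ so that the simple objects of $\C_{\chi}$ are parametrised by pairs $(F,U)$ with $F\in\Cp$ and $U\in\Irr(\Aut_{\Cp}(F))$, whence the $K_0$ decomposition. The one place where you and the paper diverge is the genericity step. The paper appeals directly to Lemma~7.5 of \cite{meir21}, which is a black-box statement that a property holding for $t_i = q^{a_i}$ with $a_i$ large propagates to algebraically independent $t_i$. You instead reconstruct this step by hand: you observe that the Gram entries are closed-diagram evaluations, hence Laurent monomials in the $t_i$ over $\Q$ (this is exactly what the lemma opening Section~\ref{sec:interpolation} supplies when one replaces $q^{a_i}$ by $t_i$, so the worry you flag at the end resolves itself), so the determinant is a Laurent polynomial which is nonzero at the large-$M$ specialisations guaranteed by Proposition~\ref{prop:hom.space.basis} and therefore nonzero at a $\Q$-algebraically-independent point. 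The two arguments buy the same thing; the paper's is shorter by deferring to \cite{meir21}, while yours makes the mechanism of the genericity transfer explicit and self-contained.
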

\begin{proof}
    This follows from the fact that everything that is valid for a large enough $M$ is valid generically. More precisely, write $\chi = \chi_{t_1,\ldots, t_n}$. We can still construct the objects $KF(M)$ and $\ol{KF(M)}$ inside $\C_{\chi}$, even when $t_i$ are not integer powers of $q$. The epimorphism theorem is still valid in the categories $\C_{\chi}$. Since the map $\Phi_F(M): K\Aut_{\Cp}(F)\to \End_{\C_{\chi}}(\ol{KF(M)})$ is injective for large enough $M$, it will also be injective for generic values of $t_1,\ldots, t_n$ (see Lemma 7.5. in \cite{meir21}). This implies the proposition.
\end{proof}
\begin{remark}  \quad
\begin{enumerate}
\item The condition that $t_1,\ldots, t_n$ will be algebraically independent over $\Q$ seems to be a bit of an overkill. It is possible that the result of the Proposition will hold also for other values of $t_i$ that are not integer powers of $q$. However, we do not know what is the exact set of singular values of $t_i$ in this interpolation.
\item We can reach the same construction using the methods of Knop from \cite{Knop2007}. Instead of starting with the category $\C= \langle M_1,\ldots, M_n\rangle$, one should start from the category $\Cp$. This is an abelian category with finite hom-spaces, in which every object has finite length, and the simple objects are in one-to-one correspondence with $M_1,\ldots, M_n$. The advantage of the construction we presented here is that our starting point was the object that is most relevant to the constructions in this paper, namely $KM$ as a representation of $\Aut_R(M)$. 
\end{enumerate}
\end{remark}

\appendix
\section{Background from homological algebra}\label{appendix}

We collect some facts from homological algebra for the reader's convenience.
We follow the setup  and notation of the paper: $R$ is a finite $\Ow_\ell$-algebra, $M_1, \ldots M_n$ are indecomposables  in $R\lmod$; $\C$ is the full subcategory of $R\lmod$ whose objects are finite direct sums of the $M_i$'s; $\Cp=\Fun_{\o_\l}(\C,\o_\l\lmod)$.      



\begin{lemma}\label{lem:homXprojective} The object $\Hom_\C(X,-)$ in $\Cp$ is projective for every $X \in \C$. 
\end{lemma}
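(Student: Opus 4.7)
The plan is to deduce projectivity directly from the Yoneda lemma together with the fact that the abelian structure of $\Cp$ is computed pointwise.

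First I would recall the $\Ow_l$-linear version of Yoneda's lemma: for every $X \in \C$ and every $F \in \Cp$, evaluation at $\id_X$ gives a natural isomorphism
\[
\Hom_{\Cp}(\Hom_\C(X,-), F) \;\xrightarrow{\cong}\; F(X),
\]
i.e.\ the functor $\Hom_{\Cp}(\Hom_\C(X,-),\argument)$ is naturally isomorphic to the evaluation functor $\mathrm{ev}_X \colon \Cp \to \Ow_l\lmod$.

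Next I would observe that $\Cp = \Fun_{\Ow_l}(\C,\Ow_l\lmod)$ is an abelian category in which kernels, cokernels, images and coimages are all computed pointwise, because the target category $\Ow_l\lmod$ is abelian. In particular a morphism $\alpha\colon F \to G$ in $\Cp$ is an epimorphism if and only if $\alpha_Y\colon F(Y) \to G(Y)$ is surjective for every $Y \in \C$. Consequently the evaluation functor $\mathrm{ev}_X$ is exact.

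Combining these two observations, $\Hom_{\Cp}(\Hom_\C(X,-),\argument) \cong \mathrm{ev}_X$ is exact, so $\Hom_\C(X,-)$ is a projective object of $\Cp$. The only subtlety to check, should the reader wish to see it spelled out, is the $\Ow_l$-linearity in Yoneda's lemma, which follows from the fact that both sides of the isomorphism are $\Ow_l$-modules and the Yoneda map is $\Ow_l$-linear by construction. There is no substantive obstacle here; the argument is a direct application of the Yoneda lemma in an enriched functor category.
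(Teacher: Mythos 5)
Your argument is correct and essentially identical to the paper's: both identify $\Hom_{\Cp}(\Hom_\C(X,-),\argument)$ with evaluation at $X$ via (enriched) Yoneda and then invoke pointwise exactness in the functor category to conclude that this functor is exact.
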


\begin{proof} By Yoneda's lemma we have natural equivalence in both $F$ and $X$: 
\[
\{\Hom_{\C}(X,-) \to F \} \xleftrightarrow{1:1} F(X), \quad \alpha \mapsto \alpha_X(1_X).
\]

We show that $\Hom_{\Cp}(\Hom_{\C}(X,-), -)$ is exact. Let $0 \to F_1 \to F_2 \to F_3 \to 0$
be a short exact sequence. Then, using Yoneda's lemma, we have a short exact sequence
\[
0 \to \Hom_{\Cp}(\Hom_{\C}(X,-),F_1) \to  \Hom_{\Cp}(\Hom_{\C}(X,-),F_2) \to \Hom_{\Cp}(\Hom_{\C}(X,-),F_3) \to 0,
\]
because the latter is naturally equivalent to 
 \[
 0 \to F_1(X) \to F_2(X) \to F_3(X) \to 0,
 \]
which is exact by definition. 
\end{proof}

Next, we classify the simple objects in $\Cp$. Let $R_i=\End_R(M_i)$. As $M_i$ is indecomposable, $R_i$ is a local (finite) ring. It follows that $E_i:=R_i/J_i$, the  quotient of $R_i$ modulo the Jacobson radical $J_i$, is a finite division algebra and therefore a finite field extension of $\Ow_1$. For $1 \leq i \leq n$,  let $S_i \in \Cp$ be the functor (object in $\Cp$) defined by  
 \[
S_i(M_j)=
\begin{cases}
0, \quad \text{if $i \ne j$}\\
E_i, \quad \text{if $i=j$},
\end{cases}
\]
extended linearly; and for a morphism $\phi:M_j \to M_k$  set $S_i(\phi)$ as zero if $j \ne k$ and as multiplication by $\ol{r}=r+J_i$ where $r \in R_i$, if $i=j=k$, and extended linearly.  

\begin{proposition}[Simples in $\Cp$] Every simple object in $\Cp$ is one of the $S_i$'s.  
\end{proposition}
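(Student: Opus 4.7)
Let $S\in\Cp$ be simple. The plan is to exhibit $S$ as a quotient $\Hom_\C(M_i,-)/N_i$ of a projective cover, where $i$ is any index with $S(M_i)\neq 0$ and $N_i$ is an explicit subfunctor whose quotient is $S_i$.

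First I would fix a nonzero $x\in S(M_i)$ and, by Yoneda, consider the natural transformation $\alpha:\Hom_\C(M_i,-)\to S$ defined by $\alpha_X(f)=S(f)(x)$. Since $\alpha_{M_i}(\id_{M_i})=x\neq 0$, the image of $\alpha$ is a nonzero subfunctor of $S$, so simplicity forces $\alpha$ to be an epimorphism and $S\cong\Hom_\C(M_i,-)/K$ with $K=\ker\alpha$.

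Next I would introduce the subfunctor
\[
N_i(X)=\{f:M_i\to X\mid g\circ f\in J_i\text{ for every }g:X\to M_i\}\subseteq\Hom_\C(M_i,X),
\]
check it is functorial in $X$ (straightforward: given $\phi:X\to Y$ and $f\in N_i(X)$, any $g:Y\to M_i$ yields $g\phi\in\Hom_\C(X,M_i)$, hence $g\phi f\in J_i$), and compute $N_i$ on the generators $M_j$. When $j\neq i$, any composition $M_i\to M_j\to M_i$ is noninvertible because $M_i\not\cong M_j$ are indecomposable, hence lies in the unique maximal ideal $J_i$ of the local ring $R_i$; so $N_i(M_j)=\Hom_\C(M_i,M_j)$. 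When $j=i$, taking $g=\id$ gives $N_i(M_i)\subseteq J_i$, while $J_i\triangleleft R_i$ gives the reverse inclusion, so $N_i(M_i)=J_i$. Hence $\Hom_\C(M_i,-)/N_i$ is isomorphic to $S_i$ on each $M_j$, and by additivity on all of $\C$.

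The heart of the argument is to show $K=N_i$. For $K\subseteq N_i$, take $f\in K(X)$ and any $g:X\to M_i$; then $S(g\circ f)(x)=S(g)\alpha_X(f)=0$. If $g\circ f\notin J_i$ it would be a unit of $R_i$, and then $x=S((g\circ f)^{-1})S(g\circ f)(x)=0$, contradicting $x\neq 0$; so $g\circ f\in J_i$ and $f\in N_i(X)$. For the reverse inclusion, suppose $K\subsetneq N_i$. Then $(N_i+K)/K$ is a nonzero subfunctor of $S\cong\Hom_\C(M_i,-)/K$, so by simplicity $N_i+K=\Hom_\C(M_i,-)$. Evaluating at $M_i$ gives $R_i=N_i(M_i)+K(M_i)\subseteq J_i+J_i=J_i$, a contradiction. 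Thus $K=N_i$ and $S\cong\Hom_\C(M_i,-)/N_i\cong S_i$.

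The step I expect to be most delicate is the argument $K\subseteq N_i$: it is where the local structure of $R_i=\End_R(M_i)$ (namely, that non-units lie in $J_i$) is used to convert ``$S(g\circ f)$ annihilates $x$'' into ``$g\circ f\in J_i$''. Everything else is a routine check once $N_i$ is written down correctly.
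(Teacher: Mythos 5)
Your proof is correct, and it takes a genuinely different route from the paper's. The paper argues structurally: it first shows that a simple functor $S$ must kill every cross-morphism $f:M_i\to M_j$ with $i\neq j$ (otherwise one exhibits the proper nonzero subfunctor $S'(M_k)=\sum_{g:M_j\to M_k}S(gf)S(M_j)$), so that $S$ is "supported on a single vertex" and hence is determined by a single simple $R_i$-module, forcing $S\cong S_i$. You instead pick $i$ with $S(M_i)\neq 0$, use Yoneda to produce an epimorphism $\alpha:\Hom_\C(M_i,-)\twoheadrightarrow S$, and compute $\ker\alpha$ explicitly as the "radical" subfunctor $N_i$; the two inclusions $K\subseteq N_i$ and $N_i\subseteq K$ each hinge cleanly on the local structure of $R_i$ (non-units lie in $J_i$, and $N_i(M_i)=J_i\neq R_i$). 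Your version is more hands-on and self-contained: it re-derives the fact that $E_i$ is the unique simple $R_i$-module inside the argument rather than invoking it, and it identifies $\Hom_\C(M_i,-)$ as the projective cover of $S_i$ in passing (which the paper records separately). The paper's version is shorter and more conceptual but more terse: the step from "kills all cross-morphisms" to "$S$ is one of the $S_i$'s" implicitly uses the classification of simple modules over the local rings $R_i$, which your argument spells out. Both are valid; yours trades brevity for explicitness.
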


\begin{proof} Clearly the $S_i$'s are simple. Let $S \in \Cp$ be simple. We note that for every morphism $f:M_i \to M_j$ in $\C$ with $i \ne j$, $S(f):S(M_i) \to S(M_j)$ must be zero; otherwise, the functor $S'$ defined on objects by $S'(M_k) := \sum_{g:M_j \to M_k} S(gf)(M_j)$ and on morphisms as the restriction of $S$ is a proper sub-object. It follows that $S$ is a direct sum of the $S_i$'s and therefore has to be one of them. 
\end{proof}

\begin{proposition} Every object in $\Cp$ is a quotient of a functor of the form $\Hom_R(X,-)$. 
\end{proposition}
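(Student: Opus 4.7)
The plan is to construct a surjection $\Hom_R(X,-) \twoheadrightarrow F$ using Yoneda's lemma and the finiteness of the evaluations $F(M_i)$. Recall from the proof of Lemma~\ref{lem:homXprojective} the natural bijection $\Hom_{\Cp}(\Hom_{\C}(X,-), F) \xrightarrow{\sim} F(X)$ sending $\alpha$ to $\alpha_X(1_X)$; inverting this, any element $a \in F(X)$ gives rise to a natural transformation $\alpha^a: \Hom_{\C}(X,-) \to F$ defined on $Y \in \C$ by $\alpha^a_Y(\phi) = F(\phi)(a)$.

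First I would choose, for each indecomposable $M_i$, a finite $\Ow_l$-generating set $\{b_i^{(1)}, \ldots, b_i^{(a_i)}\}$ of the $\Ow_l$-module $F(M_i)$, which exists because the setup of the paper ensures $F(M_i)$ is a finite $\Ow_l$-module. Set
\[
X := \bigoplus_{i=1}^n M_i^{a_i} \in \C,
\]
and let $\iota_i^{(k)}: M_i \hookrightarrow X$ and $\pi_i^{(k)}: X \twoheadrightarrow M_i$ denote the obvious inclusions and projections. Using additivity of $F$ (which gives $F(X) = \bigoplus_{i,k} F(\iota_i^{(k)})(F(M_i))$), define
\[
a := \sum_{i,k} F(\iota_i^{(k)})(b_i^{(k)}) \in F(X),
\]
and consider the corresponding natural transformation $\alpha := \alpha^a : \Hom_R(X,-) \to F$.

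The next step is to verify surjectivity of $\alpha_Y : \Hom_R(X, Y) \to F(Y)$ for every $Y \in \C$. Since both sides are additive functors in $Y$ and every object of $\C$ is a direct sum of the $M_j$, it suffices to check surjectivity on the indecomposables $Y = M_j$. A direct computation gives
\[
\alpha_{M_j}(\pi_j^{(k)}) = F(\pi_j^{(k)})(a) = \sum_{i,\ell} F(\pi_j^{(k)} \iota_i^{(\ell)})(b_i^{(\ell)}) = b_j^{(k)},
\]
using that $\pi_j^{(k)} \iota_i^{(\ell)}$ is zero unless $(i,\ell) = (j,k)$, in which case it equals $1_{M_j}$. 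Hence the image of $\alpha_{M_j}$ contains all the chosen generators $b_j^{(k)}$ of $F(M_j)$, and as $\alpha_{M_j}$ is $\Ow_l$-linear it must be surjective.

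The construction is essentially forced by Yoneda, and I do not expect any serious obstacle; the only point requiring a moment's care is the implicit finiteness assumption on $F$, which is guaranteed by the standing hypothesis that $\Cp$ has finite hom-spaces and objects of finite length. Consequently $\alpha$ is an epimorphism in $\Cp$, exhibiting $F$ as a quotient of $\Hom_R(X,-)$.
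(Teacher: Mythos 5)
Your proof is correct, but it takes a genuinely different route from the paper's. The paper proceeds by induction on the subquotient order $\preccurlyeq$: it first exhibits each simple $S_i$ as a quotient of $\Hom_\C(M_i,-)$, and then handles a general $F$ by splitting a non-trivial short exact sequence $0\to F_1\to F\to F_2\to 0$, lifting a surjection $\Hom(X_2,-)\twoheadrightarrow F_2$ through $F$ via projectivity, and assembling a surjection $\Hom(X_1\oplus X_2,-)\twoheadrightarrow F$. Your argument is instead direct and constructive: choose finite $\Ow_l$-generating sets of the finitely many modules $F(M_i)$, bundle them via additivity into a single element $a\in F(X)$ for $X=\bigoplus_i M_i^{a_i}$, and observe that the Yoneda morphism $\alpha^a$ hits every generator of every $F(M_j)$, hence is surjective on indecomposables and (by additivity of both sides) on all of $\C$. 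Your finiteness concern is indeed justified by the paper's standing hypotheses: by Yoneda, $F(M_i)\cong\Hom_{\Cp}(\Hom_\C(M_i,-),F)$, and the paper stipulates that hom-spaces in $\Cp$ are finite, so $F(M_i)$ is a finite $\Ow_l$-module. The two proofs buy slightly different things: the paper's induction is formulated so as to carry over to any abelian category with enough projectives and finite length (as used later for the abstract category $\D$ in Section~4), whereas your argument is more elementary and also makes the covering object $X$ explicit in terms of the number of generators of the values $F(M_i)$, which is useful for estimating the minimal $X$. Both are valid.
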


\begin{proof} We first prove the assertion for the simples, that is, $F=S_i$ for some $i$.  Indeed in this case we have a surjection $$\Hom(M_i,-) \longrightarrow S_i \longrightarrow 0$$ 
defined by the zero map $\Hom(M_i,M_j) = 0\to S_i(E_j)=0 \to 0$   if $i \ne j$, and by the reduction map $\Hom(M_i,M_i) = R_i \to S_i(M_i)=E_i \to 0$ if $i=j$. In fact, $\Hom(M_i,-)$ is the projective cover of $S_i$. We now prove the assertion for arbitrary $F \in \Cp$ using induction on $\preccurlyeq$. Indeed, let $
0 \to F_1 \xrightarrow{\alpha}  F \xrightarrow{\beta} F_2 \to 0 $   
be a non-trivial short exact sequence. Choose $X_1,X_2$ with $\gamma_i:\Hom(X_i,-) \twoheadrightarrow  F_i$ and let $X_2 \xrightarrow{i_2} X_1\oplus X_2 \xrightarrow{p_1} X_1$ be the embedding and projection. Since $\Hom(X_i,-)$ is projective, there is a lift $\delta_2: \Hom(X_2,-) \to F$ with $\beta \delta_2=\gamma_2$.  It follows that $\delta_2i_2^* :   \Hom(X_1 \oplus X_2, - )  \to F$ is surjective and we are done. 
\[
 \xymatrix{ 
 0 \ar[r] & F_1 \ar[r]^\alpha & F \ar[r]^\beta & F_2 \ar[r] & 0 \\
0 \ar[r]& \Hom(X_1,-) \ar[u]^{\gamma_1}  \ar[r]^{p_1^*} & \Hom(X_1 \oplus X_2, - ) \ar@{-->} [u]^{\delta_2i_2^*} \ar[r]^{i_2^*}& \Hom(X_2,-) \ar[u]^{\gamma_2}\ar@{-->}[ul]_{\delta_2} \ar[r]& 0 \\
}
\] 
\end{proof}
	
\begin{proposition}[Projectives in $\Cp$]\label{prop:projectives.in.C+1} The projectives in $\Cp$ are of the form $\Hom_{\C}(X,-)$.  
\end{proposition}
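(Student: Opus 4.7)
The plan is to combine the previous two results with the Yoneda embedding and Krull--Schmidt. Let $P\in \Cp$ be projective. By the preceding proposition there exist $X\in \C$ and a surjection $\pi:\Hom_{\C}(X,-)\twoheadrightarrow P$. Since $P$ is projective, the identity $1_P$ lifts along $\pi$ to a section $s:P\to \Hom_{\C}(X,-)$ with $\pi s=1_P$. Thus $P$ is a direct summand of $\Hom_{\C}(X,-)$, and the composite $\varepsilon := s\pi$ is an idempotent endomorphism of $\Hom_{\C}(X,-)$ whose image is (isomorphic to) $P$.

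Next I would transport this idempotent back to $\C$ using Yoneda. The Yoneda embedding ${\C^{op}\hookrightarrow \Cp}$, $X\mapsto \Hom_{\C}(X,-)$, is fully faithful, so there is a ring isomorphism ${\End_{\Cp}\bigl(\Hom_{\C}(X,-)\bigr)\cong \End_{\C}(X)^{op}}$; write $\varepsilon = e^*$ for a (unique) idempotent $e\in \End_{\C}(X)$.

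Now I would split $e$ inside $\C$. The ring $\End_{R}(X)$ is a finite (hence semiperfect) $\Ow_l$-algebra, so idempotents split: $e=ip$ with $p:X\to Y$, $i:Y\to X$, $pi=1_{Y}$, in the ambient category $R\lmod$. The object $Y$ is a direct summand of $X=\bigoplus_{j} M_j^{a_j}$, so by the Krull--Schmidt theorem applied in $R\lmod$ we have $Y\cong \bigoplus_j M_j^{b_j}$ for some $b_j\in\N$; in particular $Y\in \mathrm{ob}(\C)$. Applying $\Hom_{\C}(-,-)$ and using Yoneda, the splitting $e=ip$ produces a splitting of $\varepsilon=e^*$ through $\Hom_{\C}(Y,-)$, and hence
\[
P \;\cong\; \mathrm{Im}(\varepsilon)\;\cong\; \Hom_{\C}(Y,-),
\]
as desired. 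Combined with Lemma~\ref{lem:homXprojective}, this proves that the projectives in $\Cp$ are exactly the representable functors $\Hom_{\C}(Y,-)$ with $Y\in \mathrm{ob}(\C)$.

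The only delicate step is the splitting of the idempotent $e$ in $\C$ (as opposed to in $R\lmod$); this is handled by the Krull--Schmidt argument above to ensure that the resulting summand $Y$ still lies in the full subcategory $\C=\langle M_1,\dots,M_n\rangle$. Everything else is a direct application of projectivity, Yoneda, and the previous proposition.
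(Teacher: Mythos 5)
Your proof is correct and takes essentially the same route as the paper: realize $P$ as the image of an idempotent endomorphism of a representable functor $\Hom_{\C}(X,-)$, transport that idempotent back to an idempotent $e\in\End_{\C}(X)$ via Yoneda, split $e$ to get a summand $Y$ of $X$, and conclude $P\cong\Hom_{\C}(Y,-)$. The only difference is that you make explicit the step the paper leaves implicit, namely that the summand $\Im(e)$ actually lies in the full subcategory $\C$, which you justify via Krull--Schmidt in $R\lmod$; this is a worthwhile clarification, since $\C$ is not a priori closed under subobjects, only under direct summands of its objects.
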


\begin{proof} We already proved in Lemma~\ref{lem:homXprojective} that $\Hom(X,-)$ is projective for every $X \in \mathrm{ob}(\C)$. Conversely, suppose that $F$ is projective. There exists $X \in \mathrm{ob}(\C)$ with surjection $\alpha: \Hom_{\C}(X,-) \twoheadrightarrow F$. By projectivity of the domain, there exists a splitting $\beta$ with $\alpha \beta = 1$.  Consider $\phi=\beta \alpha : \Hom(X,-) \to \Hom(X,-)$ and note that $\phi^2=\beta(\alpha\beta)\alpha = \phi$. Let $p:X \to X$ be the morphism corresponding to $\phi$, that is, $\phi=p^*$. It follows that $p^2=p$ and that $F\cong\Hom_{\C}(\Im(p), - )$.
\end{proof}	

The next two lemmas are used in Section \ref{sec:p.order}.
\begin{lemma}
Every object in $\D$ has a projective cover. A projective cover is unique up to isomorphism.
\end{lemma}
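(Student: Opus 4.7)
The plan is to treat existence and uniqueness separately, leveraging the hypotheses that every object of $\D$ has finitely many subobjects (hence finite length) and that all hom-spaces in $\D$ are finite.

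For existence, I would start with an arbitrary surjection $p_0 \colon P_0 \twoheadrightarrow F$ from a projective, which is available because $\D$ has enough projectives. Since $\End(P_0)$ is finite and direct-summand decompositions of $P_0$ correspond to idempotents in $\End(P_0)$, there are only finitely many direct summands of $P_0$ (alternatively, each direct summand is a subobject, and $P_0$ has finitely many of those). Among those summands $P \subseteq P_0$ for which the restriction $p_0|_P \colon P \to F$ is still surjective, pick one of minimal length. A direct summand of a projective is projective, so $P$ is projective; and by the minimality, no proper direct summand of $P$ maps onto $F$ via $p_0|_P$. Thus $p_0|_P \colon P \to F$ is a projective cover.

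For uniqueness, let $p \colon P \to F$ and $q \colon Q \to F$ be two projective covers. Projectivity supplies lifts $f \colon P \to Q$ with $qf = p$ and $g \colon Q \to P$ with $pg = q$. An easy induction on $n$, using $pg = q$ and $qf = p$, gives $p \circ (gf)^n = p$ for every $n \geq 1$. By Fitting's lemma applied to the endomorphism $gf$ of the finite-length object $P$, for $n$ sufficiently large one has $P = \ker\bigl((gf)^n\bigr) \oplus \Im\bigl((gf)^n\bigr)$. The identity $p = p \circ (gf)^n$ then forces $p$ to vanish on $\ker\bigl((gf)^n\bigr)$, so the restriction of $p$ to the complementary summand $\Im\bigl((gf)^n\bigr)$ is already surjective. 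The projective-cover property of $p$ then forces $\ker\bigl((gf)^n\bigr) = 0$, whence $(gf)^n$, and therefore $gf$, is an automorphism of $P$. The symmetric argument, applied to $fg \in \End(Q)$ using the projective-cover property of $q$, shows that $fg$ is an automorphism of $Q$. Consequently $f$ is both split mono and split epi, hence an isomorphism, yielding $P \cong Q$.

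I expect the only real subtlety to be confirming that a Fitting-style decomposition is available in this abelian-categorical setting; this follows from finite length together with the finiteness of the endomorphism rings, which together guarantee that the descending chain $\Im(gf) \supseteq \Im((gf)^2) \supseteq \cdots$ and the ascending chain of kernels both stabilize, and that the stable image and stable kernel are complementary. Once that is in hand, the rest is routine.
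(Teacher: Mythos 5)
Your proof is correct and rests on the same two pillars as the paper's: existence by picking a minimal direct summand of a projective surjecting onto $F$ (using finiteness of the subobject lattice to guarantee such a minimum exists), and uniqueness via a Fitting-type decomposition $P = \ker(h^n)\oplus\Im(h^n)$ for an appropriate endomorphism $h$. The only substantive difference is in how the uniqueness argument is packaged. You take the symmetric route: given two covers $p:P\to F$ and $q:Q\to F$, you produce mutual lifts $f:P\to Q$, $g:Q\to P$, run Fitting on $gf\in\End(P)$ and on $fg\in\End(Q)$, and conclude $f$ is an isomorphism. The paper instead establishes the (slightly stronger, reusable) fact that a projective cover $p:P\to F$ is an essential surjection -- any $q:Q\to P$ with $pq$ surjective has $q$ surjective, so $P$ splits off $Q$ -- and then specializes this to the case where $Q$ is a second cover. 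Both reach the same conclusion; yours is more self-contained for the statement at hand, while the paper's intermediate characterization is the one it actually reuses (in the form of Lemma~\ref{lem:lifting.isos}). One small remark: you attribute the validity of Fitting's decomposition to ``finite length together with finiteness of the endomorphism rings,'' but finiteness of the subobject lattice alone already gives both chain conditions and hence the decomposition -- the finiteness of hom-spaces is not needed for this particular step. The paper proves the Fitting splitting inline rather than citing it, which is what you flag as the one point needing verification; your instinct there is sound.
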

\begin{proof}
Since $\D$ has enough projectives, for every $F$ in $\D$ there is a projective object $Q$ and a surjective homomorphism $Q\to F$. 
Since $Q$ has only finitely many subobjects, it also has finitely many direct summands. Take a minimal direct summand $P$ of $Q$ that maps onto $F$. The fact that we chose $P$ to be minimal guarantees that $P$ is indeed a projective cover.

For the uniqueness, assume that $p:P\to F$ is a projective cover. Let $q:Q\to P$ be a surjective morphism from some object $Q$ to $P$, such that the composition $pq:Q\to F$ is surjective. We will show that $q$ is surjective. Since $P$ is projective, this implies that $P$ is a direct summand of $Q$. By considering the case where $Q$ is also a projective cover, and $q:Q\to P$ arises from the projective property of $Q$, this will prove uniqueness of the projective cover. 

By the projectivity of $P$ there is a map $i:P\to Q$ such that $pqi=p$.
Write $qi=f:P\to P$. By the finiteness of the number of sub-objects of $P$ we get that the chains $\Ker(f)\subseteq \Ker(f^2)\subseteq \Ker(f^3)\subseteq\cdots $ and $\Im(f)\supseteq \Im(f^2)\supseteq\cdots$ stabilize eventually. Let $N$ be an integer such that $\Im(f^N) = \Im(f^{N+1})$ and $\Ker(f^N) = \Ker(f^{N+1})$. We claim that $P = \Im(f^N)\oplus \Ker(f^N)$. Indeed, if $t\in P$ then $f^N(t)\in \Im(f^N) = \Im(f^{2N})$ so $f^N(t) = f^{2N}(y)$ for some $y\in P$. This implies that $t-f^N(y)\in \Ker(f^N)$, and therefore $P = \Ker(f^N) + \Im(f^N)$. If $t\in \Ker(f^N)\cap \Im(f^N)$ then $t=f^N(s)$ for some $s\in P$. Since $t\in \Ker(f^N)$ it holds that $0=f^N(t) = f^{2N}(s)$, so $s\in \Ker(f^{2N}) = \Ker(f^N)$, which implies that $f^N(s)=t=0$, and so the sum is direct. Since $P$ is a projective cover of $F$, and the direct summand $\Im(f^N)$ projects onto $F$, it holds that $\Im(f^N)=P$. This implies that $f^N$ is surjective, and therefore $f$ is surjective, and therefore an isomorphism. This means that $i:P\to Q$ is a split injection. 
\end{proof}
The next lemma will be useful in determining the automorphism group of $F$:
\begin{lemma}\label{lem:lifting.isos}
Assume that $p:P\to F$ is a projective cover in $\D$ and that we have a commutative diagram of the form 
$$\xymatrix{P\ar[r]^p\ar[d]^{\phi} & F\ar[d]^{\psi} \\ P\ar[r]^p & F}$$
Then $\phi$ is invertible if and only if $\psi$ is invertible.
\end{lemma}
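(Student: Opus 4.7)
First I would record a key structural fact about $\D$: since every object has finitely many subobjects, both the ascending chain condition on subobjects and the descending chain condition on quotients hold. In particular, for any object $A$ of $\D$, every surjective endomorphism $A \to A$ is automatically an isomorphism. To see this, apply the argument already used in the proof of the previous lemma: for a surjective $f: A \to A$, the chain $\Ker(f) \subseteq \Ker(f^2) \subseteq \cdots$ stabilises at some $\Ker(f^N)$, and then for any $x \in \Ker(f)$ we may write $x = f^N(y)$ by surjectivity of $f^N$, giving $f^{N+1}(y) = 0$ and hence $y \in \Ker(f^{N+1}) = \Ker(f^N)$, so $x = 0$. The same argument applies to $F$ in place of $P$.

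Granted this, the forward direction is immediate. If $\phi$ is an isomorphism, then $\psi p = p \phi$ is a composition of surjections, hence surjective; since $p$ is surjective, $\psi$ itself is surjective, and by the structural fact above $\psi$ is then an isomorphism.

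For the reverse direction, suppose $\psi$ is invertible. Then $p\phi = \psi p$ is surjective. This puts us in exactly the situation treated in the uniqueness half of the previous lemma: $\phi: P \to P$ is a morphism from a projective object whose composition with the projective cover $p$ is surjective. That argument (lifting the identity of $P$ back through $\phi$ via projectivity, then applying the chain-stabilisation trick to the resulting endomorphism of $P$) shows that $\phi$ must itself be surjective. Invoking the structural fact once more, $\phi$ is an isomorphism.

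The only subtlety is checking that the hypotheses on $\D$ (finitely many subobjects, enough projectives, finite hom-sets) really do give the chain conditions needed so that ``surjective endomorphism $\Rightarrow$ isomorphism'', and that the uniqueness-of-cover argument applies verbatim with $Q = P$ and $q = \phi$; both reduce to bookkeeping rather than new ideas, so I do not expect any genuine obstacle.
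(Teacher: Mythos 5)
Your proof is correct, and the overall strategy—chain stabilisation giving a Fitting-type decomposition $P=\Ker(f^N)\oplus\Im(f^N)$, followed by minimality of the projective cover—is exactly what the paper uses. The two proofs differ only in the backward direction. You first lift through $p\phi$ (which is surjective because $\psi$ is): projectivity of $P$ gives $i:P\to P$ with $p\phi i=p$, and then you run the Fitting argument on $f=\phi i$, which satisfies $pf=p$; this reduces to the special case $\psi=\id$ treated in the uniqueness-of-cover lemma. The paper skips this reduction and applies the Fitting decomposition directly to $\phi$ itself, then observes that $p\phi^N=\psi^N p$ together with $\psi$ invertible forces $p\bigl(\Im(\phi^N)\bigr)=F$, so $\Im(\phi^N)=P$ by projective-cover minimality. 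Both are valid; your route reuses the previous lemma as a black box at the cost of one extra lifting step, while the paper's is a touch more self-contained. Your preliminary observation that surjective endomorphisms in $\D$ are automorphisms (via ACC on kernels) is also correctly verified and is used, without proof, in the paper's forward direction.
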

\begin{proof}
If $\phi$ is invertible, then it follows immediately that $\psi$ is at least surjective. Every surjective endomorphism in $\D$ is automatically an isomorphism, so $\psi$ is an isomorphism.

In the other direction, assume that $\psi$ is an isomorphism. Following the previous proof, there is some $N>0$ such that we can write $P = \Ker(\phi^N)\oplus \Im(\phi^N)$. 
The commutativity of the diagram implies that $p$ maps $\Im(\phi^N)$ onto $\Im(\psi^N)$. Since $\psi$ is an isomorphism, $\Im(\phi^N)$ maps onto $F$. Since $P$ is the projective cover of $F$, it holds that $\Im(\phi^N)=P$, so that $\phi$ is surjective, and therefore an isomorphism.
\end{proof}

\bibliographystyle{amsplain}
\bibliography{CMO}

\end{document}